\definecolor{codegreen}{rgb}{0,0.6,0}
\definecolor{codegray}{rgb}{0.5,0.5,0.5}
\definecolor{codepurple}{rgb}{0.58,0,0.82}
\definecolor{backcolour}{rgb}{0.95,0.95,0.95}
\lstdefinestyle{mystyle}{
    backgroundcolor=\color{backcolour},   
    commentstyle=\color{codegreen},
    keywordstyle=\color{magenta},
    numberstyle=\tiny\color{codegray},
    stringstyle=\color{codepurple},
    basicstyle=\ttfamily\small,
    breakatwhitespace=false,         
    breaklines=true,                 
    captionpos=b,                    
    keepspaces=true,                 
    numbers=left,                    
    numbersep=5pt,                  
    showspaces=false,                
    showstringspaces=false,
    showtabs=false,                  
    tabsize=2
}
\numberwithin{equation}{section}
\newtheorem{theorem}{Theorem}[section]
\newtheorem{proposition}[theorem]{Proposition}
\newtheorem{corollary}[theorem]{Corollary}
\theoremstyle{definition}
\theoremstyle{remark}
\newtheorem{remark}[theorem]{Remark}
\newtheorem{rhp}{Riemann-Hilbert Problem}
\newcommand{\rhref}[1]{Riemann-Hilbert Problem~\ref{#1}}
\let\Im=\undefined\DeclareMathOperator{\Im}{Im}
\newcommand{\ii}{\ensuremath{\mathrm{i}}}
\newcommand{\ee}{\ensuremath{\mathrm{e}}}
\newcommand*\dd{\mathop{}\!\mathrm{d}}
\newcommand{\bg}{\ensuremath{B}}
\newcommand{\newf}{\ensuremath{f}}
\newcommand{\newk}{\ensuremath{\ell}}
\newcommand{\tz}{Z}
\newcommand{\transconf}{\varphi}
\newcommand{\transr}{t}
\newcommand{\transs}{u}
\renewcommand*\env@matrix[1][\arraystretch]{%
  \edef\arraystretch{#1}%
  \hskip -\arraycolsep
  \let\@ifnextchar\new@ifnextchar
  \array{*\c@MaxMatrixCols c}}
\let\originalleft\left
\let\originalright\right
\renewcommand{\left}{\mathopen{}\mathclose\bgroup\originalleft}
\renewcommand{\right}{\aftergroup\egroup\originalright}
\title[General rogue waves of infinite order]{General rogue waves of infinite order:  Exact properties, asymptotic behavior, and effective numerical computation}
\author{Deniz Bilman}
\address{Deniz Bilman:  Department of Mathematical Sciences, University of Cincinnati, Cincinnati, OH, USA}
\email{bilman@uc.edu}
\author{Peter D.~Miller}
\address{Peter D. Miller:  Department of Mathematics, University of Michigan, Ann Arbor, MI, USA}
\email{millerpd@umich.edu}
\date{\today}
\begin{document}
\begin{abstract}
This paper is devoted to a comprehensive analysis of a family of solutions of the focusing nonlinear Schr\"odinger equation called general rogue waves of infinite order.  These solutions have recently been shown to describe various limit processes involving large-amplitude waves, and they have also appeared in some physical models not directly connected with nonlinear Schr\"odinger equations.  We establish the following key property of these solutions:  they are all in $L^2(\mathbb{R})$ with respect to the spatial variable but they exhibit anomalously slow temporal decay.  In this paper we define general rogue waves of infinite order, establish their basic exact and asymptotic properties, and provide computational tools for calculating them accurately.  
\end{abstract}
\maketitle
\section{Introduction}
\label{s:introduction}
%
The focusing nonlinear Schr\"odinger equation is a universal evolution equation governing the complex amplitude of a weakly nonlinear, strongly dispersive wave packet over long time scales in very general settings.  In one space dimension, this equation can be written in normalized form as
\begin{equation}
\ii q_t + \frac{1}{2}q_{xx} + |q|^2q = 0,\quad q=q(x,t),\quad (x,t)\in\mathbb{R}^2.
\label{nls-general}
\end{equation}
For instance, in 1969, V. E. Zakharov studied the surface elevation of water wave packets over deep water in the classical setting of plane-parallel irrotational and incompressible (potential) flow below the free surface, which is subject to kinematic and pressure-balance boundary conditions \cite{Zakharov69}.  He gave a derivation of \eqref{nls-general} based on the formalism  of the method of multiple scales, with the wave packet amplitude being the fundamental small parameter.  This derivation has more recently been made fully rigorous \cite{TotzW12}.  Being as the multiple-scale argument is based on Taylor expansions of nonlinear terms and of the linearized dispersion relation, the derivation of \eqref{nls-general} as a model equation applies in far more settings than surface water waves \cite{BenneyN67}.  For example, it is also a fundamental model in nonlinear optics \cite{NewellM92} and in the theory of Bose-Einstein condensation (where it is known as the Gross-Pitaevskii equation) \cite{Ueda2010}.  

In 1983, D. H. Peregrine found a compelling exact solution of \eqref{nls-general} for which $q(x,t)$ is not of constant modulus, but nonetheless decays to the exact solution $q(x,t)=\ee^{\ii t}$ of \eqref{nls-general} uniformly in all directions of space-time \cite{Peregrine83}.  Peregrine's solution is
\begin{equation}
q(x,t)=\ee^{\ii t}\left[1-4\frac{1+2\ii t}{1+4x^2+4t^2}\right] = \ee^{\ii t}\left[1+O\left(\frac{1}{\sqrt{x^2+t^2}}\right)\right],\quad (x,t)\to\infty.
\label{eq:Peregrine}
\end{equation}
Since in the setting of water waves, $q(x,t)=\ee^{\ii t}$ is the complex amplitude of a uniform periodic wavetrain (a \emph{Stokes wave}), Peregrine's solution describes a space-time localized fluctuation of a Stokes wave, and as such it is a model for a \emph{rogue wave}.   

The focusing nonlinear Schr\"odinger equation \eqref{nls-general} was shown to be a completely integrable system in the work of Zakharov and Shabat \cite{ZakharovS72}.  This means that the methods of soliton theory apply, including tools for deriving numerous exact solutions such as \eqref{eq:Peregrine}.  These tools have a recursive nature, allowing for a given exact solution to be generalized to a whole infinite family by means of iterated B\"acklund transformations.  Thus, one sees that the Peregrine solution \eqref{eq:Peregrine} is by no means the only solution of \eqref{nls-general} that has the character of a rogue wave.  Indeed, there exist algebraic representations in terms of determinants of exact solutions of \eqref{nls-general} that for any $N\in\mathbb{N}$ can be viewed as a \emph{rogue wave of order $N$}.  At each subsequent value of $N$, new parameters enter into the algebraic solution formula that affect the details of the solution without influencing its fundamental property of decay to the background $q(x,t)=\ee^{\ii t}$.  If these parameters are scaled suitably, the rogue wave of order $N$ can resemble an array of a triangular number of distant copies of the Peregrine solution on the same background, and it has been shown \cite{YangY21} that the locations of the Peregrine peaks in space-time are correlated with the complex zeros of the Yablonskii-Vorob'ev polynomials.  

However, if the parameters are chosen in a highly-correlated way at each order, then the numerous peaks all combine and form a rogue wave of significantly higher amplitude, termed a \emph{fundamental rogue wave}.  For instance one can see that the Peregrine solution \eqref{eq:Peregrine} corresponding to $N=1$ has an amplitude $|q(x,t)|$ that grows to a maximum value of $3$ times the (unit) background level.  At the level $N=2$, the maximum amplitude obtainable is actually $5$ times the background level.  As such, the $N=2$ fundamental rogue wave is a better model than the Peregrine solution for the famous Draupner event \cite{Sunde95} in the North Sea that is frequently cited as the first quantitative observation of sea-surface rogue waves.  

To study large-amplitude rogue waves it then becomes of some interest to allow the order $N$ to grow and seek an asymptotic description of fundamental rogue waves as $N\to\infty$.  This limit became tractable with the introduction of a modified form of the inverse scattering transform for \eqref{nls-general} with nonzero boundary conditions at infinity, which yielded for the first time a Riemann-Hilbert representation of rogue wave solutions of arbitrary order \cite{BilmanM19}.  In \cite{BilmanLM2020}, this representation was used to analyze the fundamental rogue wave of order $N$ in the large-order/near-field limit that $N\to\infty$ while simultaneously the independent variables are rescaled near the peak $(x,t)=(0,0)$ so that $x=2X/N$ and $t=4T/N^2$ for fixed $(X,T)\in\mathbb{R}^2$.  It was found that a limiting profile $\Psi(X,T)$ of $2q/N$ exists as $N\to\infty$ that was called the \emph{rogue wave of infinite order}, and was shown (see Theorem~\ref{t:nls} below) to be a global solution of the focusing nonlinear Schr\"odinger (NLS) equation in the form
\begin{equation}
\ii \Psi_{T}+\frac{1}{2} \Psi_{X X}+|\Psi|^{2} \Psi=0.
\label{nls}
\end{equation}
It turns out the same solution also appeared recently in the physical literature \cite{Suleimanov17} to describe a universal dispersive regularization of an anomalously catastrophic self-focusing effect predicted by the geometrical optics approximation in self-focusing Kerr media as noted in the 1960s by Talanov \cite{Talanov65}, and there is also a rigorous proof that the solution arises in the semiclassical limit scaling of \eqref{nls-general} when it is taken with real semicircle-profile initial data matching the Talanov form \cite{BuckinghamJM}.  Indeed, several of the properties of $\Psi(X,T)$ that were proven in \cite{BilmanLM2020} had been also noted independently in the paper of Suleimanov \cite{Suleimanov17}.

The methodology developed in \cite{BilmanM19} also allowed for a streamlined analysis of multisoliton solutions of \eqref{nls-general} on the zero background, and in \cite{BilmanB2019} the soliton analogue of high-order fundamental rogue waves was analyzed in a similar near-field limit.  Here one considers reflectionless potentials corresponding to a transmission coefficient with a single pole of arbitrarily high order in the upper half-plane.  Unlike the case of fundamental rogue waves, two additional parameters appear in the iterated Darboux transformation that influence the shape of the limiting wave profile.  Thus one sees that the rogue wave of infinite order $\Psi(X,T)$ is a special case of a more general family of special solutions of \eqref{nls}.  We call these solutions \emph{general rogue waves of infinite order}.  

For rogue wave solutions, the iterated Darboux transformations are applied to a ``seed solution'' that is the uniform plane wave $q(x,t)=\ee^{\ii t}$, while for high-order soliton solutions the seed is instead the vacuum solution $q(x,t)=0$.  In both cases, the same type of limiting object was observed to appear in the high-order/near-field limit.  This observation was first generalized in \cite{BilmanM2021} in which a family of exact solutions of \eqref{nls-general} was described involving a continuous order parameter $M$ that when discretized in two different ways yielded both the fundamental rogue waves and also the arbitrary order solitons, with the same near-field limit appearing no matter how the continuous order was allowed to grow without bound, suggesting a type of universality of the limiting general rogue wave of infinite order.  This notion of universality was fully generalized in \cite{BilmanM2022}, where it was shown that the seed solution could be completely arbitrary (and need not represent any type of explicit solution at all), and the same family of general rogue wave solutions always appears in the high-order/near-field limit.  

General rogue waves of infinite order also have other applications.  For one thing, the initial condition for $\Psi$ at $T=0$ would be expected to generate corresponding integrable dynamics in any evolution equation that commutes with \eqref{nls}, i.e., in other equations of the same integrable hierarchy associated with the Zakharov-Shabat operator.  One such system is the sharp-line Maxwell-Bloch system, and in \cite{LiM24} the initial profiles of the general rogue waves of infinite order are identified with a family of self-similar solutions of the Maxwell-Bloch system that describe an important boundary-layer phenomenon.  There are also analogues of general rogue waves of infinite order in the modified Korteweg-de Vries equation (some constraints on the parameters are required to ensure reality of the solution) \cite{BilmanBMY}, and in simultaneous solutions of arbitrarily many commuting flows in the focusing NLS hierarchy \cite{BuckinghamJM}.  There is also some recent interest in general rogue waves of infinite order in the analysis community due to the fact that they lie in $L^2(\mathbb{R})$ for each fixed $T\in\mathbb{R}$ (see Theorem~\ref{t:L2-norm} below) and while \eqref{nls} is globally well-posed on this space \cite{Tsutsumi87}, these solutions neither generate any coherent structures (solitons) for large time $T$ nor do they exhibit the expected $O(T^{-\frac{1}{2}})$ decay consistent with solitonless initial data in smaller spaces such as $H^{1,1}(\mathbb{R})$ \cite{BorgheseJM2018}.  In fact, they decay at the anomalously slow rate of $O(T^{-\frac{1}{3}})$ (see Theorem~\ref{t:large-T} below) and there is no reason to expect that notions such as ``soliton content'' from inverse-scattering theory apply (see Remark~\ref{rem:no-IST}).

The main purpose of this paper is to present in one place all of the important properties of the family of general rogue waves of infinite order along with related computational methods.  We therefore begin by properly defining these solutions.

\subsection{Mathematical definition of general rogue waves of infinite order}
General rogue waves of infinite order are defined in terms of the following Riemann-Hilbert problem.

\begin{rhp}
Let $(X,T)\in\mathbb{R}^2$ and $\bg>0$ be fixed 
and let $\mathbf{G}$ be a $2\times 2$ matrix satisfying $\det(\mathbf{G})=1$ and $\mathbf{G}^*=\sigma_2\mathbf{G}\sigma_2$.
 Find a $2\times 2$ matrix $\mathbf{P}(\Lambda;X,T,\mathbf{G},\bg)$ with the following properties:
\begin{itemize}
\item[]\textbf{Analyticity:}  $\mathbf{P}(\Lambda;X,T,\mathbf{G},\bg)$ is analytic in $\Lambda$ for $|\Lambda|\neq 1$, and it takes continuous boundary values on the clockwise-oriented unit circle from the interior and exterior.
\item[]\textbf{Jump condition:} The boundary values\footnote{We use the standard convention that a subscript $+$ (resp., $-$) denotes a boundary value taken on an oriented contour arc from the left (resp., right).} on the unit circle are related as follows:
\begin{equation}
\mathbf{P}_+(\Lambda;X,T,\mathbf{G},\bg)=\mathbf{P}_-(\Lambda;X,T,\mathbf{G},\bg)
\ee^{-\ii(\Lambda X+\Lambda^2T+2 \bg \Lambda^{-1})\sigma_3}\mathbf{G}
\ee^{\ii(\Lambda X+\Lambda^2T+2\bg \Lambda^{-1})\sigma_3},\quad |\Lambda|=1.
\label{P-jump}
\end{equation}
\item[]\textbf{Normalization:}  $\mathbf{P}(\Lambda;X,T,\mathbf{G},\bg)\to\mathbb{I}$ as $\Lambda\to\infty$.
\end{itemize}
\label{rhp:near-field}
\end{rhp}
In general any matrix $\mathbf{G}$ satisfying $\det(\mathbf{G})=1$ and $\sigma_2 \mathbf{G}^* \sigma_2 = \mathbf{G}$ as in \rhref{rhp:near-field} can be written as
\begin{equation}
\mathbf{G}=\mathbf{G}(a,b)=\frac{1}{\sqrt{|a|^{2}+|b|^{2}}}\begin{bmatrix}
a & b^{*} \\
-b & a^{*}
\end{bmatrix}
\label{G-form}
\end{equation}
for complex numbers $a,b$ not both zero. 

It is a consequence of the conditions on $\mathbf{G}$ and the analytic dependence of the jump matrix on $(X,T)$ that the following holds.
\begin{proposition}[Global existence]
For each $(X,T)\in\mathbb{R}^2$ there exists a unique solution to \rhref{rhp:near-field}, and the solution depends real-analytically on $(X,T)\in\mathbb{R}^2$ and the real and imaginary parts of the parameters $a,b$ of the elements of $\mathbf{G}$.
\label{prop:RHP-EU}
\end{proposition}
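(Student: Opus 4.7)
The plan is to recast \rhref{rhp:near-field} as a Beals--Coifman singular integral equation on $\Sigma:=\{|\Lambda|=1\}$, verify that the associated operator is Fredholm of index zero, establish a Zhou-type vanishing lemma to deduce invertibility, and conclude real-analytic parameter dependence via the analytic Fredholm theorem. Set $V(\Lambda):=\ee^{-\ii\phi(\Lambda)\sigma_3}\mathbf{G}\ee^{\ii\phi(\Lambda)\sigma_3}$ with $\phi(\Lambda):=\Lambda X+\Lambda^2T+2\bg/\Lambda$. The Plemelj--Sokhotski formula identifies \rhref{rhp:near-field} with $(I-\mathcal{C}_V)\boldsymbol{\mu}=\mathbb{I}$ on $L^2(\Sigma;\mathbb{C}^{2\times 2})$, with $\mathbf{P}$ reconstructed from $\boldsymbol{\mu}$ by a Cauchy integral along $\Sigma$. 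Since $\Sigma$ is a smooth compact Jordan curve, $V-\mathbb{I}$ is real-analytic on $\Sigma$, and $\det V\equiv\det\mathbf{G}=1$, the operator $I-\mathcal{C}_V$ is bounded and Fredholm of index zero on $L^2(\Sigma)$.

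The key step will be a vanishing lemma for the homogeneous problem: any $\mathbf{P}^\circ$ satisfying the jump \eqref{P-jump} together with $\mathbf{P}^\circ(\Lambda)\to 0$ as $\Lambda\to\infty$ must be identically zero. The main ingredient is the Schwarz-type symmetry
\[
V(\Lambda)=\sigma_2\,V(\bar\Lambda)^*\,\sigma_2,\qquad \Lambda\in\Sigma,
\]
which follows from $\sigma_2\mathbf{G}^*\sigma_2=\mathbf{G}$, the identity $\overline{\phi(\Lambda)}=\phi(\bar\Lambda)$ (valid since $X,T,\bg$ are real), and the invariance of $\Sigma$ under $\Lambda\mapsto\bar\Lambda$. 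I would couple this with the rank-deficiency $\det\mathbf{P}^\circ\equiv 0$ (immediate from $\det V\equiv 1$, analyticity, and decay at $\infty$) to construct an auxiliary matrix function built from $\mathbf{P}^\circ(\Lambda)$ and $\sigma_2\mathbf{P}^\circ(\bar\Lambda)^*\sigma_2$ that has no jump across $\Sigma$, extends entirely across the plane, and is forced to vanish by the decay and Liouville's theorem; this is the analogue of the vanishing argument carried out for a closely related Riemann--Hilbert problem in \cite{BilmanM19}. Once $\ker(I-\mathcal{C}_V)=\{0\}$ is established, the Fredholm alternative together with index zero yields invertibility of $I-\mathcal{C}_V$, hence unique solvability of \rhref{rhp:near-field} for every $(X,T)\in\mathbb{R}^2$ and every admissible $\mathbf{G}$.

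Real-analytic dependence on $(X,T,\Re a,\Im a,\Re b,\Im b)\in\mathbb{R}^6$ then follows from the analytic Fredholm theorem: $V$ is jointly real-entire in these six parameters, so $\mathcal{C}_V$ depends real-analytically on them as a bounded operator on $L^2(\Sigma)$, and the unique solvability established above ensures that the inverse $(I-\mathcal{C}_V)^{-1}$ exists on all of $\mathbb{R}^6$ and depends real-analytically there; hence so do $\boldsymbol{\mu}$ and $\mathbf{P}(\Lambda;X,T,\mathbf{G},\bg)$ for each $\Lambda$ off $\Sigma$. The principal technical obstacle is the vanishing lemma. Because $\phi(\Lambda)$ is complex-valued on $\Sigma$, the jump matrix $V$ carries no direct Hermitian structure and the classical ``positivity of $V+V^*$'' route is unavailable; one must instead combine the Schwarz symmetry, the conjugation-invariance of $\Sigma$, and the rank-one nature of $\mathbf{P}^\circ$ forced by $\det\mathbf{P}^\circ\equiv 0$, all with careful bookkeeping of boundary values on the clockwise-oriented unit circle.
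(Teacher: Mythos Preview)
Your approach is essentially the same as the paper's: the paper simply states that the result ``follows from Zhou's vanishing lemma \cite[Theorem 9.3]{Zhou1989} and the application of analytic Fredholm theory,'' which is precisely the Fredholm-plus-vanishing-lemma strategy you outline. Your sketch supplies the details the paper omits---in particular the Schwarz symmetry $V(\bar\Lambda)^*=\sigma_2 V(\Lambda)\sigma_2$ that drives the vanishing argument---and correctly flags that the standard $V+V^*>0$ route does not apply directly; this is exactly why one invokes Zhou's theorem rather than a naive positivity argument.
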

This follows from Zhou's vanishing lemma \cite[Theorem 9.3]{Zhou1989} and the application of analytic Fredholm theory.  
The special solution $\Psi(X,T)=\Psi(X,T;\mathbf{G}, \bg)$ of \eqref{nls} is defined in terms of the solution of \rhref{rhp:near-field} by
\begin{equation}
\Psi(X,T;\mathbf{G},\bg):= 2\ii \lim_{\Lambda\to\infty}\Lambda P_{12}(\Lambda;X,T,\mathbf{G},\bg).
\label{Psi-def}
\end{equation}
This is in general a transcendental solution of the NLS equation; therefore its quantitative properties and the qualitative the nature of its profile (for instance what boundary conditions are satisfied as $X\to\pm\infty$), and how these depend on parameters are not immediately clear. 


The function $\Psi(X,T;\mathbf{G},\bg)$ was first studied by Suleimanov \cite{Suleimanov17} and independently by the authors with L.\@ Ling \cite{BilmanLM2020} for the special case of 
\begin{equation}
\mathbf{G}=\mathbf{Q}^{-1},\qquad \mathbf{Q}:=\frac{1}{\sqrt{2}}\begin{bmatrix}1& -1 \\ 1 & 1 \end{bmatrix},
\label{G-Qi}
\end{equation}
which corresponds to the choice $a=b=1$ (or any positive number) in \eqref{G-form}. 

In order to understand all important properties of the special solution $\Psi(X,T;\mathbf{G},\bg)$ and how they depend on parameters, there are three approaches one can take: (i) investigate its exact properties including symmetries, special values, differential equations satisfied, and equivalent representations; (ii) work in a variety of interesting asymptotic regimes to obtain rigorous approximations to $\Psi(X,T;\mathbf{G},\bg)$; and (iii) compute $\Psi(X,T;\mathbf{G},\bg)$ accurately in the $(X,T)$-plane by a suitable numerical method. In this paper, we achieve all three of these. 

In the rest of this introduction section, we summarize our results in the three areas mentioned above.  To set the scene, plots of $\Psi(X,T;\mathbf{G},\bg)$ computed with \texttt{RogueWaveInfiniteNLS.jl} with $a=b=\bg=1$ are shown in Figure~\ref{f:Psi-a1-b1}. \texttt{RogueWaveInfiniteNLS.jl} is a software package for the \texttt{Julia} programming language developed as part of this work to compute rogue waves of infinite order through numerical solution of suitable Riemann-Hilbert problems; see Section~\ref{s:numerics-intro} below.

\begin{figure}
\includegraphics[width=0.75\linewidth]{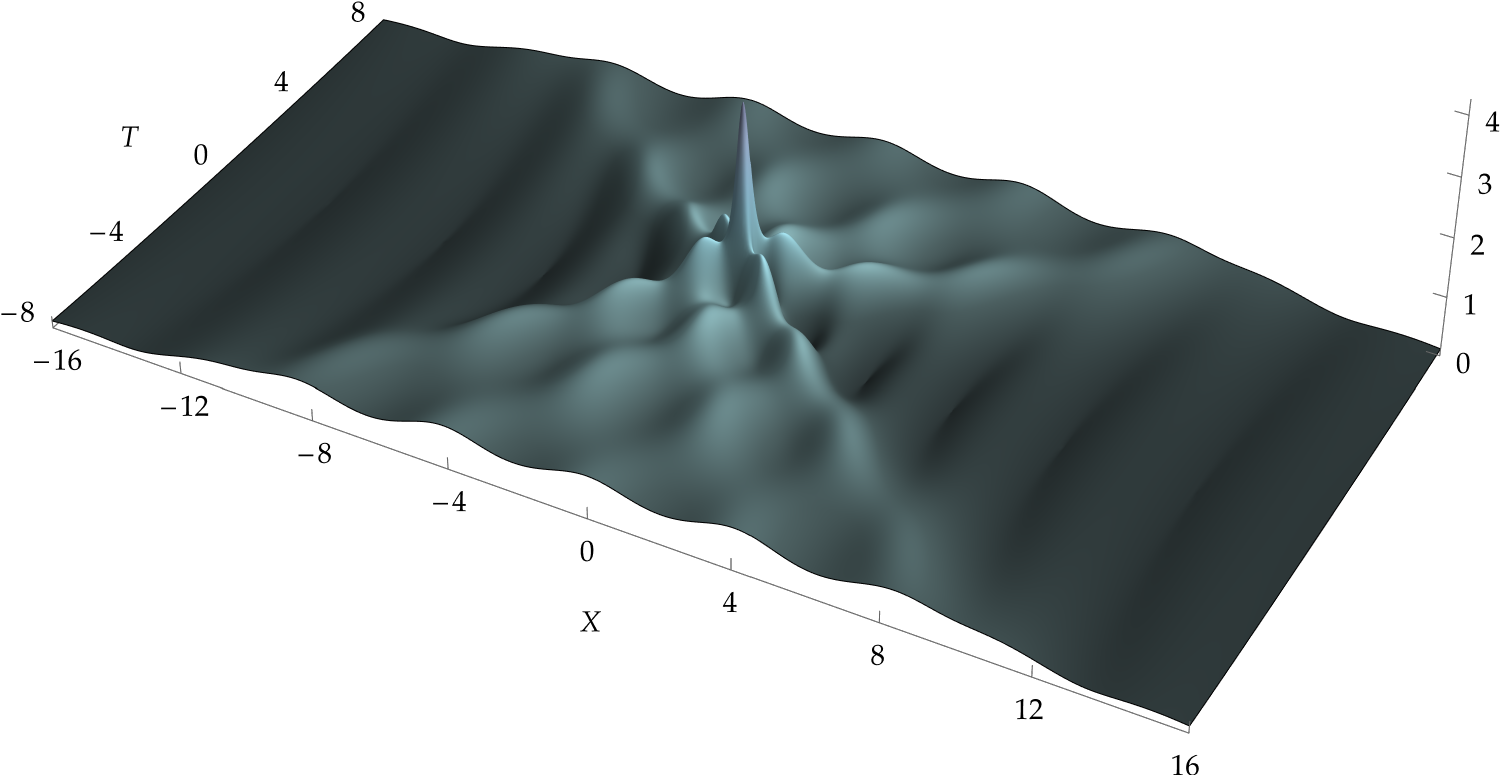}\\
\includegraphics[width=0.75\linewidth]{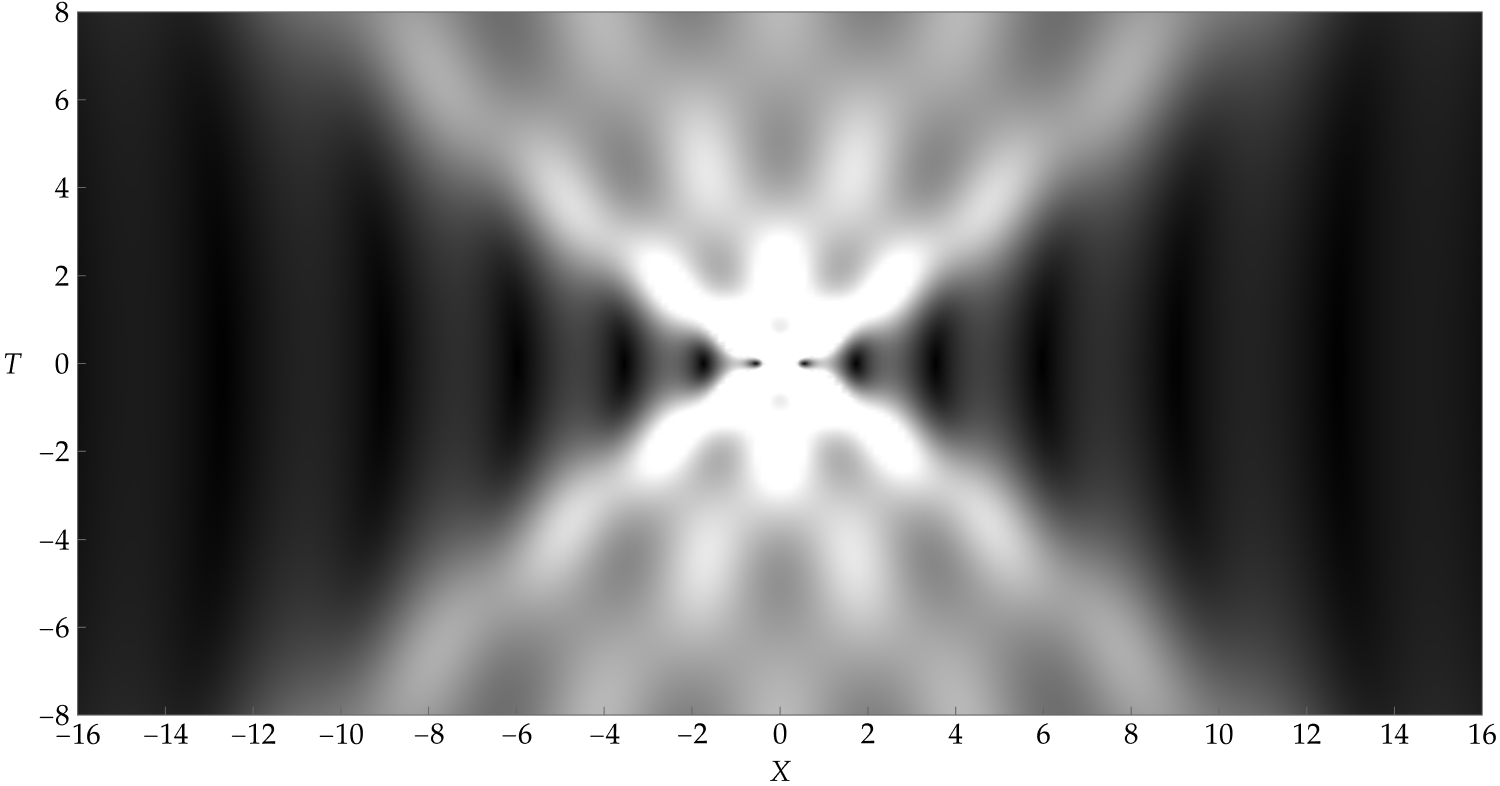}
\caption{The solution $\Psi(X,T;\mathbf{G},\bg)$ computed with \texttt{RogueWaveInfiniteNLS.jl} with $a=b=\bg=1$. \texttt{RogueWaveInfiniteNLS.jl} is a software package developed in this work for the \texttt{Julia} programming language to compute rogue waves of infinite order through numerical solution of suitable Riemann-Hilbert problems.}
\label{f:Psi-a1-b1}
\end{figure}

\subsection{Exact properties of $\Psi$}
Here we describe the symmetries of $\Psi(X,T;\mathbf{G},\bg)$ (Section~\ref{s:symmetries-intro}), evaluate it and its derivative $\Psi_X$ at $(X,T)=(0,0)$ (Section~\ref{s:origin-intro}) and give its $L^2$-norm (Section~\ref{s:norm-intro}), give partial and ordinary differential equations satisfied by $\Psi(X,T;\mathbf{G},\bg)$ (Section~\ref{s:DE-intro}), and give a new Fredholm determinant formula for the initial condition (Section~\ref{s:FredholmDeterminant-intro}).
\subsubsection{Symmetries}
\label{s:symmetries-intro}
In the setting that $\Psi(X,T;\mathbf{G},\bg)$ arises from the joint near-field/high-order limit of rogue wave solutions of \eqref{nls-general}, the parameter $\bg>0$ has the interpretation of the amplitude of the background wave supporting the rogue waves.  However, it is not hard to see that the dependence on $\bg>0$ can be scaled out of $\Psi$ by the scaling invariance $\Psi(X,T;\mathbf{G},\bg) \mapsto \bg^{-1} \Psi(\bg^{-1}X, \bg^{-2} T;\mathbf{G},\bg)$ of the focusing NLS equation \eqref{nls}. 
\begin{proposition}[Scaling symmetry]
\label{p:scaling}
Given $\mathbf{G}$ with $\det(\mathbf{G})=1$ and $\mathbf{G}^*=\sigma_2\mathbf{G}\sigma_2$, for each $\bg>0$ and $(X,T)\in\mathbb{R}^2$, we have
\begin{equation}
\Psi(X,T; \mathbf{G}, \bg) = \bg \Psi(\bg X, \bg^2 T; \mathbf{G}, 1).
\end{equation}
\end{proposition}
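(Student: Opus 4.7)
The plan is to reduce \rhref{rhp:near-field} for $\mathbf{P}(\Lambda;X,T,\mathbf{G},\bg)$ to the corresponding problem with parameters $(\bg X,\bg^2T,\mathbf{G},1)$ by rescaling the spectral variable, and then to read off the scaling identity from the large-$\Lambda$ extraction formula \eqref{Psi-def}. Concretely, I would set $\hat{\mathbf{P}}(\nu):=\mathbf{P}(\bg\nu;X,T,\mathbf{G},\bg)$ and observe that, under the substitution $\Lambda=\bg\nu$, the combined exponent in \eqref{P-jump} transforms as
\[
\Lambda X+\Lambda^2 T+2\bg\Lambda^{-1}\;\longmapsto\;\nu(\bg X)+\nu^2(\bg^2 T)+2\nu^{-1},
\]
so $\hat{\mathbf{P}}(\nu)$ satisfies exactly the jump and normalization conditions of \rhref{rhp:near-field} for the tuple $(\bg X,\bg^2T,\mathbf{G},1)$. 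The only mismatch is that the jump contour for $\hat{\mathbf{P}}$ is $|\nu|=\bg^{-1}$, whereas the RHP with $\bg=1$ uses $|\nu|=1$.

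The crux of the proof is therefore to show that the large-$\nu$ asymptotics of such an RHP does not depend on the radius of the circular jump contour, provided the contour encloses $\nu=0$; this is the main obstacle and rests on the fact that the jump matrix $V(\nu)=\ee^{-\ii h(\nu)\sigma_3}\mathbf{G}\ee^{\ii h(\nu)\sigma_3}$ with $h(\nu)=\nu\bg X+\nu^2\bg^2T+2\nu^{-1}$ is analytic on $\mathbb{C}\setminus\{0\}$. I would argue it via a Liouville-type uniqueness: for any solution $\mathbf{P}_r$ of the RHP on $|\nu|=r$, define
\[
\mathbf{M}_r(\nu):=\begin{cases}\mathbf{P}_r(\nu), & |\nu|>r,\\ \mathbf{P}_r(\nu)V(\nu)^{-1}, & 0<|\nu|<r.\end{cases}
\]
The jump relation $\mathbf{P}_{r,+}=\mathbf{P}_{r,-}V$ forces the two pieces to match on $|\nu|=r$, so $\mathbf{M}_r$ is analytic on $\mathbb{C}\setminus\{0\}$; moreover $\mathbf{M}_r V$ coincides with the (analytic) interior piece of $\mathbf{P}_r$ on $|\nu|<r$ and therefore extends analytically through $\nu=0$. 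For two radii $r,r'$, the ratio $\mathbf{M}_r\mathbf{M}_{r'}^{-1}=(\mathbf{M}_r V)(\mathbf{M}_{r'} V)^{-1}$ is then entire on $\mathbb{C}$ and tends to $\mathbb{I}$ at $\infty$, so Liouville forces $\mathbf{M}_r\equiv\mathbf{M}_{r'}$; in particular $\mathbf{P}_r$ and $\mathbf{P}_{r'}$ agree for $|\nu|>\max(r,r')$ and share the same large-$\nu$ expansion.

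Applying this observation to the pair $r=\bg^{-1}$ (for $\hat{\mathbf{P}}$) and $r=1$ (for the RHP with $\bg=1$) yields
\[
\lim_{\nu\to\infty}\nu\hat P_{12}(\nu)=\lim_{\nu\to\infty}\nu P_{12}(\nu;\bg X,\bg^2T,\mathbf{G},1)=\frac{\Psi(\bg X,\bg^2T;\mathbf{G},1)}{2\ii},
\]
while unwinding $\hat{\mathbf{P}}(\nu)=\mathbf{P}(\bg\nu;X,T,\mathbf{G},\bg)$ also gives
\[
\lim_{\nu\to\infty}\nu\hat P_{12}(\nu)=\frac{1}{\bg}\lim_{\Lambda\to\infty}\Lambda P_{12}(\Lambda;X,T,\mathbf{G},\bg)=\frac{\Psi(X,T;\mathbf{G},\bg)}{2\ii\bg}.
\]
Equating the two expressions delivers the claimed identity $\Psi(X,T;\mathbf{G},\bg)=\bg\,\Psi(\bg X,\bg^2T;\mathbf{G},1)$; everything beyond the Liouville step is just bookkeeping of the factor of $\bg$ introduced by the rescaling $\Lambda=\bg\nu$.
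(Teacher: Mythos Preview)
Your approach is essentially the paper's: rescale the spectral variable so that the jump condition becomes that of the $\bg=1$ problem at $(\bg X,\bg^2T)$, reconcile the contour-radius mismatch, and extract the scaling from \eqref{Psi-def}. The paper simply asserts that ``the radius of the circular jump contour can be taken arbitrary'' and invokes uniqueness, whereas you spell out a Liouville-type justification; one small slip there: with the clockwise orientation (so $+$ is the exterior boundary value and the jump reads $\mathbf{P}_{r,+}=\mathbf{P}_{r,-}V$), continuity of $\mathbf{M}_r$ across $|\nu|=r$ requires the interior piece to be $\mathbf{P}_r V$ rather than $\mathbf{P}_r V^{-1}$, after which your Liouville argument goes through unchanged.
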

We give a proof in Appendix~\ref{a:elementary}. In this paper we make use of Proposition~\ref{p:scaling} and take $\bg=1$. Accordingly, we write
\begin{equation}
\Psi(X,T;\mathbf{G}):=  \Psi(X,T;\mathbf{G},\bg=1)
\end{equation}
to denote the special solution under study, and similarly we generally omit $\bg=1$ from the argument list of $\mathbf{P}$ going forward. 
On the other hand, the dependence of $\Psi(X,T;\mathbf{G})$ on the $2\times 2$ matrix $\mathbf{G}$ is nontrivial. 

We proceed with two observations that concern the symmetries with respect to reflections in the $X$ variable and in the $T$ variable.

\begin{proposition}[Reflection in $X$]
\label{prop:X-symmetry}
 $\Psi(X,T; \mathbf{G}(a,b)) = \Psi(-X,T; \mathbf{G}(b,a))$.
\end{proposition}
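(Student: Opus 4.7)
My plan is to establish this reflection symmetry at the level of~\rhref{rhp:near-field} via the uniqueness assertion in Proposition~\ref{prop:RHP-EU}. The idea is to construct a candidate matrix $\tilde{\mathbf{P}}(\Lambda;X,T)$ built explicitly from $\mathbf{P}(\Lambda;-X,T,\mathbf{G}(b,a))$ by a suitable combination of spectral-parameter substitutions and constant-matrix conjugations, verify that it solves the same Riemann--Hilbert problem as $\mathbf{P}(\Lambda;X,T,\mathbf{G}(a,b))$, and then extract the asserted equality by comparing the $(1,2)$ entries at $\Lambda\to\infty$ via~\eqref{Psi-def}.

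The natural first move is the spectral reflection $\Lambda\mapsto-\Lambda$. This map preserves analyticity, the unit-circle contour with its clockwise orientation, and the normalization at $\Lambda=\infty$; the jump phase $\Lambda(-X)+\Lambda^2T+2\bg/\Lambda$ is converted into $\Lambda X+\Lambda^2T-2\bg/\Lambda$. Consequently, $\mathbf{P}(-\Lambda;-X,T,\mathbf{G}(b,a))$ satisfies a problem of the shape of~\rhref{rhp:near-field} at the target parameters $(X,T)$ but with $\mathbf{G}(b,a)$ in place of $\mathbf{G}(a,b)$ and with the sign of $2\bg/\Lambda$ in the jump exponent flipped. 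Two discrepancies then remain, to be reconciled simultaneously by composing with further symmetries of the RHP. Natural candidates are the Schwarz-type reflection $\Lambda\mapsto\bar\Lambda$ together with entrywise complex conjugation (which exploits $\mathbf{G}^*=\sigma_2\mathbf{G}\sigma_2$ and the reality of $X,T,\bg$), conjugation by a Pauli matrix, and the inverse-transpose operation (which uses $\det\mathbf{G}=1$ and the computation $\mathbf{G}(a,b)^{-T}=\mathbf{G}(a^*,b^*)$).

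The essential content of the proof, and what I expect to be the main obstacle, is the identification of the correct composite transformation that converts $\mathbf{G}(b,a)$ into $\mathbf{G}(a,b)$ while simultaneously correcting the sign in the jump exponent. Two features of the problem make this delicate. First, the only Möbius substitutions $\beta(\Lambda)$ preserving both the unit circle and the point $\Lambda=\infty$ are the rotations $\beta(\Lambda)=\lambda\Lambda$ with $|\lambda|=1$, and these effect at most the rescaling $(X,T,\bg)\mapsto(\lambda X,\lambda^2 T,\lambda^{-1}\bg)$ without altering $\mathbf{G}$. Second, and more fundamentally, the matrices $\mathbf{G}(a,b)$ and $\mathbf{G}(b,a)$ are not related by any similarity transformation with an $(a,b)$-independent conjugating matrix: their traces $2\Re a/\sqrt{|a|^2+|b|^2}$ and $2\Re b/\sqrt{|a|^2+|b|^2}$ generically differ, so the swap $(a,b)\mapsto(b,a)$ cannot be realized by constant-matrix conjugation acting on $\mathbf{G}$ alone. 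The required composite transformation must therefore emerge from the interplay between the spectral reflection, the Schwarz symmetry induced by $\mathbf{G}^*=\sigma_2\mathbf{G}\sigma_2$, and the explicit algebraic form of the parameterization~\eqref{G-form}. Once this transformation is identified and verified to deliver a genuine solution of~\rhref{rhp:near-field} at $(X,T,\mathbf{G}(a,b),\bg)$, uniqueness identifies the candidate with $\mathbf{P}(\Lambda;X,T,\mathbf{G}(a,b))$, and passage to the $(1,2)$ Laurent coefficient at $\Lambda=\infty$ via~\eqref{Psi-def} completes the proof.
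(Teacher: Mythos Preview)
Your overall strategy---build a candidate from $\mathbf{P}(\,\cdot\,;-X,T,\mathbf{G}(b,a))$, verify it solves \rhref{rhp:near-field} at $(X,T,\mathbf{G}(a,b))$, invoke uniqueness, and read off~\eqref{Psi-def}---is exactly right and is what the paper does. But the specific toolkit you propose (Schwarz reflection, Pauli conjugation, inverse-transpose) will not close the argument, and your own trace observation explains why: any composite of these acts on the jump matrix by \emph{similarity} (possibly after complex conjugation), and no such operation can send $\mathbf{G}(b,a)$ to $\mathbf{G}(a,b)$ for generic $(a,b)$.

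The missing idea is that the candidate must be defined \emph{piecewise}, with different $\Lambda$-dependent right factors inside and outside the unit circle. Concretely, after the reflection $\Lambda\mapsto-\Lambda$ the paper conjugates by $\sigma_3$ and then right-multiplies $\mathbf{P}$ by $\ee^{-4\ii\Lambda^{-1}\sigma_3}$ for $|\Lambda|>1$ (this repairs the sign of the $2\Lambda^{-1}$ term in the exponent and is invisible to the limit in~\eqref{Psi-def}) and by $\ee^{-2\ii(\Lambda X+\Lambda^2T)\sigma_3}(\ii\sigma_2)$ for $|\Lambda|<1$. Because the inner and outer right factors differ, the effect on the jump is \emph{not} a conjugation of $\mathbf{G}$ but a one-sided multiplication, and the relevant identity---checkable directly from~\eqref{G-form}---is
\[
\sigma_3\,(-\ii\sigma_2)\,\mathbf{G}(a,b)\,\sigma_3 \;=\; \mathbf{G}(b,a).
\]
This bypasses your trace obstruction entirely; no Schwarz reflection or inverse-transpose is needed. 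Once you allow piecewise right-multiplication by analytic $\Lambda$-dependent factors, the rest of your outline goes through verbatim.
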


Similarly, we have
\begin{proposition}[Reflection in $T$]
\label{prop:T-symmetry}
$\Psi(X,-T; \mathbf{G}(a,b)) = \Psi(X,T; \mathbf{G}(a,b)^*)^*$.
\end{proposition}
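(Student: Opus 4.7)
My plan is to apply the uniqueness part of Proposition~\ref{prop:RHP-EU} after exhibiting an explicit substitution that turns the RHP for $\mathbf{P}(\Lambda;X,-T,\mathbf{G}(a,b))$ into the RHP for $\mathbf{P}(\Lambda;X,T,\mathbf{G}(a,b)^*)$. Reflecting $T$ changes the sign of the quadratic phase $\Lambda^2 T$ in the jump, and the natural way to restore this sign via conjugation is to use $(-\bar\Lambda)^2=\bar\Lambda^2$. I therefore propose
\begin{equation*}
\mathbf{R}(\Lambda)\defeq\mathbf{P}(-\bar\Lambda;X,-T,\mathbf{G}(a,b))^*
\end{equation*}
(with $^*$ denoting entrywise conjugation, consistent with the paper's usage for $\mathbf{G}^*$), and would verify that $\mathbf{R}$ solves \rhref{rhp:near-field} with data $(X,T,\mathbf{G}(a,b)^*)$.

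The three defining properties follow in turn. Analyticity off $|\Lambda|=1$ holds because $\Lambda\mapsto-\bar\Lambda$ and entrywise conjugation are each antiholomorphic, and both $\{|\Lambda|>1\}$ and $\{|\Lambda|<1\}$ are setwise invariant under $\Lambda\mapsto-\bar\Lambda$, so boundary limits on each side match up correctly. For the jump, substitute $\tilde\Lambda=-\bar\Lambda$ into the exponent $-\ii(\tilde\Lambda X-\tilde\Lambda^2 T+2\tilde\Lambda^{-1})$ of the $(X,-T)$ problem to obtain $\ii(\bar\Lambda X+\bar\Lambda^2 T+2\bar\Lambda^{-1})$, whose entrywise conjugate is precisely the target phase $-\ii(\Lambda X+\Lambda^2 T+2\Lambda^{-1})$; the matrix factor $\mathbf{G}(a,b)$ becomes $\mathbf{G}(a,b)^*$ under conjugation. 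Normalization $\mathbf{R}(\Lambda)\to\mathbb{I}$ as $\Lambda\to\infty$ follows from $-\bar\Lambda\to\infty$.

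By uniqueness, $\mathbf{R}(\Lambda)=\mathbf{P}(\Lambda;X,T,\mathbf{G}(a,b)^*)$. To extract the claimed identity on $\Psi$, compare $(1,2)$ entries at infinity: setting $\mu\defeq-\bar\Lambda$ so $\Lambda=-\bar\mu$,
\begin{equation*}
\lim_{\Lambda\to\infty}\Lambda R_{12}(\Lambda)=-\lim_{\mu\to\infty}\overline{\mu P_{12}(\mu;X,-T,\mathbf{G}(a,b))}=-\overline{\frac{\Psi(X,-T;\mathbf{G}(a,b))}{2\ii}}=\frac{\Psi(X,-T;\mathbf{G}(a,b))^*}{2\ii},
\end{equation*}
and multiplying by $2\ii$ yields $\Psi(X,T;\mathbf{G}(a,b)^*)=\Psi(X,-T;\mathbf{G}(a,b))^*$, equivalent to the proposition. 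The main obstacle is the sign bookkeeping in the jump exponent, since substituting $-\bar\Lambda$, conjugating $\ii$, and carrying the pre-existing $-T$ each contribute sign changes that must conspire correctly; a secondary concern---that $\Lambda\mapsto-\bar\Lambda$ reverses the orientation of $|\Lambda|=1$ as a traversed curve---is harmless because boundary values on each side of the circle are intrinsic to the complementary regions, which are themselves preserved by the involution.
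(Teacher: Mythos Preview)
Your proof is correct and takes essentially the same approach as the paper. The paper isolates the matrix identity $\mathbf{P}(\Lambda;X,-T,\mathbf{G}(a,b))=\mathbf{P}(-\Lambda^*;X,T,\mathbf{G}(a,b)^*)^*$ as a separate proposition (proved by the same uniqueness argument you give) and then extracts the $\Psi$ identity via the same limit computation; your presentation merely packages these two steps together.
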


The proofs of Proposition~\ref{prop:X-symmetry} and Proposition~\ref{prop:T-symmetry} are in Appendix~\ref{a:elementary}.
The next observation we make concerns a useful normalization of the parameters $a,b$. Indeed, we have the following result, which is also proved in Appendix~\ref{a:elementary}.
\begin{proposition}[Normalized parameters]
For all $(X,T)\in\mathbb{R}^2$ and $a,b\in\mathbb{C}$ with $ab\neq 0$,
\begin{equation}
\Psi(X,T;\mathbf{G}(a,b))=\ee^{-\ii\arg(ab)}\Psi\left(X,T;\mathbf{G}\left(\mathfrak{a},\mathfrak{b}\right)\right),
\label{ab-symmetry}
\end{equation}
where
\begin{equation}
\mathfrak{a}:=\frac{|a|}{\sqrt{|a|^2+|b|^2}}\quad\text{and}\quad
\mathfrak{b}:=\frac{|b|}{\sqrt{|a|^2+|b|^2}}
\label{eq:normalized-ab}
\end{equation}
satisfy $\mathfrak{a},\mathfrak{b}>0$ with $\mathfrak{a}^2+\mathfrak{b}^2=1$.
\label{prop:a-b-scaling}
\end{proposition}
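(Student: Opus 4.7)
The plan is to reduce \rhref{rhp:near-field} for $\mathbf{G}(a,b)$ to the one for $\mathbf{G}(\mathfrak{a},\mathfrak{b})$ via an explicit piecewise-constant diagonal transformation of $\mathbf{P}$, and then to read off the claimed phase factor $\ee^{-\ii\arg(ab)}$ from the behavior at $\Lambda=\infty$.

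Writing $a=|a|\ee^{\ii\alpha}$ and $b=|b|\ee^{\ii\beta}$ so that $\arg(ab)=\alpha+\beta$, a direct calculation from \eqref{G-form} yields the algebraic identity $\ee^{-\ii\phi_2\sigma_3}\mathbf{G}(a,b)\ee^{\ii\phi_1\sigma_3}=\mathbf{G}(\mathfrak{a},\mathfrak{b})$ with $\phi_1\defeq -\arg(ab)/2$ and $\phi_2\defeq (\alpha-\beta)/2$; the constraints $\phi_1-\phi_2=-\alpha$ and $\phi_1+\phi_2=-\beta$ are forced by matching the arguments of the $(1,1)$ and $(1,2)$ entries on both sides. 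I then define
$\tilde{\mathbf{P}}(\Lambda)\defeq \ee^{-\ii\phi_1\sigma_3}\mathbf{P}(\Lambda;X,T,\mathbf{G}(a,b))\mathbf{D}(\Lambda)$,
where $\mathbf{D}(\Lambda)=\ee^{\ii\phi_1\sigma_3}$ for $|\Lambda|>1$ and $\mathbf{D}(\Lambda)=\ee^{\ii\phi_2\sigma_3}$ for $|\Lambda|<1$.

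Next I verify that $\tilde{\mathbf{P}}$ solves \rhref{rhp:near-field} with $\mathbf{G}$ replaced by $\mathbf{G}(\mathfrak{a},\mathfrak{b})$: analyticity off the unit circle is automatic because $\mathbf{D}$ is piecewise constant; the normalization holds since on the exterior $|\Lambda|>1$ the transformation is the conjugation $\tilde{\mathbf{P}}=\ee^{-\ii\phi_1\sigma_3}\mathbf{P}\,\ee^{\ii\phi_1\sigma_3}$, which sends $\mathbb{I}$ to $\mathbb{I}$; and the jump identity follows from a short computation that invokes the factorization above together with the commutativity of the diagonal factors $\ee^{\ii\phi_j\sigma_3}$ and $\ee^{\ii\theta\sigma_3}$. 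Global uniqueness from Proposition~\ref{prop:RHP-EU} then identifies $\tilde{\mathbf{P}}$ with $\mathbf{P}(\,\cdot\,;X,T,\mathbf{G}(\mathfrak{a},\mathfrak{b}))$.

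Finally, because the exterior transformation is conjugation by $\ee^{-\ii\phi_1\sigma_3}$, the $(1,2)$-entry of $\tilde{\mathbf{P}}$ differs from that of $\mathbf{P}$ by the constant factor $\ee^{-2\ii\phi_1}=\ee^{\ii\arg(ab)}$; passing to $\Lambda\to\infty$ in \eqref{Psi-def} gives $\Psi(X,T;\mathbf{G}(\mathfrak{a},\mathfrak{b}))=\ee^{\ii\arg(ab)}\Psi(X,T;\mathbf{G}(a,b))$, which is equivalent to \eqref{ab-symmetry}. The main subtlety is keeping the clockwise orientation convention straight: since the ``$+$'' side of the contour is the exterior $|\Lambda|>1$, $\mathbf{D}$ must equal $\ee^{\ii\phi_1\sigma_3}$ (rather than $\mathbb{I}$) there so that the outer factor $\ee^{-\ii\phi_1\sigma_3}$ is absorbed at infinity; reversing this choice would either spoil the normalization or flip the sign of the acquired phase.
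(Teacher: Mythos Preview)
Your proof is correct. The algebraic identity $\ee^{-\ii\phi_2\sigma_3}\mathbf{G}(a,b)\ee^{\ii\phi_1\sigma_3}=\mathbf{G}(\mathfrak{a},\mathfrak{b})$ checks out, the jump computation is right (the diagonal factors commute with the exponential conjugation), and you correctly track that the exterior conjugation multiplies $P_{12}$ by $\ee^{-2\ii\phi_1}=\ee^{\ii\arg(ab)}$.

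The paper's proof in Appendix~\ref{a:elementary} arrives at the same result by a slightly different route: rather than constructing the single piecewise-constant transformation directly, it composes three separate identities --- first $\mathbf{G}(ca,cb)=\mathbf{G}(a,b)$ for $c>0$, then $\Psi(X,T;\mathbf{G}(ca,c^*b))=\Psi(X,T;\mathbf{G}(a,b))$ for $c\in\mathbb{C}\setminus\{0\}$ (absorbing a left diagonal factor into $\mathbf{P}_-$), and finally $\Psi(X,T;\mathbf{G}(a,b\ee^{\ii\theta}))=\ee^{-\ii\theta}\Psi(X,T;\mathbf{G}(a,b))$ (diagonal conjugation) --- and then chooses $c=\ee^{-\ii\arg(a)}/\sqrt{|a|^2+|b|^2}$ followed by $\theta=\arg(ab)$. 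Your one-shot transformation is equivalent to composing these steps; it is arguably cleaner, while the paper's decomposition has the minor advantage of isolating the two intermediate symmetries \eqref{eq:identity-one}--\eqref{eq:identity-two} as standalone facts that can be reused elsewhere.
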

Therefore, up to a phase factor, there is just one real parameter in the family of solutions $\Psi(X,T;\mathbf{G})$ with $ab\neq 0$, which one could take as $\mathfrak{a}\in (0,1)$, or equivalently as an angle $\eta\in (0,\frac{1}{2}\pi)$ for which $\mathfrak{a}=\cos(\eta)$ and $\mathfrak{b}=\sin(\eta)$.  The coordinate $\eta$ was used, for example, in the analysis of \cite[Section 2.3]{LiM24}.  Combining Proposition~\ref{prop:T-symmetry} and Proposition~\ref{prop:a-b-scaling} for $T=0$ shows that 
\begin{equation}
\begin{split}
\Psi(X,0;\mathbf{G}(a,b))&=\ee^{-\ii\arg(ab)}\Psi(X,0;\mathbf{G}(\mathfrak{a},\mathfrak{b}))\\ &=\ee^{-\ii\arg(ab)}\Psi(X,0;\mathbf{G}(\mathfrak{a},\mathfrak{b}))^*\\ &=\ee^{-2\ii\arg(ab)}\Psi(X,0;\mathbf{G}(a,b))^*
\end{split}
\label{eq:Psi-T0-symmetry}
\end{equation}
because $\mathbf{G}(\mathfrak{a},\mathfrak{b})$ is a real matrix.  It follows that $X\mapsto \ee^{\ii\arg(ab)}\Psi(X,0;\mathbf{G}(a,b))$ is a real-valued function of $X\in\mathbb{R}$.  


The \emph{normalized parameters} $\mathfrak{a}>0$ and $\mathfrak{b}>0$ with $\mathfrak{a}^2+\mathfrak{b}^2=1$ will be used in the proofs of our asymptotic results to be described in Section~\ref{s:asymptotics-intro} below.  So that they are available later we record here the following four standard matrix factorizations of the central factor $\mathbf{G}(\mathfrak{a},\mathfrak{b})$ in the jump matrix \eqref{P-jump}, which have been further manipulated to have a diagonal matrix as the leftmost factor:
\begin{equation}
\mathbf{G}(\mathfrak{a},\mathfrak{b})=\begin{bmatrix}\mathfrak{a} & \mathfrak{b}\\-\mathfrak{b} & \mathfrak{a}\end{bmatrix}=\mathfrak{a}^{\sigma_3}\begin{bmatrix}1 & 0\\-\mathfrak{ab} & 1\end{bmatrix}\begin{bmatrix}1 & \displaystyle\frac{\mathfrak{b}}{\mathfrak{a}}\\0 & 1\end{bmatrix},\quad \text{(``LDU'')},
\label{Gnorm-LDU}
\end{equation}
\begin{equation}
\mathbf{G}(\mathfrak{a},\mathfrak{b})=\begin{bmatrix}\mathfrak{a} & \mathfrak{b}\\-\mathfrak{b} & \mathfrak{a}\end{bmatrix}=\mathfrak{a}^{-\sigma_3}\begin{bmatrix}1 & \mathfrak{ab}\\0 & 1\end{bmatrix}\begin{bmatrix}1 & 0\\\displaystyle-\frac{\mathfrak{b}}{\mathfrak{a}} & 1\end{bmatrix},\quad\text{(``UDL'')},
\label{Gnorm-UDL}
\end{equation}
\begin{equation}
\mathbf{G}(\mathfrak{a},\mathfrak{b})=\begin{bmatrix}\mathfrak{a} & \mathfrak{b}\\-\mathfrak{b} & \mathfrak{a}\end{bmatrix}=\mathfrak{a}^{\sigma_3}\begin{bmatrix}1&0\\
\displaystyle\frac{\mathfrak{a}^3}{\mathfrak{b}} & 1\end{bmatrix}
\begin{bmatrix}0&\displaystyle \frac{\mathfrak{b}}{\mathfrak{a}}\\\displaystyle -\frac{\mathfrak{a}}{\mathfrak{b}} & 0\end{bmatrix}
\begin{bmatrix}1 & 0\\\displaystyle\frac{\mathfrak{a}}{\mathfrak{b}} & 1\end{bmatrix},\quad
\text{(``LTL'')},
\label{Gnorm-LTL}
\end{equation}
\begin{equation}
\mathbf{G}(\mathfrak{a},\mathfrak{b})=\begin{bmatrix}\mathfrak{a} & \mathfrak{b}\\-\mathfrak{b} & \mathfrak{a}\end{bmatrix}=\mathfrak{a}^{-\sigma_3}\begin{bmatrix}1 & \displaystyle -\frac{\mathfrak{a}^3}{\mathfrak{b}}\\0 & 1\end{bmatrix}
\begin{bmatrix}0 & \displaystyle\frac{\mathfrak{a}}{\mathfrak{b}}\\\displaystyle-\frac{\mathfrak{b}}{\mathfrak{a}} & 0\end{bmatrix}\begin{bmatrix}1 & \displaystyle-\frac{\mathfrak{a}}{\mathfrak{b}}\\0 & 1\end{bmatrix},\quad \text{(``UTU'')}.
\label{Gnorm-UTU}
\end{equation}

\subsubsection{$\Psi(X,0;\mathbf{G})$ near $X=0$}
\label{s:origin-intro}
It is straightforward to solve Riemann-Hilbert Problem~\ref{rhp:near-field} explicitly when $(X,T)=(0,0)$.  Indeed, one can verify directly that the solution is:
\begin{equation}
\mathbf{P}(\Lambda;0,0,\mathbf{G})
=\begin{cases}
\mathbf{G}^{-1},& |\Lambda|<1,\\
\mathbf{G}^{-1}\ee^{-2\ii\Lambda^{-1}\sigma_3}\mathbf{G}\ee^{2\ii\Lambda^{-1}\sigma_3},&|\Lambda|>1.
\end{cases}
\label{eq:P-at-0-0}
\end{equation}
Then, it follows from this formula assuming $|\Lambda|>1$ that
$\mathbf{P}(\Lambda;0,0,\mathbf{G})=\mathbb{I} + (2\ii\sigma_3 - 2\ii\mathbf{G}^{-1}\sigma_3\mathbf{G})\Lambda^{-1} + O(\Lambda^{-2})$ as $\Lambda\to\infty$.  Therefore \eqref{Psi-def} yields the following.
\begin{theorem}[Value at the origin]
\begin{equation}
\Psi(0,0;\mathbf{G})=4\left(\mathbf{G}^{-1}\sigma_3\mathbf{G}\right)_{12} = 8\mathfrak{a}\mathfrak{b}\ee^{-\ii\arg(ab)}=\frac{8a^*b^*}{|a|^2+|b|^2}.
\label{eq:Psi-at-0-0}
\end{equation}
\label{t:Psi-at-0-0}
\end{theorem}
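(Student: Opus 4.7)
The plan is to verify the piecewise ansatz \eqref{eq:P-at-0-0} for $\mathbf{P}(\Lambda;0,0,\mathbf{G})$, invoke uniqueness from Proposition~\ref{prop:RHP-EU}, extract the $\Lambda^{-1}$ coefficient in the expansion at infinity, and finally compute the $(1,2)$-entry of $\mathbf{G}^{-1}\sigma_3\mathbf{G}$ using the representation \eqref{G-form}.

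First I would show that the ansatz \eqref{eq:P-at-0-0} is the unique solution to \rhref{rhp:near-field} at $(X,T)=(0,0)$. The interior piece $\mathbf{G}^{-1}$ is a constant matrix, hence analytic in $|\Lambda|<1$. The exterior piece $\mathbf{G}^{-1}\ee^{-2\ii\Lambda^{-1}\sigma_3}\mathbf{G}\ee^{2\ii\Lambda^{-1}\sigma_3}$ is analytic in $|\Lambda|>1$ since $\Lambda^{-1}$ is analytic and bounded there. As $\Lambda\to\infty$, $\ee^{\pm 2\ii\Lambda^{-1}\sigma_3}\to\mathbb{I}$, so the exterior piece tends to $\mathbf{G}^{-1}\mathbf{G}=\mathbb{I}$, confirming the normalization. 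For the jump on $|\Lambda|=1$, using the convention that for the clockwise orientation the $-$ boundary value is the interior one and the $+$ boundary value is the exterior one, I would check that $\mathbf{P}_+=\mathbf{P}_-\ee^{-2\ii\Lambda^{-1}\sigma_3}\mathbf{G}\ee^{2\ii\Lambda^{-1}\sigma_3}$, which amounts to the identity $\mathbf{G}^{-1}\ee^{-2\ii\Lambda^{-1}\sigma_3}\mathbf{G}\ee^{2\ii\Lambda^{-1}\sigma_3}=\mathbf{G}^{-1}\cdot\ee^{-2\ii\Lambda^{-1}\sigma_3}\mathbf{G}\ee^{2\ii\Lambda^{-1}\sigma_3}$. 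Uniqueness from Proposition~\ref{prop:RHP-EU} then identifies this as $\mathbf{P}(\Lambda;0,0,\mathbf{G})$.

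Next, I would expand the exterior piece in powers of $\Lambda^{-1}$ using
\begin{equation*}
\ee^{\pm 2\ii\Lambda^{-1}\sigma_3}=\mathbb{I}\pm 2\ii\Lambda^{-1}\sigma_3+O(\Lambda^{-2}),\quad \Lambda\to\infty,
\end{equation*}
which gives
\begin{equation*}
\ee^{-2\ii\Lambda^{-1}\sigma_3}\mathbf{G}\ee^{2\ii\Lambda^{-1}\sigma_3}=\mathbf{G}+2\ii\Lambda^{-1}(\mathbf{G}\sigma_3-\sigma_3\mathbf{G})+O(\Lambda^{-2}),
\end{equation*}
and hence $\mathbf{P}(\Lambda;0,0,\mathbf{G})=\mathbb{I}+2\ii\Lambda^{-1}(\sigma_3-\mathbf{G}^{-1}\sigma_3\mathbf{G})+O(\Lambda^{-2})$. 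Since $\sigma_3$ has vanishing $(1,2)$-entry, the definition \eqref{Psi-def} immediately yields
\begin{equation*}
\Psi(0,0;\mathbf{G})=2\ii\cdot 2\ii\cdot(-\mathbf{G}^{-1}\sigma_3\mathbf{G})_{12}=4(\mathbf{G}^{-1}\sigma_3\mathbf{G})_{12},
\end{equation*}
establishing the first equality in \eqref{eq:Psi-at-0-0}.

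Finally, I would compute $(\mathbf{G}^{-1}\sigma_3\mathbf{G})_{12}$ explicitly from \eqref{G-form}. Since $\det(\mathbf{G})=1$, a direct computation of the $2\times 2$ inverse followed by multiplication out gives $(\mathbf{G}^{-1}\sigma_3\mathbf{G})_{12}=2a^*b^*/(|a|^2+|b|^2)$, and rewriting $a^*b^*=|a||b|\ee^{-\ii\arg(ab)}$ and using the definitions \eqref{eq:normalized-ab} of $\mathfrak{a},\mathfrak{b}$ yields the two remaining forms in \eqref{eq:Psi-at-0-0}. There is no real obstacle here: the only thing requiring minor care is the boundary-value orientation convention used when verifying the jump relation, and the standard trick of reading off the moment $2\ii\,\lim_{\Lambda\to\infty}\Lambda P_{12}$ from an explicit closed-form $\mathbf{P}$.
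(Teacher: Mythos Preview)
Your proposal is correct and follows essentially the same approach as the paper: verify the explicit ansatz \eqref{eq:P-at-0-0}, expand the exterior piece to first order in $\Lambda^{-1}$, and read off $\Psi(0,0;\mathbf{G})$ from \eqref{Psi-def}. You simply spell out a few steps (the jump/normalization checks and the final entrywise computation from \eqref{G-form}) that the paper leaves to the reader.
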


Following \cite[Section 2.3.3]{LiM24}, it is then systematic to calculate derivatives of $\mathbf{P}(\Lambda;X,0,\mathbf{G})$ with respect to $X$ at $X=0$.  For instance, setting $\mathbf{F}(\Lambda;X):=\mathbf{P}_X(\Lambda;X,0,\mathbf{G})\mathbf{P}(\Lambda;X,0,\mathbf{G})^{-1}$, one sees that $\Lambda\mapsto\mathbf{F}(\Lambda;X)$ is analytic for $|\Lambda|\neq 1$, that $\mathbf{F}(\Lambda;X)\to \mathbf{0}$ as $\Lambda\to\infty$, and that $\mathbf{P}_+(\Lambda;X,0,\mathbf{G})=\mathbf{P}_-(\Lambda;X,0,\mathbf{G})\mathbf{V}(\Lambda;X)$ implies that also 
\begin{equation}
\mathbf{F}_+(\Lambda;X)-\mathbf{F}_-(\Lambda;X)=\mathbf{P}_-(\Lambda;X,0,\mathbf{G})\mathbf{V}_X(\Lambda;X)\mathbf{V}(\Lambda;X)^{-1}\mathbf{P}_-(\Lambda;X,0,\mathbf{G})^{-1},\quad |\Lambda|=1.
\end{equation}
It follows that (using the Plemelj formula and taking into account the clockwise orientation of the jump contour)
\begin{equation}
\begin{split}
\mathbf{F}(\Lambda;X)&=-\frac{1}{2\pi\ii}\oint_{|\mu|=1}\frac{\mathbf{P}_-(\mu;X,0,\mathbf{G})\mathbf{V}_X(\mu;X)\mathbf{V}(\mu;X)^{-1}\mathbf{P}_-(\mu;X,0,\mathbf{G})^{-1}}{\mu-\Lambda}\,\dd\mu\\
&=\frac{1}{2\pi\ii\Lambda}\oint_{|\mu|=1}\mathbf{P}_-(\mu;X,0,\mathbf{G})\mathbf{V}_X(\mu;X)\mathbf{V}(\mu;X)^{-1}\mathbf{P}_-(\mu;X,0,\mathbf{G})^{-1}\,\dd\mu + O(\Lambda^{-2})
\end{split}
\end{equation}
as $\Lambda\to\infty$, 
where on both lines the integration contour has counterclockwise orientation, and where $\mathbf{P}_-(\mu;X,0,\mathbf{G})$ refers to the boundary value taken from the interior of the unit circle.  Since according to \eqref{P-jump} the jump matrix is given by $\mathbf{V}(\Lambda;X):=\ee^{-\ii(\Lambda X + 2\Lambda^{-1})\sigma_3}\mathbf{G}\ee^{\ii (\Lambda X+2\Lambda^{-1})\sigma_3}$, we get that
\begin{equation}
\mathbf{V}_X(\Lambda;0)\mathbf{V}(\Lambda;0)^{-1}=-\ii\Lambda\sigma_3 +\ii\Lambda \ee^{-2\ii\Lambda^{-1}\sigma_3}\mathbf{G}\sigma_3\mathbf{G}^{-1}\ee^{2\ii\Lambda^{-1}\sigma_3},
\end{equation}
and according to \eqref{eq:P-at-0-0} we have $\mathbf{P}_-(\Lambda;0,0,\mathbf{G})=\mathbf{G}^{-1}$.  Therefore, as $\Lambda\to\infty$,
\begin{equation}
\mathbf{F}(\Lambda;0)=\frac{1}{2\pi\ii\Lambda}\oint_{|\mu|=1}\left[-\ii\mu\mathbf{G}^{-1}\sigma_3\mathbf{G} + \ii\mu \mathbf{G}^{-1}\ee^{-2\ii\mu^{-1}\sigma_3}\mathbf{G}\sigma_3\mathbf{G}^{-1}\ee^{2\ii\mu^{-1}\sigma_3}\mathbf{G}\right]\,\dd\mu + O(\Lambda^{-2}).
\end{equation}
The first term vanishes by Cauchy's theorem, and the second term can be evaluated by residues at $\mu=\infty$ using the expansion $\ee^{\pm 2\ii\mu^{-1}\sigma_3}=\mathbb{I} \pm 2\ii\sigma_3\mu^{-1}-2\mathbb{I}\mu^{-2} + O(\mu^{-3})$ as $\mu\to\infty$.  The result is that
\begin{equation}
\mathbf{F}(\Lambda;0)=\left[4\ii\mathbf{G}^{-1}\sigma_3\mathbf{G}\sigma_3\mathbf{G}^{-1}\sigma_3\mathbf{G}-4\ii\sigma_3\right]\Lambda^{-1}+O(\Lambda^{-2}),\quad\Lambda\to\infty.
\end{equation}
Differentiation of \eqref{Psi-def} then yields
\begin{equation}
\Psi_X(0,0;\mathbf{G}) = 2\ii\lim_{\Lambda\to\infty}\Lambda\frac{\partial P_{12}}{\partial X}(\Lambda;0,0,\mathbf{G}) = 2\ii\lim_{\Lambda\to\infty}\Lambda F_{12}(\Lambda;0) = -8\left[\mathbf{G}^{-1}\sigma_3\mathbf{G}\sigma_3\mathbf{G}^{-1}\sigma_3\mathbf{G}\right]_{12}.
\end{equation}
Explicit evaluation using \eqref{G-form} then yields the following result.
\begin{theorem}[Derivative at the origin]
\begin{equation}
\Psi_X(0,0;\mathbf{G})=32\ee^{-\ii\arg(ab)}\mathfrak{a}\mathfrak{b} (\mathfrak{b}^2-\mathfrak{a}^2)=32a^*b^*\frac{|b|^2-|a|^2}{(|a|^2+|b|^2)^2}.
\label{eq:PsiX-at-0-0}
\end{equation}
\label{t:PsiX-at-0-0}
\end{theorem}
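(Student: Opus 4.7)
The argument preceding the statement already reduces the claim to the identity
\[
\Psi_X(0,0;\mathbf{G}) = -8\left[\mathbf{G}^{-1}\sigma_3\mathbf{G}\sigma_3\mathbf{G}^{-1}\sigma_3\mathbf{G}\right]_{12},
\]
so the remaining work is an explicit matrix calculation using the parametrization of $\mathbf{G}$ in \eqref{G-form}.  The plan is to introduce the shorthand $\mathbf{A}:=\mathbf{G}^{-1}\sigma_3\mathbf{G}$, so that the quantity to be computed is $(\mathbf{A}\sigma_3\mathbf{A})_{12}$.  Because $\mathbf{A}$ is a similarity transform of $\sigma_3$, it is automatically traceless with $\mathbf{A}^2=\mathbb{I}$, a fact that can be used as a cross-check on the computation.

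Writing $N:=|a|^2+|b|^2$ and using $\mathbf{G}^{-1}=N^{-1/2}\left[\begin{smallmatrix}a^* & -b^* \\ b & a\end{smallmatrix}\right]$ (from $\det\mathbf{G}=1$), a direct multiplication of two $2\times 2$ matrices produces
\[
\mathbf{A} = \frac{1}{N}\begin{bmatrix} |a|^2-|b|^2 & 2a^*b^* \\ 2ab & |b|^2-|a|^2 \end{bmatrix}.
\]
Next I multiply out $\mathbf{A}\sigma_3\mathbf{A}$ and extract the (1,2) entry, which evaluates to $4a^*b^*(|a|^2-|b|^2)/N^2$.  Substituting into the displayed identity yields the right-hand form of the theorem,
\[
\Psi_X(0,0;\mathbf{G})=\frac{32\,a^*b^*(|b|^2-|a|^2)}{(|a|^2+|b|^2)^2}.
\]
The middle form $32\ee^{-\ii\arg(ab)}\mathfrak{a}\mathfrak{b}(\mathfrak{b}^2-\mathfrak{a}^2)$ then follows from the polar identity $a^*b^*=\ee^{-\ii\arg(ab)}|a||b|$ together with \eqref{eq:normalized-ab}, which identify $a^*b^*/N=\ee^{-\ii\arg(ab)}\mathfrak{a}\mathfrak{b}$ and $(|b|^2-|a|^2)/N=\mathfrak{b}^2-\mathfrak{a}^2$.

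There is no substantive obstacle here beyond the bookkeeping of complex conjugates.  A shortcut I would be tempted to use is the Pauli-algebra identity $\mathbf{A}\sigma_3+\sigma_3\mathbf{A}=\operatorname{tr}(\mathbf{A}\sigma_3)\mathbb{I}$, valid for any traceless $2\times 2$ matrix $\mathbf{A}$; combined with $\mathbf{A}^2=\mathbb{I}$ it gives the closed form $\mathbf{A}\sigma_3\mathbf{A}=\operatorname{tr}(\mathbf{A}\sigma_3)\mathbf{A}-\sigma_3$, so the (1,2) entry reduces to the single product $\tfrac{2(|a|^2-|b|^2)}{N}\cdot\tfrac{2a^*b^*}{N}$.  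Either route delivers the result quickly.
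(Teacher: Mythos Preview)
Your proposal is correct and follows exactly the approach the paper takes: the text preceding the theorem derives the identity $\Psi_X(0,0;\mathbf{G})=-8[\mathbf{G}^{-1}\sigma_3\mathbf{G}\sigma_3\mathbf{G}^{-1}\sigma_3\mathbf{G}]_{12}$, and the paper's proof then consists of the single sentence ``Explicit evaluation using \eqref{G-form} then yields the following result,'' which is precisely the matrix computation you have written out in detail. Your Pauli-algebra shortcut is a nice alternative bookkeeping device but not a different method.
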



\subsubsection{Exceptional parameter values and $L^2$ norm}
\label{s:norm-intro}
Another interesting result for the family $\Psi(X,T;\mathbf{G})$ of exact solutions of \eqref{nls} has to do with special values of the parameters.
\begin{proposition}[Degeneration property] If either $a=0$ or $b=0$, then $\Psi(X,T;\mathbf{G})\equiv 0$.
\label{p:degeneration}
\end{proposition}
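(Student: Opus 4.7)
The plan is to exploit the fact that when $a=0$ or $b=0$ the matrix $\mathbf{G}(a,b)$ degenerates to such a simple form that \rhref{rhp:near-field} can be solved in closed form. I would handle $b=0$ directly by constructing a piecewise-constant solution of the Riemann-Hilbert problem, and then bootstrap to $a=0$ via the reflection symmetry of Proposition~\ref{prop:X-symmetry}.

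First I would treat the case $b=0$, $a\neq 0$. Formula \eqref{G-form} reduces $\mathbf{G}(a,0)$ to the constant diagonal unimodular matrix $\mathrm{diag}(a/|a|,\,a^*/|a|)=\ee^{\ii\arg(a)\sigma_3}$. Since any diagonal matrix commutes through the conjugation factors $\ee^{\pm\ii(\Lambda X+\Lambda^2 T+2\Lambda^{-1})\sigma_3}$ appearing in \eqref{P-jump}, the jump condition collapses to the $(X,T)$-independent relation $\mathbf{P}_+=\mathbf{P}_-\mathbf{G}(a,0)$. I would then propose the piecewise-constant ansatz
\begin{equation*}
\mathbf{P}(\Lambda;X,T,\mathbf{G}(a,0))=\begin{cases}\mathbb{I}, & |\Lambda|>1,\\ \mathbf{G}(a,0), & |\Lambda|<1,\end{cases}
\end{equation*}
which is manifestly analytic off the unit circle, takes continuous boundary values realizing the required jump, and tends to $\mathbb{I}$ at infinity. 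By the uniqueness part of Proposition~\ref{prop:RHP-EU} it is the solution, and because this $\mathbf{P}$ is diagonal for every $\Lambda$, its $(1,2)$ entry vanishes identically; formula \eqref{Psi-def} then forces $\Psi(X,T;\mathbf{G}(a,0))\equiv 0$.

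For the remaining case $a=0$, $b\neq 0$, I would avoid repeating the direct RHP analysis (which is less clean since $\mathbf{G}(0,b)$ is off-diagonal and the transformed jump matrix picks up an essential singularity at $\Lambda=0$ that a constant ansatz cannot absorb) and simply invoke Proposition~\ref{prop:X-symmetry}, which gives
\begin{equation*}
\Psi(X,T;\mathbf{G}(0,b))=\Psi(-X,T;\mathbf{G}(b,0)).
\end{equation*}
The right-hand side vanishes identically by the diagonal case just handled, with $b$ in place of $a$.

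There is no real obstacle here. The only thing requiring verification is that the proposed piecewise-constant $\mathbf{P}$ really solves \rhref{rhp:near-field} when $b=0$, and this reduces to the elementary observation that a diagonal $\mathbf{G}$ commutes with $\ee^{\pm\ii\phi\sigma_3}$. No estimates, limiting procedures, or additional analytic input beyond Propositions~\ref{prop:RHP-EU} and~\ref{prop:X-symmetry} are needed.
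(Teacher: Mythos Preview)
Your approach for $b=0$ is the same as the paper's, modulo a small slip in the ansatz: with the clockwise orientation of the unit circle, $\mathbf{P}_+$ is the boundary value from the \emph{exterior} and $\mathbf{P}_-$ from the \emph{interior}, so the jump $\mathbf{P}_+=\mathbf{P}_-\mathbf{G}(a,0)$ forces the interior value to be $\mathbf{G}(a,0)^{-1}=\ee^{-\ii\arg(a)\sigma_3}$ rather than $\mathbf{G}(a,0)$ (compare with \eqref{eq:P-at-0-0}). This is harmless for the conclusion, since either way the solution is diagonal and $P_{12}\equiv 0$.

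For $a=0$ you take a genuinely different and more economical route than the paper. The paper constructs the solution directly: with $\mathbf{G}(0,b)$ off-diagonal, one verifies that
\[
\mathbf{P}(\Lambda;X,T,\mathbf{G}(0,b))=\begin{cases}\dfrac{1}{|b|}\begin{bmatrix}0 & -b^*\ee^{-2\ii(\Lambda X+\Lambda^2 T)}\\ b\,\ee^{2\ii(\Lambda X+\Lambda^2 T)} & 0\end{bmatrix}, & |\Lambda|<1,\\[1.5em] \ee^{4\ii\Lambda^{-1}\sigma_3}, & |\Lambda|>1,\end{cases}
\]
solves \rhref{rhp:near-field} and has diagonal exterior part, whence $\Psi\equiv 0$. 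Your reduction via Proposition~\ref{prop:X-symmetry} sidesteps this construction entirely and is perfectly legitimate, since the proof of that proposition is purely algebraic and nowhere requires $ab\neq 0$. The paper's direct approach buys the explicit form of $\mathbf{P}$ in both degenerate cases (occasionally handy as a sanity check), while your argument is shorter and highlights that the two degenerate cases are really one case up to the $X\mapsto -X$ symmetry.
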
 
We give a proof of Proposition~\ref{p:degeneration} in Appendix~\ref{a:elementary}. The proof relies on the fact that \rhref{rhp:near-field} can be solved explicitly in either of the cases $a=0$ or $b=0$. It follows from Proposition~\ref{prop:RHP-EU} 
that the function $\Psi(X,T;\mathbf{G})$ depends continuously on the parameters $(a,b)$ for fixed $(X,T)\in\mathbb{R}^2$, so Proposition~\ref{p:degeneration} also implies the pointwise limit $\Psi(X,T;\mathbf{G})\to 0$ as $a\to 0$ or $b\to 0$ for each $(X,T)\in\mathbb{R}^2$, and this convergence can be generalized to be uniform over $(X,T)$ ranging over any given compact set in $\mathbb{R}^2$.  To avoid trivial cases, from this point on in the paper we therefore assume that $a,b$ are complex numbers with $ab\neq 0$. 

Another result is that for general $\mathbf{G}(a,b)$ with $ab\neq 0$, $\Psi(X,T;\mathbf{G})$ lies in $L^2(\mathbb{R})$ as a function of $X$ with an $L^2(\mathbb{R})$-norm that is \emph{independent of the parameter matrix $\mathbf{G}=\mathbf{G}(a,b)$}. Namely, we prove the following theorem.  
\begin{theorem}[$L^2$-norm of $\Psi(\diamond,T;\mathbf{G})$]
Let $\mathbf{G}=\mathbf{G}(a,b)$ be as in \eqref{G-form} with $ab \neq 0$. We have that $\Psi(\diamond, T; \mathbf{G})\in L^2(\mathbb{R})$ for all $T\in\mathbb{R}$ with $\| \Psi(\diamond, T;\mathbf{G} )\|_{L^2(\mathbb{R})}=\sqrt{8}$.
\label{t:L2-norm}
\end{theorem}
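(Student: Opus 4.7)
The plan combines a reduction to $T=0$ via $L^2$-conservation for NLS, an AKNS trace identity that recasts the norm as a boundary-difference of an expansion coefficient of $\mathbf{P}$ at $\Lambda=\infty$, and a steepest-descent evaluation of that difference.

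First, $L^2$-membership of $\Psi(\diamond,T;\mathbf{G})$ for each fixed $T$ would follow from the large-$|X|$ asymptotic analysis carried out in Section~\ref{s:asymptotics-intro}, which yields sufficient decay at spatial infinity. Given this, the smoothness of $\Psi$ as a solution of~\eqref{nls} provided by Proposition~\ref{prop:RHP-EU} makes the standard NLS mass-conservation identity applicable, so that $\frac{d}{dT}\|\Psi(\diamond,T;\mathbf{G})\|^2_{L^2(\mathbb{R})}=0$ and it suffices to compute the norm at $T=0$.

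Second, expand $\mathbf{P}(\Lambda;X,0,\mathbf{G})=\mathbb{I}+\mathbf{M}^{(1)}(X)\Lambda^{-1}+\mathbf{M}^{(2)}(X)\Lambda^{-2}+O(\Lambda^{-3})$ as $\Lambda\to\infty$. By~\eqref{Psi-def} one has $\Psi=2\ii M^{(1)}_{12}$; the Schwarz symmetry $\sigma_2\overline{\mathbf{P}(\bar{\Lambda})}\sigma_2=\mathbf{P}(\Lambda)$ inherited from $\mathbf{G}^*=\sigma_2\mathbf{G}\sigma_2$, combined with $\det\mathbf{P}\equiv 1$, forces $M^{(1)}_{21}=-\ii\Psi^*/2$ and makes $M^{(1)}_{11}$ purely imaginary. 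Substituting into the AKNS compatibility relation $\mathbf{P}_X=\mathbf{Q}\mathbf{P}-\ii\Lambda[\sigma_3,\mathbf{P}]$, where $\mathbf{Q}$ is the off-diagonal potential with $Q_{12}=\Psi$ and $Q_{21}=-\Psi^*$, and reading off the $\Lambda^{-1}$-coefficient in the $(1,1)$-entry yields
\begin{equation*}
\partial_X M^{(1)}_{11}(X)=-\tfrac{\ii}{2}|\Psi(X,0;\mathbf{G})|^2,
\end{equation*}
whence
\begin{equation*}
\|\Psi(\diamond,0;\mathbf{G})\|^2_{L^2(\mathbb{R})}=2\ii\bigl[M^{(1)}_{11}(+\infty)-M^{(1)}_{11}(-\infty)\bigr].
\end{equation*}

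Third, evaluate $M^{(1)}_{11}(\pm\infty)$ by steepest descent on \rhref{rhp:near-field} at $T=0$ as $X\to\pm\infty$. Normalize $\mathbf{G}=\mathbf{G}(\mathfrak{a},\mathfrak{b})$ with $\mathfrak{a}^2+\mathfrak{b}^2=1$ via Proposition~\ref{prop:a-b-scaling}, and apply the LDU factorization~\eqref{Gnorm-LDU} for $X\to+\infty$ and the UDL factorization~\eqref{Gnorm-UDL} for $X\to-\infty$ to open lenses into the regions where the off-diagonal triangular factors are exponentially small. The phase $\theta(\Lambda)=\Lambda X+2\Lambda^{-1}$ has saddle points at $\Lambda=\pm\sqrt{2/|X|}$ (real for $X>0$, imaginary for $X<0$) that coalesce at $\Lambda=0$ as $|X|\to\infty$, so a parabolic-cylinder local parametrix at the coalescing saddles must be constructed and matched to the outer parametrix (whose jump on the unit circle is the constant diagonal $\mathfrak{a}^{\pm\sigma_3}$). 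The combined asymptotic solution is expected to produce $M^{(1)}_{11}(+\infty)-M^{(1)}_{11}(-\infty)=-4\ii$ independently of $\mathfrak{a}$ and $\mathfrak{b}$, whence $\|\Psi\|^2_{L^2}=2\ii(-4\ii)=8$. The main obstacle is the steepest-descent analysis itself: the essential singularity at $\Lambda=0$ interacts with the two coalescing saddles, and the parametrices must be shown to combine to give a $\mathbf{G}$-independent $\Lambda^{-1}$-coefficient; alternatively, the Fredholm-determinant representation discussed in Section~\ref{s:FredholmDeterminant-intro} may allow one to bypass the asymptotic construction by reducing the norm to an explicit determinant evaluation.
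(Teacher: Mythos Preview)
Your first two steps---reducing to $T=0$ via $L^2$-conservation and recognizing that $\partial_X M^{(1)}_{11}=-\tfrac{\ii}{2}|\Psi|^2$ so that the norm is a boundary difference of the $\Lambda^{-1}$-coefficient---match the paper exactly. The gap is in step three: you propose to compute $M^{(1)}_{11}(+\infty)$ and $M^{(1)}_{11}(-\infty)$ by two separate steepest-descent analyses and ``expect'' their difference to be $-4\ii$. The paper does not do this, and your proposed analysis is both unnecessary and technically off-track.

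The paper's key move is to use the exact $X$-reflection symmetry (Proposition~\ref{P-symmetry-X}): for $|\Lambda|>1$, $\mathbf{P}(\Lambda;-X,0,\mathbf{G}(\mathfrak{a},\mathfrak{b}))=\sigma_3\mathbf{P}(-\Lambda;X,0,\mathbf{G}(\mathfrak{b},\mathfrak{a}))\ee^{4\ii\Lambda^{-1}\sigma_3}\sigma_3$. Expanding at $\Lambda=\infty$, the explicit factor $\ee^{4\ii\Lambda^{-1}\sigma_3}$ contributes a $4\ii\sigma_3\Lambda^{-1}$ term, giving the \emph{exact algebraic} identity
\[
P^{[1]}_{22}(-X,0;\mathbf{G}(\mathfrak{a},\mathfrak{b}))=-P^{[1]}_{22}(X,0;\mathbf{G}(\mathfrak{b},\mathfrak{a}))-4\ii.
\]
This is where the $8=-2\ii\cdot(-(-4\ii))$ comes from---not from asymptotics at all. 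What remains is $\|\Psi\|^2_{L^2}=8-2\ii\lim_{X\to+\infty}\bigl(P^{[1]}_{22}(X,0;\mathbf{G}(\mathfrak{a},\mathfrak{b}))+P^{[1]}_{22}(X,0;\mathbf{G}(\mathfrak{b},\mathfrak{a}))\bigr)$, and both summands are $O(X^{-1/2})$ by the large-$X$ analysis of Section~\ref{s:large-X} already in hand.

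Your steepest-descent sketch is also technically wrong: the saddles $\Lambda=\pm\sqrt{2/X}$ do approach the essential singularity at $\Lambda=0$ in the original variable, but the paper rescales via $z=X^{1/2}\Lambda$ (see \eqref{X:P-to-S}), after which the saddles sit at $z=\pm\sqrt{2}$, well separated, and ordinary parabolic-cylinder parametrices at each simple saddle suffice. There is no coalescence to resolve, no interaction with the singularity, and no Fredholm-determinant detour needed.
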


When combined with the degeneration property given in Proposition~\ref{p:degeneration}, the independence of the $L^2$-norm of $\Psi(X,T,\mathbf{G})$ from the matrix $\mathbf{G}=\mathbf{G}(a,b)$ asserted in Theorem~\ref{t:L2-norm} leaves us with an interesting conundrum! While $\|\Psi(\diamond,T;\mathbf{G}(a,b))\|_{L^2(\mathbb{R})}=\sqrt{8}$ for any nonzero $a,b\in\mathbb{C}$, we have
\begin{equation}
\lim_{a\to 0}\Psi(X,T;\mathbf{G}(a,b)) = 0\quad\text{and}\quad \lim_{b\to 0}\Psi(X,T;\mathbf{G}(a,b)) = 0
\end{equation}
pointwise for any $(X,T)\in\mathbb{R}^2$. 
A mechanism for this limiting behavior could be that the $L^2$ mass of the wave packet $\Psi(X,T;\mathbf{G}(a,b))$ at any given time $T$ escapes to $X=\pm\infty$ in the limit $a\to 0$ or $b\to 0$, or alternately, the wave packet spreads in the same limit so as to preserve the $L^2$-norm while still decaying pointwise or perhaps even uniformly to zero.  In fact, it turns out that a combination of both of these mechanisms is at play.
The explanation of this phenomenon lies in a double-scaling limit in which $X\to+\infty$ while also $a\to 0$ or $b\to 0$ at suitably-related rates.  The details can be found in our next paper on the subject, \cite{BilmanM2024b}.

\subsubsection{Differential equations}
\label{s:DE-intro}
It is straightforward to derive three different first-order systems of differential equations satisfied by the matrix function 
\begin{equation}
\mathbf{W}(\Lambda;X,T,\mathbf{G}):=\mathbf{P}(\Lambda;X,T,\mathbf{G})\ee^{-\ii (\Lambda X+\Lambda^2T+2\Lambda^{-1})\sigma_3}
\end{equation}
by following the procedure in \cite[Section 3.2.1]{BilmanLM2020}, which though written with the special case \eqref{G-Qi} in mind, in fact does not depend at all on the details of the matrix $\mathbf{G}(a,b)$ and hence applies to general rogue waves of infinite order.  It follows that the matrix $\mathbf{W}(\Lambda;X,T,\mathbf{G})$ satisfies the three Lax equations
\begin{equation}
\frac{\partial \mathbf{W}}{\partial X} = \mathbf{X}\mathbf{W},\quad\frac{\partial \mathbf{W}}{\partial T}=\mathbf{T}\mathbf{W},\quad\frac{\partial \mathbf{W}}{\partial\Lambda}=\mathbf{\Lambda}\mathbf{W},
\label{eq:Lax-equations}
\end{equation}
wherein the coefficient matrices $\mathbf{X}$, $\mathbf{T}$, and $\mathbf{\Lambda}$ are explicitly represented in terms of the coefficients $\mathbf{P}^{[j]}=\mathbf{P}^{[j]}(X,T;\mathbf{G})$ in the convergent Laurent expansion
\begin{equation}
\mathbf{P}(\Lambda;X,T,\mathbf{G})=\mathbb{I}+\sum_{j=1}^\infty\mathbf{P}^{[j]}(X,T;\mathbf{G})\Lambda^{-j},\quad |\Lambda|>1
\end{equation}
or alternatively in terms of the Taylor coefficients $\mathbf{P}_0:=\mathbf{P}(0;X,T,\mathbf{G})$ etc., of $\mathbf{P}(\Lambda;X,T,\mathbf{G})$ at $\Lambda=0$.  Thus:
\begin{equation}
\mathbf{X} = -\ii\Lambda\sigma_3 + \ii [\sigma_3,\mathbf{P}^{[1]}] = \begin{bmatrix}-\ii\Lambda & \Psi\\-\Psi^* & \ii\Lambda\end{bmatrix},
\label{eq:X-matrix}
\end{equation}
\begin{equation}
\mathbf{T}=-\ii\Lambda^2\sigma_3 + \ii [\sigma_3,\mathbf{P}^{[1]}]\Lambda + \ii[\mathbf{P}^{[1]},\sigma_3\mathbf{P}^{[1]}] + \ii[\sigma_3,\mathbf{P}^{[2]}] = \begin{bmatrix}
-\ii\Lambda^2 +\frac{1}{2}\ii |\Psi|^2 & \Lambda\Psi + \frac{1}{2}\ii\Psi_X\\
-\Lambda\Psi^* +\frac{1}{2}\ii\Psi^*_X & \ii\Lambda^2 -\frac{1}{2}\ii |\Psi|^2\end{bmatrix},
\label{eq:T-matrix}
\end{equation}
and
\begin{equation}
\mathbf{\Lambda}=\begin{bmatrix}-2\ii T\Lambda -\ii X +\ii T|\Psi|^2\Lambda^{-1} & 2T\Psi +(X\Psi+\ii T\Psi_X)\Lambda^{-1}\\-2T\Psi^*+(-X\Psi^*+\ii T\Psi_X^*)\Lambda^{-1} & 2\ii T\Lambda +\ii X-\ii T|\Psi|^2\Lambda^{-1}\end{bmatrix} + 2\ii \mathbf{P}_0\sigma_3\mathbf{P}_0^{-1}\Lambda^{-2}.
\label{eq:Lambda-matrix}
\end{equation}
The global existence of $\mathbf{P}(\Lambda;X,T,\mathbf{G})$ from Riemann-Hilbert Problem~\ref{rhp:near-field} recorded in Proposition~\ref{prop:RHP-EU} then guarantees that the three Lax equations \eqref{eq:Lax-equations} are mutually compatible.  In particular, the compatibility condition $\mathbf{X}_T-\mathbf{T}_X + [\mathbf{X},\mathbf{T}]=\mathbf{0}$ implies the following basic result which has already been mentioned.
\begin{theorem}[$\Psi(X,T)$ solves NLS]
The function $\mathbb{R}^2\ni (X,T)\mapsto \Psi(X,T;\mathbf{G})\in\mathbb{C}$ obtained from Riemann-Hilbert Problem~\ref{rhp:near-field} via \eqref{Psi-def} is a global solution of the focusing NLS equation in the form \eqref{nls}.
\label{t:nls}
\end{theorem}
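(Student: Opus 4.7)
My plan is to derive NLS as the zero-curvature identity for the Lax pair $(\mathbf{X},\mathbf{T})$ already displayed in \eqref{eq:X-matrix}–\eqref{eq:T-matrix}. The argument splits into two conceptual steps: first, justify that $\mathbf{X}$ and $\mathbf{T}$ are indeed polynomial in $\Lambda$ of the stated form; second, expand the compatibility identity $\mathbf{X}_T-\mathbf{T}_X+[\mathbf{X},\mathbf{T}]=\mathbf{0}$ in powers of $\Lambda$ and read off NLS from the coefficient of $\Lambda^0$.

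For the first step, I would start from the observation that $\mathbf{W}(\Lambda;X,T,\mathbf{G})$ satisfies the constant-in-$(X,T,\Lambda)$ jump relation $\mathbf{W}_+=\mathbf{W}_-\mathbf{G}$ on $|\Lambda|=1$, since the exponential factor built into the definition of $\mathbf{W}$ cancels the two outer exponentials in \eqref{P-jump}. Consequently the logarithmic derivatives $\mathbf{X}=\mathbf{W}_X\mathbf{W}^{-1}$ and $\mathbf{T}=\mathbf{W}_T\mathbf{W}^{-1}$ are jump-free across the unit circle and extend to entire functions of $\Lambda$. Their $\Lambda\to\infty$ behavior, controlled by the convergent Laurent expansion of $\mathbf{P}$ and by differentiation of $\ee^{-\ii(\Lambda X+\Lambda^2T+2\Lambda^{-1})\sigma_3}$, shows by Liouville's theorem that $\mathbf{X}$ is a polynomial of degree $1$ and $\mathbf{T}$ a polynomial of degree $2$ in $\Lambda$. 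Matching the leading Laurent coefficients and using the $\sigma_2$-symmetry inherited from $\mathbf{G}^*=\sigma_2\mathbf{G}\sigma_2$ via the uniqueness in Proposition~\ref{prop:RHP-EU} then yields exactly the forms \eqref{eq:X-matrix}–\eqref{eq:T-matrix}, with the off-diagonal entries expressed in terms of $\Psi$ and $\Psi_X$ via the residue formula \eqref{Psi-def}. This derivation is carried out in full detail in \cite[Section 3.2.1]{BilmanLM2020} for the special case $\mathbf{G}=\mathbf{Q}^{-1}$, and, as noted in the excerpt, it uses nothing about $\mathbf{G}$ beyond $\det\mathbf{G}=1$ and the aforementioned symmetry, so it applies verbatim.

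For the second step, the existence of a common invertible matrix solution $\mathbf{W}$ of $\mathbf{W}_X=\mathbf{X}\mathbf{W}$ and $\mathbf{W}_T=\mathbf{T}\mathbf{W}$ on $\mathbb{R}^2$, guaranteed globally by Proposition~\ref{prop:RHP-EU}, forces the zero-curvature identity $\mathbf{X}_T-\mathbf{T}_X+[\mathbf{X},\mathbf{T}]=\mathbf{0}$ via the equality of mixed partials $\mathbf{W}_{XT}=\mathbf{W}_{TX}$. Substituting \eqref{eq:X-matrix}–\eqref{eq:T-matrix} into this identity and expanding in powers of $\Lambda$, the coefficients at $\Lambda^2$ and $\Lambda^1$ vanish automatically, and the $\Lambda^0$ coefficient is a matrix whose $(1,2)$ entry reads
\begin{equation}
\ii\Psi_T+\tfrac{1}{2}\Psi_{XX}+|\Psi|^2\Psi=0,
\end{equation}
which is exactly \eqref{nls}. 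The $(2,1)$ entry gives the complex conjugate equation, and the diagonal entries vanish identically. Real-analyticity of $(X,T)\mapsto\Psi(X,T;\mathbf{G})$ on $\mathbb{R}^2$, supplied by Proposition~\ref{prop:RHP-EU}, ensures that $\Psi$ is a classical solution. The main obstacle in all of this is the bookkeeping in the first step—one must keep enough Laurent coefficients of $\mathbf{P}$ to correctly identify the $\Psi_X$ entries in $\mathbf{T}$—but no substantive new difficulty arises beyond what is already handled in \cite{BilmanLM2020}; the $\Lambda^0$ extraction in the second step is then a short direct computation.
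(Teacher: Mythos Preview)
Your proposal is correct and follows essentially the same approach as the paper: the paper derives the Lax equations \eqref{eq:Lax-equations} with coefficient matrices \eqref{eq:X-matrix}--\eqref{eq:T-matrix} by the procedure of \cite[Section 3.2.1]{BilmanLM2020} (noting, as you do, that it is independent of the specific $\mathbf{G}$), and then obtains Theorem~\ref{t:nls} as the immediate consequence of the compatibility condition $\mathbf{X}_T-\mathbf{T}_X+[\mathbf{X},\mathbf{T}]=\mathbf{0}$, with global existence of $\mathbf{P}$ from Proposition~\ref{prop:RHP-EU} ensuring compatibility. Your two-step outline matches this exactly.
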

On the other hand, the compatibility condition $\mathbf{X}_\Lambda-\mathbf{\Lambda}_X + [\mathbf{X},\mathbf{\Lambda}]=\mathbf{0}$ is a system of ordinary differential equations with respect to $X$ (in which $T\in\mathbb{R}$ plays the role of a parameter) that was shown in \cite[Section 3.2.1]{BilmanLM2020} to be related to the second equation in the Painlev\'e-III hierarchy of Sakka \cite{Sakka2009} when $T\neq 0$ and to the Painlev\'e-III (D$_6$) equation itself when $T=0$.  
More explicitly, we have the following.
\begin{theorem}[$\Psi(X,0)$ and the Painlev\'e-III (D$_6$) equation \protect{\cite[Corollary 4]{BilmanLM2020}}]
The function $u(x)$ defined for $x\in\mathbb{R}\cup(\ii\mathbb{R})$ by
\begin{equation}
u(x):=2\left(\frac{\dd}{\dd x}\log\left(x^2\Psi(-\tfrac{1}{8}x^2,0;\mathbf{G})\right)\right)^{-1}
\label{eq:u-Psi}
\end{equation}
is a solution of the Painlev\'e-III (D$_6$) equation 
\begin{equation}
\frac{\dd^2u}{\dd x^2}=\frac{1}{u}\left(\frac{\dd u}{\dd x}\right)^2-\frac{1}{x}\frac{\dd u}{\dd x} + \frac{4\Theta_0 u^2 + 4(1-\Theta_\infty)}{x}+4u^3-\frac{4}{u}
\label{eq:PIII-D6}
\end{equation}
in the case that both formal monodromy parameters vanish:  $\Theta_0=\Theta_\infty=0$.
\end{theorem}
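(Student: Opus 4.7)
The plan is to invoke \cite[Corollary 4]{BilmanLM2020}, which establishes the result in the special case $\mathbf{G}=\mathbf{Q}^{-1}$, and to verify that the derivation transfers verbatim to general admissible $\mathbf{G}$. As emphasized in the excerpt immediately before \eqref{eq:Lax-equations}, the construction of the Lax triple $(\mathbf{X},\mathbf{T},\mathbf{\Lambda})$ and its compatibility rest entirely on the existence and structure of $\mathbf{P}(\Lambda;X,T,\mathbf{G})$ guaranteed by Proposition~\ref{prop:RHP-EU}; they never use any property of $\mathbf{G}$ beyond $\det\mathbf{G}=1$ and $\mathbf{G}^*=\sigma_2\mathbf{G}\sigma_2$.

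Concretely, I would set $T=0$ in \eqref{eq:Lambda-matrix}, so that $\mathbf{\Lambda}$ reduces to a diagonal polynomial in $\Lambda$ plus the $\Lambda^{-2}$ residue $2\ii\mathbf{P}_0\sigma_3\mathbf{P}_0^{-1}$. Together with $\mathbf{X}$ from \eqref{eq:X-matrix}, the compatibility relation $\mathbf{X}_\Lambda-\mathbf{\Lambda}_X+[\mathbf{X},\mathbf{\Lambda}]=\mathbf{0}$, matched order-by-order in $\Lambda$, yields a closed first-order ODE system in $X$ coupling $\Psi(X,0;\mathbf{G})$, its complex conjugate, and the entries of $\mathbf{A}(X):=\mathbf{P}_0(X,0;\mathbf{G})\sigma_3\mathbf{P}_0(X,0;\mathbf{G})^{-1}$. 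The form of this system is independent of the particular choice of $\mathbf{G}$.

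Next, I would implement the change of variable $X=-\tfrac{1}{8}x^2$ and the substitution \eqref{eq:u-Psi}. The factor $x^2$ inside the logarithm is chosen precisely so that the apparent singularity at the origin is removable: from Theorem~\ref{t:Psi-at-0-0}, $\Psi(0,0;\mathbf{G})\neq 0$ whenever $ab\neq 0$, and a short expansion using Theorem~\ref{t:PsiX-at-0-0} confirms that $u$ is analytic in $x$ near the origin. Algebraic elimination of the auxiliary entries of $\mathbf{A}(X)$ from the compatibility system in favor of $\Psi$ and $\Psi_X$ then reduces the system to a single second-order ODE for $u(x)$, which upon direct comparison matches \eqref{eq:PIII-D6} with $\Theta_0=\Theta_\infty=0$; the vanishing of the formal monodromy parameters follows from inspection of the leading behavior of $\mathbf{\Lambda}$ at $\Lambda=0$ and $\Lambda=\infty$.

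The main obstacle is this algebraic elimination step: converting the multi-variable Lax compatibility system into a single scalar Painlev\'e equation requires careful bookkeeping of several coupled nonlinear ODEs. However, this manipulation is carried out explicitly in \cite[Section 3.2.1]{BilmanLM2020}, and since each step there is a purely algebraic identity invariant under the more general $\mathbf{G}$ allowed here, the Painlev\'e-III (D$_6$) identification with $\Theta_0=\Theta_\infty=0$ transfers without modification. Extending the statement from $x\in\mathbb{R}$ to $x\in\ii\mathbb{R}$ is then a matter of analytic continuation, justified by the real-analyticity of $\Psi(X,0;\mathbf{G})$ in $X$ granted by Proposition~\ref{prop:RHP-EU}.
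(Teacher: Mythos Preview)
Your proposal is correct and mirrors the paper's own treatment: the paper does not give a self-contained proof but cites \cite[Corollary~4]{BilmanLM2020} and, in the paragraph preceding \eqref{eq:Lax-equations}, explicitly observes that the derivation of the Lax triple and its compatibility conditions in \cite[Section~3.2.1]{BilmanLM2020} is independent of the particular $\mathbf{G}$, so the Painlev\'e-III (D$_6$) identification carries over unchanged. Your outline of the actual mechanism (setting $T=0$, extracting the compatibility system, changing variables via $X=-\tfrac{1}{8}x^2$, and algebraically eliminating the auxiliary entries of $\mathbf{P}_0\sigma_3\mathbf{P}_0^{-1}$) is more detailed than what the paper provides, but it is exactly the procedure being invoked.
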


\begin{corollary}[Behavior of $u(x)$ near $x=0$]
The function $u(x)$ defined by \eqref{eq:u-Psi} is an odd function of $x$ that is analytic at the origin with Taylor expansion
\begin{equation}
u(x)=x + \frac{u'''(0)}{3!}x^3 + O(x^5),\quad x\to 0,\quad u'''(0)=3(\mathfrak{b}^2-\mathfrak{a}^2)=3\frac{|b|^2-|a|^2}{|a|^2+|b|^2}.
\end{equation}
\end{corollary}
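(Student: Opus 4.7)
The plan is to derive the expansion directly by Taylor expanding $\psi(X) := \Psi(X,0;\mathbf{G})$ about $X=0$, substituting $X=-\tfrac{1}{8}x^2$ into \eqref{eq:u-Psi}, and simplifying the logarithmic derivative.  Set $f(x):=x^2\psi(-\tfrac{1}{8}x^2)$, so that
$$u(x)=\frac{2}{(\log f)'(x)}=\frac{2f(x)}{f'(x)}.$$
Real-analyticity of $\psi$ on $\mathbb{R}$, which is inherited from Proposition~\ref{prop:RHP-EU}, extends $\psi$ and hence $f$ to an analytic function in a complex neighborhood of $0$; this also covers the case $x\in\ii\mathbb{R}$ appearing in the statement, since $-\tfrac{1}{8}x^2\in\mathbb{R}$ for every $x\in\mathbb{R}\cup(\ii\mathbb{R})$.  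The crucial structural observation is that $f$ is an even function of $x$, so $f'$ is odd; combined with Theorems~\ref{t:Psi-at-0-0} and~\ref{t:PsiX-at-0-0} this yields
$$f(x)=\psi(0)x^2-\tfrac{1}{8}\psi'(0)x^4+O(x^6),\qquad \psi(0)=8\mathfrak{a}\mathfrak{b}\ee^{-\ii\arg(ab)}\neq 0,$$
where the non-vanishing uses the standing assumption $ab\ne 0$.

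Next I would address regularity of $u$ at the origin and the oddness claim. The apparent $0/0$ indeterminacy in $u(0)=2f(0)/f'(0)$ is removable: writing $f(x)=x^2 h(x)$ and $f'(x)=x\,k(x)$ with $h,k$ even analytic and $h(0)=\tfrac{1}{2}k(0)=\psi(0)\ne 0$, one obtains $u(x)=2x\,h(x)/k(x)$, which is analytic at $x=0$ with $u(0)=0$. Oddness of $u$ follows instantly from the evenness of $f$: $u(-x)=2f(-x)/f'(-x)=2f(x)/(-f'(x))=-u(x)$.

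Finally I would compute the cubic Taylor coefficient by dividing the two power series above:
$$u(x)=x\cdot\frac{1-\dfrac{\psi'(0)}{8\psi(0)}x^2+O(x^4)}{1-\dfrac{\psi'(0)}{4\psi(0)}x^2+O(x^4)}=x+\frac{\psi'(0)}{8\psi(0)}x^3+O(x^5).$$
By Theorems~\ref{t:Psi-at-0-0} and~\ref{t:PsiX-at-0-0} the ratio $\psi'(0)/\psi(0)=4(\mathfrak{b}^2-\mathfrak{a}^2)$, so the coefficient of $x^3$ equals $(\mathfrak{b}^2-\mathfrak{a}^2)/2$; matching against $u'''(0)/3!$ yields $u'''(0)=3(\mathfrak{b}^2-\mathfrak{a}^2)$, and the equivalent expression in terms of $|a|,|b|$ follows from \eqref{eq:normalized-ab}.

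The proof is essentially bookkeeping, and I do not anticipate any substantive difficulty. The only mild subtlety worth emphasizing is the one handled in the second step: the definition \eqref{eq:u-Psi} involves a ratio with a simple zero in both numerator and denominator at $x=0$, and the non-obvious content is that this ratio is in fact analytic and odd there, a feature that would be invisible from the Painlev\'e-III~(D$_6$) equation~\eqref{eq:PIII-D6} alone.
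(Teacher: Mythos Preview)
Your proof is correct and follows essentially the same approach as the paper's own proof, which simply states that combining Proposition~\ref{prop:RHP-EU} with \eqref{eq:u-Psi} shows $u$ is odd and analytic at the origin with $u'''(0)=\tfrac{3}{4}\,\Psi_X(0,0;\mathbf{G})/\Psi(0,0;\mathbf{G})$, and then invokes Theorems~\ref{t:Psi-at-0-0} and~\ref{t:PsiX-at-0-0}. Your version carries out the Taylor bookkeeping explicitly and is more careful about the removable singularity at $x=0$, but the argument is the same in substance.
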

\begin{proof}
Combining Proposition~\ref{prop:RHP-EU} with \eqref{eq:u-Psi} shows that $u(x)$ is an odd function having a Taylor expansion about $x=0$ with $u'''(0)=\frac{3}{4}\Psi_X(0,0;\mathbf{G})/\Psi(0,0;\mathbf{G})$.  Theorems~\ref{t:Psi-at-0-0} and \ref{t:PsiX-at-0-0} then yield the claimed value of $u'''(0)$.
\end{proof}

Since the value of $\Psi(0,0;\mathbf{G})$ is known from Theorem~\ref{t:Psi-at-0-0}, it is straightforward to invert \eqref{eq:u-Psi} to explicitly express $\Psi(X,0;\mathbf{G})$ in terms of $u$:
\begin{equation}
\Psi(X,0;\mathbf{G})=\Psi(0,0;\mathbf{G})\exp\left(2\int_0^x\left[\frac{1}{u(y)}-\frac{1}{y}\right]\,\dd y\right),\quad X=-\frac{1}{8}x^2.
\label{eq:u-Psi-invert}
\end{equation}
We note that $-3<u'''(0)<3$.  In fact, when $\Theta_0=\Theta_\infty=0$ there is for each $\omega\in (-3,3)$ a unique solution of \eqref{eq:PIII-D6} analytic at the origin with $u(0)=0$, $u'(0)=1$, $u''(0)=0$, and $u'''(0)=\omega$.  This family of solutions of the Painlev\'e-III (D$_6$) equation has not only been associated with limits of sequences of solutions of the focusing NLS equation \cite{BilmanB2019,BilmanLM2020}, but has also appeared in the description of self-similar boundary layers in the sharp-line Maxwell-Bloch equations \cite{LiM24}.

\begin{remark}
The parametrization of solutions of \eqref{eq:PIII-D6} is discussed for example in \cite[Section 4.6]{BarhoumiLMP23} (see also \cite{PutS09}).  Solutions are parametrized by points on a certain \emph{monodromy manifold} characterized by a cubic equation in three variables.  In particular, the monodromy manifold for the Painlev\'e-III (D$_6$) equation \eqref{eq:PIII-D6} with parameters $\Theta_0=\Theta_\infty=0$ consists of those $(x_1,x_2,x_3)\in \mathbb{C}^3$ for which
\begin{equation}
x_1x_2x_3+x_1^2+x_2^2+2x_1+2x_2+1=0.
\label{eq:PIII-monodromy-cubic}
\end{equation}
This is a smooth complex surface except at two points obtained by also enforcing that the gradient of the left-hand side vanishes:  $(x_1,x_2,x_3)=(0,-1,2)$ and $(x_1,x_2,x_3)=(-1,0,2)$.  Because the Stokes phenomenon is trivial both at $\Lambda=0$ and $\Lambda=\infty$ (nontrivial Stokes phenomenon would be evident in additional jump conditions for \rhref{rhp:near-field} on two contours approaching the origin and two contours approaching $\Lambda=\infty$), 
the coordinates of the Painlev\'e-III (D$_6$) solution arising for $T=0$  from Riemann-Hilbert Problem~\ref{rhp:near-field} are determined from the matrix $\mathbf{G}$ alone (which plays the role of a \emph{connection matrix} in the isomonodromy theory) by $(x_1,x_2,x_3)=(G_{11}G_{22}-1,-G_{11}G_{22},2)=(\mathfrak{a}^2-1,-\mathfrak{a}^2,2)$.  Since $0<\mathfrak{a}^2<1$, this is a segment of a line in $\mathbb{C}^3$ parametrized by $G_{11}G_{22}=\mathfrak{a}^2$ that is fully contained within the monodromy manifold \eqref{eq:PIII-monodromy-cubic}.  The line passes through both singular points at parameter values $G_{11}G_{22}=\mathfrak{a}^2=0$ and $G_{11}G_{22}=\mathfrak{a}^2=1$, which are the endpoints of the relevant segment. Thus, the endpoints of the segment correspond to normalized parameters $(\mathfrak{a},\mathfrak{b})=(0,1)$ or $(\mathfrak{a},\mathfrak{b})=(1,0)$ respectively.  The singular points on the monodromy manifold then both correspond to the trivial solution $\Psi(X,T;\mathbf{G})\equiv 0$ according to Proposition~\ref{p:degeneration}.  However, each interior point of the segment yields a distinct nontrivial solution $u(x)=u(x;\mathfrak{a})$ of the Painlev\'e-III (D$_6$) equation \eqref{eq:PIII-D6} with $\Theta_0=\Theta_\infty=0$ related to $\Psi(X,0;\mathbf{G})$ via \eqref{eq:u-Psi} and its inverse \eqref{eq:u-Psi-invert}.  Note that the presence of the logarithmic derivative in \eqref{eq:u-Psi} cancels the constant phase factor $\ee^{-\ii\arg(ab)}$ in $\Psi(X,0;\mathbf{G}(a,b))$ (see \eqref{eq:Psi-T0-symmetry}) so that while $\Psi$ depends on the parameters $(a,b)$, $u$ indeed depends only on the normalized parameters $(\mathfrak{a},\mathfrak{b})$.
\label{rem:singular-monodromy}
\end{remark}

The compatibility condition $\mathbf{T}_\Lambda-\mathbf{\Lambda}_T + [\mathbf{T},\mathbf{\Lambda}]=\mathbf{0}$ is a system of ordinary differential equations with respect to $T$ in which $X\in\mathbb{R}$ is a parameter.  This system was written out in general in \cite[Eqn.\@ (119)]{BilmanLM2020}, and here we expand on a remark made in that paper concerning a symmetric special case (see also \cite{Suleimanov17}).  Suppose that $b=a$.  Then according to Proposition~\ref{prop:X-symmetry}, $X\mapsto\Psi(X,T;\mathbf{G}(a,a))$ is an even function for all $T\in\mathbb{R}$, which in light of real analyticity at $X=0$  implies also that 
\begin{equation}
\Psi_X(0,T;\mathbf{G}(a,a))=0
\label{eq:Psi-X-b=a}
\end{equation}
as is consistent with the conclusion of Theorem~\ref{t:PsiX-at-0-0} when also $T=0$.  Thus $X=0$ is an axis of symmetry of $\Psi(X,T;\mathbf{G}(a,a))$, and one may expect some simplification of the compatibility condition $\mathbf{T}_\Lambda-\mathbf{\Lambda}_T+[\mathbf{T},\mathbf{\Lambda}]=\mathbf{0}$ yielding ordinary differential equations satisfied by $T\mapsto \Psi(X,T;\mathbf{G})$.  To see this, we remark that Proposition~\ref{P-symmetry-X} in Appendix~\ref{a:elementary} implies that $\mathbf{P}(0;0,T,\mathbf{G}(a,a))=-\sigma_3\mathbf{P}(0;0,T,\mathbf{G}(a,a))\sigma_1$, and therefore for some function $s(T)\neq 0$ the unit-determinant matrix $\mathbf{P}_0=\mathbf{P}(0;0,T,\mathbf{G}(a,a))$ has the form
\begin{equation}
\mathbf{P}_0=\begin{bmatrix} s(T) & -s(T)\\(2s(T))^{-1} & (2s(T))^{-1}\end{bmatrix}\implies 2\ii\mathbf{P}_0\sigma_3\mathbf{P}_0^{-1}=2\ii\begin{bmatrix}0 & n(T)\\n(T)^{-1} & 0\end{bmatrix},\quad n(T):=2s(T)^2.
\end{equation}
Using this and \eqref{eq:Psi-X-b=a}, and setting $X=0$, the matrix coefficients $\mathbf{T}$ and  $\mathbf{\Lambda}$ defined in \eqref{eq:T-matrix} and \eqref{eq:Lambda-matrix} respectively take the simplified form
\begin{equation}
\mathbf{T}=-\ii\Lambda^2\sigma_3 +\begin{bmatrix}0 & \Psi\\-\Psi^* & 0\end{bmatrix}\Lambda +\frac{1}{2}\ii |\Psi|^2\sigma_3
\label{eq:T-D7}
\end{equation}
and
\begin{equation}
\mathbf{\Lambda}=-2\ii T\Lambda\sigma_3 +\begin{bmatrix}0 & 2T\Psi\\-2T\Psi^* & 0\end{bmatrix}+\ii T|\Psi|^2\Lambda^{-1}\sigma_3 +\frac{2\ii}{\Lambda^2}\begin{bmatrix} 0 & n(T)\\n(T)^{-1} & 0\end{bmatrix},
\label{eq:Lambda-D7}
\end{equation}
where $\Psi=\Psi(T):=\Psi(0,T;\mathbf{G}(a,a))$.  In \cite{KitaevV04} a Lax pair is presented for the partially-degenerate Painlev\'e-III equation (D$_7$ type, with parameters $\mathscr{A},\mathscr{B}\in\mathbb{C}$ and $\varepsilon=\pm 1$) 
\begin{equation}
\frac{\dd^2\mathfrak{u}}{\dd t^2}=\frac{1}{\mathfrak{u}}\left(\frac{\dd\mathfrak{u}}{\dd t}\right)^2 -\frac{1}{t}\frac{\dd\mathfrak{u}}{\dd t} + \frac{-8\varepsilon\mathfrak{u}^2+2\mathscr{A}\mathscr{B}}{t}+\frac{\mathscr{B}^2}{\mathfrak{u}},\quad\mathfrak{u}=\mathfrak{u}(t),
\label{eq:PIIID7}
\end{equation}
that, after a Fabry transformation, resembles the compatible linear system $\mathbf{W}_T=\mathbf{T W}$ and $\mathbf{W}_\Lambda=\mathbf{\Lambda W}$ with the coefficient matrices given in \eqref{eq:T-D7} and \eqref{eq:Lambda-D7}.  Hence we may expect that $\Psi(T)$ is related to a solution of the Painlev\'e-III (D$_7$) equation.  To complete the correspondence, we change variables and make a gauge transformation by 
\begin{equation}
\lambda:= T^\frac{1}{4}\Lambda,\quad t:=T^{\frac{1}{2}},\quad \mathbf{W}=t^{\frac{1}{2}\sigma_3}\mathbf{V},
\end{equation}
and then the resulting system takes the form
\begin{equation}
\frac{\partial\mathbf{V}}{\partial\lambda}=\left(-2\ii t\lambda\sigma_3 + 2t\begin{bmatrix}
0 & t^{-\frac{1}{2}}\Psi\\-t^\frac{3}{2}\Psi^* & 0\end{bmatrix}-\frac{1}{\lambda}\left(-\ii t^2|\Psi|^2\right)\sigma_3
+\frac{1}{\lambda^2}\begin{bmatrix}0 & 2\ii t^{-\frac{1}{2}}n\\
2\ii t^\frac{3}{2}n^{-1} & 0\end{bmatrix}\right)\mathbf{V}
\label{eq:V-mu}
\end{equation}
and
\begin{equation}
\frac{\partial\mathbf{V}}{\partial t}=\left(-\ii\lambda^2\sigma_3 + \lambda\begin{bmatrix}0 & t^{-\frac{1}{2}}\Psi\\
-t^\frac{3}{2}\Psi^* & 0\end{bmatrix} + \left(\frac{1}{2}\ii t|\Psi|^2-\frac{1}{2t}\right)\sigma_3
-\frac{1}{2t\lambda}\begin{bmatrix}0 & 2\ii t^{-\frac{1}{2}}n\\2\ii t^\frac{3}{2}n^{-1} & 0\end{bmatrix}\right)\mathbf{V},
\label{eq:V-tau}
\end{equation}
in which we now view $\Psi$, $\Psi^*$, and $n$ as functions of $t$.  This system has exactly the form of \cite[Eqn.\@ (12)]{KitaevV04} provided we make the correspondences
\begin{equation}
D=t^{\frac{3}{2}}\Psi^*,\quad \frac{2\ii A}{\sqrt{-AB}}=t^{-\frac{1}{2}}\Psi,
\label{eq:match1}
\end{equation}
\begin{equation}
\ii \mathscr{A}+\frac{1}{2}+\frac{2t AD}{\sqrt{-AB}}=-\ii t^2|\Psi|^2,\quad \frac{\ii \mathscr{A}}{2t}-\frac{AD}{\sqrt{-AB}}=\frac{1}{2}\ii t|\Psi|^2-\frac{1}{2t},
\label{eq:match2}
\end{equation}
\begin{equation}
\widetilde{\alpha}=2\ii t^{-\frac{1}{2}}n,\quad \ii t B=2\ii t^\frac{3}{2}n^{-1}.
\label{eq:match3}
\end{equation}
where the notation of \cite{KitaevV04} is on the left-hand side\footnote{In \cite{KitaevV04}, the symbols $\mathscr{A},\mathscr{B},t,\lambda$  are written as $a,b,\tau,\mu$ respectively, but the latter have other meanings in our paper.} and $A,B,C,D,\widetilde{\alpha}$ are functions of $t$ while $\mathscr{A}$ is a constant.  Using the product of the identities in \eqref{eq:match1} to eliminate $AD/\sqrt{-AB}$ shows that the two equations in \eqref{eq:match2} actually coincide, and yield $\mathscr{A}=\frac{1}{2}\ii$.  Then, according to \cite[Lemma 2.1]{KitaevV04}, the second parameter $\mathscr{B}$ is determined from \eqref{eq:match3} up to a sign by $\mathscr{B}=4\varepsilon$.  The corresponding solution of \eqref{eq:PIIID7} is then given by $\mathfrak{u}(t)=\varepsilon t\sqrt{-A(t)B(t)}$.  This proves the following, which is also easy to verify directly from the compatibility condition for the system \eqref{eq:V-mu}--\eqref{eq:V-tau}.
\begin{theorem}[$\Psi(0,T)$ for $b=a$ and the Painlev\'e-III (D$_7$) equation]
Fix $a\in\mathbb{C}$ nonzero, and let $\varepsilon=\pm 1$.  The function 
\begin{equation}
\mathfrak{u}(t):=\frac{1}{4}\ii\varepsilon t\Psi\cdot\left(\Psi^*+t\frac{\dd\Psi^*}{\dd t}\right),\quad t\in\mathbb{R},\quad\Psi=\Psi(0,t^2;\mathbf{G}(a,a))
\label{eq:u-D7-def}
\end{equation}
satisfies the Painlev\'e-III (D$_7$) equation in the form \eqref{eq:PIIID7} with parameters $\mathscr{A}=\frac{1}{2}\ii$ and $\mathscr{B}=4\varepsilon$.
\end{theorem}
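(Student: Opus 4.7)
The plan is to exploit the Kitaev-Vartanian Lax-pair matching \eqref{eq:match1}--\eqref{eq:match3}, invoke \cite[Lemma 2.1]{KitaevV04} to produce \emph{some} Painlev\'e-III (D$_7$) solution expressed in terms of the Kitaev-Vartanian auxiliary variables $A(t)$, $B(t)$, and then reduce that expression to \eqref{eq:u-D7-def} using the compatibility condition for \eqref{eq:V-mu}--\eqref{eq:V-tau}. Since the matching has already been seen in the body of the paper to force $\mathscr{A}=\tfrac{1}{2}\ii$ and $\mathscr{B}=4\varepsilon$, the cited lemma implies that $\mathfrak{u}_{\mathrm{KV}}(t):=\varepsilon t\sqrt{-A(t)B(t)}$ solves \eqref{eq:PIIID7} with these parameters.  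Combining \eqref{eq:match1} (which yields $\sqrt{-AB}=2\ii A t^{1/2}/\Psi$) with \eqref{eq:match3} (which yields $B=2t^{1/2}/n$) to eliminate $A$ gives $\sqrt{-AB}=\ii\Psi/n$, and hence $\mathfrak{u}_{\mathrm{KV}}(t)=\ii\varepsilon t\Psi(t)/n(t)$.

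It then remains to establish the algebraic identity
\begin{equation*}
n(t)\bigl(\Psi^*(t)+t\Psi^*_t(t)\bigr)=4,
\end{equation*}
for this would reduce $\mathfrak{u}_{\mathrm{KV}}$ to $\tfrac{1}{4}\ii\varepsilon t\Psi(\Psi^*+t\Psi^*_t)=\mathfrak{u}(t)$, completing the proof.  I would extract this identity from the zero-curvature condition $\mathbf{U}_t-\mathbf{R}_\lambda+[\mathbf{U},\mathbf{R}]=\mathbf{0}$ for the Lax pair, where $\mathbf{U}$ and $\mathbf{R}$ denote the coefficient matrices on the right-hand sides of \eqref{eq:V-mu} and \eqref{eq:V-tau} respectively.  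Expanding in a Laurent series in $\lambda$ and inspecting the $\lambda^0$ coefficient, one finds that the $|\Psi|^2\Psi^*$ and $|\Psi|^2\Psi$ contributions coming from $[U_0,R_0]$ and $[U_{-1},R_1]$ cancel exactly, leaving a purely off-diagonal residue.  Its $(2,1)$-entry then reduces to $-2t^{3/2}\Psi^*-2t^{5/2}\Psi^*_t+8t^{3/2}/n=0$, which is the identity above.  (The companion $(1,2)$-entry gives $4n=\Psi+t\Psi_t$, consistent with the $(2,1)$-entry under complex conjugation and implying the attractive side relation $|(t\Psi)_t|^2=16$.)

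The main obstacle is the careful bookkeeping of powers of $t$, powers of $\lambda$, and signs across the Fabry-type rescaling $\lambda=T^{1/4}\Lambda$, $t=T^{1/2}$ and the gauge transformation $\mathbf{W}=t^{\frac{1}{2}\sigma_3}\mathbf{V}$ that produced \eqref{eq:V-mu}--\eqref{eq:V-tau}, together with the four commutators $[U_j,R_k]$ with $j+k=0$ contributing to the $\lambda^0$ coefficient.  The cancellation of the $|\Psi|^2$-terms is essential, and automatic only once the correct prefactors are in place; after that, the identity reads off directly from the $(2,1)$-entry, and Kitaev-Vartanian's lemma completes the argument without any further input.
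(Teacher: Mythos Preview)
Your proposal is correct and follows essentially the same approach as the paper: invoke the Kitaev--Vartanian matching \eqref{eq:match1}--\eqref{eq:match3} and \cite[Lemma~2.1]{KitaevV04} to obtain $\mathfrak{u}=\varepsilon t\sqrt{-AB}$, then use the compatibility condition for \eqref{eq:V-mu}--\eqref{eq:V-tau} to reduce this to the explicit form \eqref{eq:u-D7-def}.  You actually carry out that last reduction in more detail than the paper does (which merely remarks that it is ``easy to verify directly from the compatibility condition''), and your side relation $|(t\Psi)_t|^2=16$ is precisely the paper's identity \eqref{eq:simple-ODE-T}.
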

This result was known to Suleimanov \cite{Suleimanov17}, although to extract it from his paper one must take the independent variable to be $t=T^\frac{1}{2}$ instead of $T$ and correct some constants.  Another simple identity satisfied by $\Psi$ that follows from the compatibility condition for \eqref{eq:V-mu}--\eqref{eq:V-tau} is 
\begin{equation}
\left|\Psi+t\frac{\dd\Psi}{\dd t}\right|^2=16,\quad t\in\mathbb{R},\quad\Psi=\Psi(0,t^2;\mathbf{G}(a,a)).
\label{eq:simple-ODE-T}
\end{equation}
\begin{corollary}[Behavior of $\mathfrak{u}(t)$ near $t=0$]
The function $\mathfrak{u}(t)$ defined by \eqref{eq:u-D7-def} is an odd function of $t$ that is analytic at the origin with Taylor expansion
\begin{equation}
\mathfrak{u}(t)=\mathfrak{u}'(0)t + O(\tau^3),\quad t\to 0,\quad \mathfrak{u}'(0)=4\ii\varepsilon.
\end{equation}
\label{cor:D7}
\end{corollary}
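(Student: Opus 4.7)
The plan is to exploit the fact that $\Psi = \Psi(0, t^2; \mathbf{G}(a,a))$ depends on $t$ only through $t^2$, which gives parity properties for free, and then to reduce the evaluation $\mathfrak{u}'(0) = 4\ii\varepsilon$ to knowing $|\Psi(0,0;\mathbf{G}(a,a))|^2$, which is furnished by Theorem~\ref{t:Psi-at-0-0} (and also consistent with \eqref{eq:simple-ODE-T} at $t=0$).

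First I would observe that, since by definition $\Psi(t) = \Psi(0,t^2;\mathbf{G}(a,a))$, the map $t\mapsto \Psi(t)$ is an even function of $t$. Together with the real-analytic dependence on $(X,T)\in\mathbb{R}^2$ granted by Proposition~\ref{prop:RHP-EU}, this yields analyticity of $\Psi(t)$ at the origin and shows that $\Psi^*_t(t)$ is odd in $t$, so that $\Psi^*(t)+t\Psi^*_t(t)$ is even. Multiplying by the explicit factor of $t$ in \eqref{eq:u-D7-def} and using $\Psi^*$ even makes $\mathfrak{u}(t)$ a product of an odd function and even functions, hence an odd analytic function of $t$ near the origin; in particular its Taylor expansion contains only odd powers of $t$.

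Next I would compute $\mathfrak{u}'(0)$. Splitting the definition \eqref{eq:u-D7-def} as
\begin{equation}
\mathfrak{u}(t)=\frac{1}{4}\ii\varepsilon t\,|\Psi(t)|^2 + \frac{1}{4}\ii\varepsilon t^2\,\Psi(t)\Psi^*_t(t),
\end{equation}
the second term is $O(t^3)$ because $\Psi^*_t$ is odd and vanishes at the origin, so it does not contribute to $\mathfrak{u}'(0)$. Differentiating the first term and evaluating at $t=0$ gives $\mathfrak{u}'(0)=\tfrac{1}{4}\ii\varepsilon|\Psi(0,0;\mathbf{G}(a,a))|^2$.

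Finally I would use Theorem~\ref{t:Psi-at-0-0} with $b=a$: then $\mathfrak{a}=\mathfrak{b}=1/\sqrt{2}$ and \eqref{eq:Psi-at-0-0} gives $|\Psi(0,0;\mathbf{G}(a,a))|^2 = (8\mathfrak{a}\mathfrak{b})^2 = 16$, so $\mathfrak{u}'(0)=4\ii\varepsilon$ as claimed. (Alternatively, the identity \eqref{eq:simple-ODE-T} evaluated at $t=0$ directly gives $|\Psi(0,0;\mathbf{G}(a,a))|^2=16$, providing an internal consistency check.) There is no real obstacle here; the only subtlety is remembering that the symmetry $b=a$ forces $\Psi$ to be a function of $t^2$, which makes the odd/even structure transparent and reduces the whole corollary to the evaluation at $t=0$ that is already in hand.
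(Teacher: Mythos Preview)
Your proof is correct and follows essentially the same approach as the paper's own proof: both use the evenness of $\Psi$ in $t$ (coming from the $t^2$ dependence) together with real analyticity from Proposition~\ref{prop:RHP-EU} to get the odd/analytic structure, and both reduce $\mathfrak{u}'(0)$ to the value $|\Psi(0,0;\mathbf{G}(a,a))|^2=16$ obtained from Theorem~\ref{t:Psi-at-0-0} with $b=a$ (or equivalently from \eqref{eq:simple-ODE-T}). You have simply written out explicitly the short computation that the paper leaves to the reader.
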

\begin{proof}
Both $\Psi$ and $\Psi^*$ are even functions of $t$ with real analytic real and imaginary parts, so $\mathfrak{u}(t)$ is odd and analytic at the origin.  The value of $\mathfrak{u}'(0)$ follows from Theorem~\ref{t:Psi-at-0-0} using $b=a$, or alternately, from \eqref{eq:simple-ODE-T}.
\end{proof}
According to \cite{KitaevV23}, the conditions on $\mathfrak{u}(t)$ asserted in Corollary~\ref{cor:D7} actually uniquely determine the solution of \eqref{eq:PIIID7} because the formal monodromy parameter has the special value $\mathscr{A}=\frac{1}{2}\ii$.  This value of $\mathscr{A}$ is special (so is $\mathscr{A}=\pm\frac{1}{2}\ii + k$ for any $k\in\mathbb{Z}$) in that there is a one-parameter family of solutions that are analytic and vanishing at $t=0$, but the solution becomes unique once oddness is asserted. On the other hand, for general $\mathscr{A}$ there is only one solution analytic and vanishing at $t=0$.

\subsubsection{Fredholm determinant representation of $\Psi(X,0;\mathbf{G})$}
\label{s:FredholmDeterminant-intro}
In \cite[Section 3.2.1]{BilmanLM2020} various transformations of \eqref{eq:PIII-D6} to other forms of the Painlev\'e-III equation were noted, including a transformation to the parameter-free and fully-degenerate Painlev\'e-III (D$_8$) equation, which takes the form
\begin{equation}
\frac{\dd^2U}{\dd z^2} = \frac{1}{U}\left(\frac{\dd U}{\dd z}\right)^2-\frac{1}{z}\frac{\dd U}{\dd z}+\frac{4U^2+4}{z}.
\label{eq:PIIID8}
\end{equation}
Here we report a new piece of information, which is that the relevant solution of \eqref{eq:PIIID8} is expressible in terms of a Fredholm determinant of an integrable operator \cite{ItsIKS1990,Deift1999}.  This in turn leads to an explicit representation of $\Psi(X,0;\mathbf{G})$ in terms of the same determinant.


Provided that $a>0$ and $b\in\ii\mathbb{R}$, the matrix $\hat{\mathbf{S}}(\xi;z)$ defined in terms of $\mathbf{P}(\Lambda;X,0,\mathbf{G})$ by
\begin{equation}
\hat{\mathbf{S}}(\xi;z):=\mathbf{P}(\Lambda;X,0,\mathbf{G})\ee^{(2z)^{1/2}(\ii\xi +\xi^{-1})\sigma_3},\quad X=-\ii z,\quad \Lambda=-2\ii (2z)^{-\frac{1}{2}}\xi,
\end{equation}
satisfies a related Riemann-Hilbert problem associated with the Painlev\'e-III ($D_8$) equation with specialized monodromy parameters $y_1=b/\sqrt{|a|^2+|b|^2}$, $y_2=\ii a/\sqrt{|a|^2+|b|^2}$, and $y_3=0$ (see \cite[Riemann-Hilbert Problem~9.2]{BarhoumiLMP23} and the discussion in Section~9.3 of that paper).  Comparing \eqref{Psi-def} with \cite[Eqns.\@ (9.11) and (9.14)]{BarhoumiLMP23} shows that 
\begin{equation}
\Psi(X,0;\mathbf{G})=\frac{\ii}{2}\frac{U'(z)}{U(z)},\quad z=\ii X
\label{eq:Psi-U}
\end{equation}
where $U(z)$ is a solution of the Painlev\'e-III ($D_8$) equation \eqref{eq:PIIID8}.
The parameters $y_1$ and $y_2$ are purely imaginary numbers constrained by $y_1^2+y_2^2+1=0$ (a reduction for $y_3=0$ of the monodromy cubic $y_1y_2y_3+y_1^2+y_2^2+1=0$ parametrizing solutions of \eqref{eq:PIIID8}), and they may be further parametrized by a single quantity $m\in\ii\mathbb{R}+\mathbb{Z}$ according to
\begin{equation}
y_1=\frac{\ii\ee^{\ii\pi m}}{\sqrt{1+\ee^{2\pi\ii m}}},\quad y_2=\frac{\ii}{\sqrt{1+\ee^{2\pi\ii m}}}.
\end{equation}
The generalization of this family of solutions $U(z)$ to allow for arbitrary $m\in\mathbb{C}\setminus (\mathbb{Z}+\frac{1}{2})$ was proven in \cite{BarhoumiLMP23} to correspond to a certain double-scaling limit of high-even-order rational solutions of the Painlev\'e-III ($D_6$) equation when examined near the origin in the complex plane (the only fixed singularity of the equation).  The Painlev\'e-III ($D_6$) equation has two essential parameters, one of which is quantized for rational solutions ($n=\frac{1}{2}(\Theta_0-\Theta_\infty+1)\in\mathbb{Z}$) and the other of which is arbitrary and corresponds to the value of $m=\frac{1}{2}(\Theta_0+\Theta_\infty-1)$.

Going further, the particular solution $U(z)$ with parameter $m\in\mathbb{C}\setminus(\mathbb{Z}+\frac{1}{2})$ is shown in \cite[Corollary 3.4]{BarhoumiLMP23} to be explicitly related to the Fredholm determinant of the scalar Bessel kernel:
\begin{equation}
U(z)-\frac{1}{U(z)}=R(z):=-2\ii-\frac{1}{2}\frac{\dd}{\dd z}z\frac{\dd}{\dd z}\log \left(D_{\varkappa(m)}(32\ii z)\right),
\label{eq:U-and-Uinverse}
\end{equation}
wherein $\varkappa(m):=(1+\ee^{2\pi\ii m})^{-1}=-y_2^2=a^2/(|a|^2+|b|^2)$ and 
\begin{equation}
D_\varkappa(r):=\det(\mathbb{1}-\varkappa \mathcal{K}_r)
\label{eq:FredholmDeterminant}
\end{equation}
is the Fredholm determinant of the integral operator $\mathcal{K}_r:L^2[0,r]\to L^2[0,r]$ with Bessel kernel 
\begin{equation}
K(x,y):=\frac{\sqrt{x}J_1(\sqrt{x})J_0(\sqrt{y})-J_0(\sqrt{x})\sqrt{y}J_1(\sqrt{y})}{2(x-y)}.
\label{eq:BesselKernel}
\end{equation}
It can be shown that $D_\varkappa(r)$ is entire in $\varkappa$ and analytic for $r\in\mathbb{C}$ of sufficiently small modulus.  To get a representation of $\Psi(X,0;\mathbf{G})$, we first solve \eqref{eq:U-and-Uinverse} for $U(z)$:
\begin{equation}
U(z)=\frac{1}{2}\left(R(z)\pm\sqrt{R(z)^2+4}\right),
\end{equation}
and hence from \eqref{eq:Psi-U}
\begin{equation}
\Psi(X,0;\mathbf{G})=\frac{\ii}{2}\frac{U'(z)}{U(z)}=\pm\frac{\ii R'(z)}{2\sqrt{R(z)^2+4}}.
\label{eq:Psi-Fredholm}
\end{equation}
We now have to resolve the sign of the square root; first, according to \cite[Eqn.\@ (3.22)]{BarhoumiLMP23}, one has
\begin{equation}
\log \left(D_\varkappa(r)\right)=-\frac{\varkappa}{4}r +\frac{\varkappa-\varkappa^2}{32}r^2+O(r^3),\quad r\to 0,
\end{equation}
which implies that $R(z)=-2\ii+4\ii\varkappa + 64(\varkappa-\varkappa^2)z + O(z^2)$ and $R'(z)=64(\varkappa-\varkappa^2)+O(z)$ as $z\to 0$.  Hence $R'(0)=64(\varkappa-\varkappa^2)$ and $R(0)^2+4=16(\varkappa-\varkappa^2)$.  Since $\varkappa=a^2/(|a|^2+|b|^2)\in (0,1)$, we obtain (using $z=0$ implies $X=0$)
\begin{equation}
\Psi(0,0;\mathbf{G})=\pm 8\ii\sqrt{\varkappa-\varkappa^2}=\pm\frac{8\ii a|b|}{a^2+|b|^2}.
\end{equation}
Comparing this with Theorem~\ref{t:Psi-at-0-0} and recalling that $a>0$ while $b$ is purely imaginary shows that we should choose the $+$ (resp., $-$) sign in \eqref{eq:Psi-Fredholm} when $b$ is negative (resp., positive) imaginary, taking the square root as positive when $X=0$.  

The representation of $\Psi(X,0;\mathbf{G})$ in terms of a Fredholm determinant is easily generalized to arbitrary $(a,b)\in\mathbb{C}^2$ with $ab\neq 0$ using Proposition~\ref{prop:a-b-scaling}, since
\begin{multline}
\Psi(X,T;\mathbf{G}(a,b))=\ee^{-\ii\arg(ab)}\Psi(X,T;\mathbf{G}(\mathfrak{a},\mathfrak{b})) \;\text{and}\; \Psi(X,T;\mathbf{G}(\mathfrak{a},\ii\mathfrak{b}))=-\ii\Psi(X,T;\mathbf{G}(\mathfrak{a},\mathfrak{b}))\\\implies \Psi(X,T;\mathbf{G}(a,b))=\ii\ee^{-\ii\arg(ab)}\Psi(X,T;\mathbf{G}(\mathfrak{a},\ii\mathfrak{b})),
\end{multline}
and the latter has parameters $a=\mathfrak{a}$ positive and $b=\ii\mathfrak{b}$ positive imaginary.

This proves the following.
\begin{theorem}[Painlev\'e-III (D$_8$) and Bessel kernel determinant formula for $\Psi(X,0)$]
Let $(a,b)\in\mathbb{C}^2$ with $ab\neq 0$ correspond to normalized parameters $(\mathfrak{a},\mathfrak{b})$ by \eqref{eq:normalized-ab}.  Then the function $\Psi(X,0;\mathbf{G}(\mathfrak{a},\ii\mathfrak{b}))=-\ii\ee^{\ii\arg(ab)}\Psi(X,0;\mathbf{G}(a,b))$ is expressible by \eqref{eq:Psi-U} in terms of a solution $U(z)$ of the Painlev\'e-III (D$_8$) equation \eqref{eq:PIIID8} having unit modulus for $z\in\ii\mathbb{R}$, and moreover for $X$ in a neighborhood of the origin,
\begin{equation}
\Psi(X,0;\mathbf{G}(a,b))=\left.\frac{\ee^{-\ii\arg(ab)}R'(z)}{2\sqrt{R(z)^2+4}}\right|_{z=\ii X},
\label{eq:Psi-det}
\end{equation}
where 
\begin{equation}
R(z):=-2\ii -\frac{1}{2}\frac{\dd}{\dd z}z\frac{\dd}{\dd z}\log \left(D_{\mathfrak{a}^2}(32\ii z)\right)
\end{equation}
and $D_\varkappa(r)$ denotes the Fredholm determinant \eqref{eq:FredholmDeterminant} of the Bessel kernel \eqref{eq:BesselKernel}.  Here the square root is taken to be positive when $X=0$ and the formula \eqref{eq:Psi-det} admits real analytic continuation to $X\in\mathbb{R}$.
\label{thm:FredholmDeterminant}
\end{theorem}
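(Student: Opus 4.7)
The plan is to work first in the one-parameter subfamily where $a>0$ and $b\in\ii\mathbb{R}$, recognize the Riemann-Hilbert problem for $\mathbf{P}(\Lambda;X,0,\mathbf{G})$ after a suitable change of variables as one already analyzed in \cite{BarhoumiLMP23} in connection with the Painlev\'e-III (D$_8$) equation, read off the formula $\Psi(X,0;\mathbf{G})=\tfrac{\ii}{2}U'(z)/U(z)$ with $z=\ii X$, and then solve algebraically for $U$ in terms of the Fredholm determinant $D_{\varkappa}(r)$ via the identity $U-U^{-1}=R(z)$ from \cite[Corollary~3.4]{BarhoumiLMP23}. The final step is to extend from the reduced parameter set to arbitrary $(a,b)$ with $ab\neq 0$ using the phase and imaginary-unit symmetries of Proposition~\ref{prop:a-b-scaling}.

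Concretely, I would first substitute $X=-\ii z$ and $\Lambda=-2\ii(2z)^{-1/2}\xi$ into \rhref{rhp:near-field} at $T=0$, conjugate by $\ee^{(2z)^{1/2}(\ii\xi+\xi^{-1})\sigma_3}$, and verify that the new unknown $\hat{\mathbf{S}}(\xi;z)$ solves the D$_8$ Riemann-Hilbert problem stated as \cite[Riemann-Hilbert Problem~9.2]{BarhoumiLMP23}. The assumption $a>0$, $b\in\ii\mathbb{R}$ is what enforces the reality/purity constraints on the monodromy data there, and direct matching of jump matrices identifies the Stokes/connection parameters as $y_1=b/\sqrt{|a|^2+|b|^2}$, $y_2=\ii a/\sqrt{|a|^2+|b|^2}$, $y_3=0$, consistent with the cubic $y_1^2+y_2^2+1=0$ on the monodromy manifold. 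Comparison of the large-$\Lambda$ expansion of $\mathbf{P}$ entering \eqref{Psi-def} with the extraction formulas \cite[Eqns.~(9.11),(9.14)]{BarhoumiLMP23} for the associated Painlev\'e-III (D$_8$) function $U(z)$ then gives \eqref{eq:Psi-U}.

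Next, using \eqref{eq:U-and-Uinverse} I would solve the quadratic $U^2 - R(z)U - 1 = 0$, giving $U(z)=\tfrac{1}{2}(R(z)\pm\sqrt{R(z)^2+4})$, differentiate logarithmically, and obtain $\Psi(X,0;\mathbf{G})=\pm\ii R'(z)/(2\sqrt{R(z)^2+4})$. The unit-modulus claim for $z\in\ii\mathbb{R}$ is then automatic from $|U|^2 = U\cdot (-U^{-1})^* = 1$ once one notes that $R(z)$ is purely imaginary on that axis (which in turn follows from $D_\varkappa(r)$ being real for $r$ along the appropriate ray, since $\varkappa=\mathfrak{a}^2\in(0,1)$). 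To fix the sign, I would use the small-$z$ expansion of $\log D_\varkappa(r)$ cited in \cite[Eqn.~(3.22)]{BarhoumiLMP23} to compute $R'(0)=64(\varkappa-\varkappa^2)$ and $R(0)^2+4=16(\varkappa-\varkappa^2)$, evaluate the candidate formula at $X=0$, and compare with the value given by Theorem~\ref{t:Psi-at-0-0} specialized to $(a,b)=(\mathfrak{a},\pm\ii\mathfrak{b})$. Matching signs determines the branch of the square root, with the convention that the square root be positive at $X=0$.

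Finally, for arbitrary $(a,b)\in\mathbb{C}^2$ with $ab\neq 0$, I would chain Proposition~\ref{prop:a-b-scaling} (which removes the phase $\ee^{-\ii\arg(ab)}$ and reduces to the positive normalized pair $(\mathfrak{a},\mathfrak{b})$) with the specific identity $\Psi(X,T;\mathbf{G}(\mathfrak{a},\ii\mathfrak{b}))=-\ii\,\Psi(X,T;\mathbf{G}(\mathfrak{a},\mathfrak{b}))$, which follows from the same Proposition applied to $(\mathfrak{a},\ii\mathfrak{b})$ whose normalized form is again $(\mathfrak{a},\mathfrak{b})$ but with phase factor $\ee^{-\ii\arg(\mathfrak{a}\cdot\ii\mathfrak{b})}=-\ii$. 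Combining these yields $\Psi(X,0;\mathbf{G}(a,b))=\ii\ee^{-\ii\arg(ab)}\Psi(X,0;\mathbf{G}(\mathfrak{a},\ii\mathfrak{b}))$, and inserting the already-derived Fredholm-determinant formula for the right-hand side produces \eqref{eq:Psi-det}. The main obstacle I anticipate is not any single computation but rather the careful bookkeeping of the two sign choices (the branch of $\sqrt{R^2+4}$ and the $\pm\ii$ from $b\in\pm\ii(0,\infty)$); real analyticity of $X\mapsto\ee^{\ii\arg(ab)}\Psi(X,0;\mathbf{G}(a,b))$ noted in \eqref{eq:Psi-T0-symmetry} will be used to promote the local identity to a real analytic continuation on $\mathbb{R}$.
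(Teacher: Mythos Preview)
Your proposal is correct and follows essentially the same approach as the paper: reduce to $a>0$, $b\in\ii\mathbb{R}$, identify the $T=0$ problem after the substitution $X=-\ii z$, $\Lambda=-2\ii(2z)^{-1/2}\xi$ with \cite[Riemann-Hilbert Problem~9.2]{BarhoumiLMP23}, extract \eqref{eq:Psi-U}, solve the quadratic $U-U^{-1}=R(z)$ from \cite[Corollary~3.4]{BarhoumiLMP23}, fix the sign via the small-$z$ expansion and Theorem~\ref{t:Psi-at-0-0}, and finally extend to general $(a,b)$ via Proposition~\ref{prop:a-b-scaling}. Your brief unit-modulus argument is the only addition beyond the paper's discussion, and it is essentially sound though one should note that $R\in\ii\mathbb{R}$ forces either $|U|=1$ or $\Re U=0$, the latter being excluded by $U(0)$.
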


\begin{remark}
There is a general method \cite{Bertola17} based on factorization of jump matrices into a product of upper- and lower-triangular nilpotent perturbations of the identity matrix, allowing one to associate a Fredholm determinant to virtually any Riemann-Hilbert problem with jump contour being a closed curve in the plane.  The method is applicable in particular to Riemann-Hilbert Problem~\ref{rhp:near-field}, in which case the resulting Fredholm determinant depends on the variables $(X,T)$ generally allowed to vary in $\mathbb{C}^2$, and its vanishing detects precisely the values of $(X,T)$ for which the solution fails to exist (we already know that this is impossible on the real subspace $(X,T)\in\mathbb{R}^2$).  There are multiple admissible factorizations of the jump matrix, each of which gives rise to a different Fredholm determinant ($\tau$-function) with the same zero divisor.  It might be expected for $\Psi(X,T;\mathbf{G})$ or its square modulus $|\Psi(X,T;\mathbf{G})|^2$ to be expressible explicitly in terms of invariant expressions formed by suitable derivatives of any of these Fredholm determinants, which would extend Theorem~\ref{thm:FredholmDeterminant} to arbitrary $T\in\mathbb{R}$.  We hope to address this question in future work.  
\label{rem:FD-T-nonzero}
\end{remark}

\subsection{Asymptotic properties of $\Psi$}
\label{s:asymptotics-intro}
Now we describe the asymptotic properties of $\Psi(X,T;\mathbf{G})$, i.e., how a general rogue wave of infinite order behaves for large values of the independent variables $(X,T)$, and how this behavior depends on the parameters in $\mathbf{G}$.  
\subsubsection{Behavior of $\Psi(X,T;\mathbf{G})$ for $X$ large}
\label{s:large-X-results}
Our first result concerns the behavior of $\Psi(X,T;\mathbf{G})$ as $X\to\pm\infty$.   Let $v_\mathrm{c}:=54^{-\frac{1}{2}}$, and define two functions of $v$ on $(-v_\mathrm{c},v_\mathrm{c})$ by
\begin{equation}
z_1(v):=\begin{cases}\displaystyle 
\frac{1}{6v}\left(-1+2\cos\left(\frac{1}{3}\arccos\left(
2\frac{v^2}{v_\mathrm{c}^2}
-1\right)\right)\right),&\quad -v_\mathrm{c}<v<0,\\\\
\displaystyle \frac{1}{6v}\left(-1+2\cos\left(\frac{1}{3}\arccos\left(
2\frac{v^2}{v_\mathrm{c}^2}
-1\right)-\frac{2}{3}\pi\right)\right),&\quad 0<v<v_\mathrm{c},
\end{cases}
\label{eq:Cardano-1}
\end{equation}
and
\begin{equation}
z_2(v):=\begin{cases}\displaystyle 
\frac{1}{6v}\left(-1+2\cos\left(\frac{1}{3}\arccos\left(
2\frac{v^2}{v_\mathrm{c}^2}-1\right)-\frac{2}{3}\pi\right)\right),&\quad -v_\mathrm{c}<v<0,\\\\
\displaystyle \frac{1}{6v}\left(-1+2\cos\left(\frac{1}{3}\arccos\left(
2\frac{v^2}{v_\mathrm{c}^2}-1\right)\right)\right),&\quad 0<v<v_\mathrm{c},
\end{cases}
\label{eq:Cardano-2}
\end{equation}
which are equivalent for $|v|<v_\mathrm{c}/\sqrt{2}$ to
\begin{equation}
\begin{split}
z_1(v)&=\frac{1}{6v}\left(-1+2\cos\left(-\frac{1}{3}\arcsin\left(v\sqrt{216\left(1-
\frac{v^2}{v_\mathrm{c}^2}\right)}\right)-\frac{1}{3}\pi\right)\right)\\
z_2(v)&=\frac{1}{6v}\left(-1+2\cos\left(-\frac{1}{3}\arcsin\left(v\sqrt{216\left(1-
\frac{v^2}{v_\mathrm{c}^2}\right)}\right)+\frac{1}{3}\pi\right)\right).
\end{split}
\label{eq:Cardano-3}
\end{equation}
In each case, the singularity at $v=0$ is removable, and with the definition $z_1(0)=-\sqrt{2}$ and $z_2(0)=\sqrt{2}$, we see that  $z_j(v)$, $j=1,2$, are both analytic functions of $v\in (-v_\mathrm{c},v_\mathrm{c})$ satisfying $z_1(v)<0<z_2(v)$.   Introducing the function 
\begin{equation}
\vartheta(z;v):=z+vz^2+2z^{-1},
\label{eq:vartheta-intro-def}
\end{equation}
one can check that $\vartheta'(z_j(v);v)=0$ holds identically for $-v_\mathrm{c}<v<v_\mathrm{c}$, and that $\vartheta''(z_1(v);v)<0$ while $\vartheta''(z_2(v);v)>0$ (here prime means differentiation with respect to $z$ for fixed $v$).

\begin{theorem}[Large-$X$ regime]
\label{t:large-X}
Let $\tau:=|b/a|$ and $p:=\frac{1}{2\pi}\ln(1+\tau^2)$, and set $v:=TX^{-\frac{3}{2}}\in\mathbb{R}$.  
Then for each $\delta>0$,
\begin{multline}
\Psi(X, T;\mathbf{G})=\frac{\ee^{-\ii \arg(ab)}}{X^{\frac{3}{4}}}
\left[ \frac{\sqrt{2p}}{\sqrt{-\vartheta''(z_1(v);v)}} \ee^{ -2 \ii  X^{1/2}\vartheta(z_1(v);v)}  \ee^{ - \ii \frac{1}{2} p \ln(X)  } 
\ee^{-\ii( \phi_{z_1}(v) + \phi_{0}(v))}  \right.\\
\left.+ \frac{\sqrt{2p}}{\sqrt{\vartheta''(z_2(v);v)}}\ee^{ -2 \ii  X^{1/2}\vartheta(z_2(v);v)}  \ee^{\ii \frac{1}{2} p \ln(X)  } \ee^{\ii( \phi_{z_2}(v) + \phi_{0}(v))}
\right] + O(X^{-\frac{5}{4}}), \quad X\to+\infty
\label{Psi-large-X-approx-5}
\end{multline}
holds uniformly for $|v|\le v_\mathrm{c}-\delta$ and $\tau=O(1)$, 
where real phases $\phi_0(v)$, $\phi_{z_1}(v)$, and $\phi_{z_2}(v)$ are 
defined by
\begin{equation}
\phi_0(v)
:=
\frac{1}{4}\pi + p \ln( 2 (z_2(v)-z_1(v))^2 ) - \arg(\Gamma(\ii p)),
\label{phi-0-X}
\end{equation}
and
\begin{equation}
\begin{aligned}
\phi_{z_1}(v)&:= p \ln(-\vartheta''(z_1(v);v ) ),\\ 
 \phi_{z_2}(v)&:= p \ln(\vartheta''(z_2(v);v ) ).
 \end{aligned}
  \label{phi-1-phi-2-X}
\end{equation}
%
 \end{theorem}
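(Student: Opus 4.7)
The natural strategy is a Deift--Zhou nonlinear steepest descent analysis of Riemann--Hilbert Problem~\ref{rhp:near-field} in the regime $X\to+\infty$ with $v := TX^{-3/2}$ bounded away from $\pm v_{\mathrm{c}}$.  First I would reduce to the normalized parameters $(\mathfrak{a},\mathfrak{b})$ via Proposition~\ref{prop:a-b-scaling}, reinstating the prefactor $e^{-\ii\arg(ab)}$ only at the end.  The key substitution is $\Lambda = X^{-1/2}z$, under which the unit circle becomes $|z|=X^{1/2}$ and the phase in \eqref{P-jump} acquires the exponent $\ii X^{1/2}\vartheta(z;v)$ with $\vartheta$ as in \eqref{eq:vartheta-intro-def}; the saddle-point equation $\vartheta'(z;v)=0$ is precisely the cubic $2vz^3+z^2-2=0$, whose real roots are the analytic functions $z_1(v)<0<z_2(v)$ of \eqref{eq:Cardano-1}--\eqref{eq:Cardano-2}, both simple for every $|v|<v_{\mathrm{c}}$ and coalescing only in the limits $v\to\pm v_{\mathrm{c}}$.

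Second, I would apply the factorizations \eqref{Gnorm-LDU}--\eqref{Gnorm-UDL}: analytically continue the upper-triangular piece into the side of the rescaled circle where $e^{-2\ii X^{1/2}\vartheta(z;v)}$ is exponentially decaying and the lower-triangular piece into the complementary side, using the sign chart of $\Im\vartheta(z;v)$ on $\mathbb{C}\setminus\{0\}$; a direct inspection shows this produces a lens-shaped contour whose two lobes are steepest-descent arcs passing through $z_1(v)$ and $z_2(v)$, avoiding the essential singularity of $e^{\pm \ii X^{1/2}\vartheta}$ at $z=0$.  The residual diagonal factor $\mathfrak{a}^{\sigma_3}$ supported on the real arc $[z_1(v),z_2(v)]$ is removed by a transformation $\mathbf{Q}(z):=\widehat{\mathbf{P}}(z)r(z)^{\sigma_3}$, where $r$ solves a scalar Riemann--Hilbert problem with $r_+/r_-=\mathfrak{a}^2=(1+\tau^2)^{-1}$ on this arc and $r(\infty)=1$; explicitly
\[
r(z)=\left(\frac{z-z_2(v)}{z-z_1(v)}\right)^{\ii p},\qquad p=\frac{1}{2\pi}\ln(1+\tau^2),
\]
and it is $r$ combined with the $X^{1/2}$ scaling in $\Lambda=X^{-1/2}z$ that ultimately produces the logarithmic phase factors $e^{\pm \frac{\ii}{2}p\ln X}$ and the geometric phases $\phi_0(v)$ and $\phi_{z_j}(v)$.

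Third, I would install a parabolic-cylinder parametrix at each saddle.  The rescaled local variable $\zeta_j := X^{1/4}\sqrt{|\vartheta''(z_j(v);v)|}\,(z-z_j(v))$ reduces the local model near $z_j$ to the classical confluent hypergeometric model whose Riemann--Hilbert solution is expressible in terms of $D_{\pm \ii p-1}$; matching this local parametrix to the scalar factor $r(z)$ produces the combination $\sqrt{2\pi p}/\Gamma(\mp \ii p)$ in the $(1,2)$-entry of its off-diagonal expansion at infinity, together with the explicit prefactors $1/\sqrt{\mp\vartheta''(z_j(v);v)}$.  Combined with the rescaling $\Lambda=X^{-1/2}z$ and the definition \eqref{Psi-def}, this yields the leading $X^{-3/4}$ prefactor and the oscillatory exponentials $e^{-2\ii X^{1/2}\vartheta(z_j(v);v)}$ in \eqref{Psi-large-X-approx-5}.

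Fourth, I would close the argument with a small-norm RHP for the ratio of $\mathbf{Q}$ to the global parametrix formed by the parabolic-cylinder parametrices inside small disks around $z_1(v),z_2(v)$ and the identity elsewhere.  Off the saddle disks the jump is uniformly exponentially close to the identity; on the boundaries of the saddle disks the mismatch is $O(X^{-1/4})$.  This gives a Neumann-series solution with error $O(X^{-1/2})$ uniformly on compact subsets of $\{|v|<v_{\mathrm{c}}\}$, translating to an $O(X^{-5/4})$ correction for $\Psi$ after accounting for the $X^{-1/2}$ from the rescaling in \eqref{Psi-def}.  The main obstacle I anticipate is the careful bookkeeping of all phase contributions: one must track how the factors $\Gamma(\pm \ii p)$ from the parabolic-cylinder parametrix combine with the boundary values of $r$, with the constants $2(z_2(v)-z_1(v))^2$ and $|\vartheta''(z_j(v);v)|$, and with the oscillatory exponentials, to produce precisely the phases \eqref{phi-0-X}--\eqref{phi-1-phi-2-X}, and to verify the stated uniformity in $v$; the restriction $|v|\le v_{\mathrm{c}}-\delta$ is exactly what keeps the saddles isolated, the steepest-descent contours smoothly parameterized in $v$, and the matching disks of controlled radius.
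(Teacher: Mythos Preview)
Your approach is essentially the same as the paper's: reduce to normalized parameters, rescale $\Lambda = X^{-1/2}z$ to expose the phase $X^{1/2}\vartheta(z;v)$, open lenses via the LDU/UDL factorizations of $\mathbf{G}(\mathfrak{a},\mathfrak{b})$ around the real saddles $z_1(v),z_2(v)$, introduce the diagonal outer parametrix $((z-z_1)/(z-z_2))^{\ii p\sigma_3}$ to kill the residual constant jump on the cut, install parabolic-cylinder local parametrices, and close with a small-norm argument. All of that matches Section~\ref{s:large-X}.

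The gap is in your justification of the sharp $O(X^{-5/4})$ error. The mismatch on the saddle-disk boundaries is indeed $O(X^{-1/4})$, so the generic small-norm estimate gives $\mathbf{F}_{-}-\mathbb{I}=O(X^{-1/4})$ in $L^2(\Sigma_{\mathbf{F}})$. The exact representation \eqref{Psi-large-X-exact} then yields only $\Psi=\text{(explicit)}+O(X^{-1})$: your arithmetic ``Neumann error $O(X^{-1/2})$ times $X^{-1/2}$ from rescaling'' gives $X^{-1}$, not $X^{-5/4}$. To reach $O(X^{-5/4})$ the paper exploits the diagonal/off-diagonal structure of the parabolic-cylinder expansion \eqref{U-PC-expansion}: the diagonal of $\mathbf{V}^{\mathbf{F}}-\mathbb{I}$ is $O(X^{-1/2})$ (one order better than the off-diagonal $O(X^{-1/4})$), and feeding this back into the singular integral equation for $F_{11-}-1$ bootstraps that entry from $O(X^{-1/4})$ to $O(X^{-1/2})$ in $L^2$. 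With these refined entry-wise bounds the cross terms $\int(F_{11-}-1)V^{\mathbf{F}}_{12}$ and $\int F_{12-}(V^{\mathbf{F}}_{22}-1)$ are each $O(X^{-3/4})$, and the $X^{-1/2}$ prefactor delivers $O(X^{-5/4})$. This refinement is precisely what the paper flags in Remark~\ref{Remark-Sharpen} as the improvement over the earlier $O(X^{-1})$ of \cite{BilmanLM2020}; it does not fall out of a generic second-order Neumann bound, and you should spell it out.
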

 
 \begin{corollary}[Large negative $X$]
The asymptotic behavior of $\Psi(X, T;\mathbf{G})$ with $T=|X|^{\frac{3}{2}} v$ in the limit $X\to-\infty$ is given by the same formula as in the right-hand side of  \eqref{Psi-large-X-approx-5} except that $p$ is replaced with $\bar{p}:=\frac{1}{2\pi}\ln\left(1+\tau^{-2} \right)$ throughout,  $X$ is replaced with $|X|$, and the uniformity of the error requires that $\bar{\tau}:=\tau^{-1}=O(1)$.
 \label{cor:large-negative-X}
 \end{corollary}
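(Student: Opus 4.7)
The plan is to deduce the large negative $X$ asymptotics from Theorem~\ref{t:large-X} by invoking the reflection symmetry in $X$ recorded as Proposition~\ref{prop:X-symmetry}, rather than redoing any steepest-descent analysis. First I would set $Y \defeq -X > 0$ and rewrite
\begin{equation*}
\Psi(X,T;\mathbf{G}(a,b)) = \Psi(-Y,T;\mathbf{G}(a,b)) = \Psi(Y,T;\mathbf{G}(b,a)),
\end{equation*}
which reduces the claim to the large positive $Y$ asymptotics of $\Psi$ with the parameter pair $(a,b)$ replaced by $(b,a)$.

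Next I would apply Theorem~\ref{t:large-X} directly to this reduced problem, and track how each ingredient of the formula \eqref{Psi-large-X-approx-5} transforms under $(a,b)\mapsto(b,a)$. The ratio $\tau = |b/a|$ becomes $|a/b| = \tau^{-1} = \bar{\tau}$, so $p = \tfrac{1}{2\pi}\ln(1+\tau^2)$ is replaced by $\tfrac{1}{2\pi}\ln(1+\bar{\tau}^2) = \bar{p}$. This is the only place $p$ enters the leading-order formula, since the critical points $z_j(v)$ and the function $\vartheta(z;v)$ defined in \eqref{eq:Cardano-1}--\eqref{eq:vartheta-intro-def} are parameter-free, while the phases $\phi_{z_j}(v)$ and $\phi_0(v)$ defined in \eqref{phi-0-X}--\eqref{phi-1-phi-2-X} depend on the parameters only through the explicit factor $p$, and hence transform to their $\bar{p}$-versions. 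The prefactor $\ee^{-\ii\arg(ab)}$ is invariant under the swap because $\arg(ab)=\arg(ba)$. Finally, the scaling variable is now $v = T Y^{-3/2} = T|X|^{-3/2}$, accounting for the replacement of $X$ by $|X|$ throughout the right-hand side of \eqref{Psi-large-X-approx-5}, including the overall factor $|X|^{-3/4}$ and the oscillatory and logarithmic exponents $\ee^{-2\ii|X|^{1/2}\vartheta(z_j(v);v)}$ and $\ee^{\pm\ii\tfrac{1}{2}\bar{p}\ln(|X|)}$.

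The uniformity condition of Theorem~\ref{t:large-X} requires boundedness of the ratio relevant to the problem to which it is being applied; since here that ratio is $\bar{\tau}=\tau^{-1}$, this yields precisely the stated hypothesis $\bar{\tau}=O(1)$, together with the same $|v|\le v_{\mathrm{c}}-\delta$ restriction.

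I do not expect any serious obstacle: the entire argument is a symmetry reduction, so no nonlinear steepest-descent analysis needs to be repeated. The only care required is the bookkeeping to verify that each $p$- and $\tau$-dependent quantity in \eqref{Psi-large-X-approx-5}--\eqref{phi-1-phi-2-X} is indeed dependent on the parameters only through the claimed formulas, and that nothing else is altered by $(a,b)\mapsto(b,a)$ beyond what is stated in the corollary.
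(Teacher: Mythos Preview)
Your proposal is correct and follows exactly the same approach as the paper: the paper's proof is the single line ``Apply Proposition~\ref{prop:X-symmetry},'' together with the observation that $\bar{p}=p-\ln(\tau)/\pi$. You have simply spelled out the bookkeeping that the paper leaves implicit.
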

 \begin{proof}
 Apply Proposition~\ref{prop:X-symmetry}.  Note that $\bar{p}$ can be written explicitly in terms of $(p,\tau)$ by $\bar{p}=p-\ln(\tau)/\pi$.
 \end{proof}
  
These results generalize a theorem \cite[Theorem 4]{BilmanLM2020} of the authors with L.\@ Ling to the general family of solutions parametrized by the $2\times 2$ matrix $\mathbf{G}(a,b)$, and they also sharpen the error estimate (see Remark~\ref{Remark-Sharpen} below). We prove Theorem~\ref{t:large-X} in Section~\ref{s:large-X}. Note that the leading term of \eqref{Psi-large-X-approx-5} vanishes as $p\to 0^+$, which is expected in light of Proposition~\ref{p:degeneration} since this corresponds to $b\to 0$. Similarly, the leading term of the asymptotic formula valid as $X\to-\infty$ vanishes as $\bar{p}\to 0^+$, which corresponds to $a\to 0$.  More generally, aside from the phase factor of $\ee^{-\ii\arg(ab)}$, the dependence on $(a,b)$ enters only via the value of $p$, which appears in the approximate formula \eqref{Psi-large-X-approx-5} as an overall multiplier $\sqrt{2p}$ and in smaller corrections to the $p$-independent dominant phase terms proportional to $X^\frac{1}{2}$.   In Figures~\ref{fig:LargeX-a1-b1}--\ref{fig:LargeX-a0p5EIPiOver4-b1}, the accuracy of the approximation \eqref{Psi-large-X-approx-5} is illustrated by comparing with numerical computations of $\Psi(X,T;\mathbf{G})$ achieved using the software package described in Section~\ref{s:numerics-intro}.
 
\begin{figure}[h]
\begin{center}
\includegraphics[width=0.45\linewidth]{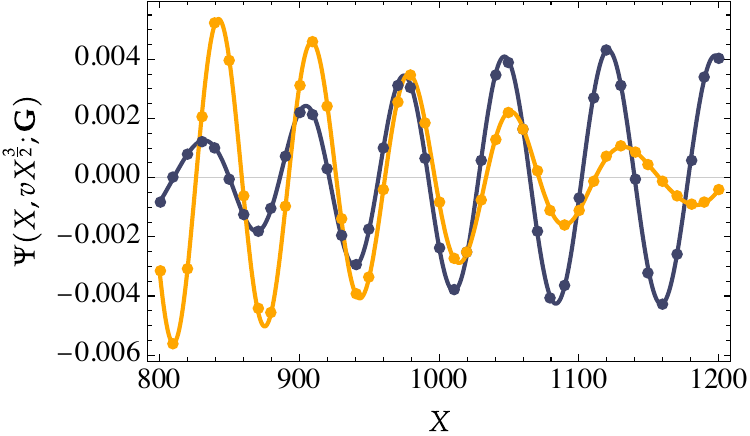}\hfill%
\includegraphics[width=0.45\linewidth]{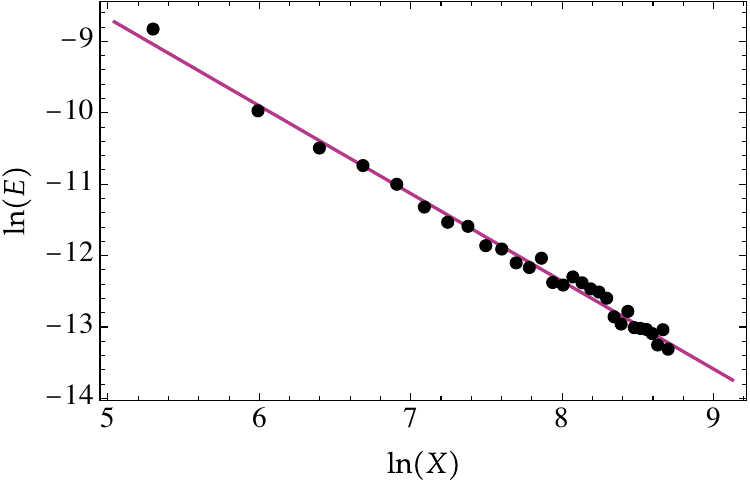}
\end{center}
\caption{Left:  The real part (navy) and imaginary part (yellow) of the explicit terms on the right-hand side of \eqref{Psi-large-X-approx-5} (curves) compared with numerical evaluation of $\Psi(X,T;\mathbf{G})$ (points) for $v=0.5v_\mathrm{c}$ and parameters $a=b=1$.  Right:  the logarithm of the absolute difference $E$ between $\Psi(X,T;\mathbf{G})$ and its explicit approximation in \eqref{Psi-large-X-approx-5} for $v=0.5v_\mathrm{c}$ and parameters $a=b=1$ plotted against $\ln(X)$.  The purple line is a least-squares best fit, and it has a slope of $-1.22883$, matching well with the predicted exponent of $-\frac{5}{4}$.}
\label{fig:LargeX-a1-b1}
\end{figure}

\begin{figure}[h]
\begin{center}
\includegraphics[width=0.45\linewidth]{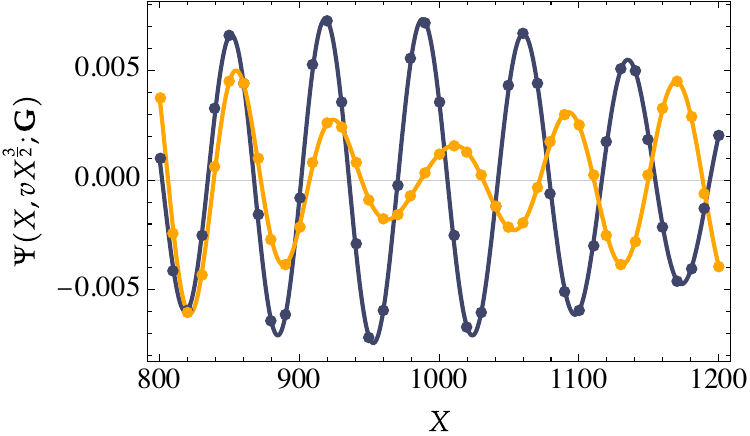}\hfill%
\includegraphics[width=0.45\linewidth]{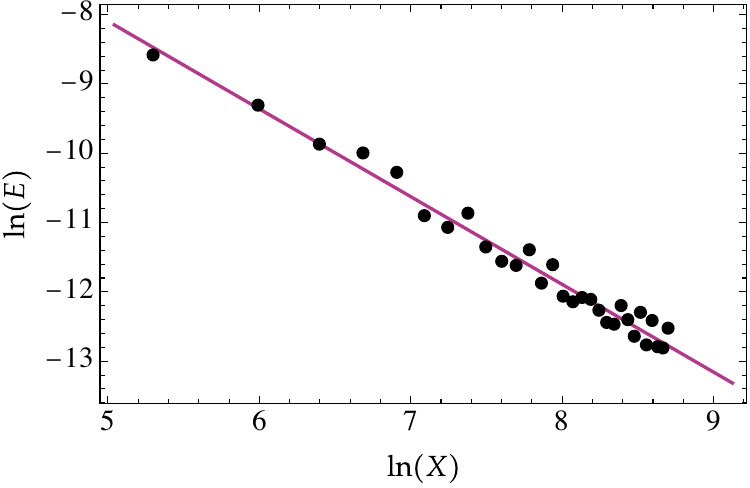}
\end{center}
\caption{Same as in Figure~\ref{fig:LargeX-a1-b1} but for $a=\frac{1}{2}\ee^{\ii\pi/4}$ and $b=1$.  The best fit line in the right-hand plot has slope $-1.26571$, again matching well with the predicted exponent of $-\frac{5}{4}$.}
\label{fig:LargeX-a0p5EIPiOver4-b1}
\end{figure}

 \begin{corollary}[Large-$X$ approximation of the squared modulus] 
  \label{coro:large-X}
 Under the hypotheses of Theorem~\ref{t:large-X}, $ |\Psi(X,T;\mathbf{G}) |^2$ with $T=X^{\frac{3}{2}}v$ has the behavior
\begin{multline}
|\Psi(X,T;\mathbf{G})|^2 = \frac{2p}{X^\frac{3}{2}\sqrt{-\vartheta''(z_1(v);v)\vartheta''(z_2(v);v)}}\left[\sqrt{-\frac{\vartheta''(z_2(v);v)}{\vartheta''(z_1(v);v)}} +\sqrt{-\frac{\vartheta''(z_1(v);v)}{\vartheta''(z_2(v);v)}}\right.\\
\left.{}\vphantom{\sqrt{-\frac{\vartheta''(z_1(v);v)}{\vartheta''(z_2(v);v)}}}+2\cos\left(
\Omega(X,v)\right)\right]
+O(X^{-2}),\quad X\to+\infty,
\label{large-X-mod-squared}
\end{multline} 
where the phase is given by
\begin{equation}
\Omega(X,v):=2\varrho(v)X^{\frac{1}{2}}+p \ln(X) +2\varsigma(v),
\label{eq:large-X-modulus-phase}
\end{equation}
where $\varrho(v):= \vartheta(z_1(v);v) - \vartheta(z_2(v);v)<0$ and $\varsigma(v):=\frac{1}{2}\phi_{z_1}(v)+\frac{1}{2}\phi_{z_2}(v)+\phi_0(v)\in\mathbb{R}$.
\end{corollary}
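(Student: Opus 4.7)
The plan is to derive Corollary~\ref{coro:large-X} directly by squaring the modulus of the asymptotic formula \eqref{Psi-large-X-approx-5} from Theorem~\ref{t:large-X}. Writing the leading term schematically as $\Psi(X,T;\mathbf{G}) = X^{-3/4}\ee^{-\ii\arg(ab)}(A_1+A_2) + O(X^{-5/4})$, where
\[
A_j := \frac{\sqrt{2p}}{\sqrt{|\vartheta''(z_j(v);v)|}}\,\ee^{-2\ii X^{1/2}\vartheta(z_j(v);v)}\,\ee^{(-1)^j\ii\frac{1}{2}p\ln(X)}\,\ee^{(-1)^j\ii(\phi_{z_j}(v)+\phi_0(v))},\quad j=1,2,
\]
I would first observe that the phase prefactor $\ee^{-\ii\arg(ab)}$ has unit modulus and drops out, so that
\[
|\Psi(X,T;\mathbf{G})|^2 = \frac{1}{X^{3/2}}|A_1+A_2|^2 + O(X^{-2}),
\]
where the error estimate follows from multiplying the $O(X^{-3/4})$ main term against the $O(X^{-5/4})$ remainder (the purely quadratic remainder contribution is $O(X^{-5/2})$ and hence subdominant).

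The next step is to expand $|A_1+A_2|^2 = |A_1|^2 + |A_2|^2 + 2\Re(A_1\overline{A_2})$. Since $\vartheta''(z_1(v);v)<0$ and $\vartheta''(z_2(v);v)>0$, I have $|A_1|^2 = 2p/(-\vartheta''(z_1(v);v))$ and $|A_2|^2 = 2p/\vartheta''(z_2(v);v)$; factoring out the common quantity $2p/\sqrt{-\vartheta''(z_1(v);v)\vartheta''(z_2(v);v)}$ and rewriting each diagonal term with a square-root ratio gives precisely the two square-root terms on the first line of \eqref{large-X-mod-squared}. All the exponential phases in $A_j$ have unit modulus, which is what makes this clean.

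For the cross term $A_1\overline{A_2}$, the unit-modulus prefactor is again $2p/\sqrt{-\vartheta''(z_1(v);v)\vartheta''(z_2(v);v)}$, and the accumulated phase is
\[
-2X^{1/2}\bigl[\vartheta(z_1(v);v)-\vartheta(z_2(v);v)\bigr] - p\ln(X) - (\phi_{z_1}(v)+\phi_{z_2}(v)+2\phi_0(v)).
\]
Taking twice the real part produces a cosine; using the evenness of cosine and the definitions $\varrho(v):=\vartheta(z_1(v);v)-\vartheta(z_2(v);v)$ and $\varsigma(v):=\tfrac{1}{2}\phi_{z_1}(v)+\tfrac{1}{2}\phi_{z_2}(v)+\phi_0(v)$ then yields $2\cos(\Omega(X,v))$ with $\Omega$ as defined in \eqref{eq:large-X-modulus-phase}. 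Combining the diagonal and off-diagonal contributions gives exactly \eqref{large-X-mod-squared}.

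There is no substantive obstacle here; the argument is straightforward algebra once Theorem~\ref{t:large-X} is in hand. The only points requiring a bit of care are (i) correctly handling the signs of the two critical values $\vartheta''(z_j(v);v)$ when extracting moduli and square roots, (ii) verifying that the error propagates as $O(X^{-2})$ rather than the naive $O(X^{-5/4})\cdot O(X^{-3/4}) = O(X^{-2})$ (which is the same—so this is automatic), and (iii) bookkeeping the phases so that the compact notation $\Omega(X,v) = 2\varrho(v)X^{1/2} + p\ln(X) + 2\varsigma(v)$ emerges cleanly. That the negative sign of $\varrho(v)$ is consistent (noted in the statement) follows from $\vartheta'(z_j(v);v)=0$ with $z_1<0<z_2$ and the global structure of $\vartheta(\cdot;v)$, so this can simply be cited from the setup preceding Theorem~\ref{t:large-X}.
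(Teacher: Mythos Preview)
Your proposal is correct and is exactly the computation the paper has in mind: the paper states Corollary~\ref{coro:large-X} without a separate proof, treating it as the immediate consequence of Theorem~\ref{t:large-X} obtained by squaring the modulus of \eqref{Psi-large-X-approx-5} and expanding $|A_1+A_2|^2$ just as you describe. Your bookkeeping of the signs of $\vartheta''(z_j(v);v)$, the phase collection into $\Omega(X,v)$, and the $O(X^{-2})$ error propagation are all accurate.
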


\begin{remark}
 In particular, Corollary~\ref{coro:large-X} gives the asymptotic behavior of $|\Psi(X,T;\mathbf{G})|^2$ in the limit $X\to +\infty$ with $T\ge 0$ held fixed, in which case the parameter $v$ tends to zero.  After taking the necessary square root, one can see that the function $\mathbb{R}\ni X\mapsto \Psi(X,T;\mathbf{G})$ is not in $L^1(\mathbb{R})$ for any $T\ge 0$.  Therefore, $\Psi(X,T;\mathbf{G})$ is not associated with any sensible scattering data in the inverse-scattering transform solution of the focusing NLS equation \eqref{nls}.  Although general rogue waves of infinite order are in $L^2(\mathbb{R})$ by Theorem~\ref{t:L2-norm} and the focusing NLS equation is globally well-posed on $L^2(\mathbb{R})$ \cite{Tsutsumi87}, the inverse-scattering method nonetheless does not apply to these solutions.  
 \label{rem:no-IST}
 \end{remark}

Aside from an overall factor of $X^{-\frac{3}{2}}$, the only dependence of $|\Psi(X,T;\mathbf{G})|^2$ on $X\gg 1$ in the leading terms for fixed $v$ appears in the phase $\Omega(X,v)$, making the leading contribution to $|\Psi(X,T;\mathbf{G})|^2$ highly oscillatory.  It is therefore reasonable to assert that $|\Psi(X,T;\mathbf{G})|^2$ is 
approximately maximized along curves in the $(X,T)$-plane where $\cos(\Omega(X,TX^{-\frac{3}{2}}))=1$.  Thus, we set $\Omega(X,v)=2\pi n$ for $n\in\mathbb{Z}$ and solve for $X$.
Since $\varrho(v)<0$ is bounded away from zero, $X>0$ large implies that also $n=-N$ is a large negative integer.  The equation $\Omega(X,v)=-2\pi N$ can then be rearranged as
\begin{equation}
\frac{1}{p}\varrho(v)X^\frac{1}{2}\exp\left(\frac{1}{p}\varrho(v)X^\frac{1}{2}\right) = -\ee^{-\eta},\quad\eta:=\frac{\pi N}{p}+\nu(v),\quad \nu(v):=\frac{\varsigma(v)}{p}+\ln\left(-\frac{p}{\varrho(v)}\right),
\end{equation}
which is solved using the Lambert $W$-function \cite[Section 4.13]{DLMF}:
\begin{equation}
X=\frac{p^2}{\varrho(v)^2}W_{\pm 1}(-\ee^{-\eta}\mp \ii 0)^2.
\label{eq:X-channels-curves}
\end{equation}
Using the asymptotic formula $W_{\pm 1}(-\ee^{-\eta}\mp\ii 0)=-\eta -\ln(\eta)-\ln(\eta)/\eta + O(\ln(\eta)^2/\eta^2)$ as $\eta\to+\infty$ (see \cite[Eqn.\@ 4.13.11]{DLMF}), we then obtain for each large positive integer $N$ a solution $X=X_N(v)$ that is accurate up to a small absolute error:
\begin{equation}
\begin{split}
X_N(v)&=\frac{p^2}{\varrho(v)^2}\left[\eta^2 + 2\eta\ln(\eta) + 2\ln(\eta) + \ln(\eta)^2\right] + O\left(\frac{\ln(\eta)^2}{\eta}\right),\quad\eta\to+\infty\\
&=\frac{\pi^2}{\varrho(v)^2}N^2 + \frac{2\pi p}{\varrho(v)^2}N\ln(N) + \frac{2\pi p}{\varrho(v)^2}\left(
\nu(v)+\ln\left(\frac{\pi}{p}\right)
\right)N\\
&\quad\quad {}+\frac{p^2}{\varrho(v)^2}\ln(N)^2+\frac{2p^2}{\varrho(v)^2}\left(1+
\nu(v)+\ln\left(\frac{\pi}{p}\right)
\right)\ln(N)\\
&\quad\quad{}+\frac{p^2}{\varrho(v)^2}\left[
2\nu(v)+\nu(v)^2
+ 2\left(1+
\nu(v)
\right)\ln\left(\frac{\pi}{p}\right) +\log\left(\frac{\pi}{p}\right)^2\right] \\
&\quad\quad{}+O(N^{-1}\ln(N)^2),\quad N\to+\infty.
\end{split}
\end{equation}
%
This expansion is valid uniformly for $|v|\le v_\mathrm{c}-\delta$ for any $\delta>0$ however small, but it fails for $v\approx \pm v_\mathrm{c}$, where $\phi_{z_1}(v)$ and hence also $\varsigma(v)$ blows up.  The curves $X=X_N(TX^{-\frac{3}{2}})$ are superimposed on a density plot of the square modulus of $\Psi(X,T;\mathbf{G})$
in Figure~\ref{f:overlay-peaks}.  This shows that these curves actually approximate the peaks of the modulus in the region $|TX^{-\frac{3}{2}}|<v_\mathrm{c}$ quite accurately even when $X$ is not very large.  

After proving Theorem~\ref{t:large-X} and Corollary~\ref{coro:large-X}, we obtain the proof of Theorem~\ref{t:L2-norm} in Section~\ref{s:large-X} essentially as a corollary.

\subsubsection{Behavior of $\Psi(X,T;\mathbf{G})$ for $T$ large}
\label{s:large-T-results}
Next, we describe the behavior of $\Psi(X,T;\mathbf{G})$ in the limit as $T\to\pm \infty$. Let $w_\mathrm{c}:=54^\frac{1}{3}>0$.  Define for $|w|<w_\mathrm{c}$ the real quantities:
\begin{equation}
\tz_1(w):=\frac{1}{12}\left(-w-\sqrt{w^2+8w_\mathrm{c}^2}\right)<0\quad\text{and}\quad
\tz_2(w):=\frac{1}{12}\left(-w+\sqrt{w^2+8w_\mathrm{c}^2}\right)>0,
\label{eq:Z1Z2-intro}
\end{equation}
and the complex quantity
\begin{equation}
\tz_0(w):=\frac{1}{3}\left(-w +\ii\sqrt{w_\mathrm{c}^2-w^2}\right)\in\mathbb{C}_+,
\label{eq:Z0-intro}
\end{equation}
where in each case the positive square root is taken.  For convenience, we write 
\begin{equation}
V(w):=\mathrm{Im}(\tz_0(w)).
\end{equation}
Related amplitudes are then defined by
\begin{equation}
\begin{split}
m_{\tz_1}^\pm(w)&:=\frac{1}{2}(1\pm\cos(\arg(\tz_1(w)-\tz_0(w))))>0,\\
m_{\tz_2}^\pm(w)&:=\frac{1}{2}(1\pm\cos(\arg(\tz_2(w)-\tz_0(w))))>0,
\end{split}
\label{eq:m-amplitudes-intro}
\end{equation}
which of course satisfy $m_{\tz_j}^+(w)+m_{\tz_j}^-(w)=1$ for $j=1,2$.  Now define some real phases by:
\begin{equation}
\kappa(w):=-\frac{1}{3}(w^2+w_\mathrm{c}^2),
\label{eq:kappa-intro}
\end{equation}
\begin{multline}
\Phi(w):= 
2\bar{p}\ln\left(\frac{\tz_1(w)-\mathrm{Re}(\tz_0(w))+|\tz_1(w)-\tz_0(w)|}{V(w)}\right)\\
{}-2p\ln\left(\frac{\tz_2(w)-\mathrm{Re}(\tz_0(w)) + |\tz_2(w)-\tz_0(w)|}{V(w)}\right) -\frac{\pi}{2},
\label{eq:Phi-intro}
\end{multline}
\begin{multline}
\Phi^0_{\tz_1}(w):=2p\ln\left(\frac{V(w)^2-2(V(w)-|\tz_1(w)-\tz_0(w)|)(V(w)-|\tz_2(w)-\tz_0(w)|)}{V(w)^2-2(V(w)+|\tz_1(w)-\tz_0(w)|)(V(w)-|\tz_2(w)-\tz_0(w)|)}\right)\\
+2p\ln\left(\frac{\mathrm{Re}(\tz_0(w))-\tz_1(w)}{|\tz_1(w)-\tz_0(w)|-V(w)}\right) 
+ 2\bar{p}\ln\left(\frac{-\tz_1(w)V(w)}{4|\tz_1(w)-\tz_0(w)|^\frac{5}{2}(\tz_2(w)-\tz_1(w))^\frac{1}{2}}\right)\\+\frac{\pi}{4}+\arg(\Gamma(\ii\bar{p})),
\label{eq:Phi0-Z1-intro}
\end{multline}
\begin{multline}
\Phi^0_{\tz_2}(w):=2\bar{p}\ln\left(\frac{V(w)^2-2(V(w)-|\tz_2(w)-\tz_0(w)|)(V(w)-|\tz_1(w)-\tz_0(w)|)}{V(w)^2-2(V(w)+|\tz_2(w)-\tz_0(w)|)(V(w)-|\tz_1(w)-\tz_0(w)|)}\right)\\
+2\bar{p}\ln\left(\frac{\tz_2(w)-\mathrm{Re}(\tz_0(w))}{|\tz_2(w)-\tz_0(w)|-V(w)}\right)
+2p\ln\left(\frac{\tz_2(w)V(w)}{4|\tz_2(w)-\tz_0(w)|^\frac{5}{2}(\tz_2(w)-\tz_1(w))^\frac{1}{2}}\right)\\ +\frac{\pi}{4}+\arg(\Gamma(\ii p)),
\label{eq:Phi0-Z2-intro}
\end{multline}
\begin{equation}
\Phi_{\tz_1}(T,w):=2\frac{|\tz_1(w)-\tz_0(w)|^3}{-\tz_1(w)}T^\frac{1}{3} -\frac{1}{3}\bar{p}\ln(T) + \Phi^0_{\tz_1}(w),
\label{eq:Phi-Z1-intro}
\end{equation}
and
\begin{equation}
\Phi_{\tz_2}(T,w):=2\frac{|\tz_2(w)-\tz_0(w)|^3}{\tz_2(w)}T^\frac{1}{3} -\frac{1}{3}p\ln(T) + \Phi_{\tz_2}^0(w).
\label{eq:Phi-Z2-intro}
\end{equation}

\begin{theorem}[Large-$T$ regime]
Let $\tau:=|b/a|$ and $p:=\frac{1}{2\pi}\ln(1+\tau^2)$, $\bar{p}:=\frac{1}{2\pi}\ln(1+\tau^{-2})$, and set $w:=XT^{-\frac{2}{3}}\in\mathbb{R}$.  Then, for each $\delta>0$,
\begin{multline}
\Psi(X,T;\mathbf{G})=\ee^{-\ii\arg(ab)}\ee^{-\ii T^{1/3}\kappa(w)}\ee^{\ii\Phi(w)}\left[\frac{1}{3}\sqrt{w_\mathrm{c}^2-w^2}T^{-\frac{1}{3}}\right.\\
-\frac{1}{\sqrt{\tz_2(w)-\tz_1(w)}}\left\{
\frac{\sqrt{\bar{p}}|\tz_1(w)|\left(m_{\tz_1}^+(w)\ee^{\ii\Phi_{\tz_1}(T,w)}+m_{\tz_1}^-(w)\ee^{-\ii\Phi_{\tz_1}(T,w)}\right)}{|\tz_1(w)-\tz_0(w)|^\frac{1}{2}}\right.\\
\left.\left.+\frac{\sqrt{p}\tz_2(w)\left(m_{\tz_2}^-(w)\ee^{\ii\Phi_{\tz_2}(T,w)}+m_{\tz_2}^+(w)\ee^{-\ii\Phi_{\tz_2}(T,w)}\right)}{|\tz_2(w)-\tz_0(w)|^\frac{1}{2}}\right\}T^{-\frac{1}{2}} + O(T^{-\frac{2}{3}})\right],\quad T\to+\infty
\label{eq:Psi-Large-T-intro}
\end{multline}
holds uniformly for $|w|\le w_\mathrm{c}-\delta$, $\tau=O(1)$, and $\tau^{-1}=O(1)$.  
\label{t:large-T}
\end{theorem}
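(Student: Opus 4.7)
The plan is to carry out a nonlinear steepest descent analysis of Riemann-Hilbert Problem~\ref{rhp:near-field} in the regime $T\to+\infty$ with $w=XT^{-2/3}$ bounded away from $\pm w_\mathrm{c}$, in parallel with the strategy used for Theorem~\ref{t:large-X} but with the geometry adapted to the new large-$T$ scaling. First, I would invoke Proposition~\ref{prop:a-b-scaling} to reduce to the case of normalized real parameters $(\mathfrak{a},\mathfrak{b})$, reinstating the factor $\ee^{-\ii\arg(ab)}$ at the end. I would then rescale the spectral variable via $\Lambda=\lambda T^{-1/3}$, so that the phase $\Lambda X+\Lambda^2T+2\Lambda^{-1}$ appearing in \eqref{P-jump} becomes $T^{1/3}\phi(\lambda;w)$ with $\phi(\lambda;w):=\lambda^2+w\lambda+2\lambda^{-1}$. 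Since the jump matrix in \eqref{P-jump} is analytic on $\mathbb{C}\setminus\{0\}$, the jump contour can be moved from the image of the unit circle to any convenient closed curve encircling the origin, and $T^{1/3}$ plays the role of the large asymptotic parameter.

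Next I would construct a scalar $g$-function $g(\lambda;w)$, analytic on $\mathbb{C}\setminus\Sigma_0$ where $\Sigma_0$ is a short arc joining the Schwarz-symmetric pair $\tz_0^*(w)$ and $\tz_0(w)$, satisfying $g(\lambda)\to 0$ as $\lambda\to\infty$, matching the pole $2/\lambda$ of $\phi$ at the origin, and obeying $g_+(\lambda)+g_-(\lambda)=\phi(\lambda;w)-\kappa(w)$ on $\Sigma_0$. The square-root endpoint conditions on $g'$ at $\tz_0,\tz_0^*$ together with a vanishing-moment condition at infinity would yield the explicit formulas \eqref{eq:Z0-intro} for the endpoints and \eqref{eq:kappa-intro} for $\kappa(w)$. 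After the conjugation $\mathbf{P}\mapsto\mathbf{P}\ee^{-\ii T^{1/3}g\sigma_3}$, the effective exponent $T^{1/3}(\phi-2g)$ would have real saddles precisely at $\tz_1(w)<0<\tz_2(w)$ given by \eqref{eq:Z1Z2-intro}, and the zero level curves of $\mathrm{Im}(\phi-2g)$ would dictate the topology of the steepest descent contours onto which I would deform the jump.

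Using one of the triangular factorizations \eqref{Gnorm-LDU}--\eqref{Gnorm-UTU} of $\mathbf{G}(\mathfrak{a},\mathfrak{b})$, I would open lenses around $\Sigma_0$ and around the two real saddles so that, outside small disks surrounding $\tz_0^*,\tz_0,\tz_1,\tz_2$, the jumps decay exponentially. Inside each disk I would install a local parametrix: parabolic cylinder parametrices at $\tz_1$ and $\tz_2$ (generic second-order stationary points with logarithmic weights, producing the gamma-function phases $\arg\Gamma(\ii\bar p)$ and $\arg\Gamma(\ii p)$ and the amplitudes $\sqrt{\bar p}$, $\sqrt{p}$ visible in \eqref{eq:Phi0-Z1-intro}--\eqref{eq:Phi0-Z2-intro}), and Schwarz-symmetric Airy-type endpoint parametrices at $\tz_0,\tz_0^*$. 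The direction-dependent factors $m^\pm_{\tz_j}(w)=\tfrac12(1\pm\cos(\arg(\tz_j(w)-\tz_0(w))))$ in \eqref{eq:m-amplitudes-intro} should emerge from projecting the two Stokes directions of the parabolic cylinder function onto the two steepest descent arcs meeting at each real saddle, whose directions are controlled by $\arg(\tz_j-\tz_0)$. A standard small-norm Riemann-Hilbert argument would give a uniform error of size $O(T^{-2/3})$. Reading off $\Psi=2\ii\lim_{\Lambda\to\infty}\Lambda P_{12}$ via \eqref{Psi-def}, the outer contribution from the $g$-function alone yields the leading term $\tfrac13\sqrt{w_\mathrm{c}^2-w^2}\,T^{-1/3}=V(w)T^{-1/3}$, while the saddle-point parametrices contribute the oscillatory $T^{-1/2}$ corrections with phases $\Phi_{\tz_1}(T,w),\Phi_{\tz_2}(T,w)$ and the overall background phase $\ee^{-\ii T^{1/3}\kappa(w)}\ee^{\ii\Phi(w)}$.

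The main obstacles I anticipate are: (i) explicit construction of the $g$-function and verification that the endpoint and moment conditions indeed yield \eqref{eq:Z0-intro}--\eqref{eq:kappa-intro}; (ii) establishing the correct signature diagram for $\mathrm{Im}(\phi-2g)$ throughout the $\lambda$-plane for all $|w|\le w_\mathrm{c}-\delta$, which is required to justify the lens openings and to select the appropriate factorization among \eqref{Gnorm-LDU}--\eqref{Gnorm-UTU}; and (iii) the meticulous tracking of all logarithmic and constant phase contributions through the parametrix matching, which accounts for the intricate structure of $\Phi(w)$, $\Phi^0_{\tz_1}(w)$, $\Phi^0_{\tz_2}(w)$ in \eqref{eq:Phi-intro}--\eqref{eq:Phi0-Z2-intro}. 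Step (iii) is the most likely source of sign and constant errors and will require particularly careful bookkeeping. The loss of uniformity as $|w|\to w_\mathrm{c}$ is intrinsic, because $\tz_0$ collapses onto the real axis and collides with the saddles $\tz_1,\tz_2$, causing the separate endpoint and saddle parametrices to merge into a higher-order Painlev\'e-type parametrix that would be needed for a transition asymptotic description.
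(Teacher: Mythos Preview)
Your proposal is essentially correct and follows the same strategy as the paper's proof in Section~\ref{s:large-T}: rescale $\Lambda=T^{-1/3}Z$, introduce a $g$-function with branch cut $\Sigma$ joining $\tz_0(w),\tz_0(w)^*$, open lenses, install parabolic cylinder parametrices at the real saddles $\tz_1,\tz_2$ and Airy parametrices at $\tz_0,\tz_0^*$, and extract the leading $T^{-1/3}$ term from the outer parametrix and the $T^{-1/2}$ correction from the parabolic cylinder parametrices.

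Two small corrections are worth noting. First, you will need \emph{all four} factorizations \eqref{Gnorm-LDU}--\eqref{Gnorm-UTU}, not just one: the LDU/UDL pair is used on the arcs through $\tz_2$ exactly as in the large-$X$ analysis, but on the arcs through $\tz_1$ (which carry the cut $\Sigma$) the LTL/UTU factorizations are required so that the twist jump on $\Sigma$ is isolated. Second, your mechanism for the amplitudes $m_{\tz_j}^\pm(w)$ is not quite right. They do not arise from projecting Stokes directions of the parabolic cylinder functions; rather, they come from evaluating the outer parametrix at the saddle points. The outer model involves a genus-zero factor $\mathbf{L}(\tz;T,w)$ built from $y(\tz;w)=((\tz-\tz_0)/(\tz-\tz_0^*))^{1/4}$, and one finds $L_{11}(\tz_j)^2=\tfrac12(1+\cos\arg(\tz_j-\tz_0))$ and $L_{12}(\tz_j)^2\propto\tfrac12(1-\cos\arg(\tz_j-\tz_0))$; these squared entries multiply the two off-diagonal contributions $r,s$ of the parabolic cylinder expansion, producing the $m_{\tz_j}^\pm$ weighting. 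This distinction matters for getting the constants in $\Phi^0_{\tz_j}(w)$ right in step~(iii).
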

\begin{corollary}[Large negative $T$]
The asymptotic behavior of $\Psi(X,T;\mathbf{G})$ with $X=w|T|^{\frac{2}{3}}$ in the limit $T\to-\infty$ is given by the right-hand side of \eqref{eq:Psi-Large-T-intro}, except that $T$ is replaced with $|T|$ and the signs of the real phases $\Phi(w)$, $\Phi_{\tz_1}(|T|,w)$, $\Phi_{\tz_2}(|T|,w)$, and $\kappa(w)$ (but not $-\arg(ab)$) are changed.
\label{cor:large-T-negative}
\end{corollary}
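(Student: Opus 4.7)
The plan is to deduce Corollary~\ref{cor:large-T-negative} from Theorem~\ref{t:large-T} by invoking the $T$-reflection symmetry established in Proposition~\ref{prop:T-symmetry}. For $T<0$, set $T':=|T|=-T>0$; then Proposition~\ref{prop:T-symmetry} gives
\begin{equation}
\Psi(X,T;\mathbf{G}(a,b))=\Psi(X,T';\mathbf{G}(a,b)^*)^*.
\end{equation}
A direct inspection of the representation \eqref{G-form} shows that $\mathbf{G}(a,b)^*=\mathbf{G}(a^*,b^*)$, so it suffices to apply Theorem~\ref{t:large-T} to the right-hand side with parameter data $(a^*,b^*)$ as $T'\to+\infty$ and then take the complex conjugate of the result.

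Next, I would verify that all the auxiliary quantities entering \eqref{eq:Psi-Large-T-intro} are invariant under the replacement $(a,b)\mapsto(a^*,b^*)$. Since $\tau=|b/a|=|b^*/a^*|$, the parameters $p$ and $\bar{p}$ are unchanged, and since $w=X/(T')^{2/3}=X/|T|^{2/3}$, the similarity variable that appears is exactly the $w$ used in the corollary. Consequently, the only dependence of the right-hand side on the phase of $(a,b)$ is carried by the scalar prefactor $\ee^{-\ii\arg(ab)}$, which transforms to $\ee^{-\ii\arg(a^*b^*)}=\ee^{\ii\arg(ab)}$; the subsequent complex conjugation restores it to $\ee^{-\ii\arg(ab)}$, as claimed in the corollary.

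It then remains to take the conjugate of the remainder of the formula. By construction, every quantity appearing outside of an exponential in \eqref{eq:Psi-Large-T-intro}—namely $\tz_0(w)$'s imaginary part, $\tz_1(w)$, $\tz_2(w)$, the amplitudes $m_{\tz_j}^\pm(w)$ defined through $\cos(\arg(\tz_j(w)-\tz_0(w)))$, and each of the square-root factors $|\tz_j(w)-\tz_0(w)|^{1/2}$ and $\sqrt{\tz_2(w)-\tz_1(w)}$—is manifestly real-valued, as are the numerical coefficients, $p$, and $\bar{p}$. The phases $\kappa(w)$, $\Phi(w)$, $\Phi_{\tz_1}(T',w)$, and $\Phi_{\tz_2}(T',w)$ defined in \eqref{eq:kappa-intro}--\eqref{eq:Phi-Z2-intro} are also real. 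Complex conjugation therefore has no effect on any of the modular prefactors and simply flips the sign of each exponential $\ee^{-\ii T'^{1/3}\kappa(w)}$, $\ee^{\ii\Phi(w)}$, and $\ee^{\pm\ii\Phi_{\tz_j}(T',w)}$. Combined with the substitution $T'=|T|$, this is exactly the modification described in the corollary, with the $O(T'^{-2/3})$ error passing unchanged to $O(|T|^{-2/3})$.

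The main (and very minor) obstacle is simply bookkeeping: one must confirm that every real factor is indeed real---in particular that the $m_{\tz_j}^\pm(w)$ and the quantities $\Phi^0_{\tz_j}(w)$ appearing in $\Phi_{\tz_j}(T',w)$ remain real-valued for $|w|<w_\mathrm{c}$---so that conjugation produces precisely the sign changes on $\{\kappa,\Phi,\Phi_{\tz_1},\Phi_{\tz_2}\}$ claimed in the corollary and nothing else.
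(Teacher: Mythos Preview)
Your proposal is correct and follows exactly the approach the paper uses: the paper's entire proof is simply ``Apply Proposition~\ref{prop:T-symmetry},'' and you have filled in precisely the bookkeeping that this one-line argument entails. Your verification that the auxiliary quantities $\tau$, $p$, $\bar p$, $w$, $m_{\tz_j}^\pm(w)$, and the real phases are unchanged under $(a,b)\mapsto(a^*,b^*)$, together with the tracking of $\ee^{-\ii\arg(ab)}$ through conjugation, is exactly what is needed and left implicit in the paper.
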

\begin{proof}
Apply Proposition~\ref{prop:T-symmetry}.
\end{proof}
These results generalize the long-time asymptotic theorem \cite[Theorem 5]{BilmanLM2020} of the authors with L.\@ Ling to the general family of solutions parametrized by the $2\times 2$ matrix $\mathbf{G}(a,b)$. They also provide an explicit correction term proportional to $T^{-\frac{1}{2}}$ not previously obtained for any parameters.  We  prove Theorem~\ref{t:large-T} in Section~\ref{s:large-T}.
One can see from \eqref{eq:Psi-Large-T-intro} that aside from the factor of $\ee^{-\ii\arg(ab)}$, the asymptotic formula depends on $(a,b)$ only via the values of $p$ and $\bar{p}$, which enter via $\sqrt{p}$ and $\sqrt{p}$ as multipliers of the two correction terms, and which also appear as multipliers in the phase $\Phi(w)$ and in subdominant corrections to the leading terms proportional to $T^\frac{1}{3}$ in the phases $\Phi_{\tz_1}(T,w)$ and $\Phi_{\tz_2}(T,w)$.   The accuracy of the large-$T$ approximation afforded by Theorem~\ref{t:large-T} is illustrated in Figures~\ref{fig:LargeT-a1-b1}--\ref{fig:LargeT-a0p5EIPiOver4-b1}.

\begin{figure}[h]
\begin{center}
\includegraphics[width=0.45\linewidth]{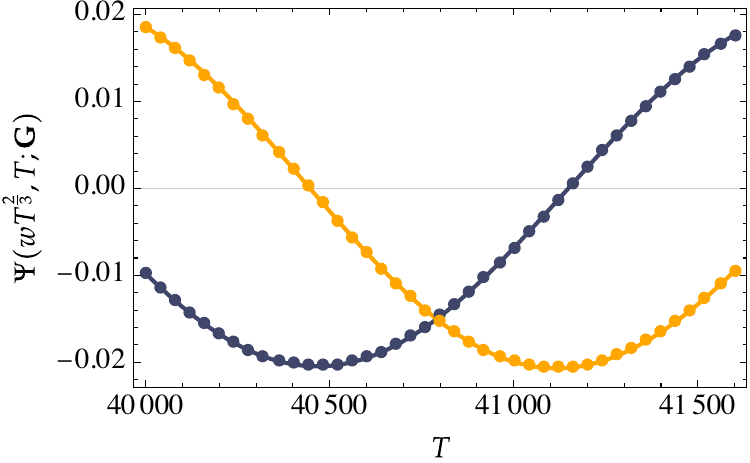}\hfill%
\includegraphics[width=0.45\linewidth]{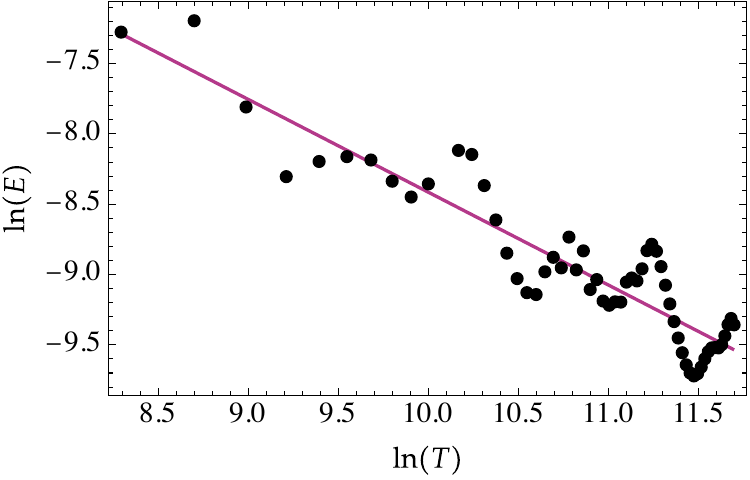}
\end{center}
\caption{Left:  The real part (navy) and imaginary part (yellow) of the explicit terms on the right-hand side of \eqref{eq:Psi-Large-T-intro} (curves) compared with numerical evaluation of $\Psi(X,T;\mathbf{G})$ (points) for $w=0.85w_\mathrm{c}$ and parameters $a=b=1$.  Right:  the logarithm of the absolute difference $E$ between $\Psi(X,T;\mathbf{G})$ and its explicit approximation in \eqref{eq:Psi-Large-T-intro} for $w=0.85w_\mathrm{c}$ and parameters $a=b=1$ plotted against $\ln(T)$.  The purple line is a least-squares best fit, and it has a slope of $-0.66042$, matching well with the predicted exponent of $-\frac{2}{3}$.}
\label{fig:LargeT-a1-b1}
\end{figure}

\begin{figure}[h]
\begin{center}
\includegraphics[width=0.45\linewidth]{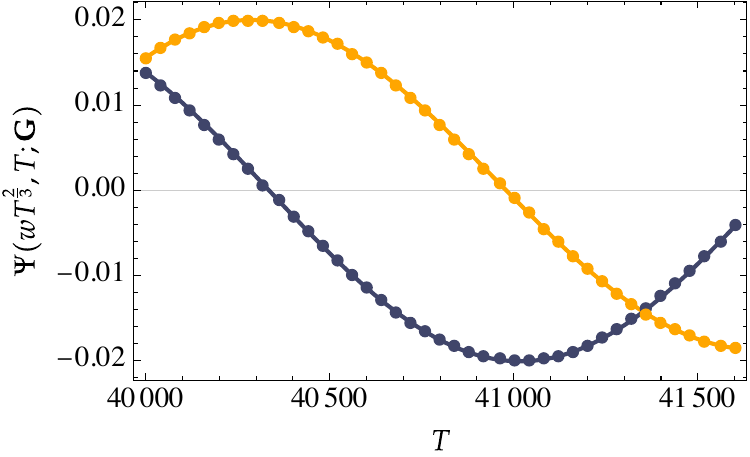}\hfill%
\includegraphics[width=0.45\linewidth]{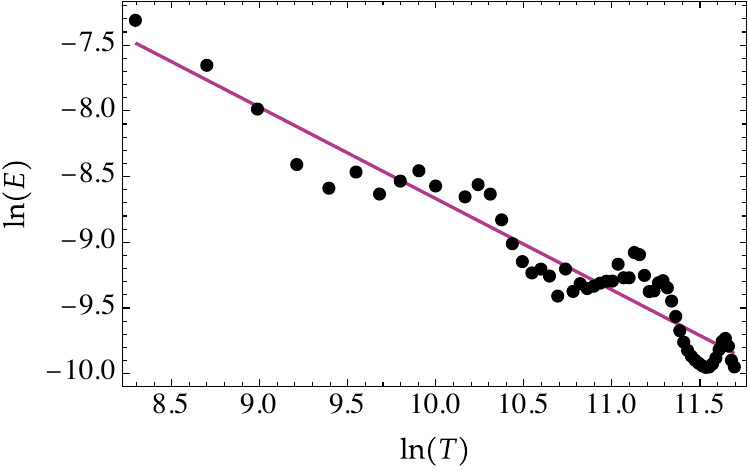}
\end{center}
\caption{Same as in Figure~\ref{fig:LargeT-a1-b1} but for $a=\frac{1}{2}\ee^{\ii\pi/4}$ and $b=1$.  The best fit line in the right-hand plot has slope $-0.69432$, again matching well with the predicted exponent of $-\frac{2}{3}$.}
\label{fig:LargeT-a0p5EIPiOver4-b1}
\end{figure}

 \begin{corollary}[Large-$T$ approximation of the squared modulus] 
  \label{coro:large-T}
 Under the hypotheses of Theorem~\ref{t:large-T}, $ |\Psi(X,T;\mathbf{G})|^2$ with $X=T^\frac{2}{3}w$ has the behavior
\begin{multline}
|\Psi(X,T;\mathbf{G})|^2 = \frac{w_\mathrm{c}^2-w^2}{9}T^{-\frac{2}{3}}\\
-\frac{2}{3}\sqrt{\frac{w_\mathrm{c}^2-w^2}{\tz_2(w)-\tz_1(w)}}\left\{\frac{\sqrt{\bar{p}}|\tz_1(w)|\cos(\Phi_{\tz_1}(T,w))}{|\tz_1(w)-\tz_0(w)|^\frac{1}{2}} +
\frac{\sqrt{p}\tz_2(w)\cos(\Phi_{\tz_2}(T,w))}{|\tz_2(w)-\tz_0(w)|^\frac{1}{2}}\right\}T^{-\frac{5}{6}}\\+O(T^{-1}),\quad T\to+\infty
\label{large-T-mod-squared}
\end{multline}
and for the limit $T\to-\infty$ we simply replace $T$ with $|T|$ on the right-hand side.
 \end{corollary}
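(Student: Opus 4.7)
The plan is to compute $|\Psi(X,T;\mathbf{G})|^2 = \Psi \cdot \Psi^*$ directly from the expansion \eqref{eq:Psi-Large-T-intro}. First, I would observe that the three prefactors $\ee^{-\ii\arg(ab)}$, $\ee^{-\ii T^{1/3}\kappa(w)}$, and $\ee^{\ii\Phi(w)}$ appearing outside the bracket in \eqref{eq:Psi-Large-T-intro} all have unit modulus, since the exponents are purely imaginary multiples of the real quantities $\arg(ab)$, $\kappa(w)$, and $\Phi(w)$. Consequently, $|\Psi|^2$ equals the squared modulus of just the bracketed expression, reducing the problem to short bookkeeping on an expansion of the form $\alpha(w) T^{-1/3} + \beta(T,w) T^{-1/2} + O(T^{-2/3})$, where $\alpha(w) := \tfrac{1}{3}\sqrt{w_\mathrm{c}^2 - w^2}$ is real and positive (uniformly for $|w|\le w_\mathrm{c}-\delta$) and $\beta(T,w)$ denotes the complex expression multiplying $T^{-1/2}$ in \eqref{eq:Psi-Large-T-intro}, including its leading minus sign and prefactor $(\tz_2(w)-\tz_1(w))^{-1/2}$.

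Since $\alpha$ is real, expansion of the squared modulus yields
\begin{equation*}
|\Psi|^2 = \alpha(w)^2 T^{-2/3} + 2\alpha(w)\operatorname{Re}(\beta(T,w)) T^{-5/6} + O(T^{-1}),
\end{equation*}
where the $O(T^{-1})$ error collects the $|\beta|^2 T^{-1}$ contribution together with all cross-terms involving the $O(T^{-2/3})$ remainder, each of which is uniformly $O(T^{-1})$ or smaller in the allowed range of $w$, $\tau$, and $\tau^{-1}$. The leading coefficient $\alpha(w)^2 = \tfrac{1}{9}(w_\mathrm{c}^2 - w^2)$ matches the first line of \eqref{large-T-mod-squared} exactly.

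To finish, I would simplify $\operatorname{Re}(\beta)$ using the key identity
\begin{equation*}
\operatorname{Re}\!\left(m_{\tz_j}^+(w) \ee^{\ii \Phi_{\tz_j}(T,w)} + m_{\tz_j}^-(w) \ee^{-\ii \Phi_{\tz_j}(T,w)}\right) = \bigl(m_{\tz_j}^+(w) + m_{\tz_j}^-(w)\bigr)\cos(\Phi_{\tz_j}(T,w)) = \cos(\Phi_{\tz_j}(T,w)),
\end{equation*}
which holds because $m_{\tz_j}^\pm(w)\in\mathbb{R}$, $\Phi_{\tz_j}(T,w)\in\mathbb{R}$, and the definitions in \eqref{eq:m-amplitudes-intro} force $m_{\tz_j}^+ + m_{\tz_j}^- = 1$. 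Substituting this into the explicit form of $\beta$ and multiplying by $2\alpha(w) = \tfrac{2}{3}\sqrt{w_\mathrm{c}^2 - w^2}$ reproduces the second line of \eqref{large-T-mod-squared} verbatim.

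The case $T\to-\infty$ then follows at no cost by invoking Proposition~\ref{prop:T-symmetry}, which gives $|\Psi(X,-T;\mathbf{G}(a,b))|^2 = |\Psi(X,T;\mathbf{G}(a,b)^*)|^2$. Since replacing $(a,b)$ with $(a^*,b^*)$ preserves $\tau=|b/a|$ and hence $p$, $\bar{p}$, and leaves the purely real data $\tz_0, \tz_1, \tz_2, m_{\tz_j}^\pm$ unchanged, the $T\to+\infty$ formula applied with conjugated parameters produces exactly the claimed expression with $T$ replaced by $|T|$. There is essentially no real obstacle in this proof; the only nontrivial step is the collapse $m^+ \ee^{\ii\Phi} + m^- \ee^{-\ii\Phi} \to \cos(\Phi)$ when taking real parts, which is an immediate consequence of the constraint $m_{\tz_j}^+ + m_{\tz_j}^- = 1$ built into \eqref{eq:m-amplitudes-intro}, and which is what causes the imaginary contributions proportional to $m_{\tz_j}^+ - m_{\tz_j}^- = \cos(\arg(\tz_j(w)-\tz_0(w)))$ to drop out of $|\Psi|^2$ at the order $T^{-5/6}$.
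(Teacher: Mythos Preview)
Your proof is correct and is exactly the computation the paper leaves implicit: the corollary is stated without proof as an immediate consequence of Theorem~\ref{t:large-T}, and your expansion of $|\alpha T^{-1/3}+\beta T^{-1/2}+O(T^{-2/3})|^2$ together with the collapse $\operatorname{Re}(m^+\ee^{\ii\Phi}+m^-\ee^{-\ii\Phi})=\cos\Phi$ via $m^++m^-=1$ is the intended route. The $T\to-\infty$ case is handled just as you say, and one can also note (consistently with Corollary~\ref{cor:large-T-negative}) that although the phases $\Phi_{\tz_j}$ flip sign under the symmetry, this is invisible in \eqref{large-T-mod-squared} because they sit inside cosines.
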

 
Comparing with the interpretation of Corollary~\ref{coro:large-X} that the modulus of the solution is maximized along certain curves $X=X_N(TX^{-\frac{3}{2}})$ in the relevant part of the $(X,T)$-plane, here we see that instead the fluctuation proportional to $T^{-\frac{5}{6}}$ is a combination of two sinusoids with different phases, so that one term is approximately maximized along curves satisfying $\Phi_{\tz_1}(T,w)=(2M+1)\pi$ while the other is approximately maximized along curves satisfying $\Phi_{\tz_2}(T,w)=(2N+1)\pi$, for independent integers $(M,N)\in\mathbb{Z}^2$.  These two families of curves are plotted over the region $|XT^{-\frac{2}{3}}|<w_\mathrm{c}$ for different values of the parameters $(a,b)$ in Figures~\ref{f:overlay-peaks} and \ref{f:overlay-peaks-alt}, and indeed one can see that even in the part of this region where $T$ is not very large, still the peaks of the fluctuation appear to be localized near the intersections of one curve from each family, so that both terms of the fluctuation are simultaneously maximized.  

 \begin{figure}
\includegraphics[width=0.75\linewidth]{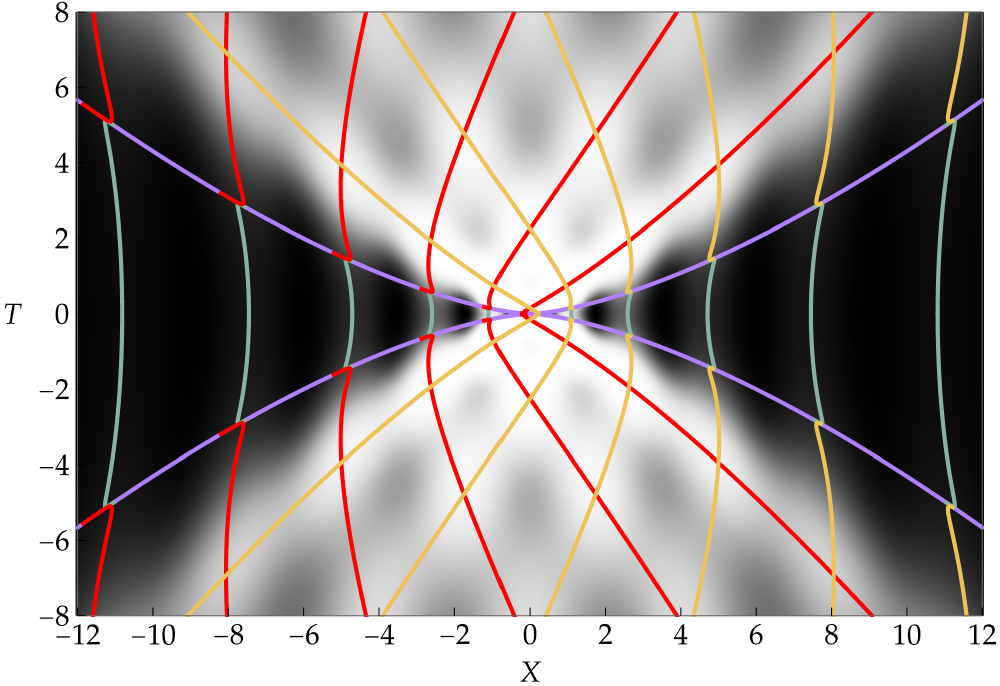}
 \caption{Density plot of $|\Psi(X,T)|^2$ with $a=b=1$ and the boundary curve (purple) $X/T^{\frac{2}{3}}=54^{\frac{1}{3}}$, overlayed with the level curves (green) on which the cosine in \eqref{large-X-mod-squared} in Corollary~\ref{coro:large-X} is maximal 
 and the level curves (red and mustard) on which $\cos(\Phi_{\tz_1}(T,w))$ and $\cos(\Phi_{\tz_2}(T,w))$ in \eqref{large-T-mod-squared} Corollary~\ref{coro:large-T} are minimal (respectively).
}
\label{f:overlay-peaks}
\end{figure}

\begin{figure}
\includegraphics[width=0.75\linewidth]{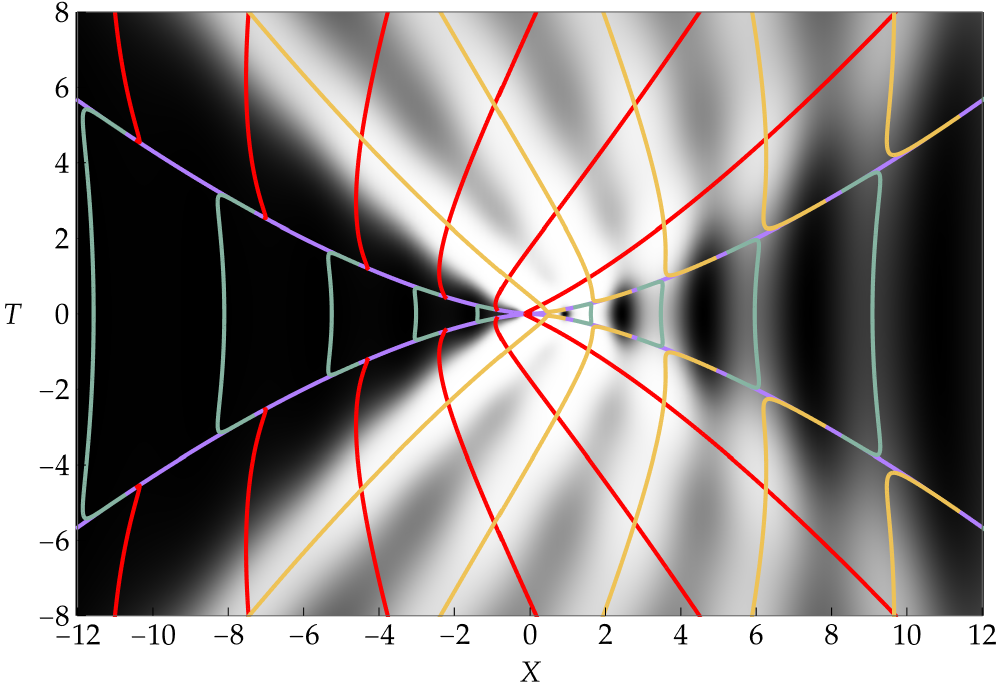}
\caption{As in Figure~\ref{f:overlay-peaks}, but for parameters $a=\frac{1}{4}$ and $b=1$.}
\label{f:overlay-peaks-alt}
\end{figure}

Like the curves $X=X_N(TX^{-\frac{3}{2}})$, the curves in the family $\Phi_{\tz_2}(T,w)=(2N+1)\pi$ shown with mustard color in Figures~\ref{f:overlay-peaks} and \ref{f:overlay-peaks-alt} appear to approach the common boundary in the first quadrant ($X>0$ and $T>0$) of the regions of validity of Corollaries~\ref{coro:large-X} and \ref{coro:large-T} (shown in purple).  It seems from the plot as though these two families of curves could be compared near the boundary curve.  The equation $\Phi_{\tz_2}(T,w)=(2N+1)\pi$ can be rewritten in the form
\begin{multline}
-\frac{2|\tz_2(w)-\tz_0(w)|^3}{p\tz_2(w)}T^\frac{1}{3}\exp\left(-\frac{2|\tz_2(w)-\tz_0(w)|^3}{p\tz_2(w)}T^\frac{1}{3}\right)=-\ee^{-\kappa},\quad\kappa:=\frac{2\pi N}{p} + \varpi(w),\\
\varpi(w):=\frac{\pi}{p}-\frac{1}{p}\Phi^0_{\tz_2}(w)-\ln\left(\frac{2|\tz_2(w)-\tz_0(w)|^3}{p\tz_2(w)}\right),
\end{multline}
which is solved in terms of the Lambert $W$-function similarly to \eqref{eq:X-channels-curves}:
\begin{equation}
T=-\frac{p^3\tz_2(w)^3}{8|\tz_2(w)-\tz_0(w)|^9}W_\pm(-\ee^{-\kappa}\mp\ii 0)^3.
\label{eq:T-shelves-curves}
\end{equation}
Since a cube must be calculated in the limit $\kappa\to +\infty$, we need the more accurate asymptotic formula
\begin{multline}
W_{\pm 1}(-\ee^{-\kappa}\mp\ii 0)=-\kappa-\ln(\kappa)-\kappa^{-1}\ln(\kappa)+\frac{1}{2}\kappa^{-2}\ln(\kappa)^2 -\kappa^{-2}\ln(\kappa) \\+ O(\kappa^{-3}\ln(\kappa)^3),\quad\kappa\to+\infty,
\end{multline}
and therefore for each large positive integer $N$ we obtain a solution $T=T_N(w)$ with the approximation
\begin{equation}
\begin{split}
T_N(w)&=\frac{p^3\tz_2(w)^3}{8|\tz_2(w)-\tz_0(w)|^9}\left[\kappa^3+3\kappa^2\ln(\kappa)+3\kappa\ln(\kappa)^2+3\kappa\ln(\kappa)+\ln(\kappa)^3+\frac{9}{2}\ln(\kappa)^2 + 3\ln(\kappa)\right]\\
&\quad\quad{}+O\left(\frac{\ln(\kappa)^3}{\kappa}\right),\quad\kappa\to+\infty\\
&=\frac{p^3\tz_2(w)^3}{8|\tz_2(w)-\tz_0(w)|^9}\left[\left(\frac{2\pi}{p}\right)^3N^3+3\left(\frac{2\pi}{p}\right)^2N^2\ln(N)+3\left(\frac{2\pi}{p}\right)^2\left(\varpi(w)+\ln\left(\frac{2\pi}{p}\right)\right)N^2\right.\\
&\quad\quad{}+\frac{6\pi}{p}N\ln(N)^2+\frac{6\pi}{p}\left(1+2\varpi(w)+2\ln\left(\frac{2\pi}{p}\right)\right)N\ln(N)\\
&\quad\quad{}+\frac{6\pi}{p}\left(\varpi(w)^2+\varpi(w)+\left(2\varpi(w)+1\right)\ln\left(\frac{2\pi}{p}\right)+\ln\left(\frac{2\pi}{p}\right)^2\right)N + \ln(N)^3\\
&\quad\quad{}+\left(3\varpi(w)+\frac{9}{2}+3\ln\left(\frac{2\pi}{p}\right)\right)\ln(N)^2\\
&\quad\quad{}+\left(3\varpi(w)^2+9\varpi(w) + 3+ 3(2\varpi(w)+3)\ln\left(\frac{2\pi}{p}\right)+3\ln\left(\frac{2\pi}{p}\right)^2 \right)\ln(N)\\
&\quad\quad{}+\varpi(w)^3+\frac{9}{2}\varpi(w)^2+3\varpi(w) +3\left(\varpi(w)^2+3\varpi(w)+1\right)\ln\left(\frac{2\pi}{p}\right) \\
&\quad\quad\quad{}\left.+ 3\left(\varpi(w)+\frac{3}{2}\right)\ln\left(\frac{2\pi}{p}\right)^2 + \ln\left(\frac{2\pi}{p}\right)^3\right]
+O\left(\frac{\ln(N)^3}{N}\right),\quad N\to+\infty.
\end{split}
\end{equation}
One can check that the first two terms of $54T_N(w)^2$, proportional to $N^6$ and $N^5\ln(N)$ respectively, have finite values in the limit $w\uparrow w_\mathrm{c}$.  Exactly the same is true of the two leading terms of $X_N(v)^3$ in the limit $v\uparrow v_\mathrm{c}$.  However, only the leading term of each expansion matches, and a discrepancy appears at the order $N^5\ln(N)$.  Moreover, it is easy to see that the coefficient of $N^5\ln(N)$ cannot be changed in one of the expansions by adding any fixed integer to $N$ (amounting to re-indexing the curves of the relevant family).  Subsequent terms in each expansion actually blow up as $w\uparrow w_\mathrm{c}$ and $v\uparrow v_\mathrm{c}$ respectively.  This computation shows that it is just an illusion that the families of curves appear to match along the boundary curve $54T^2=X^3$ in the first quadrant in Figures~\ref{f:overlay-peaks} and \ref{f:overlay-peaks-alt}.  Moreover, zooming in near the boundary curve shows that the curves actually turn sharply as they approach the boundary, becoming tangent to it as shown in Figure~\ref{fig:overlay-peaks-zoom}.

\begin{figure}
\includegraphics[width=0.49\linewidth]{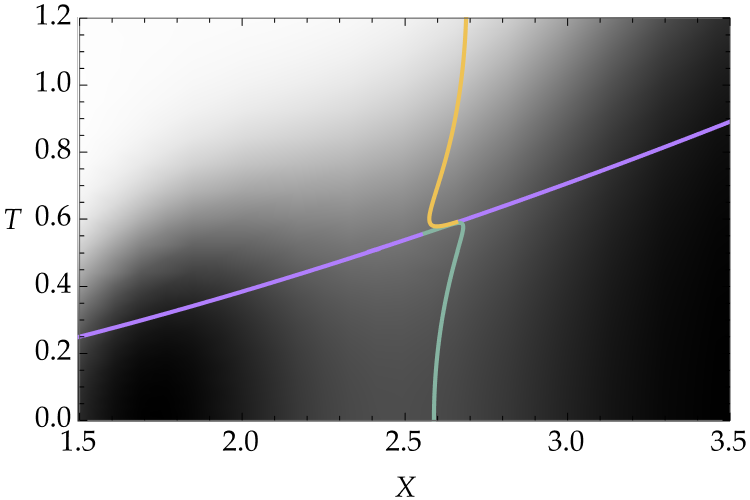}\hfill%
\includegraphics[width=0.49\linewidth]{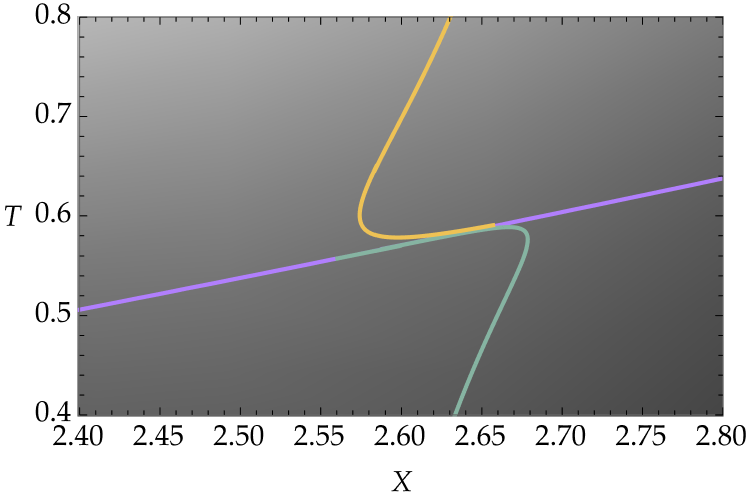}
\caption{Zoom-in plots on two different scales near the boundary curve corresponding to the parameters $a=b=1$ as in Figure~\ref{f:overlay-peaks}, showing the mismatches of the amplitude-maximizing curves near the boundary.}
\label{fig:overlay-peaks-zoom}
\end{figure}


\subsubsection{Transitional behavior of $\Psi(X,T;\mathbf{G})$}
The domains of validity of Theorems~\ref{t:large-X} and \ref{t:large-T} and their corollaries cover all asymptotic directions in the $(X,T)$-plane, except for those near the common boundary curves $|T||X|^{-\frac{3}{2}}=v_\mathrm{c}$ which is equivalent to $|X||T|^\frac{2}{3}=w_\mathrm{c}$.  
Our final asymptotic results concern the behavior of $\Psi(X,T,\mathbf{G})$ in the transitional region for large $(X,T)$ near these curves.  To formulate them, we need to first recall the second Painlev\'e equation with parameter $\alpha\in\mathbb{C}$:
\begin{equation}
\mathcal{U}''(x)=x\mathcal{U}(x)+2\mathcal{U}(x)^3-\alpha,
\label{eq:PII}
\end{equation}
every solution of which is a meromorphic function of $x\in\mathbb{C}$ all of whose poles are simple with residue $\pm 1$.
There is a unique solution $\mathcal{U}(x)$ of \eqref{eq:PII} with asymptotic behavior $\mathcal{U}(x)=-\ii (\frac{1}{2}x)^\frac{1}{2}-\frac{1}{2}\alpha x^{-1}+O(|x|^{-\frac{5}{2}})$ as $x\to\infty$ with $|\arg(x)|<\frac{2}{3}\pi$.  This is one of the so-called \emph{increasing tritronqu\'ee} solutions of \eqref{eq:PII}.  We require this solution in the situation that $\alpha=\frac{1}{2}+\ii p$, where $p=\frac{1}{2\pi}\ln(1+\tau^2)$ with $\tau=|b/a|$.  Given the increasing tritronqu\'ee solution $\mathcal{U}(x)$ for such $\alpha$ determined by $\tau>0$, a related meromorphic function $\mathcal{V}(y;\tau)$ is uniquely defined by the conditions
\begin{equation}
\frac{\mathcal{V}'(y;\tau)}{\mathcal{V}(y;\tau)}=-(\tfrac{2}{3})^\frac{1}{3}\mathcal{U}(-(\tfrac{2}{3})^\frac{1}{3}y),\quad \mathcal{V}(y;\tau)=-\left(\frac{y}{6}\right)^\alpha(1+O(y^{-\frac{3}{4}})),\quad y\to+\infty.
\label{V-P2-def}
\end{equation}
Then it is shown in \cite[Theorem 1.4]{Miller2018} that $\mathcal{V}(y;\tau)$ is analytic and non-vanishing for all real $y$, and it has the complementary asymptotic behavior
\begin{equation}
\mathcal{V}(y;\tau)=\frac{\tau p\Gamma(\ii p)}{2\sqrt{\pi}}\ee^{-3\pi\ii/4}\ee^{-\pi p/2}2^{-\ii p}\ee^{-2\ii (-y/3)^{3/2}}(-3y)^{-\frac{1}{2}(\frac{1}{2}+\ii p)}(1+O(|y|^{-\frac{5}{4}})),\quad y\to -\infty. 
\end{equation}
See \cite[Corollary 1.5]{Miller2018}, which is valid for all $\tau>0$ and $p=\frac{1}{2\pi}\ln(1+\tau^2)$.
\begin{theorem}[Transition regime]
Let $\tau:=|b/a|$ and $p:=\frac{1}{2\pi}\ln(1+\tau^2)$, and set $v:=TX^{-\frac{3}{2}}$ and $v_\mathrm{c}:= 54^{-\frac{1}{2}}$.  In the limit $X\to+\infty$, 
\begin{equation}
\Psi(X,T;\mathbf{G})=2\cdot 3^{\frac{2}{3}}X^{-\frac{2}{3}}\mathcal{V}(2^{\frac{5}{2}}3^{\frac{7}{6}}X^{\frac{1}{3}}(v-v_\mathrm{c});\tau)\ee^{\ii\Omega_\mathrm{c}(X,v)} + 2^\frac{1}{4}3^{-\frac{1}{4}}p^\frac{1}{2}X^{-\frac{3}{4}}
\ee^{\ii\Omega_2(X,v)} +O(X^{-\frac{5}{6}})
\label{eq:Psi-transitional-final}
\end{equation}
holds uniformly for $v-v_\mathrm{c}=O(X^{-\frac{1}{3}})$ and $\tau=O(1)$, where phases $\Omega_\mathrm{c}(X,v)$ and $\Omega_2(X,v)$ are defined by
\begin{equation}
\Omega_\mathrm{c}(X,v):=24^{\frac{1}{2}}X^{\frac{1}{2}}- 12 X^{\frac{1}{2}}(v-v_\mathrm{c})-\frac{1}{3}p\ln(X) +\frac{\pi}{2}-\arg(ab) + p\ln(2)-\frac{5}{3}p\ln(3),
\label{eq:trans-phase-c}
\end{equation}
and
\begin{multline}
\Omega_2(X,v):=-\left(\frac{75}{2}\right)^\frac{1}{2}X^{\frac{1}{2}}-3X^\frac{1}{2}(v-v_\mathrm{c}) +\frac{1}{2}p\ln(X)\\
{}+\frac{\pi}{4}-\arg(\Gamma(\ii p))-\arg(ab)+\frac{1}{2}p\ln(2)+\frac{7}{2}p\ln(3).
\label{eq:trans-phase-2}
\end{multline}
\label{t:transition}
\end{theorem}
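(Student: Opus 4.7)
The plan is to adapt the steepest-descent analysis underlying Theorem~\ref{t:large-X} to the transition regime where two of the three saddle points of the phase $\vartheta(z;v)=z+vz^2+2z^{-1}$ coalesce. The critical equation $\vartheta'(z;v)=0$ is the cubic $2vz^3+z^2-2=0$; for $|v|<v_\mathrm{c}$ its three roots are the real saddles $z_1(v)<0<z_2(v)$ together with a third, complex, root $z_3(v)$, and one checks by Cardano that $z_3(v_\mathrm{c})=z_1(v_\mathrm{c})=-\sqrt{54}/3$ while $z_2(v_\mathrm{c})=\sqrt{54}/6$ remains simple and isolated. Thus in the transition window $v-v_\mathrm{c}=O(X^{-1/3})$ the saddle at $z_1(v)$ is a coalescing pair while $z_2(v)$ is unaffected, and the separation $|z_1(v)-z_3(v)|=O(X^{-1/6})$ is precisely the scale on which the parabolic-cylinder parametrix used at $z_1$ in the proof of Theorem~\ref{t:large-X} breaks down.

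The first step is to reuse the full sequence of transformations from the proof of Theorem~\ref{t:large-X}: normalize the parameters via Proposition~\ref{prop:a-b-scaling}, conjugate \rhref{rhp:near-field} by a diagonal $g$-function that absorbs the dominant growth of $\ee^{-\ii(\Lambda X+\Lambda^2 T+2\Lambda^{-1})\sigma_3}$, and open lenses using the four factorizations \eqref{Gnorm-LDU}--\eqref{Gnorm-UTU} in the wedges determined by the sign chart of $\Re(\ii\vartheta(z;v))$. In the transition window the resulting problem is still small-norm off a neighborhood of the saddles, and at $z_2(v)$ the parabolic-cylinder local parametrix built in the proof of Theorem~\ref{t:large-X} continues to apply unchanged. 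Its contribution to $P_{12}$ produces the second term $2^{1/4}3^{-1/4}p^{1/2}X^{-3/4}\ee^{\ii\Omega_2(X,v)}$ in \eqref{eq:Psi-transitional-final}, the only adjustment being a first-order Taylor expansion of $\vartheta(z_2(v);v)$ about $v=v_\mathrm{c}$, which supplies the linear term $-3X^{1/2}(v-v_\mathrm{c})$ inside $\Omega_2(X,v)$.

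The second, novel step is to construct a Painlev\'e-II local parametrix at $z_1(v_\mathrm{c})$. Introducing the stretched variable $\zeta$ by $z=z_1(v_\mathrm{c})+c_1 X^{-1/6}\zeta$, with $c_1$ chosen so that $2X^{1/2}\vartheta(z;v)=2X^{1/2}\vartheta(z_1(v_\mathrm{c});v_\mathrm{c})+\bigl(\tfrac{1}{3}\zeta^3+y\zeta\bigr)+o(1)$ with $y=c_2(v-v_\mathrm{c})X^{1/3}$, the local jump data on the lens cross reproduce (after a diagonal gauge) the Flaschka--Newell Riemann-Hilbert problem for Painlev\'e-II. Matching the Stokes multipliers encoded by $\mathbf{G}(\mathfrak{a},\mathfrak{b})$ with those of the increasing tritronqu\'ee solution fixes the parameter to $\alpha=\tfrac{1}{2}+\ii p$, so the parametrix is exactly the one analyzed in \cite{Miller2018}, whose function $\mathcal{V}(y;\tau)$ encodes the $(1,2)$ entry at infinity. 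Tracking this contribution through the Cauchy/residue formula that recovers $\Psi$ from the $\Lambda\to\infty$ expansion of $P_{12}(\Lambda)$ via \eqref{Psi-def} produces the first term $2\cdot 3^{2/3}X^{-2/3}\mathcal{V}(\cdot;\tau)\ee^{\ii\Omega_\mathrm{c}(X,v)}$, with $\Omega_\mathrm{c}(X,v)$ assembled from $2X^{1/2}\vartheta(z_1(v_\mathrm{c});v_\mathrm{c})=-24^{1/2}X^{1/2}$, the first-order correction $-12X^{1/2}(v-v_\mathrm{c})$, the logarithmic contribution $-\tfrac{1}{3}p\ln(X)$ from the $g$-function at $z_1$, the overall phase $\ee^{-\ii\arg(ab)}$ from Proposition~\ref{prop:a-b-scaling}, and the residual constants $p\ln 2-\tfrac{5}{3}p\ln 3+\tfrac{\pi}{2}$ inherited from the normalization of $\mathcal{V}$ in \cite{Miller2018}. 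The error $O(X^{-5/6})$ follows from the standard small-norm estimate: the mismatch between the Painlev\'e-II parametrix and the outer model on the boundary of the $z_1$-disk is $O(X^{-1/6})$, which, multiplied by the $O(X^{-2/3})$ amplitude of the Painlev\'e-II term, gives $O(X^{-5/6})$, dominating the $O(X^{-1})$ error contribution from $z_2$.

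The principal obstacle will be the bookkeeping of constants and phases. Each local parametrix contributes a $\Gamma$-function phase, powers of $2$ and $3$, and logarithmic scalings that must be traced through the conjugation by $\ee^{-\ii(\Lambda X+\Lambda^2 T+2\Lambda^{-1})\sigma_3}$ and through the diagonal $g$-function factor; aligning the normalization of $\mathcal{V}(y;\tau)$ from \cite{Miller2018} with the intrinsic normalization of the local Painlev\'e-II parametrix, and then collecting all phases correctly to match \eqref{eq:trans-phase-c} and \eqref{eq:trans-phase-2}, is the delicate part. The structural Deift--Zhou steps are otherwise routine once the Painlev\'e-II parametrix has been installed.
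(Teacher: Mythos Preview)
Your overall strategy is the paper's strategy: reuse the large-$X$ deformation from Section~\ref{s:large-X}, keep the parabolic-cylinder parametrix at $z_2(v)$ unchanged, and replace the parabolic-cylinder parametrix at the coalescing saddle by a Painlev\'e-II parametrix built from the tritronqu\'ee Riemann--Hilbert problem in \cite{Miller2018}, then read off the two terms from residue calculations on the disk boundaries.  The paper does exactly this in Section~\ref{s:transitional}.

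However, several details in your outline are incorrect and would derail the calculation if followed literally.  First, the third critical point $z_3(v)$ is \emph{real}, not complex, for $0<v<v_\mathrm{c}$ (it lies to the left of $z_1(v)$ on the real axis and is the one denoted $z_{-\infty}(v)$ in Section~\ref{s:num-transition}); the collision at $v=v_\mathrm{c}$ is a merging of two real saddles.  Second, there is no $g$-function in this analysis: the large-$X$ proof in Section~\ref{s:large-X} works directly with the exponent $\vartheta(z;v)$ and uses only lens-opening and a diagonal outer parametrix $((z-z_*(v))/(z-z_2(v)))^{\ii p\sigma_3}$.  The $-\tfrac{1}{3}p\ln(X)$ term in $\Omega_\mathrm{c}$ comes from the conformal-coordinate rescaling $\zeta=X^{1/6}\transconf$ combined with this outer parametrix, not from a $g$-function.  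Third, the paper installs the local coordinate not by the linear stretch $z=z_1(v_\mathrm{c})+c_1 X^{-1/6}\zeta$ you propose, but via an exact cubic conformal map $2\vartheta(z;v)=\transconf^3+\transr(v)\transconf-\transs(v)$, which is what makes the Painlev\'e-II jump conditions in Figure~\ref{fig:PIItritronquee} match \emph{exactly} rather than only to leading order; this precision matters for the claimed uniformity of the error.  Finally, your heuristic for the $O(X^{-5/6})$ error (``$O(X^{-1/6})$ mismatch times $O(X^{-2/3})$ amplitude'') is not how the estimate actually goes: it follows from Cauchy--Schwarz on \eqref{eq:Psi-trans-SigmaF}, using $\mathbf{F}_--\mathbb{I}=O_\varepsilon(X^{-1/6})$ in $L^2(\Sigma_\mathbf{F})$ and $\mathbf{V}^\mathbf{F}-\mathbb{I}=O_\varepsilon(X^{-1/6})$ in the same norm, which gives $O_\varepsilon(X^{-1/3})$ on the integral and hence $O_\varepsilon(X^{-5/6})$ after the $X^{-1/2}$ prefactor.
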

\begin{corollary}[$\Psi(X,T)$ near reflected transitional curves]
The following results hold for large coordinates near the reflections of the curve $X^3=54T^2$, $X>0$, $T>0$, in the coordinate axes:
\begin{itemize}
\item
In the limit $X\to-\infty$ with $T|X|^{-\frac{3}{2}}-v_\mathrm{c}=O(|X|^{-\frac{1}{3}})$, $\Psi(X,T;\mathbf{G})$ is given by an analogue of the right-hand side of \eqref{eq:Psi-transitional-final} in which $X$ is replaced by $-X$, $\tau$ is replaced by $\bar{\tau}=\tau^{-1}$, $p$ is replaced by $\bar{p}=\frac{1}{2\pi}\ln(1+\bar{\tau}^2)$, and the error term is uniform for $\bar{\tau}=O(1)$.
\item
In the limit $X\to+\infty$ with $-TX^{-\frac{3}{2}}-v_\mathrm{c}=O(X^{-\frac{1}{3}})$, $\Psi(X,T;\mathbf{G})$ is given by the an analogue of the right-hand side of \eqref{eq:Psi-transitional-final} in which $T$ is replaced by $-T$, $\mathcal{V}(y;\tau)$ is replaced by $\mathcal{V}(y;\tau)^*$, and all signs except that of $-\arg(ab)$ in $\Omega_\mathrm{c}(X,v)$ and $\Omega_2(X,v)$ are changed.
\item 
In the limit $X\to -\infty$ with $-T|X|^{-\frac{3}{2}}-v_\mathrm{c}=O(|X|^{-\frac{1}{3}})$, $\Psi(X,T;\mathbf{G})$ is given by an analogue of the right-hand side of \eqref{eq:Psi-transitional-final} in which $(X,T)$ are replaced by $(-X,-T)$, $\tau$ is replaced by $\bar{\tau}$, $p$ is replaced by $\bar{p}$, $\mathcal{V}(y;\tau)$ is replaced by $\mathcal{V}(y;\bar{\tau})^*$, and all signs except that of $-\arg(ab)$ in $\Omega_\mathrm{c}(-X,v)$ and $\Omega_2(-X,v)$ are changed.  The error term is uniform for $\bar{\tau}=O(1)$.
\end{itemize}
\end{corollary}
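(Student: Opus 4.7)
The plan is to deduce each of the three asymptotic regimes in the corollary from Theorem~\ref{t:transition} by invoking the two reflection symmetries established earlier, namely Proposition~\ref{prop:X-symmetry} ($\Psi(X,T;\mathbf{G}(a,b))=\Psi(-X,T;\mathbf{G}(b,a))$) and Proposition~\ref{prop:T-symmetry} ($\Psi(X,-T;\mathbf{G}(a,b))=\Psi(X,T;\mathbf{G}(a,b)^*)^*$). Since $\mathbf{G}(a,b)^*=\mathbf{G}(a^*,b^*)$ when $\mathbf{G}$ is written in the normalized form \eqref{G-form}, each symmetry acts in a transparent way on the parameters $(a,b)$, and the task is simply to bookkeep how this transforms $\tau=|b/a|$, $p=\frac{1}{2\pi}\ln(1+\tau^2)$, $\arg(ab)$, the special function $\mathcal{V}(y;\tau)$, and the real phases $\Omega_\mathrm{c}$ and $\Omega_2$.

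For the first case, with $X\to-\infty$ and $T|X|^{-3/2}-v_\mathrm{c}=O(|X|^{-1/3})$, I would apply the $X$-reflection alone. Setting $X':=-X\to+\infty$ we have $v':=T(X')^{-3/2}=T|X|^{-3/2}$, so the hypothesis $v'-v_\mathrm{c}=O((X')^{-1/3})$ of Theorem~\ref{t:transition} is met. Applying it to $\Psi(X',T;\mathbf{G}(b,a))$ swaps the roles of $a$ and $b$, which sends $\tau\mapsto\bar\tau$ and $p\mapsto\bar p$; note $\arg(a_{\mathrm{new}}b_{\mathrm{new}})=\arg(ba)=\arg(ab)$, so this term is already preserved without any further manipulation. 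Rewriting in the original variable $X$ yields exactly the substitution stated in the first bullet. The uniformity requirement $\tau_{\mathrm{new}}=\bar\tau=O(1)$ is inherited from the uniformity in Theorem~\ref{t:transition}.

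For the second case, with $X\to+\infty$ and $-TX^{-3/2}-v_\mathrm{c}=O(X^{-1/3})$, I would apply the $T$-reflection. Setting $T':=-T\to+\infty$ gives $\Psi(X,T;\mathbf{G}(a,b))=\Psi(X,T';\mathbf{G}(a^*,b^*))^*$; the regime for $(X,T')$ is covered by Theorem~\ref{t:transition} with $v':=T'X^{-3/2}$. Crucially, $\tau_{\mathrm{new}}=|b^*/a^*|=\tau$ and $p_{\mathrm{new}}=p$ are unchanged, while $\arg(a_{\mathrm{new}}b_{\mathrm{new}})=-\arg(ab)$. When I then take the complex conjugate of the whole expression, $\mathcal{V}(y;\tau)$ (generally complex-valued) becomes $\mathcal{V}(y;\tau)^*$ with the same real $\tau$, and each real phase flips sign. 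The phase term $-\arg(a_{\mathrm{new}}b_{\mathrm{new}})=+\arg(ab)$ inside $\Omega_\mathrm{c}$ and $\Omega_2$ consequently returns to $-\arg(ab)$ after conjugation, explaining the statement that this particular sign is not flipped. The third case, $X\to-\infty$ and $-T|X|^{-3/2}-v_\mathrm{c}=O(|X|^{-1/3})$, is handled by composing the two symmetries in either order: $\Psi(X,T;\mathbf{G}(a,b))=\Psi(|X|,|T|;\mathbf{G}(b^*,a^*))^*$, and applying Theorem~\ref{t:transition} with $\tau_{\mathrm{new}}=|a^*/b^*|=\bar\tau$, $p_{\mathrm{new}}=\bar p$, $\arg(b^*a^*)=-\arg(ab)$, followed by the conjugation, delivers the stated formula.

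The main technical obstacle is simply the careful tracking of complex conjugates together with the distinction between (i) real phases that flip under conjugation and (ii) the specific $-\arg(ab)$ term that gets negated twice (once by the parameter substitution $a\mapsto a^*$, $b\mapsto b^*$ and once by the final conjugation) and therefore appears unchanged in the corollary. Likewise, one must be careful that $\mathcal{V}(y;\tau)$ depends on $\tau$ only through the real number $p$ in the formal monodromy $\alpha=\frac{1}{2}+\ii p$, so conjugation replaces $\mathcal{V}(y;\tau)$ by $\mathcal{V}(y;\tau)^*$ in case~2 and by $\mathcal{V}(y;\bar\tau)^*$ in case~3 without any rescaling of the argument. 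Once this bookkeeping is done, the three claims follow directly from Theorem~\ref{t:transition}, with no new analysis required.
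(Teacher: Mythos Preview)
Your proposal is correct and follows exactly the paper's approach: the paper's proof consists of the single line ``Apply Propositions~\ref{prop:X-symmetry} and \ref{prop:T-symmetry},'' and your argument is simply a careful unpacking of precisely that application, with the correct bookkeeping of how the parameter swaps and conjugations act on $\tau$, $p$, $\arg(ab)$, $\mathcal{V}$, and the real phases.
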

\begin{proof}
Apply Propositions~\ref{prop:X-symmetry} and \ref{prop:T-symmetry}.
\end{proof}

Theorem~\ref{t:transition} is proved in Section~\ref{s:transitional}.  The leading term in \eqref{eq:Psi-transitional-final} was obtained\footnote{The error estimate for the leading term was incorrectly reported in \cite[Theorem 6]{BilmanLM2020} as $O(X^{-\frac{5}{6}})$, but according to \eqref{eq:Psi-transitional-final}, this should be corrected to $O(X^{-\frac{3}{4}})$ in equations (251), (253), (254), and (256) of that paper.} in the special case of $a=b$ in \cite[Theorem 6]{BilmanLM2020}.  In the general case, we see that the limiting Painlev\'e-II function $\mathcal{V}(\diamond;\tau)$ corresponds to a variable parameter $\alpha=\frac{1}{2}+\ii p$ depending on  $(a,b)$ via $\tau=|b/a|$ and $p=\frac{1}{2\pi}\ln(1+\tau^2)$.   

The relative size of the correction term in \eqref{eq:Psi-transitional-final} compared to the leading term  is $O(X^{-\frac{1}{12}})$, which decays very slowly as $X\to\infty$.  This observation motivates keeping the correction term although it is asymptotically negligible compared to the leading term.  We observe that the correction term is essentially the same as the contribution to $\Psi(X,T;\mathbf{G})$ of the explicit term on the second line of \eqref{Psi-large-X-approx-5}, approximated in the limit $v\uparrow v_\mathrm{c}$. Theorem~\ref{t:transition} shows that as $v\uparrow v_\mathrm{c}$, the contribution from the critical point $z_2(v)$ persists at the same order while that from $z_1(v)$ becomes larger by a factor proportional to $X^\frac{1}{12}$ and takes on a universal form expressed in terms of the Painlev\'e-II special function $\mathcal{V}(\diamond;\tau)$. 

To illustrate the validity of Theorem~\ref{t:transition}, we took $a=b=1$ and applied the numerical method described in Appendix~\ref{a:P2-numerics} 
to solve Riemann-Hilbert Problem 2.1 from \cite{Miller2018} and hence obtain $\mathcal{V}(y;\tau=1)$ for a dense grid of $y$-values in the interval $[-0.2,0.2]$.  Given such a value of $y$, and a large value of $X>0$, a corresponding value of $T$ is defined by $T=X^\frac{3}{2}v=X^\frac{3}{2}(v_\mathrm{c}+2^{-\frac{5}{2}}3^{-\frac{7}{6}}X^{-\frac{1}{3}}y)$.  Then we numerically computed $\Psi(X,T;\mathbf{G}(1,1))$ using the method described in Section~\ref{s:Numerics}, and used the result to compare with the prediction of Theorem~\ref{t:transition} as shown in Figure~\ref{fig:TransitionalComparison} and also to calculate the pointwise renormalized error
\begin{multline}
E(y):=\left| 2^{-1}3^{-\frac{2}{3}}X^\frac{2}{3}\Psi(X,T;\mathbf{G}(1,1))-\mathcal{V}(y;1)\ee^{\ii\Omega_\mathrm{c}(X,v)}-2^{-\frac{3}{4}}3^{-\frac{11}{12}}X^{-\frac{1}{12}}\ee^{\ii\Omega_2(X,v)}\right|,\\
v=v_\mathrm{c}+2^{-\frac{5}{2}}3^{-\frac{7}{6}}X^{-\frac{1}{3}}y,\quad T=X^\frac{3}{2}v=X^\frac{3}{2}(v_\mathrm{c}+2^{-\frac{5}{2}}3^{-\frac{7}{6}}X^{-\frac{1}{3}}y),
\label{E-transition}
\end{multline}
for $y\in [-1,1]$.  
We next set $E:=\max_{|y|\le 1}E(y)$ and plotted $\ln(E)$ against $\ln(X)$.  See Figure~\ref{fig:TransitionalLogLogError}.
\begin{figure}[h]
\begin{center}
\includegraphics[width=0.3\linewidth]{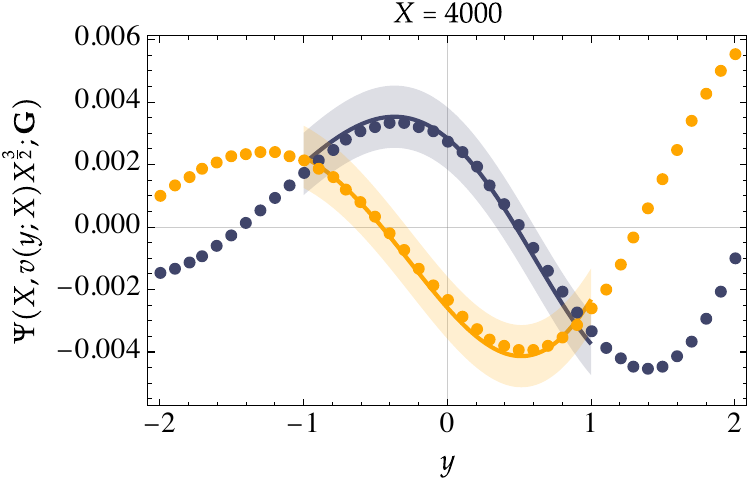}\hfill%
\includegraphics[width=0.3\linewidth]{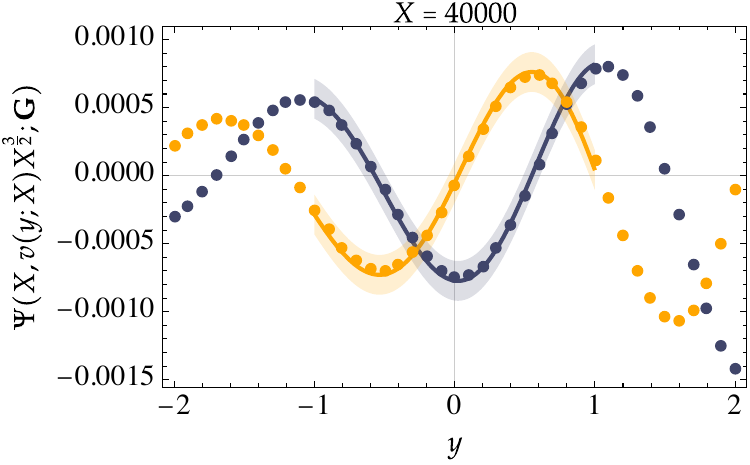}\hfill%
\includegraphics[width=0.3\linewidth]{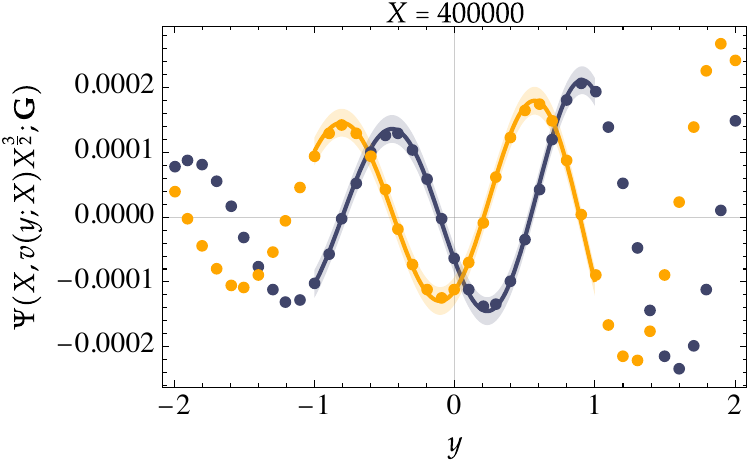}\\
\includegraphics[width=0.3\linewidth]{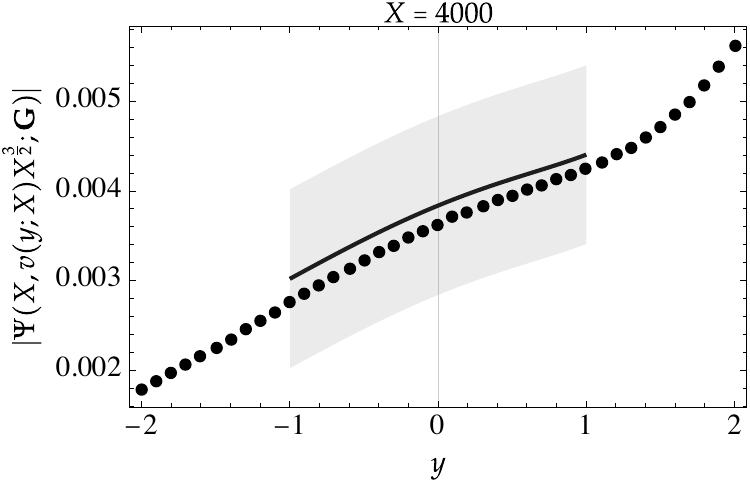}\hfill%
\includegraphics[width=0.3\linewidth]{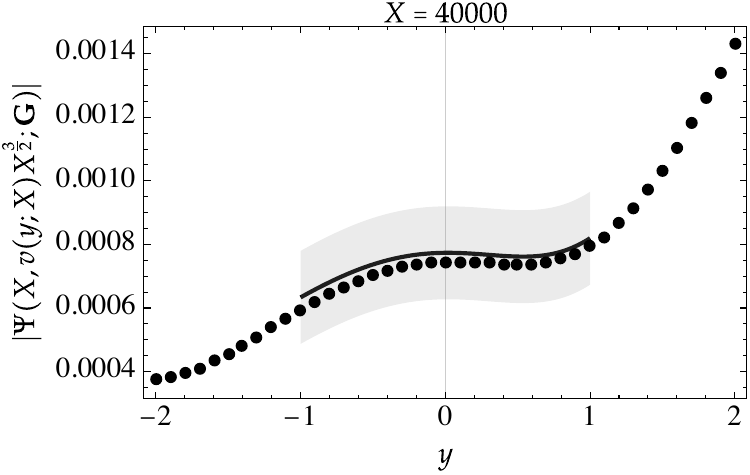}\hfill%
\includegraphics[width=0.3\linewidth]{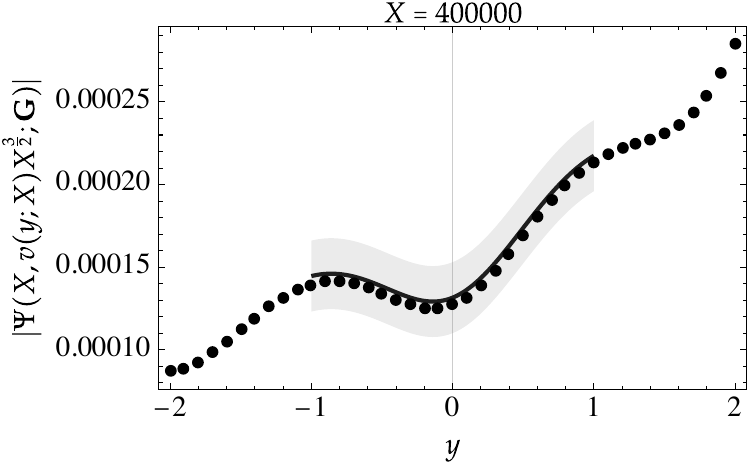}
\end{center}
\caption{Numerical evaluation of $\Psi(X,T;\mathbf{G}(1,1))$ as a function of $y$ for fixed $X$ (points) compared with the approximation of the two explicit terms in Theorem~\ref{t:transition} (solid curves).  First row:  real and imaginary parts; second row:  modulus.  Left-to-right:  $X=4000$, $X=40000$, $X=400000$.  The shaded region in each plot corresponds to an error bar proportional by a fixed constant to $X^{-5/6}$.}
\label{fig:TransitionalComparison}
\end{figure}
\begin{figure}[h]
\begin{center}
\includegraphics[width=0.5\linewidth]{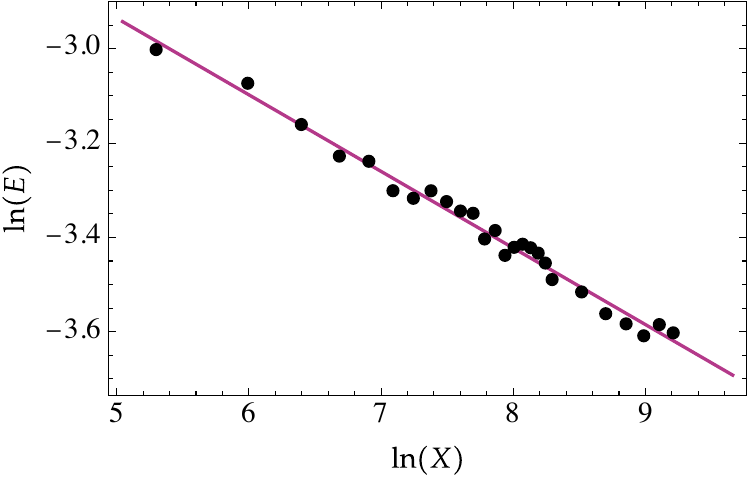}
\end{center}
\caption{The uniform renormalized error $E$ over $y\in [-1,1]$ as a function of $X$.  Black points are numerical computations and the purple line is a least-squares best fit line with slope
$-0.162334$. This matches very well the prediction of Theorem~\ref{t:transition}, namely $E=O(X^{-\frac{1}{6}})$, suggesting that the error term in \eqref{eq:Psi-transitional-final} is sharp.}
\label{fig:TransitionalLogLogError}
\end{figure}

\subsection{The software package \texttt{RogueWaveInfiniteNLS.jl} for \texttt{Julia}}
\label{s:numerics-intro}
As part of this work we introduce a software package titled \texttt{RogueWaveInfiniteNLS.jl} \cite{RogueWaveInfiniteNLS} for the \texttt{Julia} programming language. 
The main utility of \texttt{RogueWaveInfiniteNLS.jl} is that the end user can easily evaluate to high accuracy $\Psi(X,T;\mathbf{G},B)$ at a given $(X,T)\in\mathbb{R}^2$ for arbitrary choice of parameters $(a,b)\in\mathbb{C}^2$ (indexing the family of solutions) and the scalar $B>0$. This is achieved by numerically solving a suitably regularized (via numerical implementation of noncommutative steepest descent techniques) Riemann-Hilbert problem depending on the chosen value of $(X,T)$ and extracting from the solution of that Riemann-Hilbert problem the value of $\Psi(X,T;\mathbf{G},B)$.
The user need not worry about how the parameters affect the computation or about any mechanics underlying the procedure; the computation occurs in a \emph{black-box} manner. Indeed, one can simply call the main routine
\begin{lstlisting}
[julia> psi(2, 9.2, 2im, 4, 1)
\end{lstlisting}
to evaluate $\Psi(X,T;\mathbf{G},B)$ at $(X,T)=(2,9.2)$ with parameters $(a,b)=(2\ii,4)$ and $B=1$. The choice of the appropriate deformed Riemann-Hilbert problem to solve numerically is taken care of automatically.
In this regard \texttt{RogueWaveInfiniteNLS.jl} resembles the \texttt{ISTPackage} (for \texttt{Mathematica}) by T.\@ Trogdon \cite{ISTPackage}. The \texttt{ISTPackage} includes a suite of (again, black-box) routines for computing the solution of the initial-value problem on the full line with rapidly decaying initial data via the numerical inverse-scattering transform for several integrable systems.
See \cite{DeconinckOT2012} for the Korteweg-de Vries equation, \cite{TrogdonO2013} for the focusing and defocusing NLS equations, \cite{BilmanT2017} for the Toda lattice, for example. 
The first step of the numerical inverse-scattering transform procedure involved in these works is of course computation of the scattering data associated with the given initial data.
In contrast, the Riemann-Hilbert problem representation of $\Psi(X,T;\mathbf{G}, B)$ given by \rhref{rhp:near-field} does not arise from an initial-value problem or the inverse-scattering transform associated with the NLS equation \eqref{nls} --- recall Remark~\ref{rem:no-IST}.
%the slow decay rate established in Theorem~\ref{t:large-X}.
Therefore, the aforementioned routines for the NLS equation do not apply to compute the general rogue waves of infinite order studied in this work and the starting point for the framework implemented in \texttt{RogueWaveInfiniteNLS.jl} is directly the Riemann-Hilbert problem representation, which is rather the definition of this special family of solutions of the NLS equation. There is no computation of a forward (or direct) scattering transform.
The theoretical framework behind the numerical solution of Riemann-Hilbert problems is due to S.\@ Olver and T.\@ Trogdon \cite{TrogdonO2015}, see also \cite{Olver2012}. \texttt{RogueWaveInfiniteNLS.jl} relies on the routines available in the software package \texttt{OperatorApproximation.jl} \cite{OperatorApproximation} to solve the relevant Riemann-Hilbert problems numerically.
Full details on the installation, usage, and the implementation for the software package \texttt{RogueWaveInfiniteNLS.jl} are provided in Section~\ref{s:Numerics}.

Sample codes using \texttt{RogueWaveInfiniteNLS.jl} for the computations underlying the comparison and error plots given in Figure~\ref{fig:LargeX-a1-b1}, Figure~\ref{fig:LargeX-a0p5EIPiOver4-b1}, Figure~\ref{fig:LargeT-a1-b1}, Figure~\ref{fig:LargeT-a0p5EIPiOver4-b1}, and Figure~\ref{fig:TransitionalLogLogError} can be found in the notebook titled \texttt{Paper-Code.ipynb} in the public GitHub repository \cite{PaperCode}.

\subsection{Acknowledgements} D. Bilman was supported by the National Science Foundation on grant number DMS-2108029; P. D. Miller was supported by the National Science Foundation on grant numbers DMS-1812625 and DMS-2204896.
The author(s) would like to thank the Isaac Newton Institute for Mathematical Sciences, Cambridge, for support and hospitality during the program ``Emergent phenomena in nonlinear dispersive waves,'' where some of the work on this paper was undertaken. This work was supported by EPSRC grant EP/R014604/1.
%\textcolor{red}{PDM:  Maybe INI gets thanked just in the double-scaling paper?} The authors would like to thank the Isaac Newton Institute for Mathematical Sciences for support and hospitality during the programme Dispersive Hydrodynamics when work on this paper was undertaken (EPSRC Grant Number EP/R014604/1).
The computations in this work were facilitated through the use of the advanced computational, storage, and networking infrastructure provided by the Ohio Supercomputer Center (48-core Pitzer nodes) \cite{OhioSupercomputerCenter1987}.

\section{Asymptotic behavior of $\Psi(X,T;\mathbf{G})$ for large $|X|$}
\label{s:large-X}
This section is devoted to proving Theorem~\ref{t:large-X} and Theorem~\ref{t:L2-norm}. We begin with Theorem~\ref{t:large-X}.
To study $\Psi(X,T;\mathbf{G})$ for $X>0$ large and general $a,b\in\mathbb{C}$ with $ab\neq 0$, it is sufficient in light of 
%Proposition~\ref{prop:X-symmetry} to assume at first that $X>0$, and then using 
Proposition~\ref{prop:a-b-scaling} to write $\Psi(X,T;\mathbf{G}(a,b))=\ee^{-\ii\arg(ab)}\Psi(X,T;\mathbf{G}(\mathfrak{a},\mathfrak{b}))$ where the normalized parameters $(\mathfrak{a},\mathfrak{b})$ are defined in terms of $(a,b)$ as in \eqref{eq:normalized-ab}.

For $X>0$, writing $T=vX^\frac{3}{2}$ and rescaling the spectral parameter $\Lambda$ by $\Lambda=X^{-\frac{1}{2}}z$, the phase conjugating the jump matrix in \eqref{P-jump} for $\bg=1$ takes the form
\begin{equation}
\Lambda X+\Lambda^{2} T+2 \Lambda^{-1}=X^{\frac{1}{2}} \vartheta(z ; v), \quad \vartheta(z ; v):= z + v z^2 + 2 z^{-1}.
\label{phase-X}
\end{equation}
From the solution of \rhref{rhp:near-field} for $\mathbf{G}=\mathbf{G}(\mathfrak{a},\mathfrak{b})$ with $\mathfrak{a},\mathfrak{b}>0$ and $\mathfrak{a}^2+\mathfrak{b}^2=1$, and for brevity omitting $\mathbf{G}$ from the argument lists, we define a related matrix $\mathbf{S}(z;X,v)$ by
\begin{equation}
\mathbf{S}(z ; X, v):=\mathbf{P}(X^{-\frac{1}{2}} z ; X, X^{\frac{3}{2}} v ),\quad X>0,
\label{X:P-to-S}
\end{equation}
and see from \eqref{Psi-def} and Proposition~\ref{prop:a-b-scaling} that
\begin{equation}
\Psi(X, X^{\frac{3}{2}} v )=2 \ii  \ee^{-\ii\arg(ab)}X^{-\frac{1}{2}} \lim _{z \rightarrow \infty} z S_{12}(z ; X, v), \quad X>0.
\label{Psi-from-S-X}
\end{equation}
We consider the limit $X\to+\infty$ with $v\in\mathbb{R}$ held fixed (further conditions on $v$ will be introduced shortly). The matrix function $\mathbf{S}(z ; X, v)$ clearly satisfies $\mathbf{S}(z ; X, v) \to \mathbb{I}$ as $z\to\infty$ and it is analytic in the complement of an arbitrary Jordan curve $\Gamma$ surrounding $z=0$ with clockwise orientation. Across $\Gamma$, $\mathbf{S}(z ; X, v)$ satisfies the jump condition
\begin{equation}
\mathbf{S}_+(z ; X, v) = \mathbf{S}_-(z ; X, v) \ee^{-\ii X^{1/2}\vartheta(z;v)\sigma_3}\mathbf{G}(\mathfrak{a},\mathfrak{b}) \ee^{\ii X^{1/2}\vartheta(z;v)\sigma_3},\quad z\in \Gamma.
\end{equation}

\subsection{Steepest-descent deformation}
The phase $\vartheta(z;v)$ coincides with the one in \cite[Section 4.1]{BilmanLM2020}, therefore we proceed exactly as in \cite{BilmanLM2020}, assuming that $|v|<54^{-\frac{1}{2}}$ so that $\vartheta(z;v)$ has real and simple critical points. Under this assumption, the level curve $\Im(\vartheta(z;w))=0$ has a component that is a Jordan curve surrounding the origin $z=0$ and passing through two distinct (real) critical points of $\vartheta(z;w)$, which we denote by $z_1(v)<z_2(v)$ with $z_1(v)z_2(v)<0$.  We choose this Jordan curve to be the jump contour $\Gamma$ for $\mathbf{S}(z;X,v)$.  A third real critical point is present for the indicated range of $v$ only if $v\neq 0$, and it lies in the unbounded exterior of $\Gamma$.  See Figure~\ref{fig:largeX-signs} for the sign charts of $\Im(\vartheta(z;w))$ as $v$ varies in the range $|v|<54^{-\frac{1}{2}}$. 
\begin{figure}[h]
\includegraphics[width=0.3\textwidth]{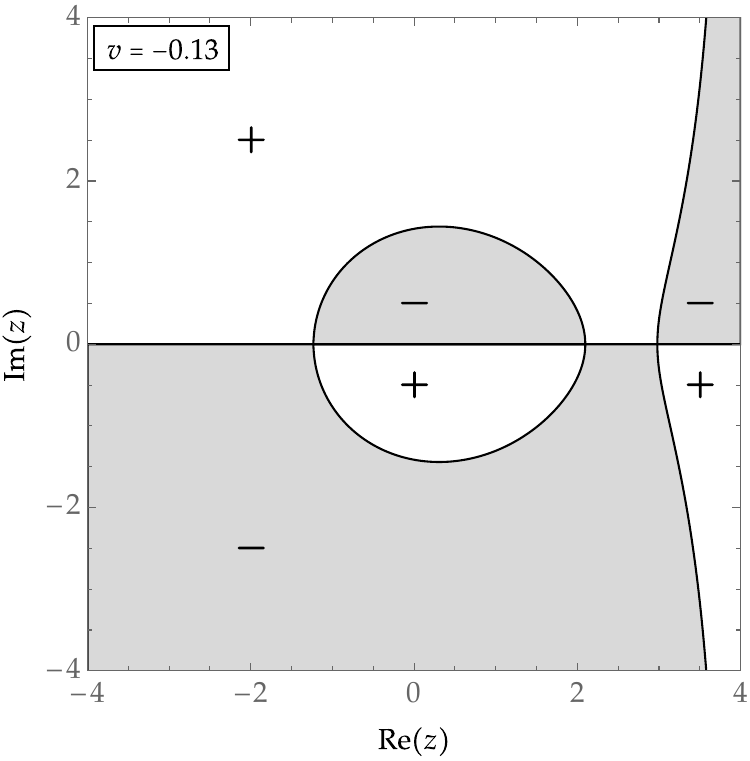}
\includegraphics[width=0.3\textwidth]{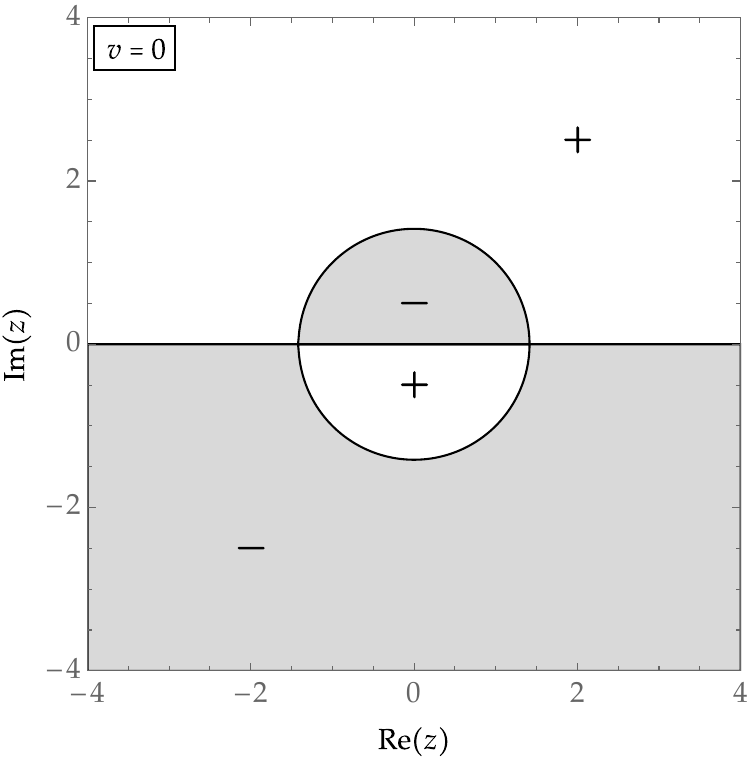}
\includegraphics[width=0.3\textwidth]{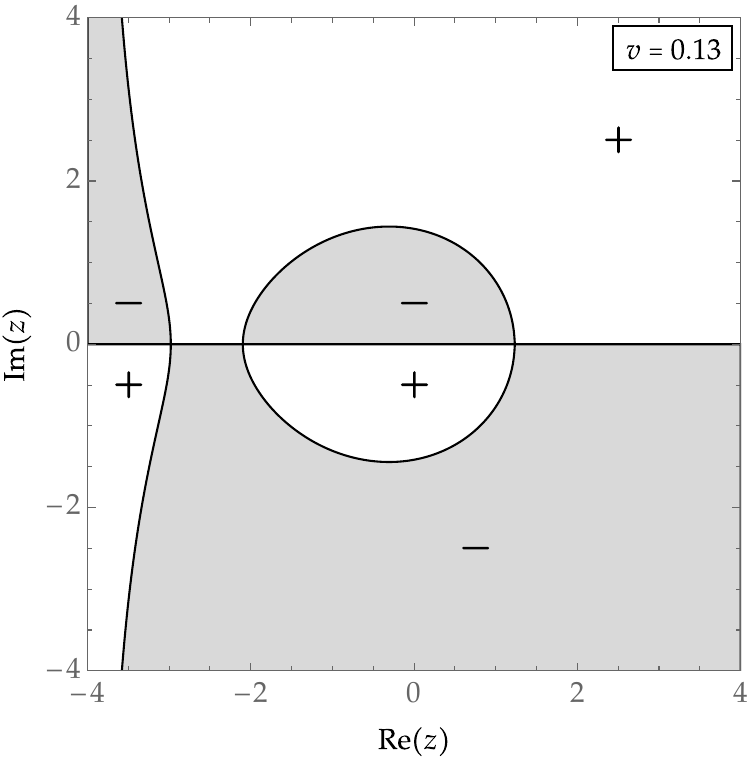}
\caption{The sign charts of $\Im(\vartheta(z;w))$ as $v$ varies in the range $|v|<54^{-\frac{1}{2}}$.}
\label{fig:largeX-signs}
\end{figure}
We define the regions $L^{\pm}$, $R^{\pm}$, and $\Omega^{\pm}$ as shown in the left-hand panel of Figure~\ref{fig:largeX-contour}.
The explicit formul\ae\ \eqref{eq:Cardano-1}--\eqref{eq:Cardano-3} for $z_1(v)$ and $z_2(v)$ are consequences of Cardano's formula, since $z=0$ cannot be a critical point of $\vartheta(\diamond;v)$ and hence $\vartheta'(z;v)=0$ is equivalent to a cubic equation for $z$.

It suffices to employ the factorizations \eqref{Gnorm-LDU} and \eqref{Gnorm-UDL} of the matrix $\mathbf{G}(\mathfrak{a},\mathfrak{b})$ for the steepest descent analysis in this case. We introduce a new unknown matrix function $\mathbf{T}(z;X,v)$ by making the following substitutions based on these factorizations.
\begin{equation}
\mathbf{T}(z;X,v):=\mathbf{S}(z;X,v)\begin{bmatrix} 1 &0 \\  \displaystyle\frac{\mathfrak{b}}{\mathfrak{a}}\ee^{2 \ii X^{1/2} \vartheta(z;v)} & 1\end{bmatrix}
,\quad z\in L^+,
\label{L-plus-sub}
\end{equation}
\begin{equation}
\mathbf{T}(z;X,v):=\mathbf{S}(z;X,v)\mathfrak{a}^{-\sigma_3} \begin{bmatrix} 1 &\mathfrak{ab} \ee^{ - 2 \ii X^{1/2} \vartheta(z;v)} \\  0 & 1\end{bmatrix}
,\quad z\in R^+,
\end{equation}
\begin{equation}
\mathbf{T}(z;X,v):=\mathbf{S}(z;X,v) \mathfrak{a}^{-\sigma_3},\quad z\in \Omega^+,
\end{equation}
\begin{equation}
\mathbf{T}(z;X,v):=\mathbf{S}(z;X,v)\mathfrak{a}^{\sigma_3},\quad z\in \Omega^-,
\end{equation}
\begin{equation}
\mathbf{T}(z;X,v):=\mathbf{S}(z;X,v) \mathfrak{a}^{\sigma_3} \begin{bmatrix} 1 & 0 \\  -\mathfrak{a b}\ee^{  2 \ii X^{1/2} \vartheta(z;v)} & 1\end{bmatrix}
,\quad z\in R^-,
\end{equation}
\begin{equation}
\mathbf{T}(z;X,v):=\mathbf{S}(z;X,v) \begin{bmatrix} 1 & -\displaystyle\frac{\mathfrak{b}}{\mathfrak{a}}\ee^{-2 \ii X^{1/2} \vartheta(z;v)}\\ 0 & 1\end{bmatrix}
,\quad z\in L^-,
\label{L-minus-sub}
\end{equation}
and we simply set $\mathbf{T}(z;X,v):=\mathbf{S}(z;X,v) $ everywhere else. See the left-hand panel of Figure~\ref{fig:largeX-contour} for the definition of the regions $R^\pm$, $L^\pm$, and $\Omega^\pm$. The jump conditions satisfied by $\mathbf{T}(z;X,v)$ are given by
\begin{equation}
\mathbf{T}_+(z;X,v)=\mathbf{T}_-(z;X,v) \begin{bmatrix} 1 &0 \\  -\displaystyle\frac{\mathfrak{b}}{\mathfrak{a}}\ee^{2 \ii X^{1/2} \vartheta(z;v)} & 1\end{bmatrix}
,\quad z\in C_{L}^+,
\label{jump-C-L-plus-X}
\end{equation}
\begin{equation}
\label{jump-C-R-plus-X}
\mathbf{T}_+(z;X,v)=\mathbf{T}_-(z;X,v) \begin{bmatrix} 1 &\mathfrak{a b}\ee^{ - 2 \ii X^{1/2} \vartheta(z;v)} \\  0 & 1\end{bmatrix}
,\quad z\in C_{R}^+,
\end{equation}
\begin{equation}
\mathbf{T}_+(z;X,v)=\mathbf{T}_-(z;X,v) \mathfrak{a}^{-2\sigma_3},
\quad z\in I,
\label{jump-T-I-X}
\end{equation}
\begin{equation}
\label{jump-C-R-minus-X}
\mathbf{T}_+(z;X,v)=\mathbf{T}_-(z;X,v) \begin{bmatrix} 1 & 0 \\  -\mathfrak{a b}\ee^{  2 \ii X^{1/2} \vartheta(z;v)} & 1\end{bmatrix}
,\quad z\in C_{R}^-,
\end{equation}
\begin{equation}
\mathbf{T}_+(z;X,v)=\mathbf{T}_-(z;X,v) \begin{bmatrix} 1 & \displaystyle\frac{\mathfrak{b}}{\mathfrak{a}}\ee^{-2 \ii X^{1/2} \vartheta(z;v)}\\ 0 & 1\end{bmatrix}
,\quad z\in C_{L}^-.
\label{jump-C-L-minus-X}
\end{equation}
See the right-hand panel of Figure~\ref{fig:largeX-contour} for definitions of the jump contours $C_R^\pm$, $C_L^\pm$, and $I$. 
\begin{figure}
\includegraphics[width=0.45\textwidth]{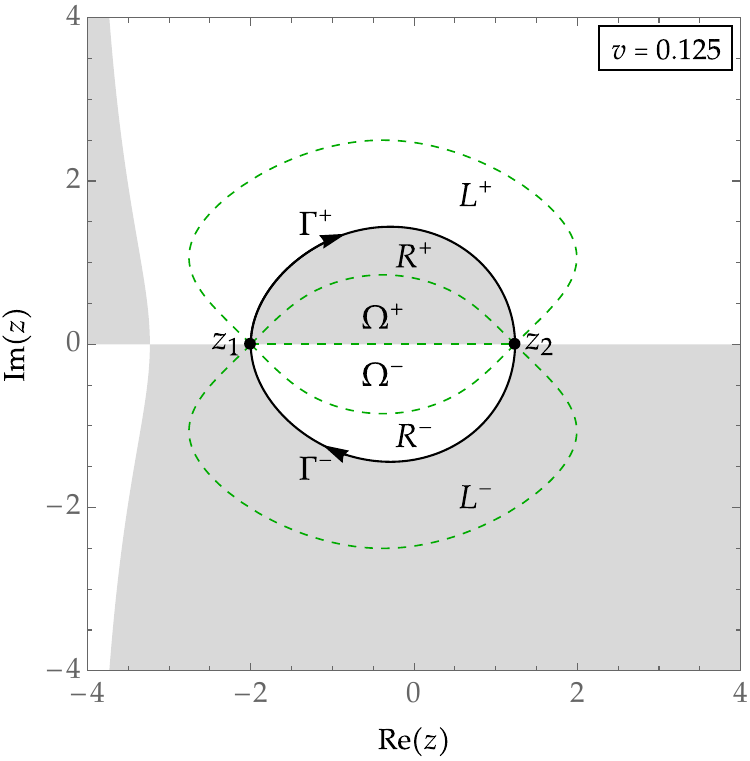}\,\includegraphics[width=0.45\textwidth]{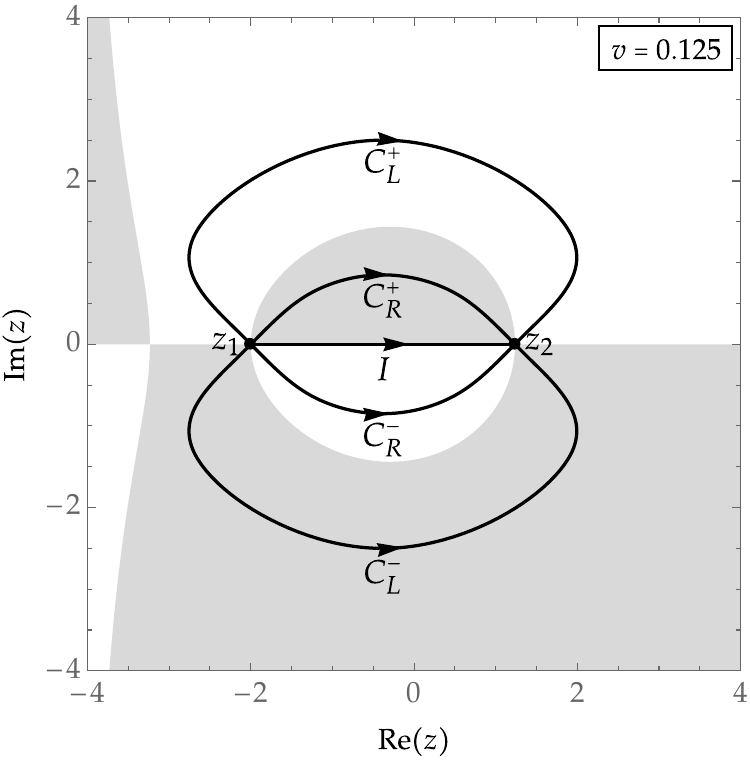}
\caption{Left: the regions $R^\pm$, $L^\pm$, and $\Omega^\pm$ used to define $\mathbf{T}(z;X,v)$. Right: the jump contours of the Riemann-Hilbert problem satisfied by $\mathbf{T}(z;X,v)$.}
\label{fig:largeX-contour}
\end{figure}
Note that we have $\Im(\vartheta(z;v))>0$ on $C_{L}^{+}$and $C_{R}^{-}$ and $\Im(\vartheta(z;v))<0$ on $C_{L}^{-}$and $C_{R}^{+}$. Therefore, as $X\to+\infty$ the jump matrices on these contours are become exponentially small perturbations of $\mathbb{I}$ uniformly except near the critical points $z=z_1(v),z_2(v)$. Since $\mathbf{S}(z;X,v)\equiv\mathbf{T}(z;X,v)$ for $|z|$ sufficiently large, we have from \eqref{Psi-from-S-X}
\begin{equation}
\Psi(X, X^{\frac{3}{2}} v )=2 \ii  \ee^{-\ii\arg(ab)}X^{-\frac{1}{2}} \lim _{z \rightarrow \infty} z T_{12}(z ; X, v), \quad X>0.
\label{Psi-from-T-X}
\end{equation}

\subsection{Parametrix construction} 
The asymptotic analysis as $X\to+\infty$ requires an outer parametrix and two inner parametrices to be used in small disks centered at $z=z_1(v),z_2(v)$. Before we proceed, we define a few quantities that let us rewrite the constant factors in the jump matrices in \eqref{jump-C-L-plus-X}--\eqref{jump-C-L-minus-X} in a more convenient way. First, %recall \eqref{norm-def} and 
note that the identity $\mathfrak{a}^2+\mathfrak{b}^2=1$ implies that
\begin{equation}
\frac{1}{\mathfrak{a}^2} = 1 + \left(\frac{\mathfrak{b}}{\mathfrak{a}}\right)^2 > 1,
\end{equation}
and hence we may define
\begin{equation}
p:= \frac{1}{2\pi} \ln\left( 1 + \left( \frac{\mathfrak{b}}{\mathfrak{a}}\right)^2 \right) > 0,
\label{p-def}
\end{equation}
so that the jump condition \eqref{jump-T-I-X} reads $\mathbf{T}_+(z;X,v)=\mathbf{T}_-(z;X,v)\ee^{2\pi p \sigma_3}$, $z\in I$. Next, introduce
\begin{equation}
\tau:=\frac{\mathfrak{b}}{\mathfrak{a}}.
%\qquad \text{and}\qquad \nu:=\arg\left( \frac{b}{a^*}\right),
\label{tau-def}
\end{equation}
Then, using again $\mathfrak{a}^2+\mathfrak{b}^2=1$,
\begin{equation}
\mathfrak{a b} = \frac{\mathfrak{b}}{\mathfrak{a}} \mathfrak{a}^2 = \tau \ee^{-2\pi p}.
\end{equation}

\subsubsection{Outer parametrix}
\label{s:large-X-outer-parametrix}
We seek an outer parametrix $\dot{\mathbf{T}}^{\mathrm{out}}(z)$ with the 
with the following properties:
\begin{itemize}
\item $\dot{\mathbf{T}}^{\mathrm{out}}(z)$ is analytic in $z$ for $z\in\mathbb{C}\setminus I$.
\item $\dot{\mathbf{T}}^{\mathrm{out}}(z) \to \mathbb{I}$ as $z\to\infty$. 
\item $\dot{\mathbf{T}}^{\mathrm{out}}(z)$ satisfies exactly the jump condition \eqref{jump-T-I-X} on $I$.
\end{itemize}
We define, using the principal branch of the power function,
\begin{equation}
\dot{\mathbf{T}}^{\mathrm{out}}(z)=\dot{\mathbf{T}}^{\mathrm{out}}(z;v):= \left( \frac{z-z_1(v)}{z-z_2(v)}\right)^{\ii p \sigma_3},
\label{W-out-X}
\end{equation}
which satisfies all of the aforementioned properties, where the value of $p$ is given in \eqref{p-def}.

\subsubsection{Inner parametrices}
We now construct inner parametrices to be used within disks $D_{z_1}(\delta)$ and $D_{z_2}(\delta)$ centered at $z=z_1(v)$ and $z=z_2(v)$, respectively, with sufficiently small and fixed radius $\delta>0$ independent of $X$. We recall that $\vartheta'(z_1(v);v)=\vartheta'(z_2(v);v)=0$ for $|v|<v_\mathrm{c}$ and note that $\vartheta''(z_1(v);v)<0$, whereas $\vartheta''(z_2(v);v)>0$. Accordingly, we define the conformal mappings $\varphi_{z_1}(z;v)$ and $\varphi_{z_2}(z;v)$ locally near $z=z_1(v)$ and $z=z_2(v)$, respectively, by the equations
\begin{equation}
\varphi_{z_1}(z ; v)^{2}=2(\vartheta(z_1(v) ; v)-\vartheta(z ; v)) \quad \text{and}\quad
\varphi_{z_2}(z ; v)^{2}=2(\vartheta(z ; v)-\vartheta(z_2(v) ; v)),
\end{equation}
and we choose the analytic solutions satisfying $\varphi_{z_1}'(z_1(v) ; v)<0$ and $\varphi_{z_2}'(z_2(v) ; v)>0$. Next, introducing the rescaled conformal coordinates $\zeta_{z_1}:=X^{\frac{1}{4}}\varphi_{z_1}$ and $\zeta_{z_2}:=X^{\frac{1}{4}}\varphi_{z_2}$, we observe that the jump conditions satisfied by 
\begin{equation}
{\mathbf{U}}^{z_1} := \mathbf{T} \ee^{-\ii X^{\frac{1}{2}}\vartheta(z_1(v);v)\sigma_3}(\ii \sigma_2),\quad \text{for $z$ near $z_1$}
\end{equation}
and by
\begin{equation}
{\mathbf{U}}^{z_2} := \mathbf{T} \ee^{-\ii X^{\frac{1}{2}}\vartheta(z_2(v);v)\sigma_3},\quad \text{for $z$ near $z_2$}
\end{equation}
take exactly the same form when expressed in terms of the respective conformal coordinates $\zeta=\zeta_{z_1}$ and $\zeta=\zeta_{z_2}$ and when the jump contours are locally taken to coincide with the five rays $\arg(\zeta)= \pm \frac{1}{4}\pi$, $\arg(\zeta)= \pm \frac{3}{4}\pi$, and $\arg(-\zeta)= 0$. Moreover, these jump conditions coincide exactly with those in (for example) \cite[Riemann-Hilbert Problem A.1]{Miller2018} for a standard parabolic cylinder parametrix. See Figure~\ref{fig:PC-z2} for the jump contours and matrices for $\mathbf{U}^{z_j}$ expressed in the coordinate $\zeta=\zeta_{z_j}$ for $j=1,2$. 
\begin{figure}[h]
\includegraphics[width=0.5\textwidth]{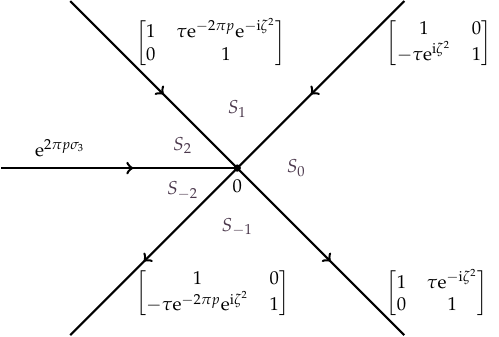}
\caption{The jump contours and jump matrices near $z=z_1(v)$ and $z=z_2(v)$ take the form given in this figure when expressed in the rescaled conformal coordinates $\zeta=\zeta_{z_1}$ and $\zeta=\zeta_{z_2}$, respectively, for which $\zeta=0$ is the image of $z=z_{1,2}(v)$. Compare with \cite[Fig.\@ 9]{Miller2018}.}
\label{fig:PC-z2}
\end{figure}
Note that the consistency condition $\tau^2 = \ee^{2\pi p} - 1$ for the jump matrices at $\zeta=0$ is satisfied by definition of $p$ and $\tau$; see \eqref{p-def} and \eqref{tau-def}.

We now let $\mathbf{U}(\zeta)=\mathbf{U}(\zeta;p,\tau)$ denote the unique solution of \cite[Riemann-Hilbert Problem A.1]{Miller2018}. This solution has the following important properties.
\begin{itemize}
\item
$\mathbf{U}(\zeta)$ is analytic in the five sectors shown in Figure~\ref{fig:PC-z2}, which are $S_0: |\arg(\zeta)|<\frac{1}{4}\pi$, $S_{1}: \frac{1}{4}\pi <\arg (\zeta)<\frac{3}{4}\pi$, $S_{-1}:-\frac{3}{4}\pi <\arg (\zeta)<-\frac{1}{4}\pi$, $S_{2}: \frac{3}{4}\pi <\arg (\zeta)<\pi$, and $S_{-2}:-\pi<\arg (\zeta)<-\frac{3}{4}\pi$. 
\item
$\mathbf{U}(\zeta)$ takes continuous boundary values on the excluded rays and at the origin from each of the five sectors, which are related by the jump condition $\mathbf{U}_+(\zeta) = \mathbf{U}_-(\zeta) \mathbf{V}^{\mathrm{PC}}(\zeta)$, where the jump contours and the jump matrix $\mathbf{V}^{\mathrm{PC}}(\zeta)$ are given in Figure~\ref{fig:PC-z2}.
\item
Importantly, the diagonal (resp., off-diagonal) part of $\mathbf{U}(\zeta)\zeta^{\ii p \sigma_3}$ has a complete asymptotic expansion in descending even (resp., odd) powers of $\zeta$ as $\zeta \to \infty$, with coefficients that are independent of the sector in which $\zeta\to \infty$. In more detail, we have
\begin{equation}
\mathbf{U}(\zeta ; p, \tau) \zeta^{\ii p \sigma_{3}}=\mathbb{I}+\frac{1}{2\ii \zeta}
\begin{bmatrix}
0 & r(p, \tau) \\
-s(p, \tau) & 0
\end{bmatrix}+\begin{bmatrix}
O\left(\zeta^{-2}\right) & O\left(\zeta^{-3}\right) \\
O\left(\zeta^{-3}\right) & O\left(\zeta^{-2}\right)
\end{bmatrix}, \quad \zeta \rightarrow \infty.
\label{U-PC-expansion}
\end{equation}
See \cite[Eqn.\@ (A.9)]{Miller2018}. Here the error terms in \eqref{U-PC-expansion} are uniform for bounded $\tau$ and $p$, 
\begin{equation}
r(p,\tau) := 2 \ee^{\ii \frac{1}{4}\pi}\sqrt{\pi}\frac{\ee^{\frac{1}{2}\pi p} \ee^{\ii p \ln(2)}}{\tau \Gamma(\ii p)},
\label{r-def}
\end{equation}
where $\Gamma(\diamond)$ is the Euler gamma function, and
\begin{equation}
s(p,\tau) := - \frac{2p}{r(p,\tau)} =  \ee^{\ii \frac{3}{4}\pi} \tau p \Gamma(\ii p) \ee^{-\frac{1}{2}\pi p} \ee^{-\ii p \ln(2)}.
\label{X-s-def}
\end{equation}
Again, see \cite[Eqn.\@ (A.7) and Eqn.\@ (A.8)]{Miller2018}. In the special case $p>0$ relevant here, it follows that $s(p,\tau)=-r(p,\tau)^*$. Then \eqref{X-s-def} implies that $|r(p,\tau)| = \sqrt{2 p}$ as well as  $|s(p,\tau)| = \sqrt{2 p}$; see \cite[Remark A.2]{Miller2018}. Therefore,
\begin{equation}
r(p,\tau) = \sqrt{2 p } \ee^{\ii(\frac{1}{4}\pi+ p \ln(2) - \arg(\Gamma(\ii p)))}\qquad\text{and}\qquad
s(p,\tau) = -\sqrt{2 p } \ee^{-\ii(\frac{1}{4}\pi+ p \ln(2) - \arg(\Gamma(\ii p)))}.
\label{r-s-polar}
\end{equation}
\end{itemize}
We define the inner parametrices by
\begin{equation}
\dot{\mathbf{T}}^{z_1}(z;X,v) := \mathbf{Y}^{z_1}(z;X,v) \mathbf{U}(\zeta_{z_1}) (\ii \sigma_2)^{-1} \ee^{\ii  X^{1/2}\vartheta(z_1(v);v)\sigma_3}, \quad z\in D_{z_1}(\delta),
\label{T-in-z1}
\end{equation}
and
\begin{equation}
\dot{\mathbf{T}}^{z_2}(z;X,v) := \mathbf{Y}^{z_2}(z;X,v) \mathbf{U}(\zeta_{z_2}) \ee^{\ii X^{1/2}\vartheta(z_2(v);v)\sigma_3}, \quad z\in D_{z_2}(\delta),
\label{T-in-z2}
\end{equation}
where the holomorphic prefactor matrices $\mathbf{Y}^{z_1}(z;X,v) $ and $\mathbf{Y}^{z_2}(z;X,v) $ will be chosen to ensure that the mismatch with the outer parametrix on the boundaries of the disks  has the behavior $\mathbb{I} + o(1)$ as $X\to +\infty$. To specify these prefactors, we first express the outer parametrix near $z=z_1$ and near $z=z_2$ in terms of the relevant conformal coordinate to see that
\begin{equation}
\begin{aligned}
\dot{\mathbf{T}}^{\mathrm{out}}(z;v)  \ee^{-\ii X^{1/2}\vartheta(z_1(v);v)\sigma_3}(\ii \sigma_2) &= X^{-\ii\frac{1}{4} p\sigma_3} \ee^{-\ii X^{1/2}\vartheta(z_1(v);v)\sigma_3} \mathbf{H}^{z_1}(z;v) \zeta_{z_1}^{-\ii p\sigma_3},\\
\mathbf{H}^{z_1}(z;v)&:= (z_2(v)-z)^{-\ii p \sigma_3}\left( \frac{z_1(v) - z}{\varphi_{z_1}(z;v)}\right)^{\ii p \sigma_3}(\ii \sigma_2),
\end{aligned}
\label{T-out-near-z1}
\end{equation}
and
\begin{equation}
\begin{aligned}
\dot{\mathbf{T}}^{\mathrm{out}}(z;v)  \ee^{-\ii X^{1/2}\vartheta(z_2(v);v)\sigma_3}&= X^{\ii\frac{1}{4} p\sigma_3} \ee^{-\ii X^{1/2}\vartheta(z_2(v);v)\sigma_3} \mathbf{H}^{z_2}(z;v) \zeta_{z_2}^{-\ii p\sigma_3},\\
\mathbf{H}^{z_2}(z;v)&:= (z-z_1(v))^{\ii p \sigma_3}\left( \frac{\varphi_{z_2}(z;v)}{z-z_2(v)}\right)^{\ii p \sigma_3}.
\end{aligned}
\label{T-out-near-z2}
\end{equation}
Here all power functions are defined as principal branches, so it is easy to verify that the matrix functions $\mathbf{H}^{z_1}(z;v)$ and $\mathbf{H}^{z_2}(z;v)$ are holomorphic in neighborhoods of $z=z_1$ and $z=z_2$, respectively. The product of factors to the left of $\zeta_{z_1}^{-\ii p\sigma_3}$ in  \eqref{T-out-near-z1} and to the left of $\zeta_{z_1}^{-\ii p\sigma_3}$ in \eqref{T-out-near-z2} determine the holomorphic prefactors in \eqref{T-in-z1} and \eqref{T-in-z2}, respectively. Thus, we define the inner parametrices by \eqref{T-in-z1} with
\begin{equation}
\mathbf{Y}^{z_1}(z;X,v):=X^{-\ii\frac{1}{4} p\sigma_3} \ee^{-\ii X^{1/2}\vartheta(z_1(v);v)\sigma_3} \mathbf{H}^{z_1}(z;v) ,
 \quad z\in D_{z_1}(\delta),
\label{T-z1-def}
\end{equation}
and by \eqref{T-in-z2} with
\begin{equation}
\mathbf{Y}^{z_2}(z;X,v):=X^{\ii\frac{1}{4} p\sigma_3} \ee^{-\ii X^{1/2}\vartheta(z_2(v);v)\sigma_3} \mathbf{H}^{z_2}(z;v),
\quad z\in D_{z_2}(\delta).
\label{T-z2-def}
\end{equation}
The analyticity properties and the jump conditions satisfied by $\mathbf{U}(\zeta)$ imply that the inner parametrices $\dot{\mathbf{T}}^{z_1}(z;X,v) $ and $\dot{\mathbf{T}}^{z_2}(z;X,v) $ exactly satisfy the jump conditions for $\mathbf{T}(z;X,v)$ within their respective disks after deforming the jump contours in the disk to agree with the preimages under $\lambda\mapsto \zeta_{z_1,z_2}$ of the five rays shown in Figure~\ref{fig:PC-z2}.
\begin{remark} The unique solution $\mathbf{U}(\zeta)=\mathbf{U}(\zeta;p,\tau)$ of \cite[Riemann-Hilbert Problem A.1]{Miller2018} becomes the identity matrix if $p=0$. Note from \eqref{p-def} that as $\mathfrak{b}\to 0$, we have $p\to 0^+$. Therefore, both of the inner parametrices degenerate to the identity matrix as $\mathfrak{b}\to 0$.
\label{rem:p-vanishes}
\end{remark}
We define the global parametrix $\dot{\mathbf{T}}(z;X,v)$ by
\begin{equation}
\dot{\mathbf{T}}(z;X,v):=\begin{cases}
\dot{\mathbf{T}}^{z_1}(z;X,v),&\quad z\in D_{z_1}(\delta),\\
\dot{\mathbf{T}}^{z_2}(z;X,v),&\quad z\in D_{z_2}(\delta),\\
\dot{\mathbf{T}}^{\mathrm{out}}(z;v),&\quad z \in \mathbb{C} \setminus \left(I \cup D_{z_1}(\delta) \cup D_{z_2}(\delta)\right).
\end{cases}
\label{eq:large-X-global-parametrix}
\end{equation}

\subsection{Asymptotics as $X\to+\infty$}
\label{s:asymptotics-X}
We compare the unknown $\mathbf{T}(z;X,v)$ with the global parametrix $\dot{\mathbf{T}}(z;X,v)$ and define the error
\begin{equation}
\mathbf{F}(z;X,v):= \mathbf{T}(z;X,v) \dot{\mathbf{T}}(z;X,v)^{-1}.
\end{equation}
As $\dot{\mathbf{T}}(z;X,v)$ is an exact solution of the jump conditions satisfied by ${\mathbf{T}}(z;X,v)$ on the part of $I$ outside the disks $D_{z_1}(\delta)$ and $D_{z_2}(\delta)$ and on the arcs inside these disks, it follows that $\mathbf{F}(z;X,v)$ extends as an analytic function of $z \in \mathbb{C}\setminus \Sigma_{\mathbf{F}}$, where the contour $\Sigma_{\mathbf{F}}$ consists of the arcs of $C_{L}^{\pm}$and $C_{R}^{\pm}$ lying outside of the disks $D_{z_1, z_2}(\delta)$, and the boundaries $\partial D_{z_1, z_2}(\delta)$. We orient the circular boundaries $\partial D_{z_1, z_2}(\delta)$ clockwise, and consider the jump matrix $\mathbf{V}^{\mathbf{F}}(z;X,v)$ that relates the boundary values of $\mathbf{F}(z;X,v)$ through the jump condition
\begin{equation}
\mathbf{F}_+(z;X,v) = \mathbf{F}_-(z;X,v) \mathbf{V}^{\mathbf{F}}(z;X,v), \qquad z\in\Sigma_{\mathbf{F}}.
\label{E-jump}
\end{equation}
Since $\dot{\mathbf{T}}^{\mathrm{out}}(z;X,v)$ is analytic in $z$ for $z\in (C_{L}^{\pm}\cup C_{R}^{\pm} ) \cap \Sigma_{\mathbf{F}}$, we may express the jump matrix $\mathbf{V}^{\mathbf{F}}(z;X,v)$ for $\mathbf{F}(z;X,v)$ on these arcs as
\begin{multline}
\mathbf{V}^{\mathbf{F}}(z;X,v) = \mathbf{F}_-(z;X,v)^{-1} \mathbf{F}_+(z;X,v) =\dot{\mathbf{T}}^{\mathrm{out}}(z ; v) \mathbf{T}_{-}(z ; X, v)^{-1} \mathbf{T}_{+}(z ; X, v) \dot{\mathbf{T}}^{\mathrm{out}}(z ; v)^{-1},\\
 z \in\left(C_{L}^{\pm} \cup C_{R}^{\pm}\right) \cap \Sigma_{\mathbf{F}}
\end{multline}
As $\delta>0$ is fixed, $\dot{\mathbf{T}}^{\text {out }}(z;v)$ is independent of $X$, and $z$ is restricted to the arcs $C_{L}^{\pm} \cup C_{R}^{\pm}$ that lie outside the disks $D_{z_1,z_2}(\delta)$ on which the jump matrix for $\mathbf{T}(z;X,v)$ becomes an exponentially small perturbation of $\mathbb{I}$ as $X\to+\infty$, 
there exists a constant $K(\varepsilon)>0$ such that 
\begin{equation}
\sup _{z \in\left(C_{L}^{\pm} \cup C_{R}^{\pm}\right) \cap \Sigma_{\mathbf{F}}}\left\|\mathbf{V}^{\mathbf{F}}(z ; X, v)-\mathbb{I}\right\|=O(\ee^{-K(\varepsilon)X^{1/2}}),\quad X\to+\infty,
\label{V-E-outside-decay}
\end{equation}
holds uniformly for $|v|\le 54^{-\frac{1}{2}}-\varepsilon$ and normalized parameters $(\mathfrak{a},\mathfrak{b})$ with $\mathfrak{b}/\mathfrak{a}\le\varepsilon^{-1}$, where $\|\diamond \|$ denotes the matrix norm induced from an arbitrary norm on $\mathbb{C}^{2}$.  

To analyze the jump $\mathbf{V}^{\mathbf{F}}(z;X,v)$ on the circular boundaries $\partial D_{z_1}(\delta)$ and $\partial D_{z_2}(\delta)$, we use the fact that $\mathbf{T}(z;X , v)$ is analytic in $z$ at all but finitely many points on $\partial D_{z_1}(\delta)$ and $\partial D_{z_2}(\delta)$ and hence observe that
\begin{equation}
\mathbf{V}^\mathbf{F}(z;X,v) = 
\begin{cases} 
\dot{\mathbf{T}}^{z_1}(z;X,v) \dot{\mathbf{T}}^{\mathrm{out}}(z;v)^{-1},&\quad \partial D_{z_1}(\delta),\\
\dot{\mathbf{T}}^{z_2}(z;X,v) \dot{\mathbf{T}}^{\mathrm{out}}(z;v)^{-1},&\quad \partial D_{z_2}(\delta).
\end{cases}
\end{equation}
Then, recalling that $\zeta_{z_1} = X^{\frac{1}{4}}\varphi_{z_1}(z;v)$, from \eqref{U-PC-expansion}, \eqref{T-in-z1}, and \eqref{T-z1-def} we have
\begin{multline}
\mathbf{V}^\mathbf{F}(z;X,v) = 
X^{-\ii\frac{1}{4} p\sigma_3} \ee^{-\ii X^{1/2}\vartheta(z_1(v);v)\sigma_3} \mathbf{H}^{z_1}(z;v) \\
{}\cdot \left( \mathbb{I} + \frac{1}{2\ii X^{\frac{1}{4}}\varphi_{z_1}(z;v)} \begin{bmatrix}
0 & r(p, \tau) \\
-s(p, \tau) & 0
\end{bmatrix}+\begin{bmatrix}
O(X^{-\frac{1}{2}}) & O(X^{-\frac{3}{4}}) \\
O(X^{-\frac{3}{4}}) & O(X^{-\frac{1}{2}})
\end{bmatrix}
\right)\\
{}\cdot  \mathbf{H}^{z_1}(z;v)^{-1} \ee^{\ii X^{1/2}\vartheta(z_1(v);v)\sigma_3} X^{\ii\frac{1}{4} p\sigma_3},\quad z\in\partial D_{z_1}(\delta).
\label{V-E-D-z1-decay}
\end{multline}
Similarly, recalling that $\zeta_{z_2} = X^{\frac{1}{4}}\varphi_{z_2}(z;v)$, from \eqref{U-PC-expansion}, \eqref{T-in-z2}, and \eqref{T-z2-def} we have
\begin{multline}
\mathbf{V}^\mathbf{F}(z;X,v) = 
X^{\ii\frac{1}{4} p\sigma_3} \ee^{-\ii X^{1/2}\vartheta(z_2(v);v)\sigma_3} \mathbf{H}^{z_2}(z;v) \\
{}\cdot \left( \mathbb{I} + \frac{1}{2\ii X^{\frac{1}{4}}\varphi_{z_2}(z;v)} \begin{bmatrix}
0 & r(p, \tau) \\
-s(p, \tau) & 0
\end{bmatrix}+\begin{bmatrix}
O(X^{-\frac{1}{2}}) & O(X^{-\frac{3}{4}}) \\
O(X^{-\frac{3}{4}}) & O(X^{-\frac{1}{2}})
\end{bmatrix}
\right)\\
{}\cdot  \mathbf{H}^{z_2}(z;v)^{-1} \ee^{\ii X^{1/2}\vartheta(z_2(v);v)\sigma_3} X^{-\ii\frac{1}{4} p\sigma_3},\quad z\in\partial D_{z_2}(\delta).
\label{V-E-D-z2-decay}
\end{multline}
In both \eqref{V-E-D-z1-decay} and \eqref{V-E-D-z2-decay} the error terms are uniform for normalized parameters $(\mathfrak{a},\mathfrak{b})$ with $\mathfrak{b}/\mathfrak{a}$ bounded because the latter condition implies that $p$ and $\tau$ are bounded.
Combining \eqref{V-E-outside-decay}, \eqref{V-E-D-z1-decay}, and \eqref{V-E-D-z2-decay} gives in particular the uniform estimate 
\begin{equation}
\sup_{z\in \Sigma_{\mathbf{F}}} \| \mathbf{V}^{\mathbf{F}}(z;X,v) - \mathbb{I}\| = O_\varepsilon(X^{-\frac{1}{4}}), \quad X\to+\infty,\quad |v|\le 54^{-\frac{1}{2}}-\varepsilon,\; \mathfrak{b}/\mathfrak{a}\le\varepsilon^{-1}.
\label{error-jump-estimate}
\end{equation}
Here, the notation $O_\varepsilon(\diamond)$ indicates that the implied constant depends on $\varepsilon>0$.
Since $\Sigma_\mathbf{F}$ is a compact contour and $\mathbf{F}(z;X,v)$ is analytic in $z$ for $z\in\mathbb{C}\setminus \Sigma_\mathbf{F} $ with $\mathbf{F}(z;X,v) \to \mathbb{I}$ as $z\to\infty$, the small-norm theory for such Riemann-Hilbert problems implies that the error $\mathbf{F}(z;X,v) $ satisfies
\begin{equation}
\mathbf{F}_-(\diamond; X,v ) - \mathbb{I} = O_\varepsilon(X^{-\frac{1}{4}}),\quad X\to +\infty,\quad |v|\le54^{-\frac{1}{2}}-\varepsilon,\; \mathfrak{b}/\mathfrak{a}\le\varepsilon^{-1}
\label{E-minus-estimate}
\end{equation}
in the $L^2(\Sigma_\mathbf{F})$ sense. See \cite[Section 4.1.3]{BilmanLM2020} for details of the argument implying this estimate. 
%In both \eqref{error-jump-estimate} and \eqref{E-minus-estimate} the estimates are uniform with respect to $(\mathfrak{a},\mathfrak{b})$ for $\mathfrak{b}/\mathfrak{a}$ bounded.  
Reformulating the jump condition \eqref{E-jump} in the form $\mathbf{F}_+ - \mathbf{F}_- = \mathbf{F}_-(\mathbf{V}^{\mathbf{F}} - \mathbb{I})$ and using the fact that both factors on the right-hand side tend to the identity matrix as $z\to\infty$, we obtain from the Plemelj formula
\begin{equation}
\mathbf{F}(z;X,v) = \mathbb{I} + \frac{1}{2\pi \ii} \int_{\Sigma_\mathbf{F}} \frac{ \mathbf{F}_-(\zeta;X,v)(\mathbf{V}^{\mathbf{F}}(\zeta;X,v) - \mathbb{I})}{\zeta-z}\dd \zeta,\quad z\in\mathbb{C}\setminus \Sigma_{\mathbf{F}}.
\label{E-Plemelj-X}
\end{equation}
Now, we have
\begin{equation}
\mathbf{T}(z;X,v) = \mathbf{F}(z;X,v) \dot{\mathbf{T}}^{\mathrm{out}}(z;v)
\label{W-in-F-Wout}
\end{equation}
holding for $|z|$ sufficiently large. Recall from \eqref{W-out-X} that $\dot{\mathbf{T}}^{\mathrm{out}}(z;v)$ is a diagonal matrix that tends to $\mathbb{I}$ as $z\to \infty$. Thus, we see from \eqref{Psi-from-T-X} that
%and the (convergent) Laurent series expansion of \eqref{E-Plemelj-X} as $z\to\infty$ that
\begin{equation}
\Psi(X,X^{\frac{3}{2}}v) = 2\ii \ee^{-\ii \arg(ab)} X^{-\frac{1}{2}}\lim_{z\to\infty} z F_{12}(z;X,v).
\end{equation}
Then, using the Laurent series expansion of $\mathbf{F}(z;X,v)$ obtained from \eqref{E-Plemelj-X} and convergent for $|z|>\sup_{s\in\Sigma_\mathbf{F}}|s|$, we arrive at the exact formula
\begin{multline}
\Psi(X, X^{\frac{3}{2}} v)=-\frac{\ee^{-\ii \arg(ab)}}{\pi X^{\frac{1}{2}}}\left[\int_{\Sigma_{\mathbf{F}}} F_{11-}(z ; X, v) V_{12}^{\mathbf{F}}(z ; X, v) \dd z \right.\\
\left.{}+\int_{\Sigma_{\mathbf{F}}} F_{12-}(z ; X, v)\left(V_{22}^{\mathbf{F}}(z; X, v)-1\right)  \dd z\right].
\label{Psi-large-X-exact}
\end{multline}
Looking at the diagonal elements of \eqref{V-E-D-z1-decay} and \eqref{V-E-D-z2-decay} along with \eqref{V-E-outside-decay}, we see that ${V}_{22}^{\mathbf{F}}(\diamond; X,v) - 1 = O_\varepsilon(X^{-\frac{1}{2}})$ holds uniformly on $\Sigma_\mathbf{F}$. Since $\Sigma_\mathbf{F}$ is compact, we also have ${V}_{22}^{\mathbf{F}}(\diamond; X,v) - 1 = O_\varepsilon(X^{-\frac{1}{2}})$ in $L^2(\Sigma_{\mathbf{F}})$ as $X\to +\infty$. On the other hand, the $L^2$ estimate \eqref{E-minus-estimate} implies that $F_{12-}(\diamond;X,v) = O_\varepsilon(X^{-\frac{1}{4}})$ holds in $L^2(\Sigma_{\mathbf{F}})$. Thus, applying Cauchy-Schwarz inequality to the modulus of the second integral in \eqref{Psi-large-X-exact} yields
\begin{equation}
\Psi(X, X^{\frac{3}{2}} v)=-\frac{\ee^{-\ii \arg(ab)}}{\pi X^{\frac{1}{2}}} \int_{\Sigma_{\mathbf{F}}} F_{11-}(z ; X, v) V_{12}^{\mathbf{F}}(z ; X, v) \dd z + O_\varepsilon(X^{-\frac{5}{4}}),\quad X\to +\infty,
\label{Psi-large-X-approx-1}
\end{equation}
when $|v|\leq 54^{-\frac{1}{2}}-\varepsilon$ and $\mathfrak{b}/\mathfrak{a}\le\varepsilon^{-1}$. We express the integral in \eqref{Psi-large-X-approx-1} as
\begin{equation}
\int_{\Sigma_{\mathbf{F}}} F_{11-}(z ; X, v) V_{12}^{\mathbf{F}}(z ; X, v) \dd z = \int_{\Sigma_{\mathbf{F}}} (F_{11-}(z ; X, v) - 1 ) V_{12}^{\mathbf{F}}(z ; X, v) \dd z + \int_{\Sigma_{\mathbf{F}}}  V_{12}^{\mathbf{F}}(z ; X, v) \dd z.
\label{integrals-X}
\end{equation}
By Cauchy-Schwarz and the fact that because $\Sigma_\mathbf{F}$ is compact the $L^2$-norm is subordinate to the $L^\infty$-norm,
\begin{equation}
\int_{\Sigma_{\mathbf{F}}}(F_{11-}(z;X,v)-1)V_{12}^{\mathbf{F}}(z;X,v)\,\dd z = O\left(\|F_{11-}(\diamond;X,v)-1\|_{L^2(\Sigma_\mathbf{F})}\|V_{12}^{\mathbf{F}}(\diamond;X,v)\|_{L^\infty(\Sigma_\mathbf{F})}\right).
\label{Int-F11mV12}
\end{equation}
Now the conjugating factors in \eqref{V-E-D-z1-decay} and \eqref{V-E-D-z2-decay} are off-diagonal and diagonal respectively, and these factors are also oscillatory.  Therefore,  using the exponential bound \eqref{V-E-outside-decay} gives the estimates ${V}_{12}^{\mathbf{F}}(\diamond; X,v)  = O_\varepsilon(X^{-\frac{1}{4}})$, $V_{21}^{\mathbf{F}}(\diamond;X,v)=O_\varepsilon(X^{-\frac{1}{4}})$, and $V_{11}^{\mathbf{F}}(\diamond;X,v)-1=O_\varepsilon(X^{-\frac{1}{2}})$ all holding in the $L^{\infty}(\Sigma_\mathbf{F})$ sense. For the $L^2$ estimate of $F_{11-}(\diamond;X,v)-1$, we can improve upon \eqref{E-minus-estimate} by using the fact that the ``minus'' boundary value of the Cauchy integral on the right-hand side of \eqref{E-Plemelj-X} is bounded as a linear operator in $L^2(\Sigma_\mathbf{F})$ acting on the matrix-valued numerator to obtain from the $11$-entry:
\begin{multline}
\|F_{11-}(\diamond;X,v)-1\|_{L^2(\Sigma_\mathbf{F})}\le C(\Sigma_\mathbf{F})\left[\|(F_{11-}(\diamond;X,v)-1)(V_{11}^{\mathbf{F}}(\diamond;X,v)-1)\|_{L^2(\Sigma_\mathbf{F})}\right.\\
\left.{}+ \|V_{11}^{\mathbf{F}}(\diamond;X,v)-1\|_{L^2(\Sigma_\mathbf{F})} + \|F_{12-}(\diamond;X,v)V_{21}^{\mathbf{F}}(\diamond;X,v)\|_{L^2(\Sigma_\mathbf{F})}\right].
\end{multline}
The bound $C(\Sigma_\mathbf{F})$ of the operator norm depends only on the contour $\Sigma_\mathbf{F}$, which in turn depends on $v$ but neither on $X$ nor on the normalized parameters $(\mathfrak{a},\mathfrak{b})$; however it may be taken to depend only on $\varepsilon$ if $|v|\le 54^{-\frac{1}{2}}-\varepsilon$.
Using the $L^\infty(\Sigma_\mathbf{F})$ estimate of $V^{\mathbf{F}}_{11}(\diamond;X,v)-1$, this yields (assuming $X>0$ is sufficiently large)
\begin{equation}
\begin{split}
\|F_{11-}(\diamond;X,v)-1\|_{L^2(\Sigma_\mathbf{F})} &= O_\varepsilon\left(\|V_{11}^{\mathbf{F}}(\diamond;X,v)-1\|_{L^2(\Sigma_\mathbf{F})}\right) + O_\varepsilon\left( \|F_{12-}(\diamond;X,v)V_{21}^{\mathbf{F}}(\diamond;X,v)\|_{L^2(\Sigma_\mathbf{F})}\right)\\
&= O_\varepsilon\left(\|V_{11}^{\mathbf{F}}(\diamond;X,v)-1\|_{L^\infty(\Sigma_\mathbf{F})}\right) \\
&\quad\quad{}+ O_\varepsilon\left( \|F_{12-}(\diamond;X,v)\|_{L^2(\Sigma_\mathbf{F})}\|V_{21}^{\mathbf{F}}(\diamond;X,v)\|_{L^\infty(\Sigma_\mathbf{F})}\right),
\end{split}
\end{equation}
again using the fact that $\Sigma_\mathbf{F}$ is compact.  Combining the $L^\infty$ estimates for $V_{11}^\mathbf{F}(\diamond;X,v)-1$ and $V_{21}^\mathbf{F}(\diamond;X,v)$ with the $O_\varepsilon(X^{-\frac{1}{4}})$ estimate of $F_{12-}(\diamond;X,v)$ in $L^2$ implied by \eqref{E-minus-estimate} then shows that $F_{11-}(\diamond;X,v)-1=O_\varepsilon(X^{-\frac{1}{2}})$ holds in the $L^2(\Sigma_\mathbf{F})$ sense, provided $|v|\le 54^{-\frac{1}{2}}-\varepsilon$ and $\mathfrak{b}/\mathfrak{a}\le\varepsilon^{-1}$.  Using this and the $O(X^{-\frac{1}{4}})$ estimate of $V_{12}^\mathbf{F}(\diamond;X,v)$ in $L^\infty(\Sigma_\mathbf{F})$ in \eqref{Int-F11mV12} then shows that the first term on the right-hand side of \eqref{integrals-X} is $O_\varepsilon(X^{-\frac{3}{4}})$ as $X\to+\infty$, and hence its contribution to \eqref{Psi-large-X-approx-1} can be absorbed into the error term already present in that asymptotic formula.  Therefore, \eqref{Psi-large-X-approx-1} gives a formula for $\Psi(X,X^\frac{3}{2}v)$ with an explicit leading term:
\begin{multline}
\Psi(X, X^{\frac{3}{2}} v)=-\frac{\ee^{-\ii \arg(ab)}}{\pi X^{\frac{1}{2}}} \int_{\Sigma_{\mathbf{F}}} V_{12}^{\mathbf{F}}(z ; X, v) \dd z + O_\varepsilon(X^{-\frac{5}{4}}),\\ X\to +\infty,
\quad |v|\le 54^{-\frac{1}{2}}-\varepsilon,\; \mathfrak{b}/\mathfrak{a}\le\varepsilon^{-1}.
\label{Psi-large-X-approx-2}
\end{multline}
Recalling the exponential decay \eqref{V-E-outside-decay}, the estimate \eqref{Psi-large-X-approx-2} holds with a different error, but of the same size, if we replace the contour of integration $\Sigma_{\mathbf{F}}$ by $\partial D_{z_1}(\delta) \cup \partial D_{z_2}(\delta)$. Thus, we arrive at
\begin{multline}
\Psi(X, X^{\frac{3}{2}} v)=-\frac{\ee^{-\ii \arg(ab)}}{\pi X^{\frac{1}{2}}} \int_{\partial D_{z_1}(\delta) \cup \partial D_{z_2}(\delta)} V_{12}^{\mathbf{F}}(z ; X, v) \dd z + O_\varepsilon(X^{-\frac{5}{4}}),\\ X\to +\infty,\quad |v|\le54^{-\frac{1}{2}}-\varepsilon,\;\mathfrak{b}/\mathfrak{a}\le\varepsilon^{-1}.
\label{Psi-large-X-approx-3}
\end{multline}
Since $\mathbf{H}^{z_1}(z;v)$ is an off-diagonal matrix (see \eqref{T-out-near-z1}), 
\eqref{V-E-D-z1-decay} shows that as $X\to+\infty$,
\begin{equation}
V_{12}^{\mathbf{F}}(z;X,v) = \frac{X^{-\ii \frac{1}{2} p} \ee^{-2\ii X^{1/2}\vartheta(z_1(v);v)}}{2\ii X^{\frac{1}{4}} \varphi_{z_1}(z;v)}s(p,\tau) H^{z_1}_{12}(z;v)^2 + O_\varepsilon(X^{-\frac{3}{4}}),\quad z\in \partial D_{z_1}(\delta),
\label{V-12-D-z1}
\end{equation}
and since $\mathbf{H}^{z_2}(z;v)$ is a diagonal matrix (recall \eqref{T-out-near-z2}), \eqref{V-E-D-z2-decay} shows that as $X\to+\infty$,
\begin{equation}
V_{12}^{\mathbf{F}}(z;X,v) = \frac{X^{\ii \frac{1}{2} p} \ee^{-2\ii X^{1/2}\vartheta(z_2(v);v)}}{2\ii X^{\frac{1}{4}} \varphi_{z_2}(z;v)}r(p,\tau) H^{z_2}_{11}(z;v)^2 + O_\varepsilon(X^{-\frac{3}{4}}),\quad z\in \partial D_{z_2}(\delta),
\label{V-12-D-z2}
\end{equation}
where both of the errors are uniform on the indicated boundary contours, and $|v|< 54^{-\frac{1}{2}}-\varepsilon$ and $\mathfrak{b}/\mathfrak{a}\le\varepsilon^{-1}$.
We can compute the integrals of the explicit leading terms in \eqref{V-12-D-z1} and \eqref{V-12-D-z2} on the relevant circular boundaries by residues at $z=z_1, z_2$ since $\varphi_{z_1}(z;v)$ has a simple zero at $z=z_1(v)$ and $\varphi_{z_2}(z;v)$ has a simple zero at $z=z_2(v)$, while $\mathbf{H}^{z_1}(z;v)$ and $\mathbf{H}^{z_2}(z;v)$ are analytic in $D_{z_1}(\delta)$ and $D_{z_2}(\delta)$, respectively. Doing so in \eqref{Psi-large-X-approx-3} noting the clockwise orientation of the circles $\partial D_{z_1,z_2}(\delta)$ yields, as $X\to+\infty$,
\begin{multline}
\Psi(X, X^{\frac{3}{2}} v)=\frac{\ee^{-\ii\arg(ab)}}{X^{\frac{3}{4}}}\left[ X^{-\ii \frac{1}{2} p} \ee^{-2\ii X^{1/2}\vartheta(z_1(v);v)} \frac{s(p,\tau) H^{z_1}_{12}(z_1(v);v)^2}{\varphi_{z_1}^\prime (z_1(v);v)}\right. \\ \left.+  X^{\ii \frac{1}{2} p} \ee^{-2\ii X^{1/2}\vartheta(z_2(v);v)} \frac{r(p,\tau) H^{z_2}_{11}(z_2(v);v)^2}{\varphi_{z_2}^\prime(z_2(v);v)} \right] + O_\varepsilon(X^{-\frac{5}{4}}),
\label{Psi-large-X-approx-4}
\end{multline}
assuming $|v|<54^{-\frac{1}{2}}-\varepsilon$ and $\mathfrak{b}/\mathfrak{a}\le\varepsilon^{-1}$.

\begin{remark}
\label{Remark-Sharpen}
Analogues of \eqref{Psi-large-X-approx-2}, \eqref{V-12-D-z1}, \eqref{V-12-D-z2}, and \eqref{Psi-large-X-approx-3} were obtained for a particular choice of the parameters $(a,b)$ in equations (160), (161), (162), and (163) respectively of our earlier paper \cite{BilmanLM2020}, but with cruder error estimates.  
\end{remark}

It now remains to compute the four quantities $H_{12}^{z_1}(z_1(v) ; v)$, $\varphi_{z_1}^{\prime}(z_1(v) ; v)$, $H_{11}^{z_2}(z_2(v) ; v)$, and $\varphi_{z_2}^{\prime}(z_2(v) ; v)$. First, by definition
\begin{equation}
\varphi_{z_1}^{\prime}(z_1(v) ; v) = -\sqrt{- \vartheta''(z_1(v);v)}\qquad\text{and}\qquad \varphi_{z_2}^{\prime}(z_2(v) ; v) = \sqrt{ \vartheta''(z_2(v);v)}.
\end{equation}
Next, using \eqref{T-out-near-z1} and \eqref{T-out-near-z2} and L'H\^opital's rule,
\begin{equation}
\mathbf{H}^{z_1}(z_1(v);v) = (z_2(v) - z_1(v))^{-\ii p \sigma_3} \left(\frac{-1}{\varphi_{z_1}^\prime(z_1(v); v)}\right)^{\ii p \sigma_3}(\ii\sigma_2)
\end{equation}
and
\begin{equation}
\mathbf{H}^{z_2}(z_2(v);v) = (z_2(v) - z_1(v))^{\ii p \sigma_3} \varphi_{z_2}^\prime(z_2(v); v)^{\ii p \sigma_3}.
\end{equation}
Therefore, using \eqref{r-s-polar} we obtain
\begin{equation}
\begin{aligned}
\frac{s(p,\tau) H^{z_1}_{12}(z_1(v);v)^2}{\varphi_{z_1}^\prime (z_1(v);v)} &= - (z_2(v) - z_1(v))^{- 2\ii p} (-\vartheta''(z_1(v);v))^{-\ii p}
\frac{s(p,\tau)}{\sqrt{-\vartheta''(z_1(v);v)}}\\
&= \frac{\sqrt{2p}\ee^{-\ii(\frac{1}{4}\pi+ p \ln(2) - \arg(\Gamma(\ii p)))}}{\sqrt{-\vartheta''(z_1(v);v)}} (z_2(v) - z_1(v))^{- 2\ii p} (-\vartheta''(z_1(v);v))^{-\ii p}  
\end{aligned}
\label{H-z1-expand}
\end{equation}
and
\begin{equation}
\begin{aligned}
\frac{r(p,\tau) H^{z_2}_{11}(z_2(v);v)^2}{ \varphi_{z_2}^\prime(z_2(v);v)} &=  (z_2(v) - z_1(v))^{2\ii p} \vartheta''(z_2(v);v)^{\ii p} \frac{r(p,\tau)}{\sqrt{\vartheta''(z_2(v);v)}}\\
&=  \frac{ \sqrt{2 p } \ee^{\ii(\frac{1}{4}\pi+ p \ln(2) - \arg(\Gamma(\ii p)))}}{\sqrt{\vartheta''(z_2(v);v)}}  (z_2(v) - z_1(v))^{2\ii p} \vartheta''(z_2(v);v)^{\ii p}.
\end{aligned}
\label{H-z2-expand}
\end{equation}
Recalling that $z_1(v)<z_2(v)$ and using $\mathfrak{b}/\mathfrak{a}=|b/a|$ in the definition \eqref{p-def}, we let phases $\phi_0(v)$, $\phi_{z_1}(v)$, and $\phi_{z_2}(v)$ independent of the large parameter $X$ be defined by \eqref{phi-0-X} and \eqref{phi-1-phi-2-X}.
%where we have recalled \eqref{p-def}.
%We substitute \eqref{H-z1-expand}--\eqref{H-z2-expand} in \eqref{Psi-large-X-approx-3}, and use \eqref{phi-0-X}--\eqref{phi-1-phi-2-X} in the resulting expressions to arrive at the following result.
Substituting \eqref{H-z1-expand}--\eqref{H-z2-expand} in \eqref{Psi-large-X-approx-3}, and using \eqref{phi-0-X}--\eqref{phi-1-phi-2-X} in the resulting expressions establishes \eqref{Psi-large-X-approx-5} in Theorem~\ref{t:large-X}.

\subsection{$L^2(\mathbb{R})$ norm of $\Psi(X,T;\mathbf{G})$}
We now prove Theorem~\ref{t:L2-norm}.
Since the $L^2(\mathbb{R})$ norm of a solution of \eqref{nls} is a conserved quantity, it suffices to compute $\| \Psi(\diamond,0;\mathbf{G})\|_{L^2(\mathbb{R})}$. We let $\mathbf{P}^{[1]}(X,T;\mathbf{G})$ and $\mathbf{P}^{[2]}(X,T;\mathbf{G})$ denote the coefficients in the asymptotic expansion
\begin{equation}
\mathbf{P}(\Lambda;X,T,\mathbf{G}) = \mathbb{I} + \mathbf{P}^{[1]}(X,T;\mathbf{G}) \Lambda^{-1} + \mathbf{P}^{[2]}(X,T;\mathbf{G}) \Lambda^{-2} + O(\Lambda^{-3}),\quad \Lambda\to \infty,
\end{equation}
of the unique solution $\mathbf{P}(\Lambda)=\mathbf{P}(\Lambda;X,T,\mathbf{G}) $ of \rhref{rhp:near-field}. A standard dressing calculation using the symmetry $\sigma_2 \mathbf{P}(\Lambda^*)^*\sigma_2 =\mathbf{P}(\Lambda)$ shows that
\begin{equation}
 \mathbf{P}^{[1]}(X,T;\mathbf{G}) = 
\frac{1}{2\ii} 
\begin{bmatrix}
\Phi(X,T;\mathbf{G}) & \Psi(X,T;\mathbf{G}) \\
\Psi(X,T;\mathbf{G})^* & - \Phi(X,T;\mathbf{G})
\end{bmatrix},
\end{equation}
where $\dfrac{\partial}{\partial X}\Phi(X,T;\mathbf{G}) = | \Psi(X,T;\mathbf{G})|^2$. Thus,
\begin{equation}
\begin{split}
\| \Psi(\diamond,0;\mathbf{G})\|_{L^2(\mathbb{R})}^2 &=-2\ii \int_{-\infty}^{+\infty} \frac{\partial}{\partial X} P_{22}^{[1]}(Y,0;\mathbf{G}) \dd Y\\
&=-2\ii \lim_{X\to +\infty} \left( P^{[1]}_{22}(X,0;\mathbf{G}) - P^{[1]}_{22}( - X,0;\mathbf{G})  \right).
\label{L2-norm-limit}
\end{split}
\end{equation}
Using Proposition~\ref{prop:a-b-scaling} we can assume that $\mathbf{G}=\mathbf{G}(\mathfrak{a},\mathfrak{b})$ depends on the normalized parameters defined from $(a,b)$ in \eqref{eq:normalized-ab}.
For $|\Lambda|>1$, Proposition~\ref{P-symmetry-X} implies that
\begin{equation}
\mathbf{P}(\Lambda;-X,0,\mathbf{G}(\mathfrak{a},\mathfrak{b})) = \sigma_3 \mathbf{P}(-\Lambda;X,0,\mathbf{G}(\mathfrak{b},\mathfrak{a})) \ee^{4\ii\Lambda^{-1}\sigma_3} \sigma_3, \quad |\Lambda|>1.
\end{equation}
Expanding the right-hand side as $\Lambda\to \infty$ yields the identity
\begin{equation}
P_{22}^{[1]}(-X,0;\mathbf{G}(\mathfrak{a},\mathfrak{b})) = - P_{22}^{[1]}(X,0;\mathbf{G}(\mathfrak{b},\mathfrak{a})) - 4\ii .
\end{equation}
Using this in \eqref{L2-norm-limit} gives
\begin{equation}
\| \Psi(\diamond,0;\mathbf{G})\|_{L^2(\mathbb{R})}^2 = 8 - 2\ii  \lim_{X\to +\infty} \left( P^{[1]}_{22}(X,0;\mathbf{G}(\mathfrak{a},\mathfrak{b})) + P^{[1]}_{22}( X,0;\mathbf{G}(\mathfrak{b},\mathfrak{a})) \right).
\label{L2-norm-limit-positive}
\end{equation}
Recalling that $\mathbf{S}(z;X,0,\mathbf{G}(\mathfrak{a},\mathfrak{b}))=\mathbf{T}(z;X,0,\mathbf{G}(\mathfrak{a},\mathfrak{b}))$ for $|z|$ sufficiently large together with \eqref{X:P-to-S} 
%and \eqref{T-to-W} 
we have
\begin{equation}
\mathbf{P}(X^{-\frac{1}{2}} z; X, 0, \mathbf{G}(\mathfrak{a},\mathfrak{b})) =   \mathbf{T}(z;X,0,\mathbf{G}(\mathfrak{a},\mathfrak{b})), \quad |z|\gg 1,
\end{equation}
which implies the identity
\begin{equation}
P^{[1]}_{22}(X,0;\mathbf{G}(\mathfrak{a},\mathfrak{b})) = X^{-\frac{1}{2}} T^{[1]}_{22}(X,0;\mathbf{G}(\mathfrak{a},\mathfrak{b})),
\label{P-to-W-identity}
\end{equation}
where $\mathbf{T}^{[1]}$ is the sub-leading coefficient matrix in the large-$z$ expansion of the matrix function $\mathbf{T}(z;X,0,\mathbf{G}(\mathfrak{a},\mathfrak{b}))$: $\mathbf{T}(z;X,0,\mathbf{G}(\mathfrak{a},\mathfrak{b})) = \mathbb{I} + \mathbf{T}^{[1]}(X,0,\mathbf{G}(\mathfrak{a},\mathfrak{b}))z^{-1} + O(z^{-2})$ as $z\to\infty$. The identity \eqref{P-to-W-identity} clearly holds regardless of the values of $\mathfrak{a},\mathfrak{b}$, and in particular when $\mathbf{G}(\mathfrak{a},\mathfrak{b})$ is replaced with $\mathbf{G}(\mathfrak{b},\mathfrak{a})$. On the other hand, from \eqref{W-in-F-Wout} we have $\mathbf{T}(z;X,0,\mathbf{G}) = \mathbf{F}(z;X,0,\mathbf{G}) \dot{\mathbf{T}}(z;X,0,\mathbf{G})$ and for $|z|$ large enough we have $\dot{\mathbf{T}}(z;X,0,\mathbf{G}) = \dot{\mathbf{T}}^{\mathrm{out}}(z;0,\mathbf{G})$, which is independent of $X$. Then, expanding \eqref{E-Plemelj-X} for $|z|$ large and using the definition \eqref{W-out-X} shows that $\mathbf{T}(z;X,0,\mathbf{G})$ has the expansion
\begin{multline}
\mathbf{T}(z;X,0,\mathbf{G}) = \mathbb{I} \\
{}+ \left[ \ii(z_2(0) - z_1(0) )p \sigma_3 
-\frac{1}{2\pi \ii}\int_{\Sigma_\mathbf{F}} \mathbf{F}_-(\zeta;X,0,\mathbf{G})\left( \mathbf{V}^{\mathbf{F}}(\zeta;X,0,\mathbf{G}) - \mathbb{I} \right) \dd \zeta \right]z^{-1}\\
{} + O(z^{-2}),\quad z\to \infty.
\end{multline}
Then we obtain
\begin{equation}
\begin{split}
P_{22}^{[1]}(X,0;\mathbf{G}) = X^{-\frac{1}{2}} &\left[-\ii(z_2(0) - z_1(0))p -\frac{1}{2\pi \ii}\int_{\Sigma_\mathbf{F}} F_{21-}(\zeta;X,0,\mathbf{G}) V^{\mathbf{F}}_{12}(\zeta;X,0,\mathbf{G}) \dd \zeta \right.\\
&\quad\left.  -\frac{1}{2\pi \ii}\int_{\Sigma_\mathbf{F}} F_{22-}(\zeta;X,0,\mathbf{G}) \left( V^{\mathbf{F}}_{22}(\zeta;X,0,\mathbf{G})  - 1 \right) \dd \zeta \right],
\end{split}
\end{equation}
which is exact. Now combining the estimates \eqref{error-jump-estimate} and \eqref{E-minus-estimate} shows that $P_{22}^{[1]}(X,0;\mathbf{G})=O(X^{-\frac{1}{2}})$ as $X\to+\infty$, and this fact holds regardless of the values of the normalized parameters $\mathfrak{a},\mathfrak{b}$ provided $\mathfrak{a}\neq 0$ (if $\mathfrak{a}=0$ then $p=\infty$); in particular it is true for $\mathbf{G}=\mathbf{G}(\mathfrak{a},\mathfrak{b})$ and $\mathbf{G}=\mathbf{G}(\mathfrak{b},\mathfrak{a})$ if $\mathfrak{ab}\neq 0$. Therefore, we have from \eqref{L2-norm-limit-positive} that
\begin{equation}
\| \Psi(\diamond,0;\mathbf{G})\|_{L^2(\mathbb{R})} = \sqrt{8}
\end{equation}
as long as $ab\neq 0$.

\section{Asymptotic behavior of $\Psi(X,T;\mathbf{G})$ for large $|T|$}
\label{s:large-T}
This section is devoted to proving Theorem~\ref{t:large-T}. To analyze $\Psi(X,T;\mathbf{G})$ for large $T>0$ and general $a,b\in\mathbb{C}$ with $ab\neq 0$, we can appeal to %Proposition~\ref{prop:T-symmetry} and assume that $T>0$, and we also use 
Proposition~\ref{prop:a-b-scaling}, which allows us to work with normalized parameters, replacing $\mathbf{G}=\mathbf{G}(a,b)$ with $\mathbf{G}(\mathfrak{a},\mathfrak{b})$ for which $\mathfrak{a},\mathfrak{b}>0$ with $\mathfrak{a}^2+\mathfrak{b}^2=1$, at the cost of including a phase factor $\ee^{-\ii\arg(ab)}$.
Hence, our aim is to generalize the analysis of \cite[Section 4.2]{BilmanLM2020} from the specific case $\mathfrak{a}=\mathfrak{b}=1/\sqrt{2}$ to allow for general normalized parameters; we will also compute a correction term not obtained in \cite{BilmanLM2020}.

We introduce a real parameter $w$ and set $X=w T^\frac{2}{3}$, and then rescale the spectral parameter $\Lambda$ by setting $\tz:=T^{\frac{1}{3}}\Lambda$. Then the phase conjugating the jump matrix in \eqref{P-jump} takes the form
\begin{equation}
\Lambda X+\Lambda^{2} T+2 \Lambda^{-1}=T^{\frac{1}{3}} \theta(\tz ; w), \quad \theta(\tz ; w):=w \tz+\tz^{2}+2 \tz^{-1}.
\label{large-T-phase}
\end{equation}
In analogy with Section~\ref{s:large-X}, taking the solution of \rhref{rhp:near-field} for $\mathbf{G}=\mathbf{G}(\mathfrak{a},\mathfrak{b})$ with $\mathfrak{a},\mathfrak{b}>0$ and $\mathfrak{a}^2+\mathfrak{b}^2=1$, for brevity we omit $\mathbf{G}$ from the argument lists since $\mathfrak{a}$ and $\mathfrak{b}$ are fixed in this section.  
Thus, setting 
\begin{equation}
\mathbf{S}(\tz;T,w):=\mathbf{P}(T^{-\frac{1}{3}}\tz; T^{\frac{2}{3}}w,T),\quad T>0,
\label{T:P-to-S}
\end{equation}
from \eqref{Psi-def} and Proposition~\ref{prop:a-b-scaling} we get
\begin{equation}
\Psi(T^\frac{2}{3} w , T)=2 \ii  \ee^{-\ii\arg(ab)}T^{-\frac{1}{3}}\lim _{\tz \rightarrow \infty} \tz S_{12}(\tz ; T, w).
\label{eq:T-Psi-from-S}
\end{equation}
The matrix $\mathbf{S}(\tz ; T, w)$ is normalized to satisfy $\mathbf{S}(\tz ; T, w) \rightarrow \mathbb{I}$ as $\tz \rightarrow \infty$ for each $T>0$ and $\mathbf{S}(\tz ; T, w)$ is analytic in the complement of an arbitrary Jordan curve $\Gamma$ surrounding $\tz=0$ in the clockwise sense. The jump condition satisfied by $\mathbf{S}(\tz ; T, w)$ across $\Gamma$ is 
\begin{equation}
\mathbf{S}_{+}(\tz ; T, w)=\mathbf{S}_{-}(\tz ; T, w) \ee^{-\ii  T^{1/3} \theta(\tz ; w) \sigma_{3}} \mathbf{G}(\mathfrak{a},\mathfrak{b}) \ee^{\ii  T^{1/3} \theta(\tz ; w) \sigma_{3}}, \quad \tz \in \Gamma.
\end{equation}

\subsection{Spectral curve and $g$-function}
Since the phase conjugating the jump matrix for $\mathbf{P}(\Lambda;X,T,\mathbf{G})$ in \rhref{rhp:near-field} does not involve $\mathbf{G}$ in any way, 
all of the analysis in \cite[Section 4.2]{BilmanLM2020} regarding the spectral curve and construction of the relevant $g$-function\footnote{In \cite[Section 4.2]{BilmanLM2020} this analysis is presented under the assumption that $w\ge 0$, but all of the results also hold for $w<0$ provided that $|w|<54^\frac{1}{3}$.} goes through without any modification. We summarize that analysis here.

We assume that $|w|<54^{\frac{1}{3}}$, and  we 
recall the $g$-function $g(\tz;w)$ defined in \cite[Section 4.2]{BilmanLM2020}, which is bounded and analytic in $\tz$ for $\tz\in \mathbb{C} \setminus \Sigma$, where $\Sigma$ is a Schwarz-symmetrical arc determined below. It also satisfies $g(\tz;w)\to 0 $ as $\tz\to \infty$, and the boundary values taken by $g(\tz;w)$ on $\Sigma$ satisfy the jump condition
\begin{equation}
g_+(\tz;w) + g_-(\tz;w) + 2 \theta(\tz;w) = \kappa(w),\quad \tz\in \Sigma,
\label{T-g-theta-kappa}
\end{equation}
where $\kappa(w)$ is a constant whose explicit value was found in \cite[Eqn.\@ (218)]{BilmanLM2020} to be
\begin{equation}
\kappa(w)=-108^{\frac{1}{3}}-\frac{1}{3} w^{2},
\label{kappa}
\end{equation}
which can also be written in the form \eqref{eq:kappa-intro}.
The derivative $g'(\tz;w)$ of $g(\tz;w)$ with respect to $\tz$ satisfies the relation 
\begin{equation}
\begin{split}
\left(g'(\tz;w) + \theta'(\tz;w)\right)^2 &= 4\tz^{-4}(\tz-\tz_1(w))^2(\tz-\tz_2(w))^2 (\tz-\tz_0(w))(\tz-\tz_0(w)^*)\\ &=:\tz^{-4} P(\tz;w),
\end{split}
\label{h-prime-squared}
\end{equation}
which is the \emph{spectral curve} for the problem at hand, see \cite[Eqn. (183)]{BilmanLM2020}. The double roots of the sextic polynomial $P(\diamond;w)$ are the real values $\tz_1(w)<0$ and $\tz_2(w)>0$ defined in \eqref{eq:Z1Z2-intro},
%\begin{equation}
%\begin{aligned}
%&\tz_1(w):=\frac{1}{2}\left(-\frac{1}{6} w-\sqrt{\frac{w^{2}}{36}+2^{\frac{5}{3}}}\right)<0 \\
%&\tz_2(w):=\frac{1}{2}\left(-\frac{1}{6} w+\sqrt{\frac{w^{2}}{36}+2^{\frac{5}{3}}}\right)>0 .
%\end{aligned}
%\end{equation}
and there is also a complex-conjugate pair of simple roots $\tz_0(w)$, $\tz_0(w)^*$ given explicitly by \eqref{eq:Z0-intro} assuming that $|w|<w_\mathrm{c}=54^\frac{1}{3}$.
%\begin{equation}
%\tz_{0}(w):=\frac{1}{3}\left(-w+\ii  \sqrt{54^{\frac{2}{3}}-w^{2}}\right), \quad |w|<54^\frac{1}{3},\label{z-0}
%\end{equation}
See \cite[Eqn.\@ (190) and Eqn.\@ (191)]{BilmanLM2020}. The cut $\Sigma$ for the $g$-function connects the conjugate pair of simple roots $\tz_0(w)$ and $\tz_0(w)^*$ of $P(\diamond;w)$, and is chosen to cross the real axis at the negative value $\tz=\tz_1(w)$. The function $g(\tz;w)$ is given explicitly in \cite[Eqn.\@ (195)]{BilmanLM2020} by
\begin{equation}
g(\tz;w) = \frac{R(\tz ; w)^{3}}{\tz}-\theta(\tz ; w)-3 \cdot 2^{-\frac{1}{3}}-\frac{1}{6} w^{2},
\label{g-def-large-T}
\end{equation}
where $R(\tz;w)$ is the function analytic for $\tz\in \mathbb{C}\setminus \Sigma$ uniquely determined by the conditions 
\begin{equation}
R(\tz;w)^2 = (\tz-\tz_0(w))(\tz-\tz_0(w)^*)\quad\text{and}\quad R(\tz;w)=\tz+O(1),\; \tz\to\infty.
\label{eq:Rdef}
\end{equation}
We define 
\begin{equation}
h(\tz;w):= g(\tz;w) + \theta(\tz;w)
\label{T-h-g-theta}
\end{equation}
as in \cite[Eqn.\@ (196)]{BilmanLM2020}, and take the jump contour $\Gamma$ for $\mathbf{S}(\tz;T,w)$ so that $\Im(h(\tz;w))=0$ holds for $\tz\in\Gamma$.
\begin{figure}
\includegraphics[width=0.24\textwidth]{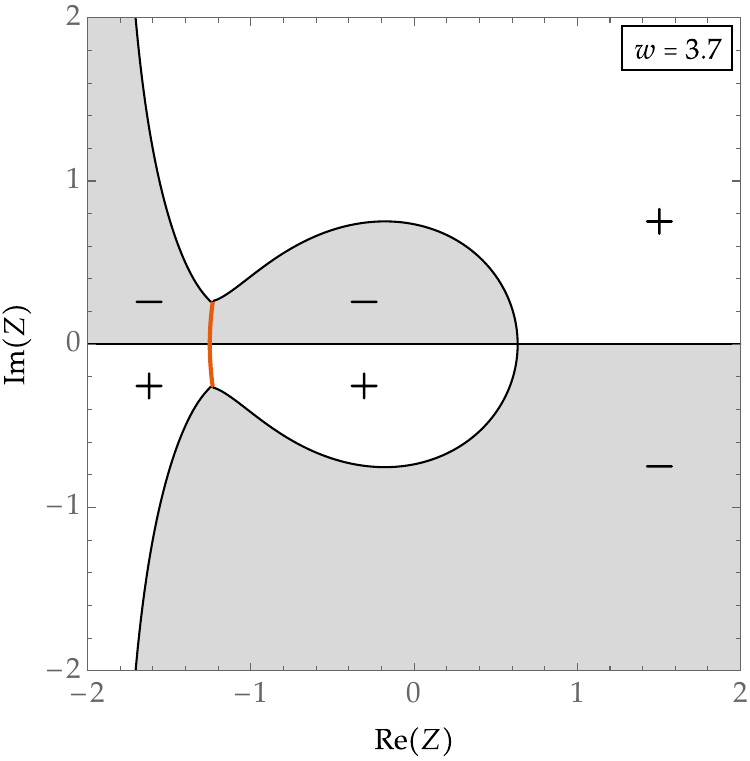}
\includegraphics[width=0.24\textwidth]{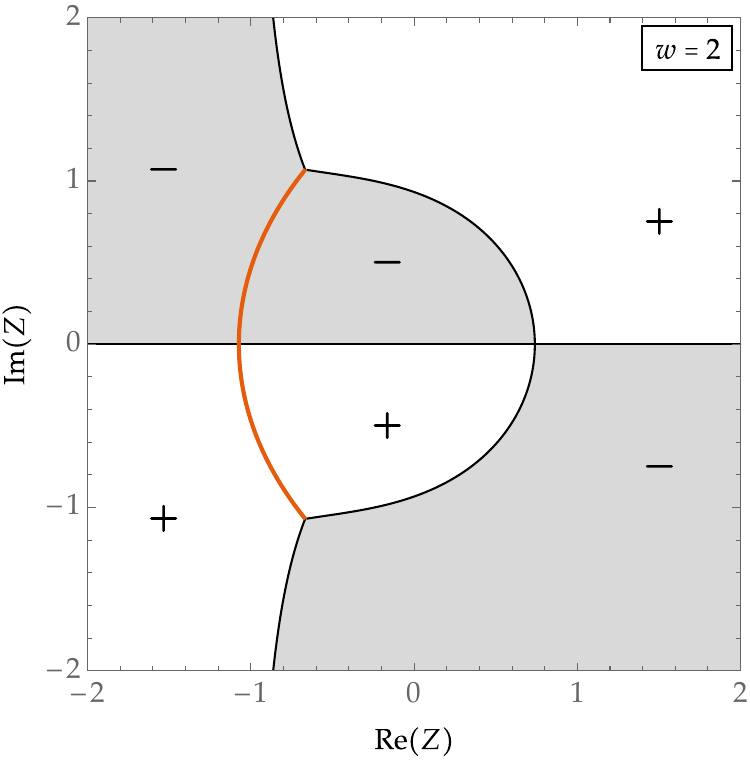}
\includegraphics[width=0.24\textwidth]{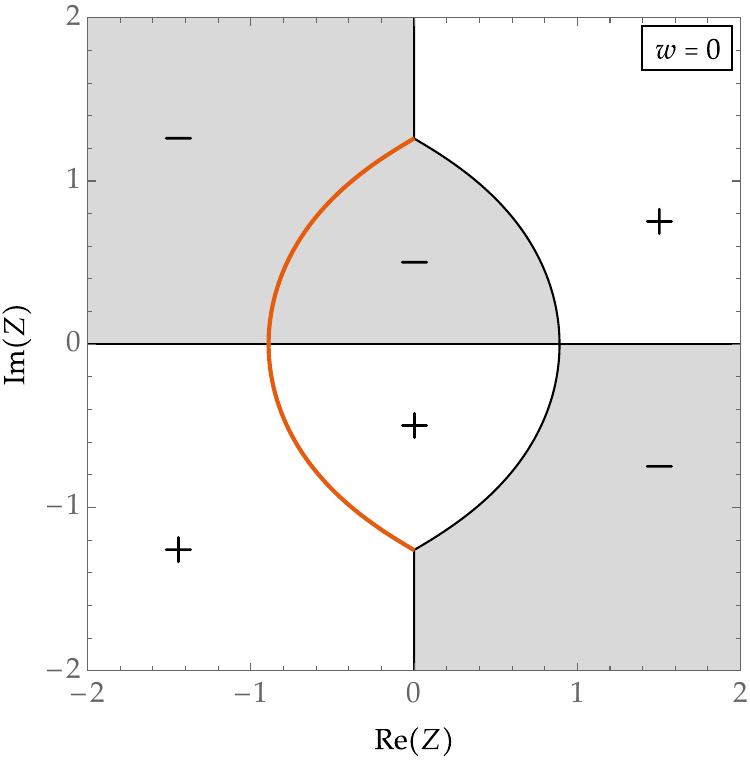}
\includegraphics[width=0.24\textwidth]{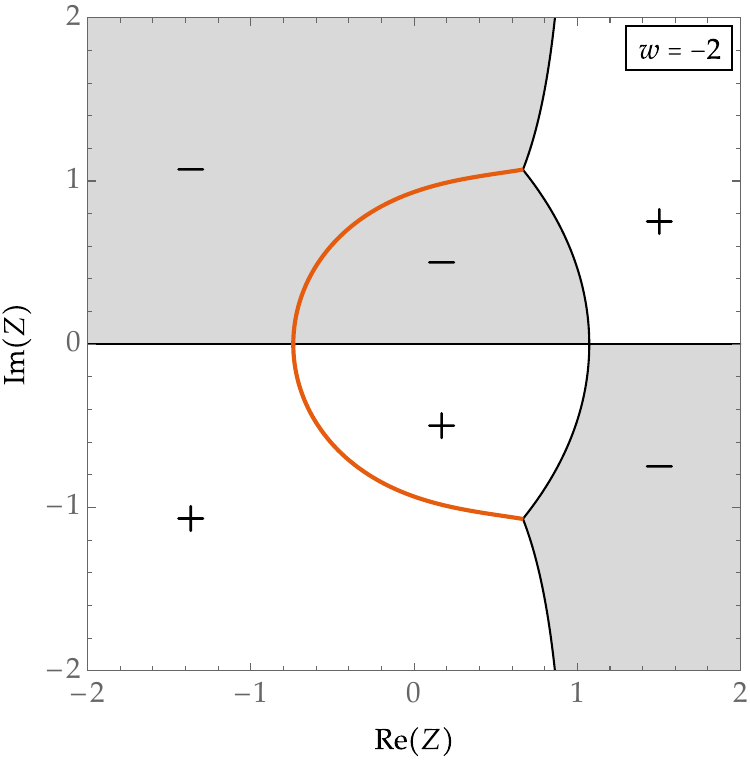}
\caption{The sign charts of $\Im(h(\tz;w))$ for $w$ in the range $|w|<w_{\mathrm{c}}$.}
\label{fig:largeT-signs}
\end{figure}
It consists of four oriented arcs $\Gamma^{\pm}$ and $\Sigma^{\pm}$ as shown in the left-hand panel of Figure~\ref{fig:largeT-contour},
and $\Im(h(\tz;w))$ is continuous across $\Sigma=\Sigma^+ \cup \Sigma^-$, vanishing there but not changing sign. See \cite[Section 4.2.1]{BilmanLM2020} for the construction of $g$ and determination of $\Sigma$ and $\Gamma$ in full detail.
See Figure~\ref{fig:largeT-signs} for the sign chart of $\Im(h(\tz;w))$ and how it varies with $w$.
We define the domains $L_\Gamma^{\pm}, R_\Gamma^{\pm}$,$L_\Sigma^{\pm}, R_\Sigma^{\pm}$, $\Omega^{\pm}$ exactly as in 
the left-hand panel of Figure~\ref{fig:largeT-contour}.
\begin{figure}
\includegraphics[width=0.45\textwidth]{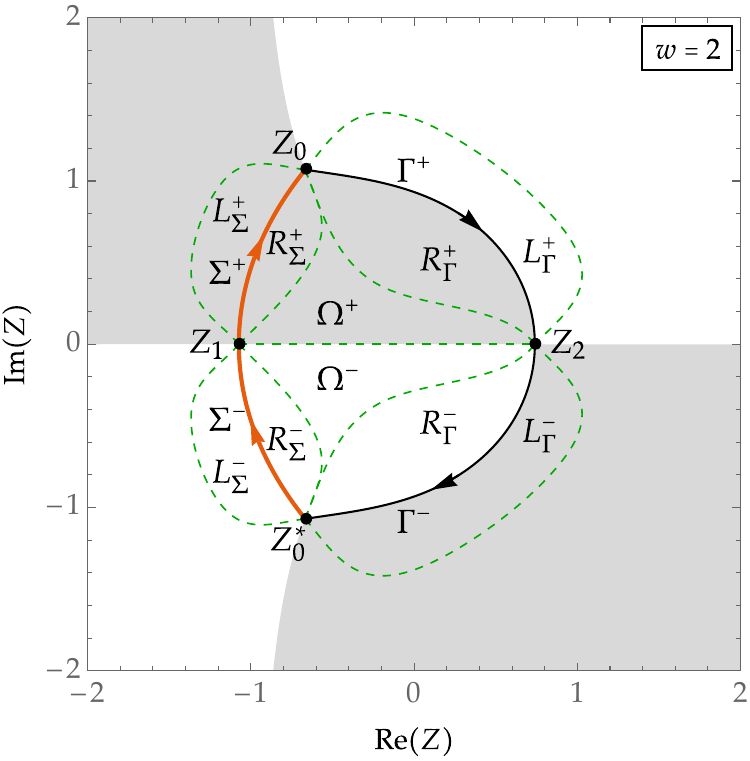}\,
\includegraphics[width=0.45\textwidth]{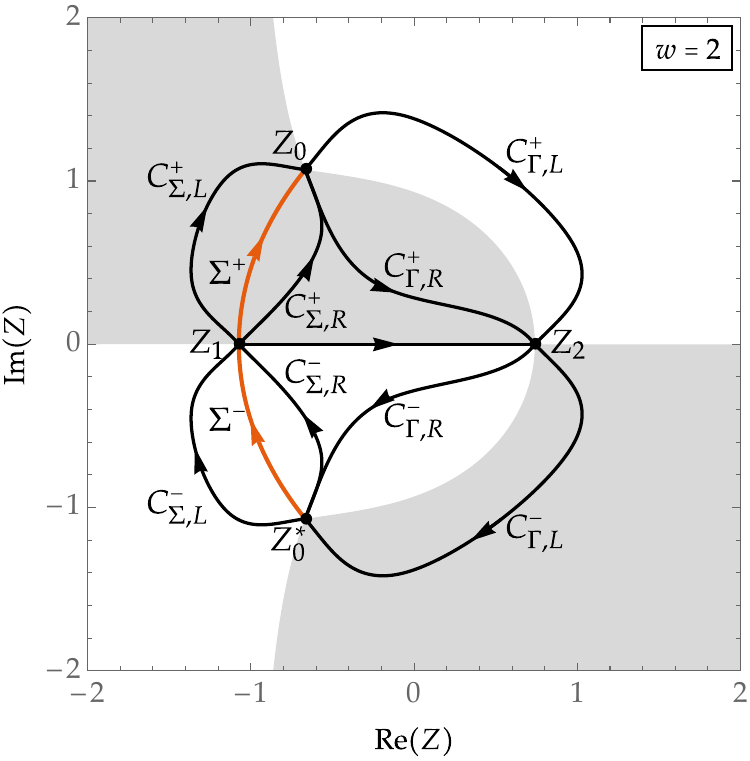}
\caption{Left: the jump contour $\Gamma=\Gamma^{+} \cup \Gamma^{-} \cup \Sigma^{+} \cup \Sigma^{-}$ for $\mathbf{S}$ and the regions $L_{\Gamma}^{ \pm}, L_{\Sigma}^{ \pm}, R_{\Gamma}^{ \pm}, R_{\Sigma}^{ \pm}$, and $\Omega^{ \pm}$. Right: the jump contour for $\mathbf{T}$.}
\label{fig:largeT-contour}
\end{figure}

\subsection{Steepest-descent deformation}
We make use of all of the factorizations in \eqref{Gnorm-LDU}--\eqref{Gnorm-UTU} and introduce the $g$-function by making the following substitutions.
\begin{equation}
\mathbf{T}(\tz;T,w):=\mathbf{S}(\tz;T,w) 
\begin{bmatrix}1 & 0 \\ \dfrac{\mathfrak{b}}{\mathfrak{a}}\ee^{2\ii T^{1/3}\theta(\tz;w)} & 1 \end{bmatrix} \ee^{\ii T^{1/3} g(\tz;w)\sigma_3},\quad \tz\in L^+_\Gamma,
\label{T-from-S-L-plus-Gamma}
\end{equation}
\begin{equation}
\mathbf{T}(\tz;T,w):=\mathbf{S}(\tz;T,w) 
\mathfrak{a}^{-\sigma_3} \begin{bmatrix}1 &\mathfrak{ab} \ee^{-2\ii T^{1/3}\theta(\tz;w)} \\ 0 & 1 \end{bmatrix}\ee^{\ii T^{1/3} g(\tz;w)\sigma_3},\quad \tz\in R^+_\Gamma,
\end{equation}
\begin{equation}
\mathbf{T}(\tz;T,w):=\mathbf{S}(\tz;T,w) 
\mathfrak{a}^{-\sigma_3} \ee^{\ii T^{1/3} g(\tz;w)\sigma_3},\quad \tz\in \Omega^+,
\end{equation}
\begin{equation}
\mathbf{T}(\tz;T,w):=\mathbf{S}(\tz;T,w) 
\mathfrak{a}^{\sigma_3} \ee^{\ii T^{1/3} g(\tz;w)\sigma_3},\quad \tz\in \Omega^-,
\end{equation}
\begin{equation}
\mathbf{T}(\tz;T,w):=\mathbf{S}(\tz;T,w) 
\mathfrak{a}^{\sigma_3}\begin{bmatrix}1 & 0 \\ -\mathfrak{a b} \ee^{2\ii T^{1/3}\theta(\tz;w)} & 1 \end{bmatrix} \ee^{\ii T^{1/3} g(\tz;w)\sigma_3},\quad \tz\in R^-_\Gamma,
\end{equation}
\begin{equation}
\mathbf{T}(\tz;T,w):=\mathbf{S}(\tz;T,w) 
\begin{bmatrix}1 &  -\dfrac{\mathfrak{b}}{\mathfrak{a}} \ee^{-2\ii T^{1/3}\theta(\tz;w)} \\ 0 & 1 \end{bmatrix} \ee^{\ii T^{1/3} g(\tz;w)\sigma_3},\quad \tz\in L^-_\Gamma,
\end{equation}
\begin{equation}
\mathbf{T}(\tz;T,w):=\mathbf{S}(\tz;T,w) 
\mathfrak{a}^{-\sigma_3}\begin{bmatrix}1 & -\dfrac{\mathfrak{a}^3}{\mathfrak{b}} \ee^{-2\ii T^{1/3}\theta(\tz;w)} \\ 0 & 1 \end{bmatrix}\ee^{\ii T^{1/3} g(\tz;w)\sigma_3},\quad \tz\in R^+_\Sigma,
\end{equation}
\begin{equation}
\mathbf{T}(\tz;T,w):=\mathbf{S}(\tz;T,w) 
\begin{bmatrix}1 & \dfrac{\mathfrak{a}}{\mathfrak{b}}\ee^{-2\ii T^{1/3}\theta(\tz;w)} \\ 0 & 1 \end{bmatrix}\ee^{\ii T^{1/3} g(\tz;w)\sigma_3},\quad \tz\in L^+_\Sigma,
\end{equation}
\begin{equation}
\mathbf{T}(\tz;T,w):=\mathbf{S}(\tz;T,w) 
\mathfrak{a}^{\sigma_3} \begin{bmatrix}1 & 0 \\  \dfrac{\mathfrak{a}^3}{\mathfrak{b}} \ee^{2\ii T^{1/3}\theta(\tz;w)} & 1 \end{bmatrix}  \ee^{\ii T^{1/3} g(\tz;w)\sigma_3},\quad \tz\in R^-_\Sigma,
\end{equation}
\begin{equation}
\mathbf{T}(\tz;T,w):=\mathbf{S}(\tz;T,w) 
\begin{bmatrix}1 & 0 \\ -\dfrac{\mathfrak{a}}{\mathfrak{b}}\ee^{2\ii T^{1/3}\theta(\tz;w)} & 1 \end{bmatrix}  \ee^{\ii T^{1/3} g(\tz;w)\sigma_3},\quad \tz\in L^-_\Sigma,
\label{T-from-S-L-minus-Sigma}
\end{equation}
and we set $\mathbf{T}(\tz;T,w):=\mathbf{S}(\tz;T,w) \ee^{\ii T^{1/3}g(\tz;w)\sigma_3}$ elsewhere. The jump contours for the jump conditions satisfied by $\mathbf{T}(\tz;T,w)$ are illustrated in 
%\textcolor{red}{\cite[Figure 11 (right-hand panel)]{BilmanLM2020}}, 
the right-hand panel of Figure~\ref{fig:largeT-contour},
and the jump conditions are the following.
\begin{equation}
\mathbf{T}_+(\tz;T,w) = \mathbf{T}_-(\tz;T,w)
\begin{bmatrix}1 & 0 \\-\dfrac{\mathfrak{b}}{\mathfrak{a}}\ee^{2\ii T^{1/3}h(\tz;w)} & 1 \end{bmatrix},\quad \tz\in C^+_{\Gamma,L},
\end{equation}
\begin{equation}
\mathbf{T}_+(\tz;T,w) = \mathbf{T}_-(\tz;T,w)
\begin{bmatrix}1 &\mathfrak{a b} \ee^{-2\ii T^{1/3}h(\tz;w)} \\ 0 & 1 \end{bmatrix},\quad \tz\in  C^+_{\Gamma,R},
\end{equation}
\begin{equation}
\mathbf{T}_+(\tz;T,w) = \mathbf{T}_-(\tz;T,w)\mathfrak{a}^{-2\sigma_3},\quad \tz\in  I,
\label{jump-T-I}
\end{equation}
\begin{equation}
\mathbf{T}_+(\tz;T,w) = \mathbf{T}_-(\tz;T,w)
\begin{bmatrix}1 & 0 \\ -\mathfrak{a b} \ee^{2\ii T^{1/3}h(\tz;w)} & 1 \end{bmatrix},\quad \tz\in  C^-_{\Gamma,R},
\end{equation}
\begin{equation}
\mathbf{T}_+(\tz;T,w) = \mathbf{T}_-(\tz;T,w)
\begin{bmatrix}1 &  \dfrac{\mathfrak{b}}{\mathfrak{a}} \ee^{-2\ii T^{1/3}h(\tz;w)} \\ 0 & 1 \end{bmatrix},\quad \tz\in  C^-_{\Gamma,L},
\end{equation}
\begin{equation}
\mathbf{T}_+(\tz;T,w) = \mathbf{T}_-(\tz;T,w)
\begin{bmatrix}1 & -\dfrac{\mathfrak{a}}{\mathfrak{b}}\ee^{-2\ii T^{1/3}h(\tz;w)} \\ 0 & 1 \end{bmatrix},\quad \tz\in  C^+_{\Sigma,L},
\end{equation}
\begin{equation}
\mathbf{T}_+(\tz;T,w) = \mathbf{T}_-(\tz;T,w)
\begin{bmatrix}1 & -\dfrac{\mathfrak{a}^3}{\mathfrak{b}}  \ee^{-2\ii T^{1/3}h(\tz;w)} \\ 0 & 1 \end{bmatrix},\quad \tz\in  C^+_{\Sigma,R},
\end{equation}
\begin{equation}
\mathbf{T}_+(\tz;T,w) = \mathbf{T}_-(\tz;T,w)
\begin{bmatrix}1 & 0 \\  \dfrac{\mathfrak{a}^3}{\mathfrak{b}} \ee^{2\ii T^{1/3}h(\tz;w)} & 1 \end{bmatrix},\quad \tz\in  C^-_{\Sigma,R},
\end{equation}
\begin{equation}
\mathbf{T}_+(\tz;T,w) = \mathbf{T}_-(\tz;T,w)
\begin{bmatrix}1 & 0 \\ \dfrac{\mathfrak{a}}{\mathfrak{b}}\ee^{2\ii T^{1/3}h(\tz;w)} & 1 \end{bmatrix},\quad \tz\in  C^-_{\Sigma,L}.
\end{equation}
Finally, on $\Sigma=\Sigma^+\cup\Sigma^-$ we have
\begin{alignat}{2}
\mathbf{T}_+(\tz;T,w) &= \mathbf{T}_-(\tz;T,w)
\begin{bmatrix} 0 & \dfrac{\mathfrak{a}}{\mathfrak{b}} \ee^{-\ii T^{1/3}\kappa(w)} \\ - \dfrac{\mathfrak{b}}{\mathfrak{a}} \ee^{\ii T^{1/3}\kappa(w)}& 0\end{bmatrix},&&\quad \tz\in\Sigma^+,\label{jump-T-Sigma-p}\\
\mathbf{T}_+(\tz;T,w) &= \mathbf{T}_-(\tz;T,w)
\begin{bmatrix} 0 &\dfrac{\mathfrak{b}}{\mathfrak{a}} \ee^{-\ii T^{1/3}\kappa(w)} \\ - \dfrac{\mathfrak{a}}{\mathfrak{b}}  \ee^{\ii T^{1/3}\kappa(w)} & 0\end{bmatrix},&&\quad \tz\in\Sigma^-.\label{jump-T-Sigma-m}
\end{alignat}
Since $\mathbf{T}(\tz;T,w)=\mathbf{S}(\tz;T,w)\ee^{\ii T^{1/3}g(\tz;w)\sigma_3}$ for large $\tz$, from \eqref{eq:T-Psi-from-S} we get
\begin{equation}
\begin{split}
\Psi(T^\frac{2}{3} w , T)&=2 \ii  \ee^{-\ii\arg(ab)}T^{-\frac{1}{3}}\lim _{\tz \rightarrow \infty} \tz T_{12}(\tz ; T, w)\ee^{\ii T^{1/3}g(\tz;w)}\\ &=2 \ii  \ee^{-\ii\arg(ab)}T^{-\frac{1}{3}}\lim _{\tz \rightarrow \infty} \tz T_{12}(\tz ; T, w),
\end{split}
\label{eq:T-Psi-from-T}
\end{equation}
where the second equality comes from the fact that $g(\tz;w)\to 0$ as $\tz\to\infty$.

\subsection{Parametrix construction}
\subsubsection{Outer parametrix} 
We construct a parametrix $\dot{\mathbf{T}}^{\mathrm{out}}(\tz;T,w)$ with the following properties:
\begin{itemize}
\item $\dot{\mathbf{T}}^{\mathrm{out}}(\tz;T,w)$ is analytic in $\tz$ for $\tz\in\mathbb{C}\setminus(\Sigma \cup I)$.
\item $\dot{\mathbf{T}}^{\mathrm{out}}(\tz;T,w) \to \mathbb{I}$ as $\tz\to\infty$. 
\item $\dot{\mathbf{T}}^{\mathrm{out}}(\tz;T,w)$ satisfies exactly the jump conditions \eqref{jump-T-I} on $I$ and \eqref{jump-T-Sigma-p}--\eqref{jump-T-Sigma-m} on $\Sigma^+$ and $\Sigma^-$.
\end{itemize}
These conditions are not sufficient to determine $\dot{\mathbf{T}}^{\mathrm{out}}$ uniquely. Nevertheless, we will just construct a particular function satisfying the properties listed above. 
We first simplify the jump matrices defined on $\Sigma^{+}$ and $\Sigma^{-}$ by introducing a Szeg\H{o} function $\newf(\tz;w)$ analytic in $\tz$ for $\tz\in \mathbb{C}\setminus \Sigma$ given by 
\begin{equation}
\newf(\tz;w) :=  \frac{1}{2}R(\tz;w) \left( 
\int_{\Sigma^{-}} \frac{\dd \zeta}{R_{+}(\zeta;w)(\zeta-\tz)}-\int_{\Sigma^{+}} \frac{\dd \zeta}{R_{+}(\zeta;w)(\zeta-\tz)}
\right).
\label{f-def-large-T}
\end{equation}
We can find an antiderivative of $1/(R(\zeta;w)(\zeta-Z))$ as follows.
We rationally parametrize the genus-zero curve $R^2=(\zeta-Z_0(w))(\zeta-Z_0(w)^*)$ by a coordinate $t\in\mathbb{C}$ by setting
\begin{equation}
\zeta=\zeta(t):=U -\frac{2V^2t}{t^2-V^2},\quad R(\zeta;w)=r(t):=V\frac{t^2+V^2}{t^2-V^2},\quad Z_0(w)=U+\ii V.
\end{equation}
Note that $U,V$ depend on $w$, but we suppress this dependence below for brevity.
Then, 
\begin{equation}
\frac{\dd\zeta}{R(\zeta;w)(\zeta-Z)}=\frac{1}{r(t)(\zeta(t)-Z)}\frac{\dd \zeta}{\dd t}(t)\,\dd t=-\frac{2V}{Z-U}\frac{\dd t}{t^2+(2V^2/(Z-U))t-V^2}.
\end{equation}
The roots of the denominator are $t=(-V^2\pm V\sqrt{(Z-U)^2+V^2})/(Z-U)$, which can be written in the form $t=(-V^2\pm VR(Z;w))/(Z-U)$.  Therefore, by partial fraction expansion,
\begin{equation}
\frac{\dd\zeta}{R(\zeta;w)(\zeta-Z)}=
\frac{\dd t}{R(Z;w)}\left[\frac{1}{t-(-V^2-VR(Z;w))/(Z-U)}-\frac{1}{t-(-V^2+VR(Z;w))/(Z-U)}\right].
\end{equation}
It follows that an antiderivative is
\begin{equation}
\int\frac{\dd\zeta}{R(\zeta;w)(\zeta-Z)}=\frac{1}{R(Z;w)}\log\left(\frac{t-(-V^2-VR(Z;w))/(Z-U)}{t-(-V^2+VR(Z;w))/(Z-U)}\right).
\end{equation}
Then, we can explicitly express $t$ in terms of $\zeta$ and $R(\zeta;w)$ as
\begin{equation}
t=-V\frac{\zeta-U}{R(\zeta;w)-V}, 
\end{equation}
so
\begin{equation}
\int\frac{\dd\zeta}{R(\zeta;w)(\zeta-Z)}=\frac{1}{R(Z;w)}\log\left(\frac{(Z-U)(\zeta-U)+(V+R(Z;w))(V-R(\zeta;w))}{(Z-U)(\zeta-U)+(V-R(Z;w))(V-R(\zeta;w))}\right).
\label{eq:antiderivative-Cauchy}
\end{equation}
One can check that if $\tz$ is real and less than $\tz_1(w)$, then taking the principal branch of the logarithm yields a function of $\zeta$ that is analytic except on $\Sigma$ and the real interval $\tz\le\zeta\le \tz_1(w)$, and $\zeta=\tz$ is a simple root of the numerator of the argument of the logarithm only.  Therefore, with this choice of branch, the antiderivative is suitable for evaluation of $\newf(\tz;w)$ with $\tz<\tz_1(w)$ provided one takes the imaginary part of the logarithm to be $\pm\pi$ at the endpoint $\zeta=\tz_1(w)$ of $\Sigma^{\mp}$.  It follows that 
\begin{multline}
\newf(\tz;w)=\frac{1}{2}\log\left(\frac{(V-R(\tz;w))^2+(\tz-U)^2}{(V+R(\tz;w))^2+(\tz-U)^2}\right) \\
{}+\log\left(-\frac{(\tz-U)(\tz_1(w)-U)+(V+R(\tz;w))(V-R_+(\tz_1(w);w))}{(\tz-U)(\tz_1(w)-U)+(V-R(\tz;w))(V-R_+(\tz_1(w);w))}\right).
\end{multline}
The term on the first line comes from the contributions at $\tz=\tz_0(w)=U+\ii V$ and $\tz=\tz_0(w)^*=U-\ii V$, and it is an analytic function of $\tz<\tz_1(w)$ that admits continuation to a neighborhood of $\tz_1(w)$; to evaluate it at $\tz=\tz_1(w)$ we need only replace $\tz$ with $\tz_1(w)$ and $R(\tz;w)$ with $R_+(\tz_1(w);w)$.  The term on the second line comes from the contributions at $\tz=\tz_1(w)$, and the numerator of the argument of the logarithm has a simple root at $\tz=\tz_1(w)$ and produces a branch cut in the $\tz$-plane emanating from $\tz=\tz_1(w)$ to the right.  To reveal the simple root, we may use the identity $(\tz_1(w)-U)^2+V^2-R_+(\tz_1(w);w)^2=0$ to write
\begin{equation}
-\frac{(\tz-U)(\tz_1(w)-U)+(V+R(\tz;w))(V-R_+(\tz_1(w);w))}{(\tz-U)(\tz_1(w)-U)+(V-R(\tz;w))(V-R_+(\tz_1(w);w))}= -\omega^\newf(\tz;w)(\tz-\tz_1(w)),
\end{equation}
where
\begin{equation}
\omega^\newf(\tz;w):=\frac{\displaystyle \tz_1(w)-U+\frac{R(\tz;w)-R_+(\tz_1(w);w)}{\tz-\tz_1(w)}(V-R_+(\tz_1(w);w))}{(\tz-U)(\tz_1(w)-U)+(V-R(\tz;w))(V-R_+(\tz_1(w);w))}
\end{equation}
is a function analytic and non-vanishing at $\tz=\tz_1(w)$ and $R(\tz;w)=R_+(\tz_1(w);w)$ (i.e., admitting analytic continuation through $\Sigma$ at $\tz=\tz_1(w)$ from the left) with 
\begin{equation}
\omega^\newf_+(\tz_1(w);w)=\frac{(\tz_1(w)-U)V}{2((\tz_1(w)-U)^2+V^2)(R_+(\tz_1(w);w)-V)}
\label{eq:omega-at-z1}
\end{equation}
which one can verify is a positive number by the definitions of $\tz_1(w)$ and $\tz_0(w)=U+\ii V$.
We may therefore write $\newf(\tz;w)$ in the form 
\begin{equation}
\newf(\tz;w)=\frac{1}{2}\log\left(\frac{(V-R(\tz;w))^2+(\tz-U)^2}{(V+R(\tz;w))^2+(\tz-U)^2}\cdot\omega^\newf(\tz;w)^2\right)+\log(\tz_1(w)-\tz),
\label{eq:f-explicit}
\end{equation}
with only the second term not being analytic at $\tz=\tz_1(w)$.

Similarly, if $\tz$ is a real number greater than $\tz_1(w)$, one can check that the antiderivative in \eqref{eq:antiderivative-Cauchy} is analytic except for $\zeta\in\Sigma$ and on the half-lines $\zeta<\tz_1(w)$ and $\zeta>\tz$, and that $\zeta=\tz$ is a simple root of the numerator only in the argument of the (principal branch) logarithm. In particular, the antiderivative is continuous along the minus side of $\Sigma$, and therefore for such $\tz$, 
\begin{equation}
\begin{split}
\newf(\tz;w)&=\frac{1}{2}R(\tz;w)\left(\int_{\Sigma^+}\frac{\dd\zeta}{R_-(\zeta;w)(\zeta-\tz)}-\int_{\Sigma^-}\frac{\dd\zeta}{R_-(\zeta;w)(\zeta-\tz)}\right) \\
&=\frac{1}{2}\log\left(\frac{(V+R(\tz;w))^2+(\tz-U)^2}{(V-R(\tz;w))^2+(\tz-U)^2}\right)\\
&\quad\quad{}+\log\left(\frac{(\tz-U)(\tz_1(w)-U)+(V-R(\tz;w))(V-R_-(\tz_1(w);w))}{(\tz-U)(\tz_1(w)-U)+(V+R(\tz;w))(V-R_-(\tz_1(w);w))}\right).
\end{split}
\end{equation}
This is analytic at $\tz>\tz_1(w)$ and hence can be evaluated in particular at $\tz=\tz_2(w)$ simply by replacing $\tz$ with $\tz_2(w)$ and $R(\tz;w)$ with $R(\tz_2(w);w)$.  All arguments of logarithms are then positive and we interpret $\log(\diamond)$ as $\ln(\diamond)$.

It is easy to verify that 
\begin{equation}
\newf_+(\tz;w) + \newf_-(\tz;w) = \mp \ii\pi,\quad \tz\in\Sigma^\pm.
\label{T-f-jump}
\end{equation}
Although the sum of the boundary values is constant along each arc $\Sigma^\pm$, both boundary values exhibit a logarithmic singularity as $\tz\to \tz_1(w)$ and are otherwise continuous.
We note that $\newf(\tz;w)$ does not vanish in the limit $\tz\to \infty$; indeed,
expanding \eqref{f-def-large-T} shows that
\begin{equation}
\newf(\tz;w) =  \gamma(w) + O(\tz^{-1}),\quad \tz\to \infty,
\label{f-normalization}
\end{equation}
where
\begin{equation}
\gamma(w):=\frac{1}{2} 
\left( 
\int_{\Sigma^+} \frac{\dd \zeta}{R_+(\zeta;w)} - \int_{\Sigma^-} \frac{\dd \zeta}{R_+(\zeta;w)}
\right)\in \mathbb{R}.
\label{gamma}
\end{equation}
As pointed out in \cite{BilmanLM2020}, it is straightforward to verify that an antiderivative of $1/R(\zeta;w)$ is 
\begin{equation}
\int\frac{\dd\zeta}{R(\zeta;w)}=%\log\left(\frac{\zeta-u+R(\zeta;w)-v}{\zeta-u-R(\zeta;w)+v}\right).
\log\left(\zeta-U+R(\zeta;w)\right).
\label{eq:antiderivative-root}
\end{equation}
Taking the principal branch of the logarithm gives a function of $\zeta$ that is analytic on the complement of $\Sigma$ and the ray $(-\infty,\tz_1(w)]$.  This is continuous along the $-$ side of $\Sigma$ and hence can be used to evaluate $\gamma(w)$:  
\begin{equation}
\gamma(w)=\frac{1}{2}\left(\int_{\Sigma^-}\frac{\dd\zeta}{R_-(\zeta;w)}-\int_{\Sigma^+}\frac{\dd\zeta}{R_-(\zeta;w)}\right)=
%-\frac{1}{\pi}\ln\left(-\frac{\tz_1(w)-u+R_+(\tz_1(w);w)-v}{\tz_1(w)-u-R_+(\tz_1(w);w)+v}\right).
\ln\left(\frac{1}{V}\left(\tz_1(w)-U+R_-(\tz_1(w);w)\right)\right).
\end{equation}
One can check that the argument of the logarithm is positive over the whole range $|w|<w_\mathrm{c}$, which is why we use the notation $\ln(\diamond)$ instead of the complex logarithm $\log(\diamond)$.
We use $\newf(\tz;w)$ to define a new unknown $\mathbf{J}(\tz;T,w)$ by
\begin{equation}
%\dot{\mathbf{T}}^{\mathrm{out}}(\tz; T, w)= \ee^{\ii q \gamma(w)\sigma_3} \mathbf{J}(\tz;T,w) \ee^{-qf(\tz;w)\sigma_3},
\dot{\mathbf{T}}^{\mathrm{out}}(\tz; T, w)= \ee^{\ii q \gamma(w)\sigma_3} \mathbf{J}(\tz;T,w) \ee^{-\ii q\newf(\tz;w)\sigma_3},
\label{eq:T-J}
\end{equation}
where 
\begin{equation}
q:=\frac{1}{\pi}\ln\left(\frac{\mathfrak{a}}{\mathfrak{b}}\right)\in\mathbb{R}.
\label{T-q}
\end{equation}
$\mathbf{J}(\tz;T,w)$ has all the properties of $\dot{\mathbf{T}}^{\mathrm{out}}(\tz; T, w)$ except that it satisfies a simpler jump condition across $\Sigma=\Sigma^+\cup \Sigma^-$, given by
\begin{equation}
\mathbf{J}_+(\tz;T,w) = \mathbf{J}_-(\tz;T,w)\begin{bmatrix} 0 &  \ee^{-\ii T^{1/3}\kappa(w)} \\ -  \ee^{\ii T^{1/3}\kappa(w)} & 0 \end{bmatrix},\quad \tz\in \Sigma.
\label{J-jump-simple}
\end{equation}
To satisfy the jump condition \eqref{jump-T-I} on $I$, we write
\begin{equation}
\mathbf{J}(\tz;T,w) = \mathbf{K}(\tz;T,w) \left(\frac{\tz-\tz_1(w)}{\tz-\tz_2(w)}\right)^{\ii  p \sigma_{3}}
\label{eq:J-K}
\end{equation}
where the power function is defined to be the principal branch and $p>0$ was given in terms of $\mathfrak{a},\mathfrak{b}$ in \eqref{p-def}. The new unknown $\mathbf{K}(\tz;T,w)$ extends analytically to $I$, and we assume that it is bounded near $\tz=\tz_j(w)$, $j=1,2$, which in particular makes it analytic at $\tz=\tz_2(w)$. Thus, $\mathbf{K}(\tz;T,w)$ is analytic in $\mathbb{C}\setminus \Sigma$ and tends to $\mathbb{I}$ as $\tz\to\infty$. The constant and simple jump condition \eqref{J-jump-simple} satisfied by $\mathbf{J}(\tz;T,w)$ across $\Sigma$ is modified for $\mathbf{K}(\tz;T,w)$:
\begin{equation}
\mathbf{K}_+(\tz;T,w) = \mathbf{K}_-(\tz;T,w) \left(\frac{\tz-\tz_1(w)}{\tz-\tz_2(w)}\right)^{\ii  p \sigma_{3}}
 \begin{bmatrix} 0 &  \ee^{-\ii T^{1/3}\kappa(w)} \\ -  \ee^{\ii T^{1/3}\kappa(w)} & 0 \end{bmatrix}
 \left(\frac{\tz-\tz_1(w)}{\tz-\tz_2(w)}\right)^{-\ii  p \sigma_{3}},\quad \tz\in\Sigma.
\end{equation}
To convert this back into a constant jump condition on $\Sigma$ alone, we follow \cite[Eqn.\@ (222)]{BilmanLM2020} up to a scaling, and introduce another Szeg\H{o} function\footnote{The scaling involves an imaginary factor for convenience.  Thus the Szeg\H{o} function $k$ defined in \cite[Eqn.\@ (222)]{BilmanLM2020} is given by $\ii p\newk$ and the constants $\mu(w)$ defined in \eqref{eq:mu-def-large-T} and \cite[Eqn.\@ (223)]{BilmanLM2020} differ by a factor of $p$.}
\begin{equation}
%k(\tz;w):= \ii  \log \left(\frac{\tz-\tz_1(w)}{\tz-\tz_2(w)}\right)+ \ii  R(\tz ;w) \int_{\tz_1(w)}^{\tz_2(w)} \frac{\dd \zeta}{R(\zeta ; w)(\zeta-\tz)} + \frac{1}{2}\ii \mu(w),
\newk(\tz;w):=   \log \left(\frac{\tz-\tz_1(w)}{\tz-\tz_2(w)}\right)+  R(\tz ;w) \int_{\tz_1(w)}^{\tz_2(w)} \frac{\dd \zeta}{R(\zeta ; w)(\zeta-\tz)} + \frac{1}{2} \mu(w),
\label{k-def-large-T}
\end{equation}
where the logarithm is taken to be the principal branch, $-\pi<\operatorname{Im}(\log (\diamond))<\pi$, and where the constant $\mu(w)$ is given by 
\begin{equation}
\mu(w):=2  \int_{\tz_1(w)}^{\tz_2(w)} \frac{\dd \zeta}{R(\zeta ; w)}>0.
\label{eq:mu-def-large-T}
\end{equation}
Using the same antiderivative \eqref{eq:antiderivative-root} used to integrate $\gamma(w)$, which is analytic for $\tz_1(w)<\zeta<\tz_2(w)$, we take care to evaluate $R(\tz;w)$ at the lower limit of integration by the boundary value $R_-(\tz_1(w);w)$ and obtain (an analogue of \cite[Eqn.\@ (225)]{BilmanLM2020})
\begin{equation}
\mu(w)=2\ln\left(\frac{\tz_2(w)-U+R(\tz_2(w);w)}{\tz_1(w)-U+R_-(\tz_1(w);w)}\right).
%2\log\left(\frac{\tz_2(w)-u+R(\tz_2(w);w)-v}{\tz_2(w)-u-R(\tz_2(w);w)+v}\right)-2\log\left(\frac{\tz_1(w)-u+R_-(\tz_1(w);w)-v}{\tz_1(w)-u-R_-(\tz_1(w);w)+v}\right).
\label{mu}
\end{equation}
The function $\newk(\tz;w)$ has the following properties. Firstly, $\newk(\tz;w) = O(\tz^{-1})$ as $\tz\to\infty$ by definition of $\mu(w)$. Next, it can be easily confirmed that $\newk(\tz;w)$ has no jump across $I$ by using the Plemelj formula and comparing the boundary values of the logarithm.
The function $\tz\mapsto \newk(\tz;w)$ has a removable singularity at $\tz=\tz_2(w)$, and the boundary value $\newk_-(\tz;w)$ is continuous along $\Sigma$, including at $\tz=\tz_1(w)$. However, $\newk_{+}(\tz;w)$ has a logarithmic singularity at $\tz=\tz_1(w)$.  Thus, the domain of analyticity for $\newk(\tz;w)$ is $\tz\in \mathbb{C}\setminus \Sigma$ and the jump condition satisfied by the (continuous, except at $\tz=\tz_1(w)$ from the left) boundary values of $\newk(\tz;w)$ is 
\begin{equation}
%k_{+}(\tz ; w)+k_{-}(\tz ; w)=2 \ii  \log \left(\frac{\tz-\tz_1(w)}{\tz-\tz_2(w)}\right)+ \ii  \mu(w),\quad \tz\in\Sigma.
\newk_{+}(\tz ; w)+\newk_{-}(\tz ; w)=2  \log \left(\frac{\tz-\tz_1(w)}{\tz-\tz_2(w)}\right)+  \mu(w),\quad \tz\in\Sigma.
\label{T-k-jump}
\end{equation}
We next evaluate $\newk(\tz;w)$ for $\tz\in\mathbb{R}$ with $\tz<\tz_1(w)$ using the antiderivative \eqref{eq:antiderivative-Cauchy}, which is analytic for $\zeta\in (\tz_1(w),\tz_2(w))$, and hence 
\begin{multline}
\newk(\tz;w)=\log\left(\frac{\tz-\tz_1(w)}{\tz-\tz_2(w)}\right)+\frac{1}{2}\mu(w) \\
{}+\log\left(\frac{(\tz-U)(\tz_2(w)-U)+(V+R(\tz;w))(V-R(\tz_2(w);w))}{(\tz-U)(\tz_2(w)-U)+(V-R(\tz;w))(V-R(\tz_2(w);w))}\right)\\
{}-\log\left(\frac{(\tz-U)(\tz_1(w)-U)+(V+R(\tz;w))(V-R_-(\tz_1(w);w))}{(\tz-U)(\tz_1(w)-U)+(V-R(\tz;w))(V-R_-(\tz_1(w);w))}\right).
\label{eq:k-Z-less-than-Z1}
\end{multline}
The term on the middle line admits analytic continuation through $\Sigma$ from the left; in particular it the argument of the logarithm is positive when evaluated at $\tz=\tz_1(w)$ and $R(\tz;w)=R_+(\tz_1(w);w)$ whenever $|w|<w_\mathrm{c}=54^\frac{1}{3}$.   The term on the last line has a logarithmic singularity at $\tz=\tz_1(w)$ however, coming from the denominator of the argument of the logarithm.  We can write
\begin{equation}
\frac{(\tz-U)(\tz_1(w)-U)+(V+R(\tz;w))(V-R_-(\tz_1(w);w))}{(\tz-U)(\tz_1(w)-U)+(V-R(\tz;w))(V-R_-(\tz_1(w);w))}=-\frac{1}{\omega^{\newk,1}(\tz;w)(\tz-\tz_1(w))},
\end{equation}
where
\begin{equation}
\omega^{\newk,1}(\tz;w):=\frac{\displaystyle U-\tz_1(w)+\frac{R(\tz;w)-R_+(\tz_1(w);w)}{\tz-\tz_1(w)}(V-R_-(\tz_1(w);w))}{(\tz-U)(\tz_1(w)-U)+(V+R(\tz;w))(V-R_-(\tz_1(w);w))}
\end{equation}
is a function analytic and non-vanishing at $\tz=\tz_1(w)$ and $R(\tz;w)=R_+(\tz_1(w);w)$ with value
\begin{equation}
\omega^{\newk,1}_+(\tz_1(w);w)=\frac{(\tz_1(w)-U)V}{2((\tz_1(w)-U)^2+V^2)(R_+(\tz_1(w);w)+V)}
\label{eq:tilde-omega-at-z1}
\end{equation}
which one can confirm is a positive number whenever $|w|<w_\mathrm{c}=54^\frac{1}{3}$.  Therefore, for $\tz$ near $\tz_1(w)$ on the left of $\Sigma$, $\newk(\tz;w)$ can be written in the form
\begin{multline}
\newk(\tz;w)=\frac{1}{2}\mu(w) 
+\log\left(\frac{(\tz-U)(\tz_2(w)-U)+(V+R(\tz;w))(V-R(\tz_2(w);w))}{(\tz-U)(\tz_2(w)-U)+(V-R(\tz;w))(V-R(\tz_2(w);w))}\cdot\frac{\omega^{\newk,1}(\tz;w)}{\tz_2(w)-\tz}\right)\\
{}+2\log(\tz_1(w)-\tz),
\label{eq:k-explicit}
\end{multline}
where only the last term fails to be analytic at $\tz=\tz_1(w)$.

Similarly, if $\tz>\tz_2(w)$, then once again the antiderivative in \eqref{eq:antiderivative-Cauchy} is an analytic function of $\zeta\in (\tz_1(w),\tz_2(w))$, so the formula \eqref{eq:k-Z-less-than-Z1} is also valid in this situation.  If $\tz$ approaches $\tz_2(w)$ from above, then the terms on the final line in \eqref{eq:k-Z-less-than-Z1} are now analytic at $\tz=\tz_2(w)$, while the terms on the middle line produce a logarithmic singularity and a branch cut emanating from $\tz=\tz_2(w)$ to the left.  The latter cancels with the explicit logarithmic singularity on the first line of \eqref{eq:k-Z-less-than-Z1}, and the resulting formula is analytic at $\tz=\tz_2(w)$:
\begin{multline}
\newk(\tz;w)=
\frac{1}{2} \mu(w) \\
{}+\log\left(\frac{(\tz-U)(\tz_1(w)-U)+(V-R(\tz;w))(V-R_-(\tz_1(w);w))}{(\tz-U)(\tz_1(w)-U)+(V+R(\tz;w))(V-R_-(\tz_1(w);w))}(\tz-\tz_1(w))\omega^{\newk,2}(\tz;w)\right),
\end{multline}
wherein $\omega^{\newk,2}(\tz;w)$ is a function analytic and positive at $\tz=\tz_2(w)$ given by
\begin{equation}
\omega^{\newk,2}(\tz;w):=\frac{\displaystyle\tz_2(w)-U+\frac{R(\tz;w)-R(\tz_2(w);w)}{\tz-\tz_2(w)}(V-R(\tz_2(w);w))}{(\tz-U)(\tz_2(w)-U)+(V-R(\tz;w))(V-R(\tz_2(w);w))}.
\end{equation}
In particular,
\begin{equation}
\omega^{\newk,2}(\tz_2(w);w)=\frac{(\tz_2(w)-U)V}{2((\tz_2(w)-U)^2+V^2)(R(\tz_2(w);w)-V)}.
\end{equation}

We introduce the function $\newk(\tz;w)$ in the analysis by writing
\begin{equation}
%\mathbf{K}(\tz;T,w) = \mathbf{L}(\tz;T,w)\ee^{-pk(\tz;w)\sigma_3}.
\mathbf{K}(\tz;T,w) = \mathbf{L}(\tz;T,w)\ee^{-\ii p\newk(\tz;w)\sigma_3}.
\label{eq:K-L}
\end{equation}
It follows that $\mathbf{L}(\tz;T,w)$ is a matrix function analytic for $\tz\in\mathbb{C}\setminus\Sigma$, which also tends to $\mathbb{I}$ as $\tz\to\infty$ and satisfies the jump condition
\begin{equation}
\mathbf{L}_+(\tz;T,w) = \mathbf{L}_-(\tz;T,w) \begin{bmatrix} 0 &  \ee^{-\ii (T^{1/3}\kappa(w) +p\mu(w))} \\ -  \ee^{\ii ( T^{1/3}\kappa(w) + p\mu(w))} & 0 \end{bmatrix},\quad \tz\in\Sigma.
\label{T-L-jump}
\end{equation}
We can directly solve for $\mathbf{L}(\tz;T,w)$ by diagonalizing the constant jump matrix 
and choosing the unique solution that exhibits $-\frac{1}{4}$-power growth at the endpoints $\tz=\tz_0(w),\tz_0(w)^*$ of $\Sigma$:
\begin{equation}
\mathbf{L}(\tz;T,w)=  \ee^{-\frac{1}{2}\ii (T^{1/3}\kappa(w) +p\mu(w))\sigma_3} \mathbf{Z} y(\tz;w)^{\sigma_3}\mathbf{Z}^{-1}  \ee^{\frac{1}{2}\ii (T^{1/3}\kappa(w) +p\mu(w))\sigma_3},\quad 
\mathbf{Z}:=
\frac{1}{\sqrt{2}}\begin{bmatrix}
1 & \ii \\
\ii & 1
\end{bmatrix},
\label{L-def}
\end{equation}
where $y(\tz;w)$ is the function analytic for $\tz\in \mathbb{C}\setminus \Sigma$, determined by the properties
\begin{equation}
y(\tz;w)^4 =  \frac{\tz-\tz_0(w)}{\tz-\tz_0(w)^*}\quad\text{and}\quad \lim_{\tz\to\infty}y(\tz;w)= 1.
\end{equation}
Combining \eqref{eq:T-J}, \eqref{eq:J-K}, and \eqref{eq:K-L} finishes the construction of the outer parametrix, yielding
\begin{equation}
%\dot{\mathbf{T}}^{\rm out}(\tz;T,w) := \ee^{\ii q \gamma(w)\sigma_3} \mathbf{L}(\tz;T,w) \ee^{-(pk(\tz;w)+qf(\tz;w))\sigma_3}\left(\frac{\tz-\tz_1(w)}{\tz-\tz_2(w)}\right)^{\ii p \sigma_{3}}.
\dot{\mathbf{T}}^{\rm out}(\tz;T,w) := \ee^{\ii q \gamma(w)\sigma_3} \mathbf{L}(\tz;T,w) \ee^{-\ii(p\newk(\tz;w)+q\newf(\tz;w))\sigma_3}\left(\frac{\tz-\tz_1(w)}{\tz-\tz_2(w)}\right)^{\ii p \sigma_{3}}.
\label{W-out-T}
\end{equation}
where $\mathbf{L}(\tz;T,w)$ is given by \eqref{L-def}.
Note that unlike the outer parametrix constructed for the analysis in the regime $X\to+\infty$, the outer parametrix $\dot{\mathbf{T}}^{\rm out}(\tz;T,w)$ depends on $T$.  However, this dependence is purely oscillatory, coming solely from the conjugating exponential factors in \eqref{L-def}.

\subsubsection{Inner parametrices near $\tz=\tz_1(w)$ and $\tz=\tz_2(w)$}
\label{sec:Airy-parametrices}
Let $D_{\tz_j}(\delta)$, $j=1,2$, denote the disk of radius $\delta>0$ centered at $\tz_j(w)$.
To construct an inner parametrix $\dot{\mathbf{T}}^{\tz_2}$ in $D_{\tz_2}(\delta)$, first note that $h(\tz;w) - h(\tz_2(w);w)$ vanishes precisely to second order as $\tz\to \tz_2(w)$. We define the $T$-independent conformal coordinate $\varphi_{\tz_2}$ by choosing the solution of
\begin{equation}
\varphi_{\tz_2}(\tz;w)^2 = 2( h(\tz;w) - h(\tz_2(w);w) ),\quad \tz\in D_{\tz_2}(\delta),
\label{T-varphi-z2}
\end{equation}
that is analytic at $\tz=\tz_2(w)$ and that satisfies $\varphi_{\tz_2}'(\tz_2(w),w)>0$. This choice ensures that the arc $I\cap D_{\tz_2}(\delta)$ is mapped by $\varphi_{\tz_2}(\diamond; w)$ locally to the negative real axis. We define the rescaled conformal coordinate $\zeta_{\tz_2}:= T^{\frac{1}{6}}\varphi_{\tz_2}(\tz;w)$ and observe that the jump conditions satisfied by the matrix
\begin{equation}
\mathbf{U}^{\tz_2}:= \mathbf{T}(\tz;T,w) \ee^{-\ii T^{1/3}h(\tz_2(w);w)\sigma_3}.
\end{equation}
match exactly those shown in Figure~\ref{fig:PC-z2}
when expressed in terms of the conformal coordinate $\zeta=\zeta_{\tz_{2}}$ and when the jump contours are locally taken to coincide with the five rays $\arg (\zeta)=\pm \frac{1}{4} \pi$, $\arg (\zeta)=\pm \frac{3}{4} \pi$, and $\arg (-\zeta)=0$, with the same values of $p$ and $\tau$ given in terms of $\mathfrak{a},\mathfrak{b}$ by \eqref{p-def}--\eqref{tau-def}.
These jump conditions coincide exactly with those in \cite[Riemann-Hilbert Problem A.1]{Miller2018} for a well-defined and explicit standard parabolic cylinder parametrix $\mathbf{U}(\zeta)=\mathbf{U}(\zeta;p,\tau)$. Thus, an inner parametrix $\dot{\mathbf{T}}^{\tz_2}(\tz;T,w)$ that satisfies exactly the jump conditions inside $D_{\tz_2}(\delta)$ can be taken in the form
\begin{equation}
\dot{\mathbf{T}}^{\tz_2}(\tz;T,w):=\mathbf{Y}^{\tz_2}(\tz;T,w)\mathbf{U}(\zeta_{\tz_2};p,\tau)\ee^{\ii T^{1/3}h(\tz_2(w);w)\sigma_3}
\label{eq:T-z2-parametrix-Y}
\end{equation}
where $\mathbf{Y}^{\tz_2}(\tz;T,w)$ is any matrix analytic in $D_{\tz_2}(\delta)$.  To specify $\mathbf{Y}^{\tz_2}(\tz;T,w)$, we write the outer parametrix in terms of the conformal map $\tz\mapsto\varphi_{\tz_2}(\tz;w)$ by noting that $\varphi_{\tz_2}(\tz;w)^{-\ii p\sigma_3}$ is an exact solution of the jump conditions for $\dot{\mathbf{T}}^\mathrm{out}(\tz;T,w)$ in $D_{\tz_2}(\delta)$.  Hence we may write
\begin{equation}
\dot{\mathbf{T}}^\mathrm{out}(\tz;T,w)\ee^{-\ii T^{1/3}h(\tz_2(w);w)\sigma_3} = \mathbf{H}^{\tz_2}(\tz;T,w)\varphi_{\tz_2}(\tz;w)^{-\ii p\sigma_3},
\end{equation}
where
 $\mathbf{H}^{\tz_2}(\tz;T,w)$ is analytic for $\tz\in D_{\tz_2}(\delta)$ and is uniformly bounded on this disk as $T\to+\infty$.  We then define the parametrix near $\tz=\tz_2(w)$ by the formula \eqref{eq:T-z2-parametrix-Y} in which we take
\begin{equation}
\mathbf{Y}^{\tz_2}(\tz;T,w):=\mathbf{H}^{\tz_2}(\tz;T,w)T^{\frac{1}{6}\ii p\sigma_3}.
\label{T-Y-z2}
\end{equation}
%constructed following exactly the same steps in Section~\ref{s:large-X}. 
For $\tz\in D_{\tz_2}(\delta)$, the parametrix $\dot{\mathbf{T}}^{\tz_2}(\tz;T,w)$ satisfies
\begin{equation}
\dot{\mathbf{T}}^{\tz_2}(\tz;T,w)\dot{\mathbf{T}}^\mathrm{out}(\tz;T,w)^{-1} = 
\mathbf{H}^{\tz_2}(\tz;T,w)T^{\frac{1}{6}\ii p\sigma_3}\mathbf{U}(\zeta_{\tz_2};p,\tau)\zeta_{\tz_2}^{\ii p\sigma_3}T^{-\frac{1}{6}\ii p\sigma_3}\mathbf{H}^{\tz_2}(\tz;T,w)^{-1},
\label{T-z2-mismatch}
\end{equation}
in which we note that the product $\mathbf{U}(\zeta_{\tz_2};p,\tau)\zeta_{\tz_2}^{\ii p\sigma_3}$ has an asymptotic expansion in descending powers of $\zeta_{\tz_2}$ according to \eqref{U-PC-expansion}, and  $\zeta_{\tz_2}$ is large of size $T^{\frac{1}{6}}$ when $\tz\in \partial D_{\tz_2}(\delta)$.

An explicit formula for $\mathbf{H}^{\tz_2}(\tz;T,w)$ is:
\begin{multline}
%\mathbf{H}^{\tz_2}(\tz;T,w):=\ee^{\ii q\gamma(w)\sigma_3}\mathbf{L}(\tz;T,w)\ee^{-(pk(\tz;w)+qf(\tz;w))\sigma_3}\ee^{-\ii T^{1/3}h(\tz_2(w);w)\sigma_3}\\
%{}\cdot (\tz-\tz_1(w))^{\ii p\sigma_3}\left(\frac{\varphi_{\tz_2}(\tz;w)}{\tz-\tz_2(w)}\right)^{\ii p\sigma_3}.
\mathbf{H}^{\tz_2}(\tz;T,w):=\ee^{\ii q\gamma(w)\sigma_3}\mathbf{L}(\tz;T,w)\ee^{-\ii(p\newk(\tz;w)+q\newf(\tz;w))\sigma_3}\ee^{-\ii T^{1/3}h(\tz_2(w);w)\sigma_3}\\
{}\cdot (\tz-\tz_1(w))^{\ii p\sigma_3}\left(\frac{\varphi_{\tz_2}(\tz;w)}{\tz-\tz_2(w)}\right)^{\ii p\sigma_3}.
\label{T-Hz2-formula}
\end{multline}

Constructing an inner parametrix in $D_{\tz_1}(\delta)$ is slightly more involved due to the presence of jump conditions satisfied by $\mathbf{T}(\tz;T,w)$ across $\Sigma^+\cup\Sigma^-$. 
Recall that $h(\tz;w)$ is analytic in $\tz$ for $\tz\in D_{\tz_1}(\delta) \setminus \Sigma$, and the values of $h(\tz;w)$ in the left ($h_+(\tz;w)$) and right ($h_-(\tz;w)$) half-disks both admit analytic continuation to the full disk $D_{\tz_1}(\delta)$ where, according to \eqref{T-g-theta-kappa} and \eqref{T-h-g-theta} we have 
\begin{equation}
h_+(\tz;w) + h_-(\tz;w) = \kappa(w),\quad \tz\in D_{\tz_1}(\delta).
\label{T-h-jump}
\end{equation}
We will base the construction of the inner parametrix in $D_{\tz_1}(\delta)$ on a $T$-independent conformal mapping $\varphi_{\tz_1}$ constructed from $h_{-}(\tz;w)$ by choosing the solution of
\begin{equation}
\varphi_{\tz_1}(\tz;w)^2 = 2(   h_-(\tz_1(w);w) - h_-(\tz;w) ),\quad \tz\in D_{\tz_1}(\delta),
\label{T-varphi-z1}
\end{equation}
that is analytic in $D_{\tz_1}(\delta)$ and satisfies $\varphi_{\tz_1}'(\tz_1(w),w)<0$, and we introduce the rescaled conformal coordinate $\zeta_{\tz_1}:= T^{\frac{1}{6}}\varphi_{\tz_1}(\tz;w)$. Letting $\Omega_\infty$ denote the region near $\tz_1(w)$ complementary to $R_\Sigma^+\cup\Omega_+\cup R_\Sigma^-\cup\Omega_-\cup L_\Sigma^+\cup L_\Sigma^-$ (i.e., to the left of $\Sigma=\Sigma^+\cup\Sigma^-$), we then consider the matrix
\begin{equation}
\mathbf{U}^{\tz_1}:=
\begin{cases}
\mathbf{T}(\tz;T,w) \left(\dfrac{\mathfrak{a}}{\mathfrak{b}}\right)^{\sigma_3}(\ii \sigma_2) \ee^{\ii T^{1/3} h_{-}(\tz_1(w);w)\sigma_3}\ii^{\sigma_3},&\quad \tz\in (R_\Sigma^+ \cup \Omega_+)\cap D_{\tz_1}(\delta),\\
\mathbf{T}(\tz;T,w) \left(\dfrac{\mathfrak{b}}{\mathfrak{a}}\right)^{\sigma_3}(\ii \sigma_2) \ee^{\ii T^{1/3} h_{-}(\tz_1(w);w)\sigma_3}\ii^{\sigma_3},&\quad \tz\in (R_\Sigma^- \cup \Omega_-) \cap D_{\tz_1}(\delta),\\
\mathbf{T}(\tz;T,w) \ee^{\ii T^{1/3} h_{-}(\tz_1(w);w)\sigma_3}\ee^{-\ii T^{1/3}\kappa(w)\sigma_3}\ii^{\sigma_3} ,&\quad \tz\in (L_\Sigma^+\cup L_\Sigma^- \cup \Omega_\infty)\cap D_{\tz_1}(\delta).\\
\end{cases}
\label{T-U-z1}
\end{equation}
Using \eqref{T-h-jump}, one checks that this transformation results in a trivial identity jump for $\mathbf{U}^{\tz_1}$ across $\Sigma^+\cup\Sigma^-$. 
Now, recall the values $\bar{p}$ and $\bar{\tau}$ defined in Corollary~\ref{cor:large-negative-X}. 
Taking into account that the conformal coordinate $\zeta_{\tz_1}:= T^{\frac{1}{6}}\varphi_{\tz_1}(\tz;w)$ satisfies $\varphi_{\tz_1}'(\tz_1(w);w)<0$, 
we see that the jump conditions satisfied by $\mathbf{U}^{\tz_1}$ 
%(see Figure~\ref{fig:z1-jumps-T-U-z1}) 
%take exactly the same form as 
match those shown in Figure~\ref{fig:PC-z2} with $(p,\tau)$ replaced by $(\bar{p},\bar{\tau})$ (equivalent to swapping $\mathfrak{a}$ and $\mathfrak{b}$).
These jump conditions therefore coincide exactly with those of the standard parabolic cylinder parametrix $\mathbf{U}(\zeta;\bar{p},\bar{\tau})$ solving \cite[Riemann-Hilbert Problem A.1]{Miller2018}.

Thus, an inner parametrix $\dot{\mathbf{T}}^{\tz_1}(\tz;T,w)$ that satisfies exactly the jump conditions of $\mathbf{T}(\tz;T,w)$ within $D_{\tz_1}(\delta)$ can be taken in the form
\begin{multline}
\dot{\mathbf{T}}^{\tz_1}(\tz;T,w):=\mathbf{Y}^{\tz_1}(\tz;T,w)\mathbf{U}(\zeta_{\tz_1};\bar{p},\bar{\tau})\ii^{-\sigma_3}\ee^{-\ii T^{1/3}h_-(\tz_1(w);w)\sigma_3}\\
{}\cdot\begin{cases}
(\ii\sigma_2)^{-1}\left(\dfrac{\mathfrak{b}}{\mathfrak{a}}\right)^{\sigma_3},&\tz\in (R_\Sigma^+\cup\Omega_+)\cap D_{\tz_1}(\delta),\\
(\ii\sigma_2)^{-1}\left(\dfrac{\mathfrak{a}}{\mathfrak{b}}\right)^{\sigma_3},&\tz\in (R_\Sigma^-\cup\Omega_-)\cap D_{\tz_1}(\delta),\\
\ee^{\ii T^{1/3}\kappa(w)\sigma_3},&\tz\in (L_\Sigma^+\cup L_\Sigma^-\cup\Omega_\infty)\cap D_{\tz_1}(\delta),
\end{cases}
\label{T-inner-z1}
\end{multline}
where $\mathbf{Y}^{\tz_1}(\tz;T,w)$ is a matrix factor analytic in $D_{\tz_1}(\delta)$.  To determine $\mathbf{Y}^{\tz_1}(\tz;T,w)$, we first define a matrix $\dot{\mathbf{U}}^{\tz_1}$ within $D_{\tz_1}(\delta)$ exactly as in \eqref{T-U-z1} replacing $\mathbf{T}(\tz;T,w)$ with $\dot{\mathbf{T}}^\mathrm{out}(\tz;T,w)$.  Then one checks that $\dot{\mathbf{U}}^{\tz_1}$ is analytic in $D_{\tz_1}(\delta)$ except on $I\cap D_{\tz_1}(\delta)$ where it satisfies $\dot{\mathbf{U}}^{\tz_1}_+=\dot{\mathbf{U}}^{\tz_1}_-\mathfrak{b}^{2\sigma_3}$.  Since $I$ corresponds to the ray $\arg(-\varphi_{\tz_1})=0$ oriented away from the origin in the $\varphi_{\tz_1}$-plane and $\mathfrak{b}^{2\sigma_3}=\ee^{-2\pi\bar{p}\sigma_3}$, it follows that $\dot{\mathbf{U}}^{\tz_1}=\mathbf{H}^{\tz_1}(\tz;T,w)\varphi_{\tz_1}(\tz;w)^{-\ii\bar{p}\sigma_3}$ where $\mathbf{H}^{\tz_1}(\tz;T,w)$ is holomorphic in $D_{\tz_1}(\delta)$ and bounded as $T\to+\infty$.  By analogy with \eqref{T-Y-z2} we define the inner parametrix $\dot{\mathbf{T}}^{\tz_1}(\tz;T,w)$ by \eqref{T-inner-z1} in which the holomorphic factor $\mathbf{Y}^{\tz_1}(\tz;T,w)$ is given by
\begin{equation}
\mathbf{Y}^{\tz_1}(\tz;T,w):=\mathbf{H}^{\tz_1}(\tz;T,w)T^{\frac{1}{6}\ii\bar{p}\sigma_3}.
\label{T-Y-z1}
\end{equation}
%constructed following exactly the same steps in Section~\ref{s:large-X}. 
By analogy with \eqref{T-z2-mismatch}, for $\tz\in D_{\tz_1}(\delta)$, the parametrix $\dot{\mathbf{T}}^{\tz_1}(\tz;T,w)$ satisfies
\begin{equation}
\dot{\mathbf{T}}^{\tz_1}(\tz;T,w)\dot{\mathbf{T}}^\mathrm{out}(\tz;T,w)^{-1}=
\mathbf{H}^{\tz_1}(\tz;T,w)T^{\frac{1}{6}\ii\bar{p}\sigma_3}\mathbf{U}(\zeta_{\tz_1};\bar{p},\bar{\tau})\zeta_{\tz_1}^{\ii \bar{p}\sigma_3}T^{-\frac{1}{6}\ii\bar{p}\sigma_3}\mathbf{H}^{\tz_1}(\tz;T,w)^{-1}.
\label{T-z1-mismatch}
\end{equation}

To find an analogue of \eqref{T-Hz2-formula} for $\mathbf{H}^{\tz_1}(\tz;T,w)$ valid for $\tz\in D_{\tz_1}(\delta)$, it is enough to first assume that $\tz\in\Omega_\infty$ and then extend the result to $D_{\tz_1}(\delta)$ by analytic continuation.  By definition, 
\begin{multline}
%\mathbf{H}^{\tz_1}(\tz;T,w):=\ee^{\ii q\gamma(w)\sigma_3}\mathbf{L}(\tz;T,w)\ee^{-(pk(\tz;w)+qf(\tz;w))\sigma_3}\left(\frac{\tz_1(w)-\tz}{\tz_2(w)-\tz}\right)^{\ii p\sigma_3}\\
%{}\cdot\ee^{\ii T^{1/3}h(\tz_1(w);w)\sigma_3}\ee^{-\ii T^{1/3}\kappa(w)\sigma_3}\ii^{\sigma_3}\varphi_{\tz_1}(\tz;w)^{\ii\overline{p}\sigma_3},\quad \tz\in\Omega_\infty\cap D_{\tz_1}(\delta).
\mathbf{H}^{\tz_1}(\tz;T,w):=\ee^{\ii q\gamma(w)\sigma_3}\mathbf{L}(\tz;T,w)\ee^{-\ii(p\newk(\tz;w)+q\newf(\tz;w))\sigma_3}\left(\frac{\tz_1(w)-\tz}{\tz_2(w)-\tz}\right)^{\ii p\sigma_3}\\
{}\cdot\ee^{\ii T^{1/3}h(\tz_1(w);w)\sigma_3}\ee^{-\ii T^{1/3}\kappa(w)\sigma_3}\ii^{\sigma_3}\varphi_{\tz_1}(\tz;w)^{\ii\bar{p}\sigma_3},\quad \tz\in\Omega_\infty\cap D_{\tz_1}(\delta).
\end{multline}
Since the jump matrix for $\mathbf{L}(\tz;T,w)$ across $\Sigma$ is constant (see \eqref{T-L-jump}), upon identifying $\mathbf{L}(\tz;T,w)$ for $\tz\in\Omega_\infty$ with $\mathbf{L}_+(\tz;T,w)$, we see that $\mathbf{L}(\tz;T,w)=\mathbf{L}_+(\tz;T,w)$ has an analytic continuation to all of $D_{\tz_1}(\delta)$ that we will denote by the same symbol $\mathbf{L}_+(\tz;T,w)$ and that is given by \eqref{L-def} in which the branch cut of $y(\tz;w)$ is deformed toward the right near $\tz=\tz_1(w)$ to coincide with the corresponding arc of $\partial D_{\tz_1}(\delta)$.  On the other hand, neither $\newf(\tz;w)$ nor $\newk(\tz;w)$ can be analytically continued from $\Omega_\infty$ to all of $D_{\tz_1}(\delta)$ without a branch cut appearing in each case that we take to agree with $I\cap D_{\tz_1}(\delta)$.  However, using \eqref{T-f-jump} to continue $\newf(\tz;w)=\newf_+(\tz;w)$ from $\Omega_\infty\cap D_{\tz_1}(\delta)$ through $\Sigma^\pm$ to $D_{\tz_1}(\delta)\setminus I$ shows that the continuation has the form 
\begin{equation}
%f_+(\tz;w)=f^0(\tz;w)-\frac{1}{\pi\ii}\log(\varphi_{\tz_1}(\tz;w)),\quad \tz\in D_{\tz_1}(\delta)\setminus I,
\newf_+(\tz;w)=\newf^0(\tz;w)+\log(\varphi_{\tz_1}(\tz;w)),\quad \tz\in D_{\tz_1}(\delta)\setminus I,
\label{f0-def}
\end{equation}
where $\newf^0(\tz;w)$ is holomorphic in $D_{\tz_1}(\delta)$ and the logarithm is the principal branch.  
In fact, we can use \eqref{eq:f-explicit} to express $\newf^0(\tz;w)$ explicitly as 
\begin{equation}
%f^0(\tz;w)=\frac{1}{2\pi\ii}\log\left(\frac{(v+R(\tz;w))^2+(\tz-u)^2}{(v-R(\tz;w))^2+(\tz-u)^2}\cdot\frac{1}{\omega(\tz;w)^2}\cdot\frac{\varphi_{\tz_1}(\tz;w)^2}{(\tz_1(w)-\tz)^2}\right).
\newf^0(\tz;w)=\frac{1}{2}\log\left(\frac{(V-R(\tz;w))^2+(\tz-U)^2}{(V+R(\tz;w))^2+(\tz-U)^2}\cdot\omega^\newf(\tz;w)^2\cdot\frac{(\tz_1(w)-\tz)^2}{\varphi_{\tz_1}(\tz;w)^2}\right).
\end{equation}
This formula holds for $\tz\in \Omega_\infty\cap D_{\tz_1}(\delta)$, but to continue analytically to $D_{\tz_1}(\delta)\setminus\Omega_\infty$ one just replaces $R(\tz;w)$ with $-R(\tz;w)$.  In particular, to compute the value at the center of the disk, one simply takes $R(\tz;w)$ as $R_+(\tz_1(w);w)<0$ and obtains
\begin{equation}
%f^0(\tz_1(w);w)=\frac{1}{2\pi\ii}\ln\left(\frac{(v+R_+(\tz_1(w);w))^2+(\tz_1(w)-u)^2}{(v-R_+(\tz_1(w);w))^2+(\tz_1(w)-u)^2}\cdot\frac{\varphi'_{\tz_1}(\tz_1(w);w)^2}{\omega_+(\tz_1(w);w)^2}\right),
\newf^0(\tz_1(w);w)=\frac{1}{2}\ln\left(\frac{(V-R_+(\tz_1(w);w))^2+(\tz_1(w)-U)^2}{(V+R_+(\tz_1(w);w))^2+(\tz_1(w)-U)^2}\cdot\frac{\omega^\newf_+(\tz_1(w);w)^2}{\varphi'_{\tz_1}(\tz_1(w);w)^2}\right),
\end{equation}
wherein $\omega^\newf_+(\tz_1(w);w)>0$ is defined by \eqref{eq:omega-at-z1}.  
Similarly using \eqref{T-k-jump} to continue $\newk(\tz;w)=\newk_+(\tz;w)$ from $\Omega_\infty\cap D_{\tz_1}(\delta)$ through $\Sigma^\pm$ to $D_{\tz_1}(\delta)\setminus I$ shows that
\begin{equation}
%k_+(\tz;w)=k^0(\tz;w) + 2\ii\log(\varphi_{\tz_1}(\tz;w)),\quad \tz\in D_{\tz_1}(\delta)\setminus I,
\newk_+(\tz;w)=\newk^0(\tz;w) + 2\log(\varphi_{\tz_1}(\tz;w)),\quad \tz\in D_{\tz_1}(\delta)\setminus I,
\label{k0-def}
\end{equation}
where $\newk^0(\tz;w)$ is holomorphic in $D_{\tz_1}(\delta)$.  Using \eqref{eq:k-explicit}, we can explicitly write $\newk^0(\tz;w)$ in the form
\begin{multline}
%k^0(\tz;w)=\frac{1}{2}\ii\mu(w)\\
%{}+\ii\log\left(\frac{(\tz-u)(\tz_2(w)-u)+(v+R(\tz;w))(v-R(\tz_2(w);w))}{(\tz-u)(\tz_2(w)-u)+(v-R(\tz;w))(v-R(\tz_2(w);w))}\cdot\frac{\widetilde{\omega}(\tz;w)}{\tz_2(w)-\tz}\cdot \frac{(\tz_1(w)-\tz)^2}{\varphi_{\tz_1}(\tz;w)^2}\right).
\newk^0(\tz;w)=\frac{1}{2}\mu(w)\\
{}+\log\left(\frac{(\tz-U)(\tz_2(w)-U)+(V+R(\tz;w))(V-R(\tz_2(w);w))}{(\tz-U)(\tz_2(w)-U)+(V-R(\tz;w))(V-R(\tz_2(w);w))}\cdot\frac{\omega^{\newk,1}(\tz;w)}{\tz_2(w)-\tz}\cdot \frac{(\tz_1(w)-\tz)^2}{\varphi_{\tz_1}(\tz;w)^2}\right).
\end{multline}
Again, this holds as written for $\tz\in D_{\tz_1}(\delta)\cap\Omega_\infty$ but to analytically continue to $D_{\tz_1}(\delta)\setminus\Omega_\infty$ one just replaces $R(\tz;w)$ with $-R(\tz;w)$.
Evaluating at $\tz=\tz_1(w)$ means replacing $R(\tz;w)$ with $R_+(\tz_1(w);w)<0$ and computing a limit of a difference quotient:
\begin{multline}
%k^0(\tz_1(w);w)=\frac{1}{2}\ii\mu(w)\\
%{}+\ii\ln\left(\frac{(\tz_1(w)-u)(\tz_2(w)-u)+(v+R_+(\tz_1(w);w))(v-R(\tz_2(w);w))}{(\tz_1(w)-u)(\tz_2(w)-u)+(v-R_+(\tz_1(w);w))(v-R(\tz_2(w);w))}\right.\\
%\left.{}\cdot\frac{\widetilde{\omega}_+(\tz_1(w);w)}{(\tz_2(w)-\tz_1(w))\varphi_{\tz_1}'(\tz_1(w);w)^2}\right),
\newk^0(\tz_1(w);w)=\frac{1}{2}\mu(w)\\
{}+\ln\left(\frac{(\tz_1(w)-U)(\tz_2(w)-U)+(V+R_+(\tz_1(w);w))(V-R(\tz_2(w);w))}{(\tz_1(w)-U)(\tz_2(w)-U)+(V-R_+(\tz_1(w);w))(V-R(\tz_2(w);w))}\right.\\
\left.{}\cdot\frac{\omega^{\newk,1}_+(\tz_1(w);w)}{(\tz_2(w)-\tz_1(w))\varphi_{\tz_1}'(\tz_1(w);w)^2}\right),
\end{multline}
wherein $\omega^{\newk,1}_+(\tz_1(w);w)>0$ is given by \eqref{eq:tilde-omega-at-z1}.  %This agrees with the value computed by numerical integration as \texttt{k0atz1[w]} in the Mathematica notebook \texttt{RWIO-Comparison-Plots-NEW.nb}.
Therefore, using the identity $q=\bar{p}-p$, recalling that $\mathbf{L}_+(\tz;T,w)$ is interpreted as a holomorphic function in $D_{\tz_1}(\delta)$, and using \eqref{T-h-jump},
\begin{multline}
%\mathbf{H}^{\tz_1}(\tz;T,w)=\ee^{\ii q\gamma(w)\sigma_3}\mathbf{L}_+(\tz;T,w)\ee^{-(pk^0(\tz;w)+qf^0(\tz;w))\sigma_3}\ee^{-\ii T^{1/3}h_+(\tz_1(w);w)\sigma_3}\ii^{\sigma_3}\\
%{}\cdot (\tz_2(w)-\tz)^{-\ii p\sigma_3}\left(\frac{\tz_1(w)-\tz}{\varphi_{\tz_1}(\tz;w)}\right)^{\ii p\sigma_3},\quad \tz\in D_{\tz_1}(\delta).
\mathbf{H}^{\tz_1}(\tz;T,w)=\ee^{\ii q\gamma(w)\sigma_3}\mathbf{L}_+(\tz;T,w)\ee^{-\ii(p\newk^0(\tz;w)+q\newf^0(\tz;w))\sigma_3}\ee^{-\ii T^{1/3}h_+(\tz_1(w);w)\sigma_3}\ii^{\sigma_3}\\
{}\cdot (\tz_2(w)-\tz)^{-\ii p\sigma_3}\left(\frac{\tz_1(w)-\tz}{\varphi_{\tz_1}(\tz;w)}\right)^{\ii p\sigma_3},\quad \tz\in D_{\tz_1}(\delta).
\label{T-Hz1-formula}
\end{multline}

\subsubsection{Inner parametrices near $\tz=\tz_0(w)$ and $\tz=\tz_0(w)^*$}
It suffices to construct a parametrix for $\mathbf{T}(\tz;T,w)$ for $\tz$ near $\tz_0(w)$ and obtain a corresponding parametrix for $\tz$ near $\tz_0(w)^*$ using Schwarz reflection.  Let $D_{\tz_0}(\delta)$ denote a disk of radius $\delta>0$ centered at $\tz=\tz_0(w)$.  We define a conformal mapping on $D_{\tz_0}(\delta)$ by setting $\varphi_{\tz_0}(\tz;w):=(2\ii (h(\tz_0(w);w)-h(\tz;w)))^\frac{2}{3}$, analytically continued to $\tz\in D_{\tz_0}(\delta)$ from the arc along which $h(\diamond;w)-h(\tz_0(w);w)$ is positive imaginary, and we choose $C_{\Gamma,L}^+$ to coincide with this arc within $D_{\tz_0}(\delta)$.  Then within $D_{\tz_0}(\delta)$ we choose $\Sigma^+$ to be mapped by $\varphi=\varphi_{\tz_0}(\tz;w)$ to $\varphi\le 0$, we choose $C_{\Sigma,L}^+$ to be mapped to $\arg(\varphi)=\frac{2}{3}\pi$, and fusing $C_{\Sigma,R}^+$ and $C_{\Gamma,R}^+$ locally we choose both to be mapped to $\arg(\varphi)=-\frac{2}{3}\pi$.  We define a rescaling of the conformal coordinate by $\zeta_{\tz_0}:=T^\frac{2}{9}\varphi_{\tz_0}(\tz;w)$.

Using the facts that $2h(\tz_0(w);w)=h_+(\tz_0(w);w)+h_-(\tz_0(w);w)=\kappa(w)$ (as $h(\diamond;w)$ is continuous at $\tz_0(w)$) and  that $-\mathfrak{a}^3/\mathfrak{b}-\mathfrak{ab}=-\mathfrak{a}/\mathfrak{b}$ (as $\mathfrak{a}^2+\mathfrak{b}^2=1$), one can then check that the matrix $\mathbf{P}(\tz;T,w)$ defined by
\begin{equation}
\mathbf{P}(\tz;T,w):=\mathbf{T}(\tz;T,w)(\ii\sigma_2)\left(\frac{\mathfrak{b}}{\mathfrak{a}}\right)^{\frac{1}{2}\sigma_3}\ee^{\frac{1}{2}\ii T^{1/3}\kappa(w)\sigma_3}
\label{T-P-from-T}
\end{equation}
satisfies the following jump conditions within $D_{\tz_0}(\delta)$:
\begin{equation}
\mathbf{P}_+(\tz;T,w)=\mathbf{P}_-(\tz;T,w)\begin{bmatrix}1 & \ee^{-\zeta_{\tz_0}^{3/2}}\\0 & 1\end{bmatrix},\quad \arg(\zeta_{\tz_0})=0,
\label{T-P-pos}
\end{equation}
\begin{equation}
\mathbf{P}_+(\tz;T,w)=\mathbf{P}_-(\tz;T,w)\begin{bmatrix}1&0\\\ee^{\zeta_{\tz_0}^{3/2}}&1\end{bmatrix},\quad\arg(\zeta_{\tz_0})=\pm\frac{2\pi}{3},
\end{equation}
and
\begin{equation}
\mathbf{P}_+(\tz;T,w)=\mathbf{P}_-(\tz;T,w)\begin{bmatrix}0 & 1\\-1 & 0\end{bmatrix},\quad\arg(-\zeta_{\tz_0})=0,
\label{T-P-twist}
\end{equation}
where we are orienting all four rays in the direction of increasing real part of $\zeta_{\tz_0}$.  Defining a matrix $\dot{\mathbf{P}}^{\mathrm{out}}(\tz;T,w)$ by the right-hand side of \eqref{T-P-from-T} replacing $\mathbf{T}(\tz;T,w)$ with $\dot{\mathbf{T}}^\mathrm{out}(\tz;T,w)$ we see that $\dot{\mathbf{P}}^\mathrm{out}(\tz;T,w)$ is analytic within $D_{\tz_0}(\delta)$ except for the arc where $\varphi_{\tz_0}(\tz;w)\le 0$, along which the same jump condition as in \eqref{T-P-twist} is satisfied, and $\dot{\mathbf{P}}^\mathrm{out}(\tz;T,w)$ exhibits negative one-fourth power singularities near $\tz_0$.  Therefore, the matrix function defined on $D_{\tz_0}(\delta)$ by
\begin{equation}
\mathbf{H}^{\tz_0}(\tz;T,w):=\dot{\mathbf{P}}^\mathrm{out}(\tz;T,w)\mathbf{V}^{-1}\varphi_{\tz_0}(\tz;w)^{-\frac{1}{4}\sigma_3},\quad \tz\in D_{\tz_0}(\delta),\quad \mathbf{V}:=\frac{1}{\sqrt{2}}\begin{bmatrix}1&-\ii\\-\ii & 1\end{bmatrix}
\end{equation}
is actually analytic on the whole disk, and it is easy to check that it is uniformly bounded on the disk in the limit $T\to+\infty$.  Now let $\mathbf{A}(\zeta)$ denote the standard Airy parametrix analytic for $\mathrm{Im}(\zeta)\neq 0$ except across the rays $\arg(\zeta)=\pm\frac{2}{3}\pi$, satisfying the exact jump conditions \eqref{T-P-pos}--\eqref{T-P-twist}, and satisfying the asymptotic condition
\begin{equation}
\mathbf{A}(\zeta)\mathbf{V}^{-1}\zeta^{-\frac{1}{4}\sigma_3}=\mathbb{I} + \begin{bmatrix}O(\zeta^{-3}) & O(\zeta^{-1})\\O(\zeta^{-2}) & O(\zeta^{-3})\end{bmatrix},\quad\zeta\to\infty,
\label{T-Airy-norm}
\end{equation}
(i.e., $\mathbf{A}(\zeta)$ is the unique solution of Riemann-Hilbert Problem 4 of \cite{BothnerM19}, for instance --- see \cite[Appendix B]{BothnerM19} for full details), we then define the parametrix for $\mathbf{T}(\tz;T,w)$ in $D_{\tz_0}(\delta)$ by
\begin{equation}
\dot{\mathbf{T}}^{\tz_0}(\tz;T,w):=\mathbf{H}^{\tz_0}(\tz;T,w)T^{-\frac{1}{18}\sigma_3}\mathbf{A}(T^{\frac{2}{9}}\varphi_{\tz_0}(\tz;w))\ee^{-\frac{1}{2}\ii T^{1/3}\kappa(w)\sigma_3}\left(\frac{\mathfrak{b}}{\mathfrak{a}}\right)^{-\frac{1}{2}\sigma_3}(\ii\sigma_2)^{-1},\quad \tz\in D_{\tz_0}(\delta).
\end{equation}
Then, comparing with the outer parametrix, we have for $\tz\in D_{\tz_0}(\delta)$,
\begin{equation}
\dot{\mathbf{T}}^{\tz_0}(\tz;T,w)\dot{\mathbf{T}}^{\mathrm{out}}(\tz;T,w)^{-1}=\mathbf{H}^{\tz_0}(\tz;T,w)T^{-\frac{1}{18}\sigma_3}\mathbf{A}(\zeta_{\tz_0})\mathbf{V}^{-1}\zeta_{\tz_0}^{-\frac{1}{4}\sigma_3}T^{\frac{1}{18}\sigma_3}\mathbf{H}^{\tz_0}(\tz;T,w)^{-1},%\quad \tz\in D_{\tz_0}(\delta),
\end{equation}
where we recall $\zeta_{\tz_0}=T^\frac{2}{9}\varphi_{\tz_0}(\tz;w)$.
Using \eqref{T-Airy-norm}, the fact that $\varphi_{\tz_0}(\diamond;w)$ is bounded away from zero on $\partial D_{\tz_0}(\delta)$, and the fact that $\mathbf{H}^{\tz_0}(\diamond;T,w)$ is bounded as $T\to+\infty$ and has unit determinant yields
\begin{equation}
\sup_{\tz\in\partial D_{\tz_0}(\delta)}\left\|\dot{\mathbf{T}}^{\tz_0}(\tz;T,w)\dot{\mathbf{T}}^{\mathrm{out}}(\tz;T,w)^{-1}-\mathbb{I}\right\|=O(T^{-\frac{1}{3}}),\quad T\to+\infty.
\label{D-z0-bound}
\end{equation}
Since $\mathbf{T}(\tz;T,w)$ satisfies the exact Schwarz symmetry $\mathbf{T}(\tz^*;T,w)=\sigma_2\mathbf{T}(\tz;T,w)^*\sigma_2$, we obtain an inner parametrix near $\tz_0^*$ by applying the same reflection to $\dot{\mathbf{T}}^{\tz_0}(\tz;T,w)$.

\subsubsection{Global parametrix}
We define the global parametrix $\dot{\mathbf{T}}(\tz;T,w)$ by
\begin{equation}
\dot{\mathbf{T}}(\tz;T,w):=\begin{cases}
\dot{\mathbf{T}}^{\tz_1}(\tz;T,w),&\quad \tz\in D_{\tz_1}(\delta),\\
\dot{\mathbf{T}}^{\tz_2}(\tz;T,w),&\quad \tz\in D_{\tz_2}(\delta),\\
\dot{\mathbf{T}}^{\tz_0}(\tz;T,w),&\quad \tz\in D_{\tz_0}(\delta),\\
\sigma_2\dot{\mathbf{T}}^{\tz_0}(\tz^*;T,w)^*\sigma_2,&\quad \tz\in D_{\tz_0^*}(\delta),\\
\dot{\mathbf{T}}^{\mathrm{out}}(\tz;T,w),&\quad \tz \in \mathbb{C} \setminus \left(I \cup D_{\tz_1}(\delta) \cup D_{\tz_2}(\delta)\cup D_{\tz_0}(\delta) \cup D_{\tz_0^*}(\delta) \right).
\end{cases}
\end{equation}

\subsection{Asymptotics as $T\to+\infty$} 
As in Section~\ref{s:asymptotics-X} we compare the matrix $\mathbf{T}(\tz;T,w)$ with the global parametrix $\dot{\mathbf{T}}(\tz;T,w)$ by defining the error matrix $\mathbf{F}(\tz;T,w):=\mathbf{T}(\tz;T,w)\dot{\mathbf{T}}(\tz;T,w)^{-1}$ whenever both of the factors are defined. It is straightforward to verify that $\mathbf{F}(\tz;T,w)$ satisfies a small-norm Riemann-Hilbert problem with the jump contour $\Sigma_{\mathbf{F}}$ consisting of the disk boundaries $\partial D_{\tz_1}(\delta)$, $\partial D_{\tz_2}(\delta)$, $\partial D_{\tz_0}(\delta)$, and $\partial D_{\tz_0^*}(\delta)$, together with the restrictions of the arcs $C^{\pm}_{\Gamma,L}$, $C^{\pm}_{\Gamma,R}$, $C^{\pm}_{\Sigma,L}$, and $C^{\pm}_{\Sigma,R}$ to the exterior of the four disks. 

The jump matrix $\mathbf{V}^{\mathbf{F}}$ for
$\mathbf{F}(\tz;T,w)$ is expressed on the latter arcs as
\begin{equation}
\begin{aligned}
\mathbf{V}^{\mathbf{F}}(\tz;T,w) &= \mathbf{F}_-(\tz;T,w)^{-1}  \mathbf{F}_+(\tz;T,w)\\
&= \dot{\mathbf{T}}_-(\tz;T,w){\mathbf{T}}_-(\tz;T,w)^{-1} {\mathbf{T}}_+(\tz;T,w)\dot{\mathbf{T}}_+(\tz;T,w)^{-1}\\
&=\dot{\mathbf{T}}^\mathrm{out}_-(\tz;T,w)\mathbf{T}_-(\tz;T,w)^{-1}\mathbf{T}_+(\tz;T,w)\dot{\mathbf{T}}^\mathrm{out}_-(\tz;T,w)^{-1},
\end{aligned}
\end{equation} 
because on these arcs $\dot{\mathbf{T}}(\tz;T,w)=\dot{\mathbf{T}}^\mathrm{out}(\tz;T,w)$, which has no jump discontinuity.  The restriction to the exterior of the disks makes the conjugating factors bounded independently of $T\to+\infty$, while the jump matrix $\mathbf{T}_-(\tz;T,w)^{-1}\mathbf{T}_+(\tz;T,w)$ for $\mathbf{T}(\tz;T,w)$ is a uniformly exponentially small perturbation of the identity.  Therefore there is a positive constant $K(\varepsilon)>0$ such that
\begin{equation}
\sup_{\tz\in\left(C^{\pm}_{\Gamma,L}\cup C^{\pm}_{\Gamma,R} \cup C^{\pm}_{\Sigma,L}\cup C^{\pm}_{\Sigma,R}\right)\cap \Sigma_\mathbf{F}} \| \mathbf{V}^\mathbf{F}(\tz;T,w) - \mathbb{I} \| = O(\ee^{-K(\varepsilon) T^{1/3}}),\quad T\to +\infty,
\end{equation}
holds uniformly for $|w| \le w_\mathrm{c}-\varepsilon$ and normalized parameters $(\mathfrak{a},\mathfrak{b})$ satisfying the double-sided inequality $\varepsilon\le \mathfrak{b}/\mathfrak{a}\le\varepsilon^{-1}$.  For the rest of the section we assume that these inequalities on $w$ and $\mathfrak{b}/\mathfrak{a}$ hold for some $\varepsilon>0$ and use the notation $O_\varepsilon(\diamond)$ introduced after \eqref{error-jump-estimate} to indicate the dependence of implied constants on $\varepsilon$.

Taking the disk boundary $\partial D_{\tz_0}(\delta)\subset\Sigma_\mathbf{F}$ to have clockwise orientation, the jump matrix on this circle takes the form $\mathbf{V}^\mathbf{F}(\tz;T,w)=\dot{\mathbf{T}}^{\tz_0}(\tz;T,w)\dot{\mathbf{T}}^\mathrm{out}(\tz;T,w)^{-1}$, and using \eqref{D-z0-bound} shows that $\mathbf{V}^\mathbf{F}-\mathbb{I}$ is uniformly $O_\varepsilon(T^{-\frac{1}{3}})$ on this circle.   By Schwarz reflection a similar estimate holds for $\mathbf{V}^\mathbf{F}-\mathbb{I}$ on $\partial D_{\tz_0^*}(\delta)\subset\Sigma_\mathbf{F}$.  

The discrepancy $\mathbf{V}^\mathbf{F}-\mathbb{I}$ is dominated by its behavior on the boundaries of the disks $D_{\tz_j}(\delta)$, $j=1,2$, which we also take to be clockwise-oriented.  On these two circles we have $\mathbf{V}^\mathbf{F}(\tz;T,w)=\dot{\mathbf{T}}^{\tz_j}(\tz;T,w)\dot{\mathbf{T}}^\mathrm{out}(\tz;T,w)^{-1}$.  
Therefore, the formul\ae\ \eqref{T-z2-mismatch} and \eqref{T-z1-mismatch} together with the basic estimate $\mathbf{U}(\zeta;p,\tau)\zeta^{\ii p\sigma_3}=\mathbb{I}+O_\varepsilon(\zeta^{-1})$ (see \eqref{U-PC-expansion}, here valid also with $(p,\tau)$ replaced by $(\bar{p},\bar{\tau})$ due to the double-sided inequality $\varepsilon\le\mathfrak{b}/\mathfrak{a}\le\varepsilon^{-1}$) where $\zeta$ is large of size $T^\frac{1}{6}$ when $\tz\in\partial D_{\tz_j}(\delta)$ immediately gives
\begin{equation}
\sup_{\tz\in\partial D_{\tz_1}(\delta) \cup   \partial D_{\tz_2}(\delta)} \| \mathbf{V}^\mathbf{F}(\tz;T,w) - \mathbb{I} \| = O_\varepsilon(T^{-\frac{1}{6}}),\quad T\to +\infty.
\end{equation}
This estimate is sharp, and we will extract a leading term proportional to $T^{-\frac{1}{6}}$ below.

 Just as in Section~\ref{s:asymptotics-X}, from the $L^2(\Sigma_\mathbf{F})$ theory of small-norm Riemann-Hilbert problems it follows that 
\begin{equation}
\mathbf{F}_{-}(\diamond ; T, w)-\mathbb{I}=O_\varepsilon(T^{-\frac{1}{6}}),\quad T \rightarrow+\infty
\label{T-F-minus-I-in-L2}
\end{equation} 
holds in the $L^2(\Sigma_\mathbf{F})$ sense. 
Therefore, again every coefficient 
\begin{equation}
\mathbf{F}^{[m]}(T, w):=-\frac{1}{2 \pi \ii} \int_{\Sigma_{\mathbf{F}}} \mathbf{F}_{-}(\tz; T, w)\left(\mathbf{V}^{\mathbf{F}}(\tz ; T, w)-\mathbb{I}\right) \tz^{m-1} \dd \tz
\label{T-F-coeffs}
\end{equation}
in the Laurent series for $\mathbf{F}(\tz ; T, w)$ 
\begin{equation}
\mathbf{F}(\tz ; T, w)=\mathbb{I}+\sum_{m=1}^{\infty} \tz^{-m} \mathbf{F}^{[m]}(T, w),
\end{equation}
which is convergent for $|\tz|$ sufficiently large, satisfies $\|\mathbf{F}^{[m]}(T, w)\|=O_\varepsilon(T^{-\frac{1}{6} })$ as $T \rightarrow+\infty$.

Now, note that $\mathbf{T}(\tz;T,w) = \mathbf{F}(\tz;T,w) \dot{\mathbf{T}}^{\mathrm{out}}(\tz;T,w)$, and we have $\dot{\mathbf{T}}(\tz;T,w)=\dot{\mathbf{T}}^{\mathrm{out}}(\tz;T,w)$ for $|\tz|$ large enough. Thus, from \eqref{eq:T-Psi-from-T} we obtain
\begin{multline}
\Psi(T^\frac{2}{3} w , T) \\
\begin{aligned}
&= 2 \ii \ee^{-\ii\arg(ab)} T^{-\frac{1}{3}}\lim _{\tz \rightarrow \infty} \tz T_{12}(\tz ; T, w)\ee^{\ii  T^{1/3} g(\tz ; w)}\\
&= 2 \ii \ee^{-\ii\arg(ab)} T^{-\frac{1}{3}}\lim _{\tz \rightarrow \infty} \tz \left( F_{11}(\tz;T,w) \dot{T}^{\mathrm{out}}_{12}(\tz;T,w) + F_{12}(\tz;T,w) \dot{T}^{\mathrm{out}}_{22}(\tz;T,w) \right)\ee^{\ii  T^{1/3} g(\tz ; w)}\\
&= 2 \ii \ee^{-\ii\arg(ab)} T^{-\frac{1}{3}}\lim _{\tz \rightarrow \infty} \tz \left( \dot{T}^{\mathrm{out}}_{12}(\tz;T,w) + F_{12}(\tz;T,w) \right)\\
&= 2 \ii \ee^{-\ii\arg(ab)} T^{-\frac{1}{3}}\left(\lim _{\tz \rightarrow \infty} \tz  \dot{T}^{\mathrm{out}}_{12}(\tz;T,w)  + F^{[1]}_{12}(T,w)\right),
\end{aligned}
\label{Psi-from-W-aspymptotics}
\end{multline}
where we have also used the properties $g(\tz;w)=O(\tz^{-1})$, $\dot{\mathbf{T}}^{\mathrm{out}}(\tz;T,w)-\mathbb{I} = O(\tz^{-1})$, and $\mathbf{F}(\tz;T,w)-\mathbb{I} = O(\tz^{-1})$ as $\tz\to\infty$. 

This has the form of a leading term plus a correction proportional to $T^{-\frac{1}{3}}F^{[1]}_{12}(T,w)$, which since $\|F^{[m]}(T,w)\|=O_\varepsilon(T^{-\frac{1}{6}})$ for all $m\ge 1$ is of size $O_\varepsilon(T^{-\frac{1}{2}})$.  We will now compute the leading term explicitly, and also expand the error term to obtain a sub-leading term.  For the leading term, we observe that
\begin{equation}
L_{12}(\tz;T,w) = -\frac{\Im(\tz_0(w))}{2\tz} \ee^{-\ii (T^{1/3}\kappa(w) +p \mu(w))} + O(\tz^{-2}),\quad \tz\to \infty,
\end{equation}
and that $\Im(\tz_0(w))= \tfrac{1}{3}\sqrt{w_\mathrm{c}^2-w^{2}}$ from \eqref{eq:Z0-intro}. Using this in \eqref{Psi-from-W-aspymptotics} while recalling \eqref{f-normalization} and the form of the outer parametrix given in \eqref{W-out-T} shows that the leading term is exactly
\begin{equation}
2\ii\ee^{-\ii\arg(ab)}T^{-\frac{1}{3}}\lim_{\tz\to\infty}\tz\dot{T}^\mathrm{out}_{12}(\tz;T,w)=-\ii\ee^{-\ii\arg(ab)}\ee^{2\ii q\gamma(w)}\ee^{-\ii (T^{1/3}\kappa(w)+p\mu(w))}T^{-\frac{1}{3}}\frac{1}{3}\sqrt{w_\mathrm{c}^2-w^2},
\label{eq:LeadingTerm-Large-T}
\end{equation}
in which $q=\ln(\mathfrak{a}/\mathfrak{b})=\ln(|a/b|)$ and $2\pi p=\ln(1+\mathfrak{b}^2/\mathfrak{a}^2)=\ln(1+|b/a|^2)$.

For the sub-leading term, we use \eqref{T-F-coeffs} to obtain
\begin{multline}
F^{[1]}_{12}(T,w)=-\frac{1}{2\pi\ii}\int_{\Sigma_\mathbf{F}}V^\mathbf{F}_{12}(\tz;T,w)\,\dd \tz\\
{}-\frac{1}{2\pi\ii}\int_{\Sigma_\mathbf{F}}((F_{11-}(\tz;T,w)-1)V^\mathbf{F}_{12}(\tz;T,w) + F_{12-}(\tz;T,w)(V^\mathbf{F}_{22}(\tz;T,w)-1))\,\dd \tz.
\end{multline}
Using \eqref{T-F-minus-I-in-L2} and $\mathbf{V}^\mathbf{F}(\diamond;T,w)-\mathbb{I}=O_\varepsilon(T^{-\frac{1}{6}})$ in $L^\infty(\Sigma_\mathbf{F})$ and hence also in $L^2(\Sigma_\mathbf{F})$ for $\Sigma_\mathbf{F}$ compact, Cauchy-Schwarz implies that
\begin{equation}
\begin{split}
F^{[1]}_{12}(T,w)&=-\frac{1}{2\pi\ii}\int_{\Sigma_\mathbf{F}}V^\mathbf{F}_{12}(\tz;T,w)\,\dd \tz + O_\varepsilon(T^{-\frac{1}{3}})\\
&=-\frac{1}{2\pi\ii}\int_{\partial D_{\tz_1}(\delta)\cup\partial D_{\tz_2}(\delta)}V^\mathbf{F}_{12}(\tz;T,w)\,\dd \tz + O_\varepsilon(T^{-\frac{1}{3}}),\quad T\to+\infty,
\end{split}
\label{T-F1-12}
\end{equation}
where on the second line we used the fact that $V^\mathbf{F}_{12}(\diamond;T,w)=O_\varepsilon(T^{-\frac{1}{3}})$ in $L^1(\Sigma^\mathbf{F}\setminus(\partial D_{\tz_1}(\delta)\cup\partial D_{\tz_2}(\delta)))$.
Note that in the situation described in Section~\ref{s:asymptotics-X}, $\mathbf{V}^\mathbf{F}-\mathbb{I}$ had additional structure allowing for a refinement of the analogous estimate; however that structure is  is not present here.  Now, $V^\mathbf{F}_{12}(\tz;T,w)$ is given by the $12$-element of \eqref{T-z2-mismatch} and \eqref{T-z1-mismatch} on $\partial D_{\tz_2}(\delta)$ and $\partial D_{\tz_1}(\delta)$, respectively.  Using \eqref{U-PC-expansion} and the fact that $\zeta_{\tz_j}$ is large of size $T^{\frac{1}{6}}$ on the disk boundaries, along with $\det(\mathbf{H}^{\tz_j}(\tz;T,w))=1$ for $j=1,2$,
\begin{equation}
V_{12}^\mathbf{F}(\tz;T,w)=\frac{T^{-\frac{1}{3}\ii \bar{p}}s(\bar{p},\bar{\tau})H_{12}^{\tz_1}(\tz;T;w)^2+T^{\frac{1}{3}\ii \bar{p}}r(\bar{p},\bar{\tau})H_{11}^{\tz_1}(\tz;T,w)^2}{2\ii T^\frac{1}{6}\varphi_{\tz_1}(\tz;w)}+O_\varepsilon(T^{-\frac{1}{3}}),\quad \tz\in\partial D_{\tz_1}(\delta),
\end{equation}
and
\begin{equation}
V_{12}^\mathbf{F}(\tz;T,w)=\frac{T^{-\frac{1}{3}\ii p}s(p,\tau)H_{12}^{\tz_2}(\tz;T;w)^2+T^{\frac{1}{3}\ii p}r(p,\tau)H_{11}^{\tz_2}(\tz;T,w)^2}{2\ii T^\frac{1}{6}\varphi_{\tz_2}(\tz;w)}+O_\varepsilon(T^{-\frac{1}{3}}),\quad \tz\in\partial D_{\tz_2}(\delta),
\end{equation}
with both estimates holding in the $L^\infty$ sense and hence also the $L^1$ sense.  Since $\mathbf{H}^{\tz_j}(\tz;T,w)$ is holomorphic in $D_{\tz_j}(\delta)$, and $Z\mapsto\varphi_{\tz_j}(\tz;w)$ is conformal at $\tz_j(w)$ with $\varphi_{\tz_j}(\tz_j(w);w)=0$, if $\delta>0$ is sufficiently small, we substitute into \eqref{T-F1-12} and compute by residues to obtain
\begin{multline}
F_{12}^{[1]}(T,w)=\frac{T^{-\frac{1}{3}\ii\bar{p}}s(\bar{p},\bar{\tau})H_{12}^{\tz_1}(\tz_1(w);T,w)^2+T^{\frac{1}{3}\ii \bar{p}}r(\bar{p},\bar{\tau})H_{11}^{\tz_1}(\tz_1(w);T,w)^2}{2\ii T^\frac{1}{6}\varphi'_{\tz_1}(\tz_1(w);w)}\\
{}+\frac{T^{-\frac{1}{3}\ii p}s(p,\tau)H_{12}^{\tz_2}(\tz_2(w);T,w)^2 + T^{\frac{1}{3}\ii p}r(p,\tau)H_{11}^{\tz_2}(\tz_2(w);T,w)^2}{2\ii T^{\frac{1}{6}}\varphi_{\tz_2}'(\tz_2(w);w)} + O_\varepsilon(T^{-\frac{1}{3}}).
\label{eq:T-F12-1-approx}
\end{multline}
Here, $r(p,\tau)$ and $s(p,\tau)$ are given by \eqref{r-s-polar}.  
By implicit differentiation of the equations \eqref{T-varphi-z2} and \eqref{T-varphi-z1} and using the facts that $\varphi_{\tz_j}(\tz_j(w);w)=0$ and $\varphi'_{\tz_2}(\tz_2(w);w)>0$ while $\varphi'_{\tz_1}(\tz_1(w);w)<0$ we obtain
\begin{equation}
\varphi'_{\tz_2}(\tz_2(w);w)=\sqrt{h''(\tz_2(w);w)}\quad\text{and}\quad\varphi_{\tz_1}'(\tz_1(w);w)=-\sqrt{-h_-''(\tz_1(w);w)}.
\end{equation}
Since $g(\tz;w)=O(\tz^{-1})$ and $R(\tz)=\tz+O(1)$ as $\tz\to\infty$, it follows from \eqref{large-T-phase}, \eqref{h-prime-squared}, and \eqref{T-h-g-theta} that
\begin{equation}
h'(\tz;w) =2 \frac{(\tz-\tz_1(w))(\tz-\tz_2(w))}{\tz^2}R(\tz;w),
\end{equation}
implying
\begin{equation}
h''(\tz;w) =2 \frac{(\tz_1(w)+\tz_2(w))\tz - 2 \tz_1(w) \tz_2(w)}{\tz^3}R(\tz;w) + 2\frac{(\tz-\tz_1(w))(\tz-\tz_2(w))}{\tz^2}R'(\tz;w).
\end{equation}
Thus,
\begin{align}
h''(\tz_2(w);w) &= \frac{2(\tz_2(w)-\tz_1(w))}{\tz_2(w)^2}|\tz_2(w)-\tz_0(w)|>0,\\
%\implies \varphi'_{z_2}(z_2(w);w)= \frac{\sqrt{z_2(w)-z_1(w)}}{z_2(w)}|z_2(w)-z_0(w)|^{1/2}>0
h''_-(\tz_1(w);w) &= \frac{2(\tz_1(w)-\tz_2(w))}{\tz_1(w)^2}|\tz_1(w)-\tz_0(w)|<0,
%\implies \varphi'_{z_1}(z_1(w);w)= \frac{\sqrt{z_2(w)-z_1(w)}}{z_1(w)}|z_1(w)-z_0(w)|^{1/2}>0
\end{align}
and accordingly
\begin{align}
\varphi'_{\tz_2}(\tz_2(w);w) &= \frac{\sqrt{2(\tz_2(w)-\tz_1(w))}}{\tz_2(w)}|\tz_2(w)-\tz_0(w)|^{\frac{1}{2}}>0,\label{phiz1-prime-atz1}\\
\varphi'_{\tz_1}(\tz_1(w);w) &= \frac{\sqrt{2(\tz_2(w)-\tz_1(w))}}{\tz_1(w)}|\tz_1(w)-\tz_0(w)|^{\frac{1}{2}}<0.\label{phiz2-prime-atz2}
\end{align}
Using $\varphi_{\tz_2}(\tz_2(w);w)=0$, we find from \eqref{T-Hz2-formula} that
\begin{multline}
%H_{11}^{\tz_2}(\tz_2(w);T,w)^2 = \ee^{2\ii q \gamma(w)} L_{11}(\tz_2(w);T,w)^2 \ee^{-2(p k(\tz_2(w);w) + q f(\tz_2(w);w))} \ee^{-2\ii T^{1/3}h(\tz_2(w);w)}\\
%\cdot (\tz_2(w)-\tz_1(w))^{2\ii p} \left[ \varphi'_{\tz_2}(\tz_2(w);w)^2\right]^{\ii p }
H_{11}^{\tz_2}(\tz_2(w);T,w)^2 = \ee^{2\ii q \gamma(w)} L_{11}(\tz_2(w);T,w)^2 \ee^{-2\ii(p \newk(\tz_2(w);w) + q \newf(\tz_2(w);w))} \ee^{-2\ii T^{1/3}h(\tz_2(w);w)}\\
\cdot (\tz_2(w)-\tz_1(w))^{2\ii p} \left[ \varphi'_{\tz_2}(\tz_2(w);w)^2\right]^{\ii p }
\label{T-Hz2-11-formula}
\end{multline}
and
\begin{multline}
%H_{12}^{\tz_2}(\tz_2(w);T,w)^2 = \ee^{2\ii q \gamma(w)} L_{12}(\tz_2(w);T,w)^2 \ee^{2(p k(\tz_2(w);w) + q f(\tz_2(w);w))} \ee^{2\ii T^{1/3}h(\tz_2(w);w)}\\
%\cdot (\tz_2(w)-\tz_1(w))^{-2\ii p} \left[ \varphi'_{\tz_2}(\tz_2(w);w)^2\right]^{-\ii p }.
H_{12}^{\tz_2}(\tz_2(w);T,w)^2 = \ee^{2\ii q \gamma(w)} L_{12}(\tz_2(w);T,w)^2 \ee^{2\ii(p \newk(\tz_2(w);w) + q \newf(\tz_2(w);w))} \ee^{2\ii T^{1/3}h(\tz_2(w);w)}\\
\cdot (\tz_2(w)-\tz_1(w))^{-2\ii p} \left[ \varphi'_{\tz_2}(\tz_2(w);w)^2\right]^{-\ii p }.
\label{T-Hz2-12-formula}
\end{multline}
Since $\tz_2(w)-\tz_1(w)>0$, we use \eqref{phiz2-prime-atz2} to write
\begin{align}
%(\tz_2(w)-\tz_1(w))^{\pm2 \ii p}&=\ee^{\pm 2 \ii p \ln(\tz_2(w)-\tz_1(w))},\\
%\left[ \varphi'_{\tz_2}(\tz_2(w);w)^2\right]^{\pm \ii p } &=\ee^{\pm \ii p \ln( {\color{red}2}(\tz_2(w)-\tz_1(w)) \tz_2(w)^{-2} |\tz_2(w)-\tz_0(w)|)},
(\tz_2(w)-\tz_1(w))^{\pm2 \ii p}&=\ee^{\pm 2 \ii p \ln(\tz_2(w)-\tz_1(w))},\\
\left[ \varphi'_{\tz_2}(\tz_2(w);w)^2\right]^{\pm \ii p } &=\ee^{\pm \ii p \ln( 2(\tz_2(w)-\tz_1(w)) \tz_2(w)^{-2} |\tz_2(w)-\tz_0(w)|)},
\label{eq:T-varphi-2-prime-pmip}
\end{align} 
and from the definition \eqref{L-def} with $y(\tz_2(w);w)=\ee^{\ii\arg(\tz_2(w)-\tz_0(w))/2}$ with the principal branch of the argument, we obtain using the amplitude notation from \eqref{eq:m-amplitudes-intro} in the introduction
\begin{equation}
\begin{split}
L_{11}(\tz_2(w);T,w)^2 &= \frac{1}{2} + \frac{1}{4}\left( y(\tz_2(w);w)^2 + \frac{1}{y(\tz_2(w);w)^2} \right)\\
& =
%\frac{1}{2}\left(1+\cos(\arg(	\tz_2(w)-\tz_0(w)))\right)=:
m^+_{\tz_2}(w),
\end{split}
\label{eq:T-L11-z2}
\end{equation}
\begin{equation}
\begin{split}
L_{12}(\tz_2(w);T,w)^2 %&= -\ee^{-2 \ii(T^{1/3} \kappa(w)+ p \mu(w))}\frac{1}{4}\left(y(z_2(w))-\frac{1}{y(z_2(w))} \right)^2 \\
&=-\ee^{-2 \ii(T^{1/3} \kappa(w)+ p \mu(w))} \left[ \frac{1}{4}\left( y(\tz_2(w);w)^2+\frac{1}{y(\tz_2(w);w)^2} \right) - \frac{1}{2}\right]\\
%&=\ee^{-2 \ii(T^{1/3} \kappa(w)+ p \mu(w))} \frac{1}{2}\left(1-\cos(\arg(\tz_2(w)-\tz_0(w)))\right)\\
&= \ee^{-2 \ii(T^{1/3} \kappa(w)+ p \mu(w))} m^-_{\tz_2}(w).%,\qquad m^-_{\tz_2}(w).
\end{split}
\label{eq:T-L12-z2}
\end{equation}
Note that $m_{\tz_2}^\pm(w)>0$ and $m_{\tz_2}^+(w)+m_{\tz_2}^-(w)=1$.
Thus, the formulae \eqref{T-Hz2-11-formula} and \eqref{T-Hz2-12-formula} can be expressed as
\begin{align}
H_{11}^{\tz_2}(\tz_2(w);T,w)^2 &=  \ee^{2\ii q \gamma(w)} m_{\tz_2}^+(w) \ee^{- \ii \phi_{\tz_2}(T,w)},\\
H_{12}^{\tz_2}(\tz_2(w);T,w)^2 &=  \ee^{2\ii q \gamma(w)} \ee^{-2 \ii(T^{1/3} \kappa(w)+ p \mu(w))}  m_{\tz_2}^-(w) \ee^{ \ii \phi_{\tz_2}(T,w)},
\end{align}
where a real phase is defined by
\begin{multline}
%\phi_{\tz_2}(T,w) := 2 T^{1/3} h(\tz_2(w);w) -3p \ln(\tz_2(w)-\tz_1(w)) - p \ln\left(\frac{|\tz_2(w)-\tz_0(w)|}{\tz_2(w)^2}\right)\\
%+2(p \newk(\tz_2(w);w) + q \newf(\tz_2(w);w)){\color{red}-p\ln(2)}.
\phi_{\tz_2}(T,w) := 2 T^{1/3} h(\tz_2(w);w) -3p \ln(\tz_2(w)-\tz_1(w)) - p \ln\left(\frac{|\tz_2(w)-\tz_0(w)|}{\tz_2(w)^2}\right)\\
+2(p \newk(\tz_2(w);w) + q \newf(\tz_2(w);w))-p\ln(2).
\label{phi-z2-large-T}
\end{multline}
Similarly, using $\varphi_{\tz_1}(\tz_1(w);w)=0$, we find from \eqref{T-Hz1-formula} that
\begin{multline}
%H_{11}^{\tz_1}(\tz_1(w);T,w)^2 = - \ee^{2\ii q \gamma(w)} L_{11+}(\tz_1(w);T,w)^2 \ee^{-2(p k^0(\tz_1(w);w) + q f^0(\tz_1(w);w))} \ee^{-2\ii T^{1/3}h_{+}(\tz_1(w);w)}\\
%\cdot (\tz_2(w)-\tz_1(w))^{-2\ii p} \left[ \frac{1}{\varphi'_{\tz_1}(\tz_1(w);w)^2}\right]^{\ii p }
H_{11}^{\tz_1}(\tz_1(w);T,w)^2 = - \ee^{2\ii q \gamma(w)} L_{11+}(\tz_1(w);T,w)^2 \ee^{-2\ii(p \newk^0(\tz_1(w);w) + q \newf^0(\tz_1(w);w))} \ee^{-2\ii T^{1/3}h_{+}(\tz_1(w);w)}\\
\cdot (\tz_2(w)-\tz_1(w))^{-2\ii p} \left[ \frac{1}{\varphi'_{\tz_1}(\tz_1(w);w)^2}\right]^{\ii p }
\label{eq:T-H11-z1-squared}
\end{multline}
and
\begin{multline}
%H_{12}^{\tz_1}(\tz_1(w);T,w)^2 = - \ee^{2\ii q \gamma(w)} L_{12+}(\tz_1(w);T,w)^2 \ee^{2(p k^0(\tz_1(w);w) + q f^0(\tz_1(w);w))} \ee^{2\ii T^{1/3}h_{+}(\tz_1(w);w)}\\
%\cdot (\tz_2(w)-\tz_1(w))^{2\ii p} \left[ \frac{1}{\varphi'_{\tz_1}(\tz_1(w);w)^2}\right]^{-\ii p }.
H_{12}^{\tz_1}(\tz_1(w);T,w)^2 = - \ee^{2\ii q \gamma(w)} L_{12+}(\tz_1(w);T,w)^2 \ee^{2\ii(p \newk^0(\tz_1(w);w) + q \newf^0(\tz_1(w);w))} \ee^{2\ii T^{1/3}h_{+}(\tz_1(w);w)}\\
\cdot (\tz_2(w)-\tz_1(w))^{2\ii p} \left[ \frac{1}{\varphi'_{\tz_1}(\tz_1(w);w)^2}\right]^{-\ii p }.
\label{eq:T-H12-z1-squared}
\end{multline}
The analogue of \eqref{eq:T-varphi-2-prime-pmip} needed here is
\begin{equation}
%\left[\frac{1}{\varphi'_{\tz_1}(\tz_1(w);w)^2}\right]^{\pm\ii p} = \ee^{\mp\ii p\ln({\color{red}2}(\tz_2(w)-\tz_1(w))\tz_1(w)^{-2}|\tz_1(w)-\tz_0(w)|)},
\left[\frac{1}{\varphi'_{\tz_1}(\tz_1(w);w)^2}\right]^{\pm\ii p} = \ee^{\mp\ii p\ln(2(\tz_2(w)-\tz_1(w))\tz_1(w)^{-2}|\tz_1(w)-\tz_0(w)|)},
\end{equation}
and those of \eqref{eq:T-L11-z2}--\eqref{eq:T-L12-z2}, using $y_+(\tz_1(w);w)=\ii\ee^{\ii\arg(\tz_1(w)-\tz_0(w))/2}$ with the principal branch of the argument, are
\begin{equation}
\begin{split}
L_{11+}(\tz_1(w);T,w)^2&=\frac{1}{2}+\frac{1}{4}\left(y_+(\tz_1(w);w)^2+\frac{1}{y_+(\tz_1(w);w))^2}\right)\\
&=
%\frac{1}{2}(1-\cos(\arg(\tz_1(w)-\tz_0(w))))=:
m_{\tz_1}^-(w),
\end{split}
\end{equation}
\begin{equation}
\begin{split}
L_{12+}(\tz_1(w);T,w)^2&=-\ee^{-2\ii (T^{1/3}\kappa(w)+p\mu(w))}\left[\frac{1}{4}\left(y_+(\tz_1(w);w)^2+\frac{1}{y_+(\tz_1(w);w)^2}\right)-\frac{1}{2}\right]\\
%&=\ee^{-2\ii (T^{1/3}\kappa(w)+p\mu(w))}\frac{1}{2}(1+\cos(\arg(\tz_1(w)-\tz_0(w))))\\
&=\ee^{-2\ii (T^{1/3}\kappa(w)+p\mu(w))}m_{\tz_1}^+(w), %,\quad m_{\tz_1}^+(w)>0.
\end{split}
\end{equation}
where we again recall the amplitude notation from \eqref{eq:m-amplitudes-intro} with $m^\pm_{\tz_1}(w)>0$ and $m^+_{\tz_1}(w)+m^-_{\tz_1}(w)=1$.
Therefore, \eqref{eq:T-H11-z1-squared} and \eqref{eq:T-H12-z1-squared} become
\begin{align}
H_{11}^{\tz_1}(\tz_1(w);T,w)^2&=-\ee^{2\ii q\gamma(w)}m^-_{\tz_1}(w)\ee^{-\ii\phi_{\tz_1}(T,w)},
\\
H_{12}^{\tz_1}(\tz_1(w);T,w)^2&=-\ee^{2\ii q\gamma(w)}\ee^{-2\ii(T^{1/3}\kappa(w)+p\mu(w))}m_{\tz_1}^+(w)\ee^{\ii\phi_{\tz_1}(T,w)},
\end{align}
where 
%$m_{\tz_1}^+(w)+m_{\tz_1}^-(w)=1$ and 
another real phase is defined by
\begin{multline}
%\phi_{\tz_1}(T,w):=2T^\frac{1}{3}h_+(\tz_1(w);w)+3p\ln(\tz_2(w)-\tz_1(w))+p\ln\left(\frac{|\tz_1(w)-\tz_0(w)|}{\tz_1(w)^2}\right)\\
%{}+2 (p\newk^0(\tz_1(w);w)+q\newf^0(\tz_1(w);w)){\color{red}+p\ln(2)}.
\phi_{\tz_1}(T,w):=2T^\frac{1}{3}h_+(\tz_1(w);w)+3p\ln(\tz_2(w)-\tz_1(w))+p\ln\left(\frac{|\tz_1(w)-\tz_0(w)|}{\tz_1(w)^2}\right)\\
{}+2 (p\newk^0(\tz_1(w);w)+q\newf^0(\tz_1(w);w))+p\ln(2).
\label{phi-z1-large-T}
\end{multline}

Using these results in \eqref{eq:T-F12-1-approx} together with the fact $s(p,\tau)=- r(p,\tau)^*$ gives
\begin{multline}
%F_{12}^{[1]}(T, w)= \frac{\ee^{2\ii q \gamma(w)}\ee^{-\ii (T^{1/3}\kappa(w)+ p\mu(w))}\ee^{\ii\frac{\pi}{2}}}{2\ii T^{\frac{1}{6}} }
%\left(
%\frac{|r(\bar{p},\bar{\tau})|}{\sqrt{-h_-''(\tz_1(w);w)}}\left( m_{\tz_1}^+(w)\ee^{\ii \Phi_{\tz_1}(T,w)} + m_{\tz_1}^-(w)\ee^{-\ii \Phi_{\tz_1}(T,w)}\right)\right.
%\\
%\left.+
% \frac{|r(p,\tau)|}{\sqrt{h''(\tz_2(w);w)}}\left( m_{\tz_2}^-(w)\ee^{\ii \Phi_{\tz_2}(T,w)} + m_{\tz_2}^+(w)\ee^{-\ii \Phi_{\tz_2}(T,w)}\right)
% \right) + O_\varepsilon(T^{-\frac{1}{3}}),
F_{12}^{[1]}(T,w)=\frac{\ee^{2\ii q\gamma(w)}\ee^{-\ii (T^{1/3}\kappa(w)+p\mu(w))}}{2T^\frac{1}{6}\sqrt{2(\tz_2(w)-\tz_1(w))}}
\left(\frac{|r(\bar{p},\bar{\tau})||\tz_1(w)|\left(m^+_{\tz_1}(w)\ee^{\ii\Phi_{\tz_1}(T,w)}+m^-_{\tz_1}(w)\ee^{-\ii\Phi_{\tz_1}(T,w)}\right)}{|\tz_1(w)-\tz_0(w)|^{\frac{1}{2}}}\right.\\
{}+\left.\frac{|r(p,\tau)|\tz_2(w)\left(m_{\tz_2}^-(w)\ee^{\ii\Phi_{\tz_2}(T,w)}+m_{\tz_2}^+(w)\ee^{-\ii\Phi_{\tz_2}(T,w)}\right)}{|\tz_2(w)-\tz_0(w)|^\frac{1}{2}}\right) + O_\varepsilon(T^{-\frac{1}{3}}).
\end{multline}
with the modified real phases
\begin{align}
\Phi_{\tz_1}(T,w)&:=\phi_{\tz_1}(T,w) - T^{\frac{1}{3}}\kappa(w) - p \mu(w) - \frac{1}{3}\bar{p}\ln(T)+\frac{\pi}{2} - \arg(r(\bar{p},\bar{\tau})),\\
\Phi_{\tz_2}(T,w)&:=\phi_{\tz_2}(T,w) - T^{\frac{1}{3}}\kappa(w) - p \mu(w) - \frac{1}{3}p \ln(T)+\frac{\pi}{2} - \arg(r(p,\tau)).
\end{align}
We note from the definition \eqref{r-def} that $\arg(r(\bar{p},\bar{\tau})) = \frac{\pi}{4} + \bar{p} \ln(2) - \arg(\Gamma(\ii \bar{p}))$ and $\arg(r(p,\tau)) = \frac{\pi}{4} + p \ln(2) - \arg(\Gamma(\ii p))$, so that the  modified real phases take the form
\begin{align}
\Phi_{\tz_1}(T,w)&:=\phi_{\tz_1}(T,w) - T^{\frac{1}{3}}\kappa(w) - p \mu(w) - \frac{1}{3}\bar{p}\ln(T)+\frac{\pi}{2} - \frac{\pi}{4} - \bar{p} \ln(2) + \arg(\Gamma(\ii \bar{p})),\\
\Phi_{\tz_2}(T,w)&:=\phi_{\tz_2}(T,w) - T^{\frac{1}{3}}\kappa(w) - p \mu(w) - \frac{1}{3}p \ln(T)+\frac{\pi}{2} - \frac{\pi}{4} - p \ln(2) + \arg(\Gamma(\ii p)).
\end{align}
Finally, substituting $|r(p,\tau)|=\sqrt{2p}$ from \eqref{r-s-polar} yields
\begin{multline}
%F_{12}^{[1]}(T, w)=  \frac{\ee^{2\ii q \gamma(w)}\ee^{-\ii (T^{1/3}\kappa(w)+ p\mu(w))} \ee^{\ii\frac{\pi}{2}} }{2\ii T^{\frac{1}{6}} }\left(
%\frac{\sqrt{2 \bar{p}}}{\sqrt{-h_-''(\tz_1(w);w)}}\left( m_{\tz_1}^+(w)\ee^{\ii \Phi_{\tz_1}(T,w)} + m_{\tz_1}^-(w)\ee^{-\ii \Phi_{\tz_1}(T,w)}\right)\right.
%\\
%\left.+
% \frac{\sqrt{2 p}}{\sqrt{h''(\tz_2(w);w)}}\left( m_{\tz_2}^-(w)\ee^{\ii \Phi_{\tz_2}(T,w)} + m_{\tz_2}^+(w)\ee^{-\ii \Phi_{\tz_2}(T,w)}\right)
% \right).
F_{12}^{[1]}(T,w)=\frac{\ee^{2\ii q\gamma(w)}\ee^{-\ii (T^{1/3}\kappa(w)+p\mu(w))}}{2T^\frac{1}{6}\sqrt{\tz_2(w)-\tz_1(w)}}
\left(\frac{\sqrt{\bar{p}}|\tz_1(w)|\left(m^+_{\tz_1}(w)\ee^{\ii\Phi_{\tz_1}(T,w)}+m^-_{\tz_1}(w)\ee^{-\ii\Phi_{\tz_1}(T,w)}\right)}{|\tz_1(w)-\tz_0(w)|^{\frac{1}{2}}}\right.\\
{}+\left.\frac{\sqrt{p}\tz_2(w)\left(m_{\tz_2}^-(w)\ee^{\ii\Phi_{\tz_2}(T,w)}+m_{\tz_2}^+(w)\ee^{-\ii\Phi_{\tz_2}(T,w)}\right)}{|\tz_2(w)-\tz_0(w)|^\frac{1}{2}}\right) + O_\varepsilon(T^{-\frac{1}{3}}).
\end{multline}
Using this in \eqref{Psi-from-W-aspymptotics} along with \eqref{eq:LeadingTerm-Large-T} yields
\begin{multline}
%\Psi(T^\frac{2}{3}w,T)=-\frac{1}{3}\ii \ee^{-\ii\arg(ab)}T^{-\frac{1}{3}}\ee^{-\ii T^{1/3}\kappa(w)}\ee^{\ii( 2q\gamma(w)-p\mu(w))}\left[\sqrt{54^\frac{2}{3}-w^2}
%\vphantom{{}-T^{-\frac{1}{6}}\left\{\frac{3\sqrt{2\overline{p}}}{\sqrt{-h''_-(\tz_1(w);w)}}\left(m_{\tz_1}^+(w)\ee^{\ii\Phi_{\tz_1}(T,w)} + m_{\tz_1}^-(w)\ee^{-\ii\Phi_{\tz_1}(T,w)}\right)\right.}
%\right.\\
%{}-T^{-\frac{1}{6}}\left\{\frac{3\sqrt{2\overline{p}}}{\sqrt{-h''_-(\tz_1(w);w)}}\left(m_{\tz_1}^+(w)\ee^{\ii\Phi_{\tz_1}(T,w)} + m_{\tz_1}^-(w)\ee^{-\ii\Phi_{\tz_1}(T,w)}\right)\right.\\
%\left.\left. + \frac{3\sqrt{2p}}{\sqrt{h''(\tz_2(w);w)}}\left(m_{\tz_2}^-(w)\ee^{\ii\Phi_{\tz_2}(T,w)} + m_{\tz_2}^+(w)\ee^{-\ii\Phi_{\tz_2}(T,w)}\right)\right\} + O(T^{-\frac{1}{3}})\right].
\Psi(T^\frac{2}{3}w,T)=-\frac{1}{3}\ii \ee^{-\ii\arg(ab)}T^{-\frac{1}{3}}\ee^{-\ii T^{1/3}\kappa(w)}\ee^{\ii( 2q\gamma(w)-p\mu(w))}\left[
\vphantom{\left\{\frac{3\sqrt{2\bar{p}}|\tz_1(w)|\left(m^+_{\tz_1}(w)\ee^{\ii\Phi_{\tz_1}(T,w)}+m^-_{\tz_1}(w)\ee^{-\ii\Phi_{\tz_1}(T,w)}\right)}{|\tz_1(w)-\tz_0(w)|^{\frac{1}{2}}\sqrt{2(\tz_2(w)-\tz_1(w))}}\right.}
\sqrt{w_\mathrm{c}^2-w^2}\right.\\
{}-3T^{-\frac{1}{6}}\frac{1}{\sqrt{\tz_2(w)-\tz_1(w)}}\left\{\frac{\sqrt{\bar{p}}|\tz_1(w)|\left(m^+_{\tz_1}(w)\ee^{\ii\Phi_{\tz_1}(T,w)}+m^-_{\tz_1}(w)\ee^{-\ii\Phi_{\tz_1}(T,w)}\right)}{|\tz_1(w)-\tz_0(w)|^{\frac{1}{2}}}\right.\\
{}+\left.\left.\frac{\sqrt{p}\tz_2(w)\left(m_{\tz_2}^-(w)\ee^{\ii\Phi_{\tz_2}(T,w)}+m_{\tz_2}^+(w)\ee^{-\ii\Phi_{\tz_2}(T,w)}\right)}{|\tz_2(w)-\tz_0(w)|^\frac{1}{2}}
\right\} + O_\varepsilon(T^{-\frac{1}{3}})\right].
\label{large-T-formula}
\end{multline}
Simplifying $2q\gamma(w)-p\mu(w)-\pi/2$ into the form of $\Phi(w)$ given in \eqref{eq:Phi-intro}, and simplifying the phases $\Phi_{\tz_1}(T,w)$ and $\Phi_{\tz_2}(T,w)$ into the forms \eqref{eq:Phi-Z1-intro} and \eqref{eq:Phi-Z2-intro} respectively, we complete the proof of Theorem~\ref{t:large-T}.

\section{Transitional asymptotics for $\Psi(X,T;\mathbf{G})$}
\label{s:transitional}
Now we analyze $\Psi(X,T;\mathbf{G}(a,b))$ for large positive $(X,T)$ in the regime that $T\approx v_\mathrm{c}X^{\frac{3}{2}}$.  Due to Proposition~\ref{prop:a-b-scaling} we will use normalized parameters \eqref{eq:normalized-ab} $\mathfrak{a},\mathfrak{b}$ and write $\Psi(X,T;\mathbf{G}(a,b))=\ee^{-\ii\arg(ab)}\Psi(X,T;\mathbf{G}(\mathfrak{a},\mathfrak{b}))$.  Therefore, in the terminology of Theorem~\ref{t:large-X}, the parameter $v:=TX^{-\frac{3}{2}}$ should be allowed to increase into a neighborhood of $v=v_\mathrm{c}:=54^{-\frac{1}{2}}$.  Assuming that for some fixed $\varepsilon>0$ arbitrarily small we have $\tau:=\mathfrak{b}/\mathfrak{a}<\varepsilon^{-1}$, we will here adapt the analysis from Section~\ref{s:large-X} to allow for $v\approx v_\mathrm{c}$.  

As $v\uparrow v_\mathrm{c}$, a third real simple critical point of $z\mapsto\vartheta(z;v)$ collides with $z_1(v)$ to form a double critical point at $z=z_\mathrm{c}=-\sqrt{6}$, while the simple critical point at $z_2(v)$ persists with the limiting value of $z_2(v_\mathrm{c})=(\frac{3}{2})^{\frac{1}{2}}$.  Following \cite[Section 4.3]{BilmanLM2020}, we begin by introducing a Schwarz-symmetric conformal mapping $z\mapsto \transconf(z;v)$ 
on a neighborhood of $z=z_\mathrm{c}$ by means of the equation 
\begin{equation}
2\vartheta(z;v)=\transconf^3+\transr\transconf-\transs
\end{equation}
where $\transr=\transr(v)$ and $\transs=\transs(v)$ 
are real-analytic functions of $v\approx v_\mathrm{c}$ determined such that the two critical points of the left-hand side near $z=z_\mathrm{c}$ correspond to the two critical points of the cubic on the right-hand side.  These functions satisfy 
\begin{equation}
\transr_\mathrm{c}:=\transr(v_\mathrm{c})=0,\quad \transr_\mathrm{c}':=\transr'(v_\mathrm{c})=4\cdot 6^{\frac{1}{2}}9^{\frac{1}{3}},\quad \transs_\mathrm{c}:=\transs(v_\mathrm{c})=2\cdot 6^{\frac{1}{2}},\quad \transs'_\mathrm{c}:=\transs'(v_\mathrm{c})=-12.
\label{eq:Transitional-XT-critical-values}
\end{equation}
The conformal map $z\mapsto \transconf(z;v)$ is locally a dilation and reflection through $z_\mathrm{c}$; indeed $\transconf_\mathrm{c}':=\transconf'(z_\mathrm{c};v_\mathrm{c})=-9^{-\frac{1}{3}}<0$.  We denote by $z_*(v)$ the preimage under $z\mapsto\transconf(z;v)$ of $\transconf=0$ for general $v\approx v_\mathrm{c}$; it is an analytic function of $v$ with $z_*(v_\mathrm{c})=z_\mathrm{c}$.
\begin{figure}[h]
\includegraphics[width=0.45\textwidth]{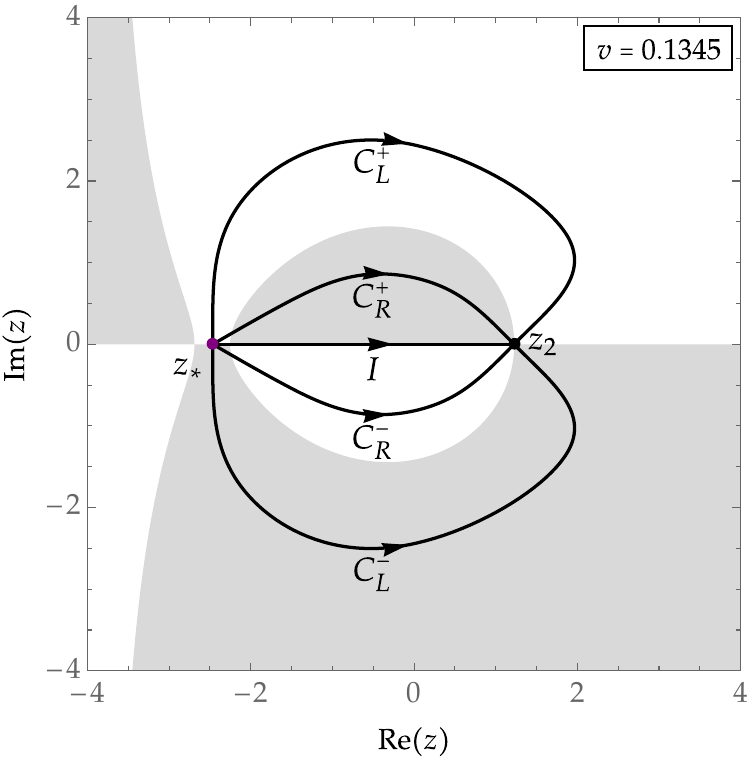}
\includegraphics[width=0.45\textwidth]{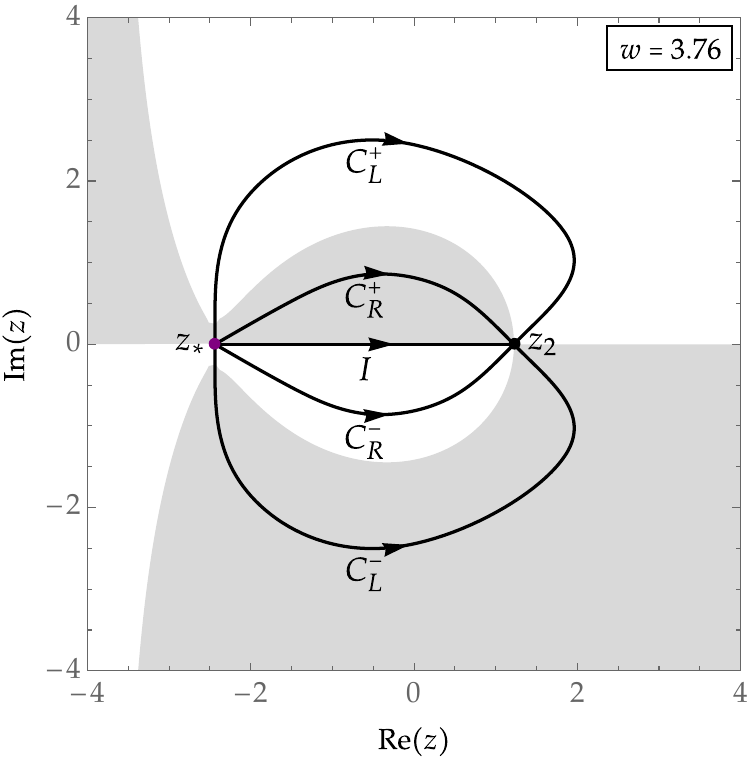}
\caption{Left: the jump contours in the $z$-plane when $v=0.1345$ near $v_{\mathrm{c}}$ using the points $z_2(v)$ and $z_{*}(v)$ overlayed with the regions where $\Im(\vartheta(z;v))$ has a definite sign. Right: the jump contours in the $z$-plane when $w=3.76$ near $w_{\mathrm{c}}$ using the points $z_2(v)$ and $z_{*}(v)$ overlayed with the regions where $\Im(h(Z;w))$ has a definite sign. This is plotted in the $z$-plane using the relation $z=Z/v^{\frac{1}{3}}$ and the points $z_2(v)$ and $z_{*}(v)$ are found using the relation $v=w^{-\frac{3}{2}}$.}
\label{fig:Painleve-contour}
\end{figure}

Next, we modify the outer parametrix defined in Section~\ref{s:large-X-outer-parametrix} simply by replacing the point $z_1(v)$ with $z_*(v)$ 
\begin{equation}
\dot{\mathbf{T}}^\mathrm{out}(z)=\dot{\mathbf{T}}^\mathrm{out}(z;v):=\left(\frac{z-z_*(v)}{z-z_2(v)}\right)^{\ii p\sigma_3},\quad p:=\frac{1}{2\pi}\ln(1+\tau^2)>0,\quad z\in \mathbb{C}\setminus [z_*(v),z_2(v)].
\end{equation}
The definition of an inner parametrix near the simple critical point $z=z_2(v)$ is given by \eqref{T-in-z2}, \eqref{T-out-near-z2}, and \eqref{T-z2-def} with only one small alteration:  the factor $\mathbf{H}^{z_2}(z;v)$ holomorphic near $z=z_2(v)$ is modified from its definition in \eqref{T-out-near-z2} only by replacing $z_1(v)$ with $z_*(v)$.  On the other hand, we will need an inner parametrix near $z=z_\mathrm{c}$ that is no longer constructed from parabolic cylinder functions at all.

We now explain how to construct such an inner parametrix.  The exact jump conditions satisfied by $\mathbf{U}^{\mathrm{c}}:=\mathbf{T}\ee^{\ii X^{1/2}\transs(v)\sigma_3/2}(\ii\sigma_2)$ near $z=z_\mathrm{c}$ can be expressed in terms of the conformal coordinate via $\zeta:=X^{\frac{1}{6}}\transconf$ and the rescaled parameter $y:=X^{\frac{1}{3}}\transr(v)$.  Locally we take the jump contours to coincide with the rays $\arg(\zeta)=\pm\frac{1}{2}\pi$, $\arg(\zeta)=\pm\frac{5}{6}\pi$, and $\arg(-\zeta)=0$, and then the jump conditions are as shown in Figure~\ref{fig:PIItritronquee}.
\begin{figure}[h]
\includegraphics[width=0.5\textwidth]{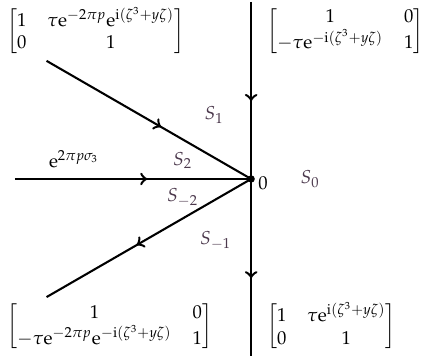}
\caption{The jump contours and jump matrices near $z=z_*(v)$ take the form shown here when expressed in terms of the rescaled conformal coordinate $\zeta$ for which $\zeta=0$ is the image of $z=z_*(v)$.  Compare with \cite[Figure 3]{Miller2018}.}
\label{fig:PIItritronquee}
\end{figure}
Now let $\mathbf{U}^\mathrm{TT}(\zeta;y,\tau)$ denote the solution of \cite[Riemann-Hilbert Problem 2.1]{Miller2018} (denoted by $\mathbf{W}(\zeta;y)$ in that reference).  By a simple generalization of the argument given in \cite[Section 5]{Miller2018} to arbitrary $\tau>0$, it exists for all $y\in\mathbb{R}$ and $\tau>0$.  It has the following properties.
\begin{itemize}
\item $\mathbf{U}^\mathrm{TT}(\zeta;y,\tau)$ is analytic in the five sectors shown in Figure~\ref{fig:PIItritronquee}, which are $S_0:  |\arg(\zeta)|<\frac{1}{2}\pi$, $S_1:  \frac{1}{2}\pi<\arg(\zeta)<\frac{5}{6}\pi$, $S_{-1}:  -\frac{5}{6}\pi<\arg(\zeta)<-\frac{1}{2}\pi$, $S_2:  \frac{5}{6}\pi<\arg(\zeta)<\pi$, and $S_{-2}:  -\pi<\arg(\zeta)<-\frac{5}{6}\pi$.
\item $\mathbf{U}^\mathrm{TT}(\zeta;y,\tau)$ takes continuous boundary values on the excluded rays and at the origin from each of the five sectors, which are related by the jump condition $\mathbf{U}^\mathrm{TT}_+(\zeta;y,\tau)=\mathbf{U}^\mathrm{TT}_-(\zeta;y,\tau)\mathbf{V}^\mathrm{TT}(\zeta;y,\tau)$, where the jump contours and the jump matrix $\mathbf{V}^\mathrm{TT}(\zeta;y,\tau)$ are given in Figure~\ref{fig:PIItritronquee}.
\item The matrix function $\mathbf{U}^\mathrm{TT}(\zeta;y,\tau)\zeta^{\ii p\sigma_3}$ has a complete asymptotic expansion in descending integer powers of $\zeta$ as $\zeta\to\infty$, with coefficients that are independent of the sector in which $\zeta\to\infty$.  In particular,
\begin{equation}
\mathbf{U}^\mathrm{TT}(\zeta;y,\tau)\zeta^{\ii p\sigma_3}=\mathbb{I} + \mathbf{U}^1(y,\tau)\zeta^{-1}+O(\zeta^{-2}),\quad\zeta\to\infty
\label{eq:Transitional-XT-UTT-expansion}
\end{equation}
holds uniformly for bounded $\tau>0$.
For each $\tau>0$, the function 
\begin{equation}
\mathcal{V}(y;\tau):=U_{21}^1(y,\tau)
\label{eq:Transitional-XT-V}
\end{equation}
is analytic for all $y\in\mathbb{R}$ and has no real zeros or critical points.
\end{itemize}
The reason for the notation ``TT'' is that $\mathcal{V}(y;\tau)$ is connected with an 
increasing tritronqu\'ee solution of the Painlev\'e-II equation \eqref{eq:PII} as explained in the introduction.  
The analogue of \eqref{T-out-near-z1} in this case is
\begin{equation}
\begin{aligned}
\dot{\mathbf{T}}^{\mathrm{out}}(z;v)\ee^{\ii X^{1/2}\transs(v)\sigma_3/2}(\ii\sigma_2)&=X^{-\ii p\sigma_3/6}\ee^{\ii X^{1/2}\transs(v)\sigma_3/2}\mathbf{H}^{\mathrm{c}}(z;v)\zeta^{-\ii p\sigma_3},\\
\mathbf{H}^\mathrm{c}(z;v)&:=(z_2(v)-z)^{-\ii p\sigma_3}\left(\frac{z_*(v)-z}{\transconf(z;v)}\right)^{\ii p\sigma_3}(\ii\sigma_2),
\end{aligned}
\label{eq:Transitional-XT-Hc}
\end{equation}
in which $\zeta=X^{\frac{1}{6}}\transconf(z;v)$.
The function $z\mapsto\mathbf{H}^\mathrm{c}(z;v)$ is analytic for $z$ near $z_\mathrm{c}$, and we use it together with $\mathbf{U}^\mathrm{TT}(\zeta;y,\tau)$ to build an inner parametrix by setting for $z\in D_{z_\mathrm{c}}(\delta)$,
\begin{equation}
\dot{\mathbf{T}}^\mathrm{c}(z;X,v):=X^{-\ii p\sigma_3/6}\ee^{\ii X^{1/2}\transs(v)\sigma_3/2}\mathbf{H}^\mathrm{c}(z;v)\mathbf{U}^{\mathrm{TT}}(X^{\frac{1}{6}}\transconf(z;v);X^{\frac{1}{3}}\transr(v))(-\ii\sigma_2)\ee^{-\ii X^{1/2}\transs(v)\sigma_3/2}.
\end{equation}
This is the correct analogue of \eqref{T-z1-def} in the situation that $v\approx v_\mathrm{c}$.

We now define a global parametrix $\dot{\mathbf{T}}(z;X,v)$ for $\mathbf{T}(z;X,v)$ by direct analogy with \eqref{eq:large-X-global-parametrix}:
\begin{equation}
\dot{\mathbf{T}}(z;X,v):=\begin{cases}
\dot{\mathbf{T}}^{\mathrm{c}}(z;X,v),&\quad z\in D_{z_\mathrm{c}}(\delta),\\
\dot{\mathbf{T}}^{z_2}(z;X,v),&\quad z\in D_{z_2}(\delta),\\
\dot{\mathbf{T}}^{\mathrm{out}}(z;v),&\quad z \in \mathbb{C} \setminus \left([z_*(v),z_2(v)] \cup D_{z_\mathrm{c}}(\delta) \cup D_{z_2}(\delta)\right).
\end{cases}
\label{eq:Transitional-XT-global-parametrix}
\end{equation}
Defining the error as $\mathbf{F}(z;X,v):=\mathbf{T}(z;X,v)\dot{\mathbf{T}}(z;X,v)^{-1}$ wherever both factors make sense, we see that $z\mapsto\mathbf{F}(z;X,v)$ is analytic on the complement of a bounded jump contour that is a version of the contour $\Sigma_\mathbf{F}$ defined in Section~\ref{s:asymptotics-X}, and at each non-self-intersection point $z\in\Sigma_\mathbf{F}$ there is a well-defined jump matrix $\mathbf{V}^\mathbf{F}(z;X,v)$ such that $\mathbf{F}_+(z;X,v)=\mathbf{F}_-(z;X,v)\mathbf{V}^\mathbf{F}(z;X,v)$.  The arguments of Section~\ref{s:asymptotics-X} then imply that 
\begin{equation}
\sup_{z\in\Sigma_\mathbf{F}\setminus\partial D_{z_\mathrm{c}}(\delta)}\|\mathbf{V}^\mathbf{F}(z;X,v)-\mathbb{I}\|=O_\varepsilon(X^{-\frac{1}{4}}),\quad X\to+\infty
\label{eq:Transitional-XT-error-away}
\end{equation}
holds uniformly for $v\approx v_\mathrm{c}$ and $\tau=\mathfrak{b}/\mathfrak{a}<\varepsilon^{-1}$.  However, using $\mathbf{V}^\mathbf{F}(z;X,v):=\dot{\mathbf{T}}^\mathrm{c}(z;X,v)\dot{\mathbf{T}}^\mathrm{out}(z;v)^{-1}$ for $z\in\partial D_{z_\mathrm{c}}(\delta)$ then shows that if $v=v_\mathrm{c}+O(X^{-\frac{1}{3}})$ so that $y$ is bounded, the sharp estimate $\mathbf{V}^\mathbf{F}(z;X,v)-\mathbb{I}=O_\varepsilon(X^{-\frac{1}{6}})$ holds uniformly for $z\in\partial D_{z_\mathrm{c}}(\delta)$, which is small but which dominates all other contributions to $\mathbf{V}^\mathbf{F}-\mathbb{I}$.  Applying the small-norm theory, it follows that the analogue of \eqref{E-minus-estimate} in the present situation is that $\mathbf{F}_-(\diamond;X,v)-\mathbb{I}=O_\varepsilon(X^{-\frac{1}{6}})$ holds in the $L^2(\Sigma_\mathbf{F})$ sense.  Using this and the fact that $\mathbf{V}^\mathbf{F}(\diamond;X,v)-\mathbb{I}=O_\varepsilon(X^{-\frac{1}{6}})$ also holds in the same topology, by Cauchy-Schwarz, the exact formula \eqref{Psi-large-X-exact} implies that the analogue of \eqref{Psi-large-X-approx-2} in this situation is that
\begin{equation}
\Psi(X,X^\frac{3}{2}v)=-\frac{\ee^{-\ii\arg(ab)}}{\pi X^\frac{1}{2}}\int_{\Sigma_\mathbf{F}}V_{12}^\mathbf{F}(z;X,v)\,\dd z + O(X^{-\frac{5}{6}}),\quad X\to+\infty,\quad v=v_\mathrm{c}+O_\varepsilon(X^{-\frac{1}{3}}).
\label{eq:Psi-trans-SigmaF}
\end{equation}
Since the integrand is uniformly exponentially small unless $z\in\partial D_{z_\mathrm{c}}(\delta)\cup\partial D_{z_2}(\delta)$ we can simplify further:
\begin{multline}
\Psi(X,X^\frac{3}{2}v)=-\frac{\ee^{-\ii\arg(ab)}}{\pi X^\frac{1}{2}}\int_{\partial D_{z_\mathrm{c}}(\delta)\cup\partial D_{z_2}(\delta)}V_{12}^\mathbf{F}(z;X,v)\,\dd z + O_\varepsilon(X^{-\frac{5}{6}}),\\ X\to+\infty,\quad v=v_\mathrm{c}+O(X^{-\frac{1}{3}}).
\label{eq:Psi-trans-two-circles}
\end{multline}

As in Section~\ref{s:asymptotics-X}, we then compute the integral over $\partial D_{z_2}(\delta)$ in \eqref{eq:Psi-trans-two-circles}, which we identify up to an error of order $O_\varepsilon(X^{-\frac{5}{4}})$ as the second term on the right-hand side of \eqref{Psi-large-X-approx-4} in which $H^{z_2}_{11}(z_2(v);v)$ is modified from its definition in \eqref{T-out-near-z2} just replacing $z_1(v)$ with $z_*(v)$.  In other words,
\begin{multline}
-\frac{\ee^{-\ii\arg(ab)}}{\pi X^\frac{1}{2}}\int_{\partial D_{z_2(\delta)}}V_{12}^\mathbf{F}(w;X,v)\,\dd w\\
{}=
\frac{\ee^{-\ii\arg(ab)}}{X^\frac{3}{4}}X^{\frac{1}{2}\ii p}\ee^{-2\ii X^{1/2}\vartheta(z_2(v);v)}
\frac{\sqrt{2p}\ee^{\ii(\frac{1}{4}\pi+p\ln(2)-\arg(\Gamma(\ii p)))}}{\sqrt{\vartheta''(z_2(v);v)}}(z_2(v)-z_*(v))^{2\ii p}\vartheta''(z_2(v);v)^{\ii p}
\\
{}+O_\varepsilon(X^{-\frac{5}{4}}).
\end{multline}
By Taylor expansion, $v=v_\mathrm{c}+O(X^{-\frac{1}{3}})$ implies
\begin{equation}
\begin{split}
\vartheta(z_2(v);v)&=\sqrt{\frac{75}{8}}+\frac{3}{2}(v-v_\mathrm{c}) + O(X^{-\frac{2}{3}})\\
\vartheta''(z_2(v);v)&= \sqrt{6}+O(X^{-\frac{1}{3}})\\
z_2(v)-z_*(v)&= \sqrt{\frac{27}{2}} + O(X^{-\frac{1}{3}}),
\end{split}
\end{equation}
so it follows that 
\begin{multline}
-\frac{\ee^{-\ii\arg(ab)}}{\pi X^\frac{1}{2}}\int_{\partial D_{z_2}(\delta)}V_{12}^\mathbf{F}(w;X,v)\,\dd w =
2^\frac{1}{4}3^{-\frac{1}{4}}p^\frac{1}{2}X^{-\frac{3}{4}}
%\frac{\sqrt{2p}}{6^\frac{1}{4}X^{\frac{3}{4}}}
\ee^{\ii\Omega_{2}(X,v)} + O_\varepsilon(X^{-\frac{11}{12}}),\\ X\to+\infty,\quad v=v_\mathrm{c}+O(X^{-\frac{1}{3}}),
\label{eq:2nd-term-trans}
\end{multline}
wherein the phase $\Omega_2(X,v)$ is defined by \eqref{eq:trans-phase-2}.

Next, we compute the integral over $\partial D_{z_\mathrm{c}}(\delta)$ in \eqref{eq:Psi-trans-two-circles}.  Using the expansion \eqref{eq:Transitional-XT-UTT-expansion} and the representation \eqref{eq:Transitional-XT-Hc} of the outer parametrix within $D_{z_\mathrm{c}}(\delta)$, we obtain that as $X\to+\infty$,
\begin{equation}
\begin{split}
V^\mathbf{F}_{12}(z;X,v)&=
\frac{ X^{-\frac{1}{3}\ii p}\ee^{\ii X^{1/2}\transs(v)}}{X^{\frac{1}{6}}\transconf(z;v)}\left[\mathbf{H}^\mathrm{c}(z;v)\mathbf{U}^1(X^{\frac{1}{3}}\transr(v),\tau)\mathbf{H}^\mathrm{c}(z;v)^{-1}\right]_{12} + O_\varepsilon(X^{-\frac{1}{3}})\\
&=-\frac{X^{-\frac{1}{3}\ii p}\ee^{\ii X^{1/2}\transs(v)}U^1_{21}(X^{\frac{1}{3}}\transr(v),\tau)}{X^{\frac{1}{6}}\transconf(z;v)}(z_2(v)-z)^{-2\ii p}\left(\frac{z_*(v)-z}{\transconf(z;v)}\right)^{2\ii p} + O_\varepsilon(X^{-\frac{1}{3}})\\
&=-\frac{X^{-\frac{1}{3}\ii p}\ee^{\ii X^{1/2}\transs(v)}\mathcal{V}(X^{\frac{1}{3}}\transr(v);\tau)}{X^{\frac{1}{6}}\transconf(z;v)}(z_2(v)-z)^{-2\ii p}\left(\frac{z_*(v)-z}{\transconf(z;v)}\right)^{2\ii p} + O_\varepsilon(X^{-\frac{1}{3}}).
\end{split}
\end{equation}
Therefore,
\begin{multline}
-\frac{\ee^{-\ii\arg(ab)}}{\pi X^\frac{1}{2}}\int_{\partial D_{z_\mathrm{c}}(\delta)}V_{12}^\mathbf{F}(w;X,v)\,\dd w\\=\frac{\ee^{-\ii\arg(ab)}X^{-\frac{1}{3}\ii p}\ee^{\ii X^{1/2}\transs(v)}\mathcal{V}(X^{\frac{1}{3}}\transr(v);\tau)}{\pi X^{\frac{2}{3}}}\int_{\partial D_{z_\mathrm{c}}(\delta)}
(z_2(v)-w)^{-2\ii p}\left(\frac{z_*(v)-w}{\transconf(w;v)}\right)^{2\ii p}\frac{\dd w}{\transconf(w;v)}\\{} + O_\varepsilon(X^{-\frac{5}{6}}),\quad X\to+\infty.
\label{eq:Psi-trans-half}
\end{multline}
We evaluate the integral by residues, using the fact that $w\mapsto \transconf(w;v)$ has a simple zero at $w=z_*(v)\in D_{z_\mathrm{c}}(\delta)$ and the first two factors in the integrand are analytic at $w=z_*(v)$ with value
\begin{equation}
\left.(z_2(v)-w)^{-2\ii p}\left(\frac{z_*(v)-w}{\transconf(w;v)}\right)^{2\ii p}\right|_{w=z_*(v)} = (z_2(v)-z_*(v))^{-2\ii p}(-\transconf'(z_*(v);v))^{-2\ii p}.
\end{equation}
Further expanding the result in $v$ about $v=v_\mathrm{c}$ using \eqref{eq:Transitional-XT-critical-values}, $\transconf'(z_\mathrm{c};v_\mathrm{c})=-9^{-\frac{1}{3}}$,  $z_2(v_\mathrm{c})=(\frac{3}{2})^{\frac{1}{2}}$, and the assumption that $v-v_\mathrm{c}=O(X^{-\frac{1}{3}})$, we produce no error terms larger than already present in \eqref{eq:Psi-trans-half} and hence
\begin{multline}
-\frac{\ee^{-\ii\arg(ab)}}{\pi X^\frac{1}{2}}\int_{\partial D_{z_\mathrm{c}}(\delta)}V_{12}^\mathbf{F}(w;X,v)\,\dd w=2\cdot 3^{\frac{2}{3}}X^{-\frac{2}{3}}\mathcal{V}(2^{\frac{5}{2}}3^{\frac{7}{6}}X^{\frac{1}{3}}(v-v_\mathrm{c});\tau)\ee^{\ii\Omega_\mathrm{c}(X,v)} + O_\varepsilon(X^{-\frac{5}{6}}),\\ X\to+\infty,\quad v=v_\mathrm{c}+O(X^{-\frac{1}{3}}),
\label{eq:Psi-trans-1}
\end{multline}
where the phase $\Omega_\mathrm{c}(X,v)$ is as defined in \eqref{eq:trans-phase-c}.

Using \eqref{eq:2nd-term-trans} and \eqref{eq:Psi-trans-1} in \eqref{eq:Psi-trans-two-circles}, we obtain
\eqref{eq:Psi-transitional-final}.

\section{The \texttt{RogueWaveInfiniteNLS.jl} software package for \texttt{Julia}}
\label{s:Numerics}
In this section we introduce the software package \texttt{RogueWaveInfiniteNLS.jl} \cite{RogueWaveInfiniteNLS} for \texttt{Julia} that is developed as part of this work to accurately compute $\Psi(X,T;\mathbf{G},\bg)$ at virtually any given point in the $(X,T)$-plane for given $\mathbf{G}$ and $\bg>0$. 
The method for computing $\Psi(X,T;\mathbf{G},\bg)$ is based on numerically solving a suitably regularized Riemann-Hilbert problem that is selected depending on the location of the point $(X,T)$ in $\mathbb{R}^2$.  Basically,
\begin{itemize}
\item
if $X$ is deemed to be sufficiently large and $|T|<v_\mathrm{c}|X|^\frac{3}{2}$, then we numerically solve for $\mathbf{T}$ described in Section~\ref{s:large-X};
\item
if $T$ is deemed to be sufficiently large and $|X|<w_\mathrm{c}|T|^\frac{2}{3}$, then we numerically solve for the different matrix $\mathbf{T}$ described in Section~\ref{s:large-T};
\item
if either $X$ or $T$ is deemed to be sufficiently large and $|T|\approx v_\mathrm{c}|X|^\frac{3}{2}$ or equivalently $|X|\approx w_\mathrm{c}|T|^\frac{2}{3}$, then we numerically solve for $\mathbf{T}$ defined as in Section~\ref{s:large-X} but with more suitable contour choices as described in Section~\ref{s:transitional};
\item
otherwise, we numerically solve a version of Riemann-Hilbert Problem~\ref{rhp:near-field} for $\mathbf{P}$.
\end{itemize}
Since the selected Riemann-Hilbert problem 
has then been appropriately deformed (via employing noncommutatitve steepest descent techniques) to be suitable for asymptotic analysis in one of the asymptotic regimes considered in this work, it is also good for computation, modulo some details due to numerical considerations. 
This final Riemann-Hilbert problem is solved using the routines available in the \texttt{OperatorApproximation.jl} package, see \cite{OperatorApproximation}.  \texttt{OperatorApproximation.jl} is a framework for approximating functions and operators, and for solving equations involving such objects.

Numerical solution of a Riemann-Hilbert problem posed on a suitable oriented contour $\Gamma$ (may be open, closed, or unbounded) for an unknown $\mathbf{\Phi}(z)\in\mathbb{C}^{2 \times 2}$ satisfying a jump condition
\begin{equation}
\mathbf{\Phi}_+(z) = \mathbf{\Phi}_-(z)\mathbf{V}(z),\qquad z\in\Gamma,
\label{basic-rhp-jump}
\end{equation}
and normalized to satisfy $\mathbf{\Phi}(z) \to \mathbb{I}$ as $z\to\infty$, essentially involves seeking a solution of the form
\begin{equation}
\mathbf{\Phi}(z) =\mathbb{I} +\mathcal{C}_{\Gamma,W}[\mathbf{F}](z),\qquad \mathcal{C}_{\Gamma,W}[\mathbf{F}](z):=\frac{1}{2\pi \ii}\int_{\Gamma}\frac{\mathbf{F}(s)W(s)\dd s}{s-z},
\end{equation}
where $W$ is a suitably chosen weight (possibly different on each arc of $\Gamma$) and rephrasing \eqref{basic-rhp-jump} as a singular integral equation for the unknown density $\mathbf{F}(z)$ in the form
\begin{equation}
\mathcal{C}^+_{\Gamma,W}[\mathbf{F}](z) - \mathcal{C}^-_{\Gamma,W}[\mathbf{F}](z)\mathbf{V}(z) = \mathbf{V}(z)-\mathbb{I},\quad z\in \Gamma.
\label{sie}
\end{equation}
This singular integral equation is discretized via collocation separately on each arc of $\Gamma$ using a basis of suitable  polynomials  orthogonal with respect to the weight $W$. In practice, a basis of orthogonal polynomials on the unit interval $[-1,1]$ with positive weight $W$ is mapped to
each arc of $\Gamma$. 
The linear system resulting from the employed collocation is solved for the coefficients of $\mathbf{F}_{\pm}(z)$ for $z\in\Gamma$. All of this machinery is readily implemented and available in a black-box manner in \texttt{OperatorApproximation.jl} \cite{OperatorApproximation}, and \cite{RogueWaveInfiniteNLS} uses those capabilities to numerically solve various Riemann-Hilbert problem representations of $\Psi(X,T;\mathbf{G},\bg)$.
The theoretical framework behind the computational approach described above is due to S.\@ Olver and T.\@ Trogdon, see \cite{TrogdonO2015} (and also \cite{Olver2011,Olver2012}) and the references therein.
An in-depth description and analysis of the accuracy of the numerical method employed can be found in \cite{OlverT2014} and \cite[Chapters 2 and 7]{TrogdonO2015}.

\subsection{Basic use of the software package{} \texttt{RogueWaveInfiniteNLS.jl}}
The installation of the software package to the user's \texttt{Julia} environment follows the standard package installation:
\begin{lstlisting}
[julia> using Pkg
[julia> Pkg.add("RogueWaveInfiniteNLS")
\end{lstlisting}
Then at the \texttt{Julia} prompt one can activate the package via
\begin{lstlisting}
[julia> using RogueWaveInfiniteNLS
\end{lstlisting}
and to compute $\Psi(X,T;\mathbf{G}(a,b),\bg)$ at $X=-1.8$, $T=0.6$ with $\mathbf{G}=\mathbf{G}(a=2-3\ii, b=1+0.5\ii)$ and $\bg=1.2$, one just calls:
\begin{lstlisting}
[julia> psi(-1.8, 0.6, 2-3im, 1+0.5im, 1.2)
-0.283757397147 - 0.8166685877725581im
\end{lstlisting}
The syntax for using the main routine \texttt{psi} is \texttt{psi(X, T, a, b, B)}.  See also the user's guide in Appendix~\ref{a:julia-summary}.

\subsection{Details of the implementation and the regions of the $(X,T)$-plane}
The routines we develop to compute $\Psi(X,T; \mathbf{G}(a,b), \bg)$ make use of the elementary symmetry properties given in Section~\ref{s:introduction}. Thanks to Proposition~\ref{p:scaling}, Proposition~\ref{prop:X-symmetry}, and Proposition~\ref{prop:T-symmetry}, 
to compute the value of $\Psi(X,T; \mathbf{G}(a,b), \bg)$ at a given point $(X,T)\in\mathbb{R}^2$ for given $\bg>0$ and $\mathbf{G}=\mathbf{G}(a,b)$, it suffices to compute $\Psi(\widetilde{X},\widetilde{T}; \widetilde{\mathbf{G}}, \bg=1)$ for 
\begin{equation}
(\widetilde{X},\widetilde{T}) = (\bg |X|, \bg^2 |T|)
\end{equation} 
and
\begin{equation}
\widetilde{\mathbf{G}}:= \begin{cases}
\mathbf{G}(a,b),&\quad\text{if}~X\geq 0~\text{and}~T\geq 0\\
\mathbf{G}(b,a),&\quad\text{if}~X< 0~\text{and}~T\geq 0\\
\mathbf{G}(a,b)^*,&\quad\text{if}~X \geq 0~\text{and}~T<0\\
\mathbf{G}(b,a)^*,&\quad\text{if}~X < 0~\text{and}~T<0.
\end{cases}
\end{equation}

The method underlying the routine \texttt{psi} for computing $\Psi(X,T;\mathbf{G}(a,b),\bg)$ consists of the following steps.
\begin{itemize}
\item[\textbf{Step 1:}]  Choose a computationally-appropriate Riemann-Hilbert problem to solve based on the location of the point $(X,T)$ using Algorithm~\ref{alg:regions} (see below). Thus the $(X,T)$-plane is written as the disjoint union of four regions denoted \texttt{NoDeformation}, \texttt{LargeX}, \texttt{LargeT}, and \texttt{Painleve}.
\item[\textbf{Step 2:}] Construct data structures representing the jump contours and jump matrices of the selected Riemann-Hilbert problem.  
%Implement the Riemann-Hilbert problem that is well-conditioned asymptotically in the region chosen.
Because of the choice made in Step 1, the jump matrix differs little from the identity 
%for large values of $X$ or $T$ thanks to the steepest descent deformations in place, away from the 
except on certain arcs where it takes a (piecewise) constant value and near self-intersection points of the jump contour.  Depending on details of the problem, small circles centered at the self-intersection points may be added to the jump contour at this stage in order to remove singularities, and the radii of these circles is chosen so that for the given coordinates $(X,T)$ the jump matrices supported on the circles remain bounded in norm.
\item[\textbf{Step 3:}] Solve the relevant Riemann-Hilbert problem numerically by passing the data structures built in Step 2 to suitable routines in the package \texttt{OperatorApproximation.jl}.  The quantity 
\begin{equation}
P_{12}^{[1]}(\widetilde{X},\widetilde{T},\widetilde{\mathbf{G}}):= \lim_{\Lambda\to\infty} 2\ii \Lambda P_{12}(\Lambda;\widetilde{X},\widetilde{T},\widetilde{\mathbf{G}})
\end{equation}
is then extracted from the numerically computed solution by a contour integration of the returned weighted Cauchy density.  
\item[\textbf{Step 4:}] Recover $\Psi(X,T;\mathbf{G}(a,b),\bg)$ from $P_{12}^{[1]}(\widetilde{X},\widetilde{T},\widetilde{\mathbf{G}})$ using \eqref{Psi-def} and the symmetry relations in Proposition~\ref{p:scaling}, Proposition~\ref{prop:X-symmetry}, and Proposition~\ref{prop:T-symmetry}.
\end{itemize}
Since the computation of $\Psi(X,T;\mathbf{G},\bg)$ is based on the numerical solution of Riemann-Hilbert problems that depend on $(X,T)$ explicitly and parametrically, the computations for different pairs $(X,T)$ are independent and can be immediately parallelized over a large range of the coordinates. 
Some of the computations for this work were performed in parallel on the 48-core Pitzer nodes of the Ohio Supercomputer Center \cite{OhioSupercomputerCenter1987}. 
For instance, the solution shown in the plots in Figure~\ref{f:Psi-a1-b1}
% and Figure~\ref{f:Psi-elliptic} 
is computed over the domain $\{(X,T)\colon -16\leq X \leq 16,~ -8\leq T \leq 8\}$ with grid spacings $\texttt{dX}=\texttt{dT}=0.05$. Therefore, to obtain the data for these plots, 205,761 Riemann-Hilbert problems were solved in parallel on the supercomputer as $(X,T)$ ranges over the 205,761 points on the discretized domain (in batches, of course, due to memory limitations and the number of nodes available).
%(v>VCRIT) && (abs(v-VCRIT)>VDISTANCEPAINLEVE)
\begin{algorithm}
\caption{Asymptotic Region Algorithm}\label{alg:regions}
\KwData{$X\in\mathbb{R}$, $T\in\mathbb{R}$, $\bg>0$}
\KwResult{The computationally-appropriate region: \texttt{region}}
initialization\;
$\widetilde{X}\gets \bg|X|$; $\widetilde{T}\gets \bg^2 |T|$; $\widetilde{T}_{\mathrm{max}}\gets 8$; $R\gets 2$\;
$v\gets \widetilde{T}\widetilde{X}^{-\frac{3}{2}}$; $v_{\mathrm{c}}\gets 54^{-\frac{1}{2}}$; $w\gets\widetilde{X}\widetilde{T}^{-\frac{2}{3}}$; $w_{\mathrm{c}}\gets 54^{\frac{1}{3}}$; $\varepsilon_{v} \gets 0.00025$; $\varepsilon_{w} \gets 0.02*w_{\mathrm{c}}$\;
%\Comment{$v_{\mathrm{c}} - v < \varepsilon_v$ and $w_{\mathrm{c}} - w < \varepsilon_w$ capture $|z_{-\infty}-z_1|<0.125$ and $\Im(z_0)<0.125$, respectively}\;
%@everywhere VDISTANCEPAINLEVE = 0.0001
%vfXT(X,T) = T/X^(3/2)
%# These choices roughly guarantees:
%# abs(zintfy - z1) < zpert
%# imag(z0) < zpert
%# with zpert = 0.125
%Good else if
\uIf{$\widetilde{X}^2 + \widetilde{T}^2 \leq R^2$}{
$\texttt{region} \gets \texttt{NoDeformation}$\;
\Return \texttt{region}\;
}
\uElseIf{$\widetilde{T}\leq \widetilde{T}_{\mathrm{max}}$}{
\uIf{$v>v_\mathrm{c}$ \textbf{\bf\rm and} $|v-v_{\mathrm{c}}|>\varepsilon_v$}{$\texttt{region}\gets \texttt{NoDeformation}$\;}
\uElseIf{$v<v_\mathrm{c}$ {\bf\rm and} $|v-v_{\mathrm{c}}|>\varepsilon_v$}{$\texttt{region}\gets \texttt{LargeX}$\;}
\uElse{$\texttt{region}\gets \texttt{Painleve}$\;}
\Return \texttt{region}\;
}
\Else{
\uIf{$v>v_\mathrm{c}$ {\bf\rm and} $|w-w_{\mathrm{c}}|>\varepsilon_w$}{$\texttt{region}\gets \texttt{LargeT}$\;}
\uElseIf{$v<v_\mathrm{c}$ {\bf\rm and} $|v-v_{\mathrm{c}}|>\varepsilon_v$}{$\texttt{region}\gets \texttt{LargeX}$\;}
\uElse{$\texttt{region}\gets \texttt{Painleve}$\;}
\Return \texttt{region}\;
}
\end{algorithm}

The routine \texttt{psi} calls Algorithm~\ref{alg:regions}, and based on the region determined to contain $(X,T)$, calls one of the programs \texttt{psi\_undeformed}, \texttt{psi\_largeX}, \texttt{psi\_largeT}, or \texttt{psi\_Painleve} to perform Steps 2--4.  These programs are in turn ``wrappers'' for corresponding lower-level programs \texttt{rwio\_undeformed}, \texttt{rwio\_largeX}, \texttt{rwio\_largeT} and \texttt{rwio\_Painleve}.  We will describe the syntax of these programs and give some details about the choices of contours and jump matrices needed to build the data structures in Step 2 below.  Some users of \texttt{RogueWaveInfiniteNLS.jl} may like to use these routines directly to compute $\Psi(X,T;\mathbf{G},\bg)$ by using a specific Riemann-Hilbert representation as they bypass Algorithm~\ref{alg:regions} and just compute $\Psi(X,T;\mathbf{G},\bg)$ from the indicated problem, whether or not that is a good idea given the values of $(X,T)$.  However we wish to emphasize that the casual user of the package need not be concerned with any of these programs and can reliably compute general rogue waves of infinite order in most of the $(X,T)$-plane just by using the main routine \texttt{psi}.

It is our intention that \texttt{psi} return an accurate numerical evaluation of $\Psi(X,T;\mathbf{G},\bg)$ for coordinates $(X,T)$ lying in a very large, but bounded region of the $(X,T)$-plane.  Indeed, the main utility of the routines is to allow the reliable computation of $\Psi(X,T;\mathbf{G},\bg)$ for values of $(X,T)$ that are definitely not in the regime of applicability of any of the theorems in Section~\ref{s:asymptotics-intro}.  However, we also want the region of accurate computability to be large enough to allow for substantial overlap with the various regions of validity of those theorems.
This allows one to validate the analytical results as shown in Figures~\ref{fig:LargeX-a1-b1}, \ref{fig:LargeX-a0p5EIPiOver4-b1}, \ref{fig:LargeT-a1-b1}, \ref{fig:LargeT-a0p5EIPiOver4-b1}, \ref{fig:TransitionalComparison}, and \ref{fig:TransitionalLogLogError}. 

In principle the analytical asymptotics make numerical calculations unnecessary for extreme values of the variables.  Nonetheless, it is of some interest to push the envelope of applicability of the numerical methods beyond the limits of the current version of the software, and here we point out that as the variables become larger, even a Riemann-Hilbert problem adapted to asymptotic analysis in the relevant regime becomes challenging to solve numerically.  The reason is that the deviation of the jump matrix from a piecewise-constant matrix becomes increasingly concentrated near the contour self-intersection points, and the rapid variation requires an increasingly large number of collocation points as the variables grow in size.  There are strategies for dealing with this phenomenon. The first technique is to remove the non-identity limiting jump matrices using the analytical outer parametrix.  This yields a modified Riemann-Hilbert problem with jump matrices that are different from the identity only in small neighborhoods of the self-intersection points.  One may think that an optimal approach would be to then deal with the self-intersection points using analytical local/inner parametrices constructed from special functions (e.g., Airy, Bessel, or parabolic cylinder), and then reducing the problem to a small-norm problem --- exactly as in the proofs of the theorems ---  for the computer to solve.  However, just calculating the jump matrices for the small-norm problem accurately requires reliable evaluation of the relevant special functions for extreme values of the arguments, which is again a computational problem of a similar nature.  An alternative is to construct a numerical parametrix for a given intersection point by truncating jump contours away from it, imposing identity asymptotics at infinity, and rescaling to obtain a model requiring fewer collocation points for accuracy.  Then one conjugates the full problem by the parametrix, which removes all difficulties near the selected point and conjugates the jumps near the remaining points by near-identity factors.  Iterating this procedure to take care of the intersection points one-by-one can yield excellent results.  The actual procedure has additional technical details, but this is the main idea.  We plan to continue to update the software in \texttt{RogueWaveInfiniteNLS.jl} as such improvements come to light, with the aim of making the accurate computation of $\Psi(X,T;\mathbf{G},\bg)$ available in increasingly larger domains of the $(X,T)$-plane.

In the remainder of this section we redefine $v$ (defined in Section~\ref{s:large-X-results}) and $w$ (defined in Section~\ref{s:large-T-results}) in terms of the rescaled and reflected coordinates $(\widetilde{X},\widetilde{T})$:
\begin{equation}
v:= \widetilde{T} \widetilde{X}^{-\frac{3}{2}},\qquad w:= \widetilde{X}\widetilde{T}^{-\frac{2}{3}}.
\label{eq:vw-rescaled}
\end{equation}
We also recall the critical values of $v$ and $w$: $v_{\mathrm{c}}:=54^{-\frac{1}{2}}$ and $w_{\mathrm{c}}:=54^{\frac{1}{3}}$.


\begin{remark}[Scaling of arguments]
Like the main program \texttt{psi}, the programs \texttt{psi\_undeformed}, \texttt{psi\_largeX}, \texttt{psi\_largeT}, and \texttt{psi\_Painleve} take the unscaled variables $(X,T)\in\mathbb{R}^2$ as arguments, along with the value of $\bg$.  However, the lower-level programs assume that $\bg=1$ and take the rescaled coordinates $(\widetilde{X},\widetilde{T})$ as arguments (both nonnegative).  To simplify the notation in describing the latter routines below, we will drop the tildes.  This also makes it easier for the reader to match with the notation in the rest of the paper where the Riemann-Hilbert problems that are solved numerically by these routines are formulated in terms of variables denoted $(X,T)$ and the derived quantities $v$ and $w$ given by \eqref{eq:vw-rescaled}.  Of course if $B=1$ and $X,T>0$, there is no difference between the scaled and unscaled coordinates.
%
%As described in Algorithm~\ref{alg:regions}, given $\bg>0$ with $\bg\neq 1$, we introduce the rescaled variables $\widetilde{X}:= \bg|X|$ and $\widetilde{T}:= \bg^2 |T|$ which are both nonnegative and work with the deformed Riemann-Hilbert problems analyzed in the paper for $\bg=1$. In the material following this remark we perform
%\begin{equation}
%X\gets \bg|X| \quad\text{and}\quad T \gets \bg^2 |T|.
%\end{equation}
%In other words, we denote the rescaled coordinates $(\widetilde{X},\widetilde{T})$ by $(X,T)$ so that  references to the Riemann-Hilbert problems from Sections~\ref{s:introduction}, \ref{s:large-X}, \ref{s:large-T}, and \ref{s:transitional} (with $B=1$) is more clear.
\end{remark}

\subsection{The region \texttt{NoDeformation}} If  $(X,T)$ lies in the region \texttt{NoDeformation} according to Algorithm~\ref{alg:regions}, then \texttt{psi} calls the wrapper \texttt{psi\_undeformed} which in turn calls the low-level program \texttt{rwio\_undeformed}.  The latter program solves numerically the basic \rhref{rhp:near-field} which does not leverage any deformation or opening of lenses.  Although the jump contour is stated to be $|\Lambda|=1$ in \eqref{P-jump}, one can actually take any Jordan curve enclosing the origin $\Lambda=0$ to be the jump contour.  \texttt{rwio\_undeformed} leverages this freedom and uses the circle $|\Lambda|=T^{-\frac{1}{2}}$ as the jump contour if $1< T\leq T_{\mathrm{max}}$ and uses the original contour $|\Lambda|=1$ if $0 \leq T \leq 1$. In practice we take $T_{\mathrm{max}}:=8$ (see Algorithm~\ref{alg:regions}). With this choice the jump contour stays away from the singularity of the exponential factors in \eqref{P-jump} at $\Lambda=0$ (with a distance at least $1/\sqrt{8}$) while, under the conditions that $(X,T)$ is assigned to the region \texttt{NoDeformation} by Algorithm~\ref{alg:regions}, the matrix norm of the jump matrix is uniformly of moderate size on the jump contour. 
%We employ this strategy in the case $X^2 + T^2 \leq R$, where we take $R=2$ in practice, or the case $T \leq  T_{\mathrm{max}} = 8$ and $v>v_{\mathrm{c}}$, see Algorithm~\ref{alg:regions}. The latter conveniently delays the use of the more complicated Riemann-Hilbert problem considered in the \texttt{LargeT} region (see Section~\ref{s:num-large-T}). 
%
The low-level routine for computing $\Psi(X,T;\mathbf{G},\bg=1)$ via solving the undeformed problem \rhref{rhp:near-field} with the given parameters and for $X\geq 0$ and $T\geq 0$ is \texttt{rwio{\_}undeformed} with arguments $X$, $T$ (rescaled and nonnegative), $a$, $b$, and an integer $n$, the number of collocation points to use on each straight-line arc of the polygonal jump contour.  For instance, the command
%\begin{lstlisting}
%[julia> rwio_nodeformation_rescaled(0.8, 1.5, 1, 2im, 800)
%\end{lstlisting}
\begin{lstlisting}
[julia> rwio_undeformed(0.8, 1.5, 1, 2im, 400)
\end{lstlisting}
returns $\Psi(X,T;\mathbf{G},\bg)$ at $X=0.8$, $T=1.5$, with $\mathbf{G}=\mathbf{G}(a=1,b=2\ii)$ and $\bg=1$.  The original circular jump contour is modeled as a square, each side of which is resolved using 400 collocation points.  
%It overrides Algorithm~\ref{alg:regions} and just computes $\Psi(X,T;\mathbf{G},1)$ as indicated, whether or not that is a good idea given the values of $(X,T)$.
The corresponding wrapper
\texttt{psi{\_}undeformed} takes the same arguments as does \texttt{psi} except for an additional argument allowing the user to specify the number of collocation points on each of the four sides of the square jump contour.  These are the unscaled coordinates, which can take any signs, and the value of $\bg$ is specified in the argument list.  Thus for instance
\begin{lstlisting}
[julia> psi_undeformed(-0.8,1.5,1,2im,1.2,400)
\end{lstlisting}
returns $\Psi(X,T;\mathbf{G},\bg)$ at $X=-0.8$, $T=1.5$, $\mathbf{G}=\mathbf{G}(a=1,b=2\ii)$ and $\bg=1.2$ using $400$ collocation points on each edge of the square.
%with the arguments \texttt{psi{\_}undeformed(X, T, a, b, B, n)}
%to compute for $(X,T)$ outside the first quadrant using $\texttt{n}$ collocation points on each subarc (segment) of the contour.
%We emphasize that end user need not solve this Riemann-Hilbert problem specifically,  and we are just giving the description of an available routine here. The choice of the appropriate Riemann-Hilbert problem to compute $\Psi(X,T;\mathbf{G},\bg)$ is done automatically (in a black-box manner) when the main routine \texttt{psi(X, T, a, b, B)} is called depending on the value of $(X,T)$.


\subsection{The region \texttt{LargeX}} 
\label{s:num-large-X}
%This region corresponds to the case where $X^2+T^2\geq R$ and $0\leq v < v_{\mathrm{c}}$.
If Algorithm~\ref{alg:regions} determines that $(X,T)$ lies in the region \texttt{LargeX}, then \texttt{psi} calls the wrapper \texttt{psi\_largeX} which calls the low-level program \texttt{rwio\_largeX}.  
This low-level program solves the Riemann-Hilbert problem satisfied by $\mathbf{T}(z;X,v)$ defined by the substitutions \eqref{L-plus-sub}--\eqref{L-minus-sub} with jump conditions given by \eqref{jump-C-L-plus-X}--\eqref{jump-C-L-minus-X}. See Section~\ref{s:large-X} and in particular Figure~\ref{fig:largeX-contour} for the jump contour of this Riemann-Hilbert problem.
This is exactly what was done in our first paper \cite{BilmanLM2020} where $\Psi(X,T;\mathbf{G},\bg=1)$ was computed for the first time for $\mathbf{G}=\mathbf{Q}^{-1}$ for $T$ small: $|T|\leq 1$. The numerical routine \texttt{rwio\_largeX} implemented in the package \texttt{RogueWaveInfiniteNLS.jl}  is very similar, the main difference being the use of the open-source \texttt{Julia} programming language.

As described in Section~\ref{s:large-X}, the jump contour for the deformed problem is independent of $X$ but depends on $v\in[0,v_{\mathrm{c}})$. \texttt{rwio\_largeX} adaptively chooses a polygonal model for the jump contour that varies as $v$ ranges over this interval. This variation becomes especially important as $v$ approaches $v_\mathrm{c}\approx 0.136$. See Figure~\ref{f:num-contours-X} for the numerical jump contours used by \texttt{rwio\_largeX} for different values of $v$.

\begin{figure}[h]
\includegraphics[width=0.31\textwidth]{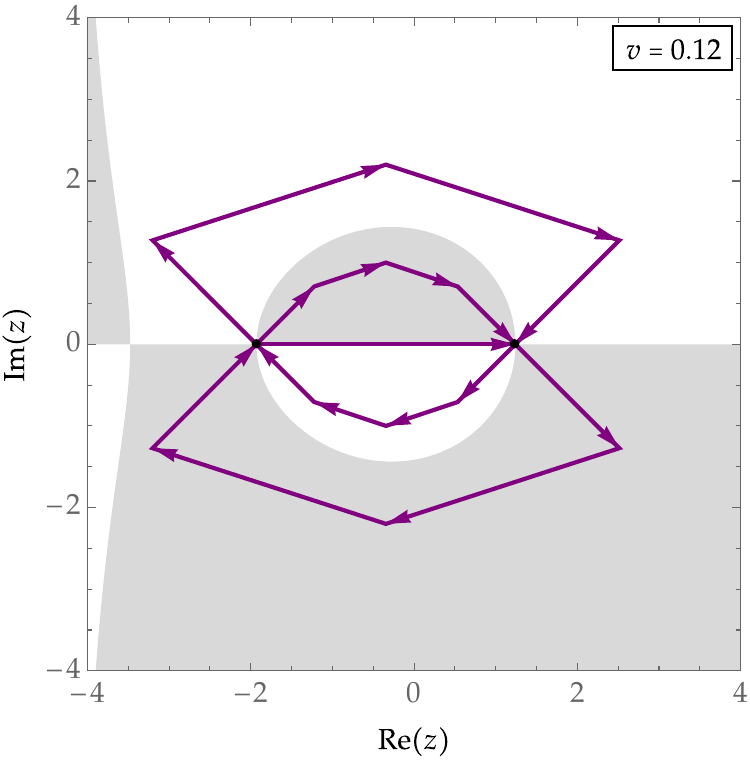}
\includegraphics[width=0.31\textwidth]{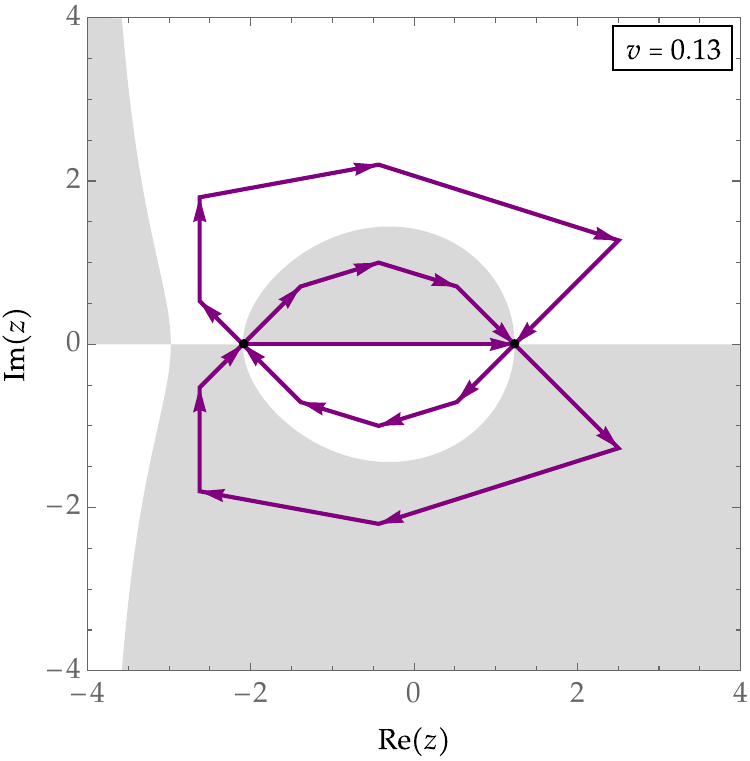}
\includegraphics[width=0.31\textwidth]{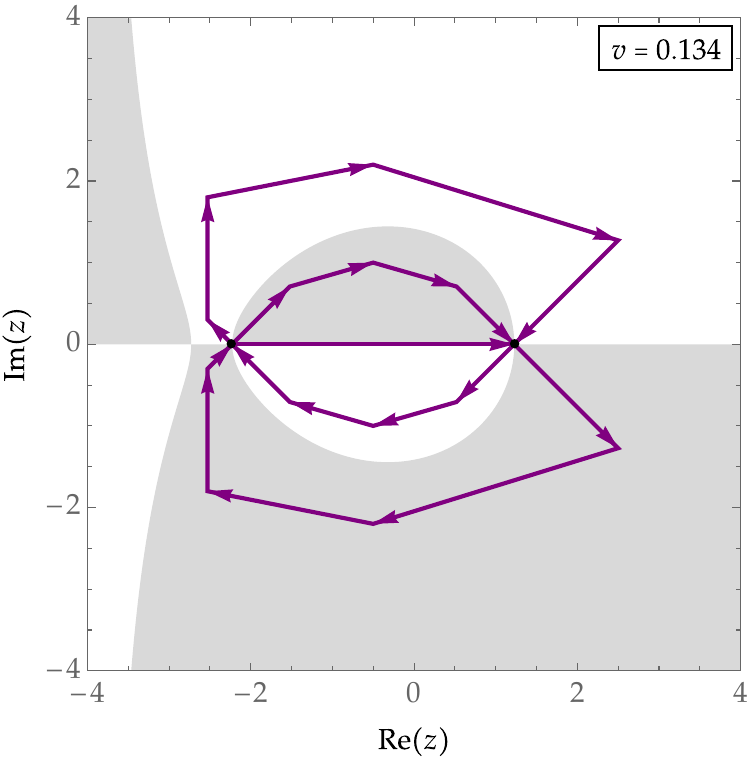}
\caption{The numerical contours used by \texttt{rwio\_largeX} for increasing values of $v\in[0,v_{\mathrm{c}})$.}
\label{f:num-contours-X}
\end{figure}

The arguments taken by \texttt{rwio\_largeX} are $X$ and $v$ (the natural parameters for $\mathbf{T}(z;X,v)$ as described in Section~\ref{s:large-X}), $a$, $b$, and $n$ (the number of collocation points per segment of the polygonal jump contour).  
%To compute $\Psi(X,T;\mathbf{G},\bg)$ by solving the deformed Riemann-Hilbert problem satisfied by $\mathbf{T}(z;X,v)$ (see the associated jump conditions \eqref{jump-C-L-plus-X}--\eqref{jump-C-L-minus-X}) with the given parameters for $X\geq 0$ and fixed $v\in[0,v_{\mathrm{c}})$ one can also use the routine
%\texttt{rwio{\_}largeX} of the package as:
%\begin{lstlisting}
%[julia> rwio_largeX(25, 0.1, 1, 2im, 280)
%\end{lstlisting}
For instance, 
\begin{lstlisting}
[julia> rwio_largeX(25, 0.1, 1, 2im, 140)
\end{lstlisting}
returns $\Psi(X,T;\mathbf{G},\bg=1)$ at $X=25$, $v=TX^{-\frac{3}{2}}=0.1$, with $\mathbf{G}=\mathbf{G}(a=1,b=2\ii)$ by using 
%. (and $\bg=1$ is the hard-coded default) by solving the deformed Riemann-Hilbert problem satisfied by $\mathbf{T}(z; X, v)$ using 
$140$ collocation points in each segment.
%subarc (segment) of the jump contour, which is depicted in Figure~\ref{f:num-contours-X}.  
The auxiliary routine \texttt{vfromXT(X,T)} included in \texttt{RogueWaveInfiniteNLS.jl} can be used to compute the value of $v$ given $(X,T)$ if needed.

In this case, the wrapper \texttt{psi{\_}largeX} takes different arguments, as it allows the user to directly compute $\Psi(X,T;\mathbf{G},\bg)$ at arbitrary given $(X,T)$ coordinates (unscaled, and in any quadrant of the plane) using the numerical approach underpinning \texttt{rwio\_largeX}.  The arguments of \texttt{psi\_largeX} are again the same as those of \texttt{psi} with an additional integer argument for specifying the number of points used on each contour segment.
Thus,
\begin{lstlisting}
[julia> psi_largeX(-25,-0.1,1,2im,1.2,140)
\end{lstlisting}
returns $\Psi(-25,-0.1,\mathbf{G}(1,2\ii),1.2)$ computed by scaling the variables by $\bg=1.2$,  computing $v$ from the scaled variables using \texttt{vfromXT}, then calling \texttt{rwio\_largeX} with 140 collocation points, and finally scaling the returned value by $\bg=1.2$.

%\textcolor{green}{[DB: Here is a new remark that clarifies possible questions of a numerically-inclined referee.]}
%
%\begin{remark}
%\label{rem:num-largeX}
%We are not claiming that our \emph{initial release} \cite{RogueWaveInfiniteNLS} would work for extremely large values of $X$ for given fixed $T$ or $v$.
%As X increases unboundedly, one eventually needs more collocation points in the implementation described above. This is likely due to the density of points being insufficient near the points $z=z_1$ and $z=z_2$, where the jump matrix (its perturbation from the identity) localizes. 
%A way to overcome this could be as follows: 
%One can truncate the four arcs $C^{\pm}_{R}$ and $C^{\pm}_{L}$ connecting the points $z_1(v)$ and $z_2(v)$ by eliminating the segments on which the jump matrices' discrepancy from the identity matrix equals $0$ up to machine precision (recall the exponential decay of the off-diagonal elements of the jump matrices on those arcs).
%Then, one can implement the outer parametrix and use it in the numerical routines to remove the constant jump matrix supported on $I$, which is also independent of the complex independent variable $z$. 
%Since the outer parametrix has singularities at $z=z_1(v)$ and $z=z_2(v)$, now one has to introduce little disks centered at those points and apply the outer parametrix in the exterior of those disks. 
%Following this, one can transfer the jump conditions supported on segments in the interior of these disks to the boundary of the disks, but then one has to shrink the radii of those little disks at the rate $|X|^{-\frac{1}{4}}$ as $X$ becomes large to ensure that the jump matrices on the disk boundaries remain $O(1)$. 
%This practice is common in the numerical inverse-scattering transform methods in the literature. 
%Doing so, one ends up with two sets of jump conditions that are supported on disjoint contours: one on the disk boundary at $z=z_1(v)$ along with the segments of the truncated contours attached to the circle, and the other in an analogous configuration at $z=z_2(v)$.
%At this point one faces the problem that the disks cannot be shrunk indefinitely as $X$ grows and inaccuracies may eventually be formed for extremely large values of $X$. 
%At that point, a strategy could be to consider the now-disconnected jump conditions separately, formulate and solve the Riemann-Hilbert problem consisting of the jump conditions near, say, $z=z_1(v)$, and use that computed solution as a numerical parametrix to reduce the Riemann-Hilbert problem to a problem consisting of solely the jump conditions near $z=z_2(v)$.
%The advantage of doing this could be that since each of the problems localized on the small circles, they can be shifted to the origin and rescaled appropriately since they are treated separately. One hopes that this would help combat the inaccuracies arising from shrinking of the disks for extremely large values of $X$.
%This procedure may be implemented in a future version of the software package \cite{RogueWaveInfiniteNLS}. 
%
%The above-mentioned ``truncate-shift-and-rescale'' strategy was implemented, for example, in the numerical inverse-scattering \cite{TrogdonO2013} for the NLS equation \eqref{nls} to solve the initial-value problem on the full line. In that setting there is only one critical point of the controlling phase in the Riemann-Hilbert problem and the whole problem localizes around that point, which can be shifted and rescaled (upon eliminating a jump on an infinite ray via implementing an outer parametrix and truncating the jump contour in the sense described above).
%\end{remark}

%In light of Remark~\ref{rem:num-largeX}, we recall our Theorem~\ref{t:large-X}. 
%The main utility we have in mind for the initial release of \texttt{RogueWaveInfiniteNLS.jl} is to be able to compute and plot general rogue waves of infinite order for moderate and moderately large values of $(X,T)$ where our analytical asymptotic results (such as Theorem~\ref{t:large-X}) may not give accurate information. 
%This is especially important for small and moderate values of $(X,T)$ since these solutions tend to zero in all directions in the $(X,T)$-plane and they are not small near the origin.
%Figure~\ref{fig:LargeX-a1-b1} and Figure~\ref{fig:LargeX-a0p5EIPiOver4-b1} also demonstrate that the accuracy of the numerical routines employed in the \texttt{largeX} region for moderately large values of $X$.
%
%\textcolor{green}{[------DB: End of new material.------]}

\subsection{The region \texttt{LargeT}}
\label{s:num-large-T}
When Algorithm~\ref{alg:regions} assigns $(X,T)$ to the region \texttt{LargeT}, the main program \texttt{psi} calls the wrapper \texttt{psi\_largeT} which in turn calls the low-level program \texttt{rwio\_largeT}.  The latter routine computes the solution by via the Riemann-Hilbert problem satisfied by the matrix $\mathbf{T}(\tz;T,w)$ described in Section~\ref{s:large-T}.  The numerical solution of the latter problem is substantially more complicated than is solving for $\mathbf{T}(z;X,v)$ and it was not 
considered in our earlier work in \cite{BilmanLM2020}.  Its implementation in \texttt{RogueWaveInfiniteNLS.jl} is a significant new contribution.

To describe the method used by \texttt{rwio\_largeT}, we first rewrite the jump conditions satisfied by $\mathbf{T}(\tz;T,w)$ by assigning new names to the constant matrices that appear in the jump conditions that will be convenient in describing certain local transformations later on. Thus the material starting with \eqref{T-from-S-L-plus-Gamma} is reformulated as follows:
\begin{equation}
\mathbf{T}_+(\tz;T,w) = \mathbf{T}_-(\tz;T,w) \ee^{-\ii T^{1/3}h(\tz;w)\sigma_3} \mathbf{V}_{L}\ee^{\ii T^{1/3}h(\tz;w)\sigma_3}
,\quad \mathbf{V}_{L}:= \begin{bmatrix}1 & 0 \\-\dfrac{\mathfrak{b}}{\mathfrak{a}} & 1 \end{bmatrix},\quad \tz\in C^+_{\Gamma,L},
\label{num-jump-T-Gamma-L}
\end{equation}
\begin{equation}
\mathbf{T}_+(\tz;T,w) = \mathbf{T}_-(\tz;T,w) \ee^{-\ii T^{1/3}h(\tz;w)\sigma_3} \mathbf{V}_{R}\ee^{\ii T^{1/3}h(\tz;w)\sigma_3},
\quad \mathbf{V}_{R}:=
\begin{bmatrix}1 &\mathfrak{a b} \\ 0 & 1 \end{bmatrix},\quad \tz\in  C^+_{\Gamma,R},
\end{equation}
\begin{equation}
\mathbf{T}_+(\tz;T,w) = \mathbf{T}_-(\tz;T,w)\mathfrak{a}^{-2\sigma_3},\quad \tz\in  I,
\label{num-jump-T-I}
\end{equation}
\begin{equation}
\mathbf{T}_+(\tz;T,w) = \mathbf{T}_-(\tz;T,w) \ee^{-\ii T^{1/3}h(\tz;w)\sigma_3} \mathbf{Y}_{R}\ee^{\ii T^{1/3}h(\tz;w)\sigma_3},
\quad \mathbf{Y}_{R}:=
\begin{bmatrix}1 & 0 \\ -\mathfrak{a b}  & 1 \end{bmatrix},\quad \tz\in  C^-_{\Gamma,R},
\end{equation}
\begin{equation}
\mathbf{T}_+(\tz;T,w) = \mathbf{T}_-(\tz;T,w)\ee^{-\ii T^{1/3}h(\tz;w)\sigma_3} \mathbf{Y}_{L}\ee^{\ii T^{1/3}h(\tz;w)\sigma_3},\quad
\mathbf{Y}_{L}:=\begin{bmatrix}1 &  \dfrac{\mathfrak{b}}{\mathfrak{a}} \\ 0 & 1 \end{bmatrix},\quad \tz\in  C^-_{\Gamma,L},
\end{equation}
\begin{equation}
\mathbf{T}_+(\tz;T,w) = \mathbf{T}_-(\tz;T,w)  \ee^{-\ii T^{1/3}h(\tz;w)\sigma_3} \mathbf{W}_{L}\ee^{\ii T^{1/3}h(\tz;w)\sigma_3},\quad
\mathbf{W}_{L} := \begin{bmatrix}1 & -\dfrac{\mathfrak{a}}{\mathfrak{b}} \\ 0 & 1 \end{bmatrix},\quad \tz\in  C^+_{\Sigma,L},
\end{equation}
\begin{equation}
\mathbf{T}_+(\tz;T,w) = \mathbf{T}_-(\tz;T,w)\ee^{-\ii T^{1/3}h(\tz;w)\sigma_3} \mathbf{W}_{R}\ee^{\ii T^{1/3}h(\tz;w)\sigma_3},\quad
\mathbf{W}_{R}:=\begin{bmatrix}1 & -\dfrac{\mathfrak{a}^3}{\mathfrak{b}}  \\ 0 & 1 \end{bmatrix},\quad \tz\in  C^+_{\Sigma,R},
\end{equation}
\begin{equation}
\mathbf{T}_+(\tz;T,w) = \mathbf{T}_-(\tz;T,w)\ee^{-\ii T^{1/3}h(\tz;w)\sigma_3} \mathbf{X}_{R}\ee^{\ii T^{1/3}h(\tz;w)\sigma_3},\quad
\mathbf{X}_{R}:=\begin{bmatrix}1 & 0 \\  \dfrac{\mathfrak{a}^3}{\mathfrak{b}} & 1 \end{bmatrix},\quad \tz\in  C^-_{\Sigma,R},
\end{equation}
\begin{equation}
\mathbf{T}_+(\tz;T,w) = \mathbf{T}_-(\tz;T,w)\ee^{-\ii T^{1/3}h(\tz;w)\sigma_3} \mathbf{X}_{R}\ee^{\ii T^{1/3}h(\tz;w)\sigma_3},\quad
\mathbf{X}_{L}:=
\begin{bmatrix}1 & 0 \\ \dfrac{\mathfrak{a}}{\mathfrak{b}}& 1 \end{bmatrix},\quad \tz\in  C^-_{\Sigma,L}.
\end{equation}
Finally, on $\Sigma=\Sigma^+\cup\Sigma^-$ we have
\begin{alignat}{2}
\mathbf{T}_+(\tz;T,w) &= \mathbf{T}_-(\tz;T,w)\ee^{-\ii T^{1/3}h_-(\tz;w)\sigma_3} \mathbf{W}\ee^{\ii T^{1/3}h_+(\tz;w)\sigma_3}, \quad
\mathbf{W}:=
\begin{bmatrix} 0 & \dfrac{\mathfrak{a}}{\mathfrak{b}} \\ - \dfrac{\mathfrak{b}}{\mathfrak{a}} & 0\end{bmatrix},&&\quad \tz\in\Sigma^+,
\label{num-jump-T-Sigma-p}\\
\mathbf{T}_+(\tz;T,w) &= \mathbf{T}_-(\tz;T,w) \ee^{-\ii T^{1/3}h_-(\tz;w)\sigma_3} \mathbf{X}\ee^{\ii T^{1/3}h_+(\tz;w)\sigma_3},\quad
\mathbf{X}:=\begin{bmatrix} 0 &\dfrac{\mathfrak{b}}{\mathfrak{a}} \\ - \dfrac{\mathfrak{a}}{\mathfrak{b}}  & 0\end{bmatrix},&&\quad \tz\in\Sigma^-.\label{num-jump-T-Sigma-m}
\end{alignat}
Note that the jump matrices on $\Sigma^+$ and $\Sigma^-$ are constants in $\tz$ since $h_+(\tz)+h_-(\tz) = \kappa(w)$ on those arcs. For numerical purposes, we implement $h(z;w)$ as follows. We first define $R^{N}(z;w)$ as the function (the numerical implementation of $R(z;w)$, hence the superscript) that has branch cuts on the line segments from $\tz_0^*$ to $\tz_1$ and from $\tz_1$ to $\tz_0$. It can be written in terms of principal branch square roots as
%Depending on the location of $\tz$ in the complex plane, there are various ways to implement this, but it can essentially be implemented as
\begin{equation}
R^{N}(\tz;w):=\left( \frac{\tz -\tz_1(w)}{\tz -\tz_0^*(w)}\right)^\frac{1}{2}\left( \frac{\tz -\tz_0(w)}{\tz -\tz_1(w)}\right)^\frac{1}{2}(\tz-\tz_0^*(w)),
\end{equation}
%for the principal branch of the power function.
however, this formulation is not stable near $\tz=\tz_1(w)$ or $\tz=\tz_0^*(w)$. In practice, we work with the following functions:
\begin{equation}
R^{N}_{\leftarrow}(\tz;w) := (\tz-\tz_0(w))^{\frac{1}{2}}(\tz-\tz_0^*(w))^{\frac{1}{2}},
\end{equation}
which has horizontal branch cuts from $\tz=\tz_0(w)$ to $\tz=(-\infty)+ \ii \Im(\tz_0)$ and from $\tz=\tz_0^*(w)$ to $\tz=(-\infty) - \ii \Im(\tz_0(w))$,
\begin{equation}
R^{N}_{\rightarrow}(\tz;w) := - (\tz_0(w)-\tz)^{\frac{1}{2}}(\tz_0^*(w) -  \tz)^{\frac{1}{2}},
\end{equation}
which has horizontal branch cuts from $\tz=\tz_0(w)$ to $\tz=(+\infty)+ \ii \Im(\tz_0(w))$ and from $\tz=\tz_0^*(w)$ to $\tz=(+\infty) - \ii \Im(\tz_0(w))$, and
\begin{equation}
R^{N}_{\uparrow}(\tz;w) := \left(\frac{\tz-\tz_0(w)}{\tz-\tz_0^*(w)}\right)^{\frac{1}{2}}(\tz-\tz_0^*(w)),
\end{equation}
which has a vertical branch cut from $\tz=\tz_0^*$ to $\tz=\tz_0$. Recalling that $\Sigma$ is oriented upward, $R^{N}_{\leftarrow}(\tz;w)$ is the continuation of $R^N_-(\tz;w)$ to the strip $-\Im(\tz_0)<\Im(\tz)<\Im(\tz_0)$ lying to the left-hand side of $\Sigma$ and $R^{N}_{\rightarrow}(\tz;w)$ is the continuation of $R^N_+(\tz;w)$ to the strip $-\Im(\tz_0)<\Im(\tz)<\Im(\tz_0)$ lying to the right-hand side of $\Sigma$. The values of the functions $R^{N}_{\leftarrow}(\tz;w)$ and $R^{N}_{\rightarrow}(\tz;w)$ match with the values of $R^N(\tz;w)$ directly above and below the relevant semi-infinite strips emanating from $\Sigma$.
Using these numerical versions of $R(\tz;w)$, we define corresponding numerical versions $h^{N}_{\leftarrow}(\tz;w)$, $h^{N}_{\rightarrow}(\tz;w)$, and $h^{N}_{\uparrow}(\tz;w)$ of $h(\tz;w)$ via the formula
\begin{equation}
h(\tz;w) = \frac{R(\tz;w)^3}{\tz} - 3\cdot2^{-\frac{1}{3}}-\frac{w^2}{6},
\end{equation}
as in \eqref{g-def-large-T}.
%\textcolor{green}{[Maybe include a figure describing on which branch is used on which contour.]}

Before we do anything, we locally collapse the jump conditions supported on $C_{\Gamma,R}^+$ and $C_{\Sigma,R}^+$ near $\tz=\tz_0$ to a single common arc $C^+_{\Gamma\Sigma,R}$ oriented towards $\tz_0$ by a local transformation and we still call the resulting unknown matrix $\mathbf{T}(\tz)=\mathbf{T}(\tz;T,w)$ due to the simplicity of the transformation. Since $C_{\Sigma,R}^+$ is oriented towards $\tz_0$ but $C_{\Gamma,R}^+$ is oriented away from $\tz_0$, it follows that the collapsed jump condition satisfied by the redefined $\mathbf{T}(\tz;T,w)$ is
\begin{equation}
\mathbf{T}_+(\tz;T,w) =  \mathbf{T}_-(\tz;T,w)   \ee^{-\ii T^{1/3}h(\tz;w)\sigma_3}\mathbf{C}_{\tz_0}  \ee^{\ii T^{1/3}h(\tz;w)\sigma_3}, \quad \mathbf{C}_{\tz_0}:=\mathbf{V}_R^{-1}\mathbf{W}_R, \quad \tz\in C^+_{\Gamma\Sigma,R}.
\end{equation}
%See Figure~\ref{f:num-z0-collapse}.
\begin{figure}[h]
\includegraphics[width=0.48\textwidth]{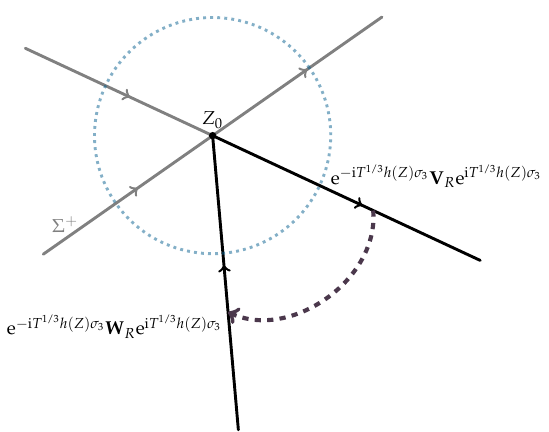}\,\,
\includegraphics[width=0.48\textwidth]{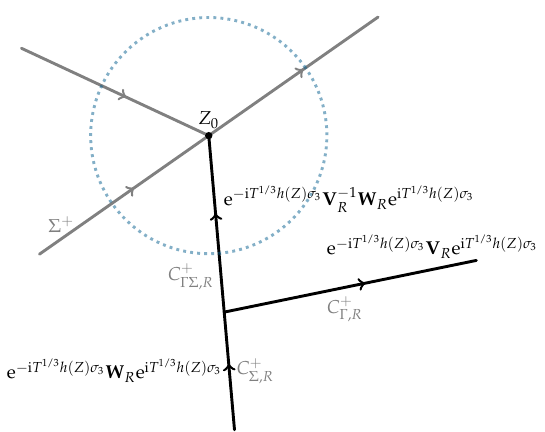}
\caption{Collapsing the jump conditions supported on $C_{\Gamma,R}^+$ and $C_{\Sigma,R}^+$ to a common arc near $\tz_0$.}
\label{f:num-z0-collapse}
\end{figure}
The analogue of this transformation is also carried out near $\tz=\tz_0^*$ by locally collapsing the jump conditions supported on $C_{\Gamma,R}^-$ and $C_{\Sigma,R}^-$ near $\tz=\tz_0^*$ to a single common arc $C^-_{\Gamma\Sigma,R}$ oriented away from $\tz_0^*$ by a similar transformation.
Since $C_{\Sigma,R}^-$ is oriented away from $\tz_0^*$ but $C_{\Gamma,R}^-$ is oriented towards $\tz_0^*$, it follows that the collapsed jump condition satisfied by the redefined $\mathbf{T}(\tz;T,w)$ is
\begin{equation}
\mathbf{T}_+(\tz;T,w) =  \mathbf{T}_-(\tz;T,w)   \ee^{-\ii T^{1/3}h(\tz;w)\sigma_3}\mathbf{C}_{\tz_0^*}  \ee^{\ii T^{1/3}h(\tz;w)\sigma_3}, \quad \mathbf{C}_{\tz_0^*}:=\mathbf{Y}_R^{-1}\mathbf{X}_R, \quad \tz\in C^-_{\Gamma\Sigma,R}.
\end{equation}
We place the collapsed contours $C^+_{\Gamma\Sigma,R}$ and $C^-_{\Gamma\Sigma,R}$ where $C^+_{\Sigma,R}$ and $C^-_{\Sigma,R}$ were placed near $\tz_0$ and $\tz_0^*$, respectively. See Figure~\ref{f:num-z0-collapse} for the arrangement of contours near $\tz_0$. We omit the figure for the configuration near $\tz_0^*$. 

We now let $D_{\tz_0}(\delta_0(T))$ and $D_{\tz_0^*}(\delta_0(T))$ denote disks centered at $\tz=\tz_0$ and $\tz=\tz_0^*$ with common radii $\delta_0(T)$, whose dependence on $T$ will be determined later. We introduce 
\begin{equation}
\mathbf{A}(\tz;T,w):=\begin{cases} 
\mathbf{T}(\tz;T,w) \ee^{-\ii T^{1/3}h(\tz;w)\sigma_3},&\quad\tz\in D_{\tz_0}(\delta_0(T)) \cup D_{\tz_0}(\delta_0(T)),\\
\mathbf{T}(\tz;T,w),&\quad\text{everywhere else}.
\end{cases}
\end{equation}
This transformation introduces jump conditions on the boundary of the disks, which we take to be clockwise oriented. It also modifies the jump matrices on the existing arcs of the jump contour for $\mathbf{T}(\tz;T,w)$. 
The jump matrices associated with the jump conditions satisfied by $\mathbf{A}(\tz;T,w)$ for $\tz$ near $\tz_0$ are illustrated in Figure~\ref{f:num-T-z0-pre} in blue color. See for Figure~\ref{f:num-T-z0conj-pre} for the analogous jump conditions for $\tz$ near $\tz_0^*$.
For the purposes of numerics, the disks are modeled by two polygons related by Schwarz reflection to preserve that of the original jump contour.

\begin{figure}[h!]
\includegraphics[width=0.5\textwidth]{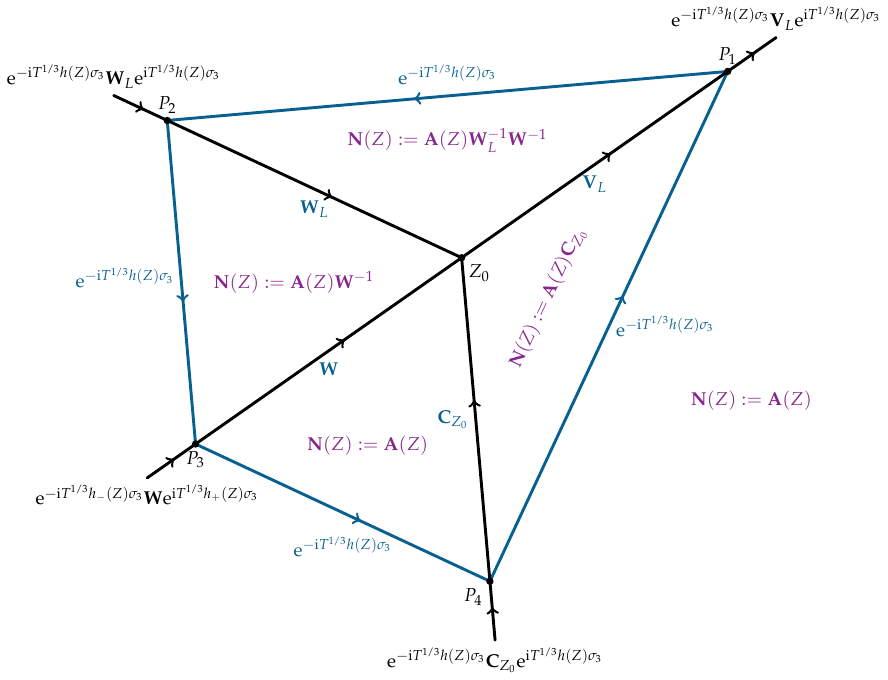}
\caption{The transformation $\mathbf{T}(\tz;T,w)\mapsto \mathbf{A}(\tz;T,w)$ augments the jump contour for $\mathbf{T}(\tz;T,w)$ (black segments) by the blue-colored segments. The jump matrices (modified or new) associated with $\mathbf{A}(\tz;T,w)$ are given in blue. The substitutions shown in fuchsia define the transformation $\mathbf{A}(\tz;T,w)\mapsto \mathbf{N}(\tz;T,w)$ inside the disk (modeled by a polygon in \texttt{rwio\_largeT}).}
\label{f:num-T-z0-pre}
\end{figure}

\begin{figure}[h!]
\includegraphics[width=0.5\textwidth]{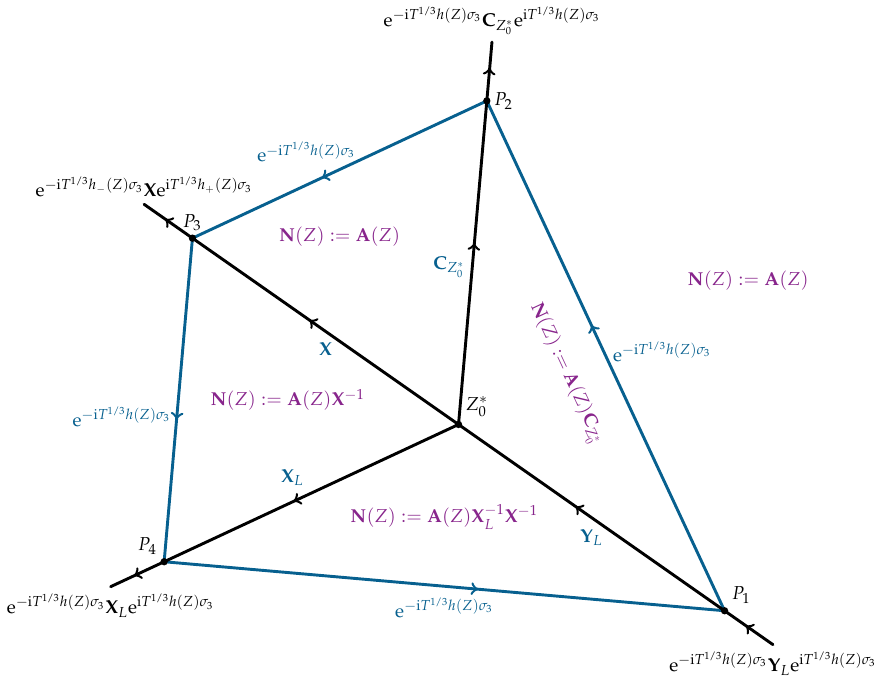}
\caption{As in Figure~\ref{f:num-T-z0-pre} but for the neighborhood of $\tz_0^*$.}
\label{f:num-T-z0conj-pre}
\end{figure}

We then make the local substitutions shown in fuchsia in Figures~\ref{f:num-T-z0-pre}--\ref{f:num-T-z0conj-pre} to transform $\mathbf{A}(\tz;T,w)$ to a new unknown $\mathbf{N}(\tz;T,w)$, and we define $\mathbf{N}(\tz;T,w):= \mathbf{A}(\tz;T,w)$ for $\tz$ outside the polygonal disks. The transformation $\mathbf{A}(\tz;T,w)\mapsto \mathbf{N}(\tz;T,w)$ results in the new unknown $\mathbf{N}(\tz;T,w)$ being analytic inside the disks.  The jump conditions satisfied by $\mathbf{N}(\tz;T,w)$ near $\tz=\tz_0$ and $\tz=\tz_0^*$ are described in Figure~\ref{f:num-T-z0-post}. 

\begin{figure}[h!]
\includegraphics[width=0.45\textwidth]{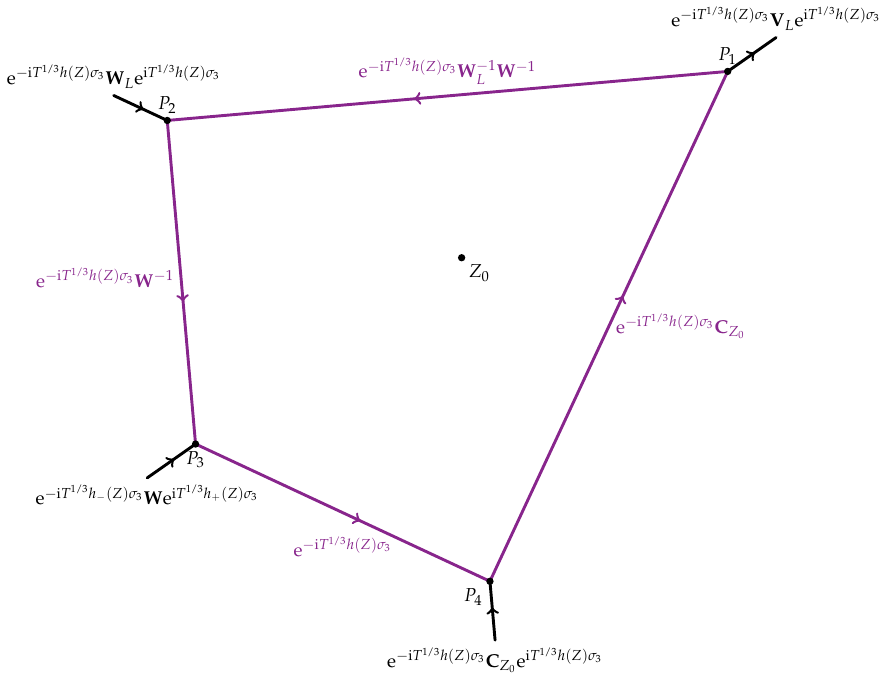}
\includegraphics[width=0.45\textwidth]{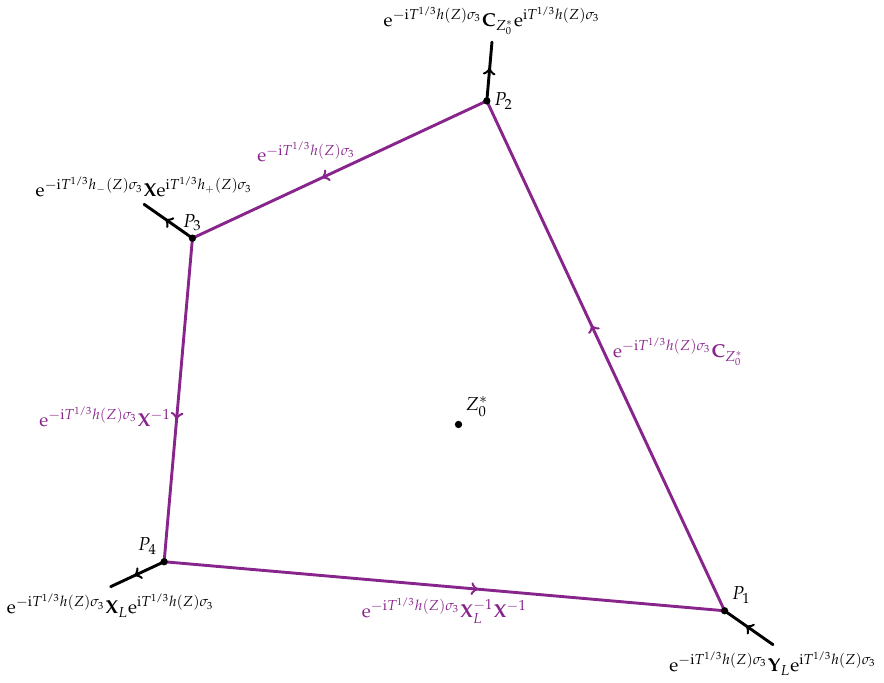}
\caption{The transformation $\mathbf{A}(\tz;T,w)\mapsto \mathbf{N}(z;T,w)$ removes the jump discontinuities inside the disk (polygon). The jump matrices (modified) associated with $\mathbf{N}(z;T,w)$ are given in fuchsia.}
\label{f:num-T-z0-post}
\end{figure}

Although the jump contour for the problem satisfied by $\mathbf{N}(\tz;T,w)$ (or by $\mathbf{T}(\tz;T,w)$) is independent of $T$, it depends on $w$. So the numerical jump contour is chosen adaptively according to the value of $w$. The final Riemann-Hilbert problem that is satisfied by $\mathbf{N}(\tz;T,w)$  has the numerical jump contours as shown in Figure~\ref{fig:num-contours-T}.
\begin{figure}[h]
\includegraphics[width=0.31\textwidth]{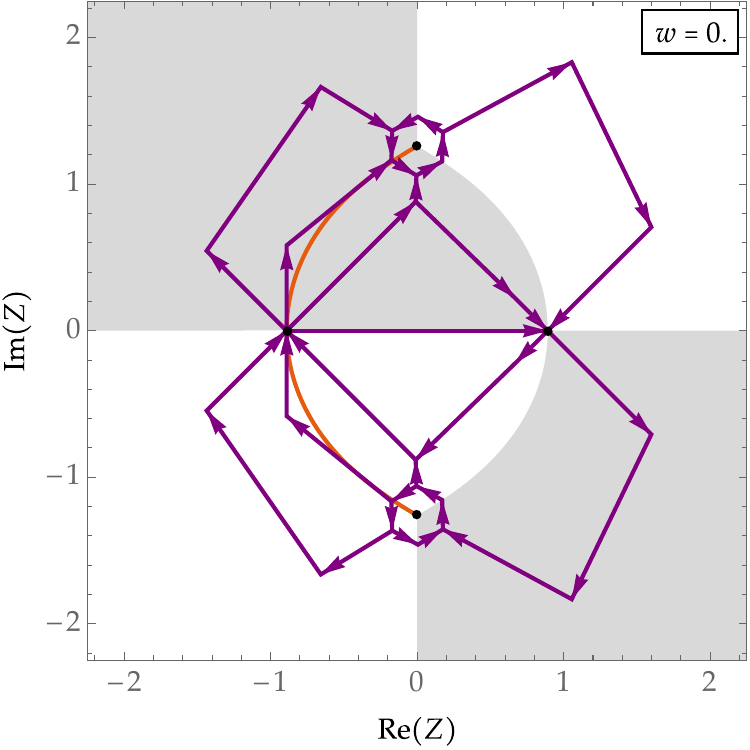}
\includegraphics[width=0.31\textwidth]{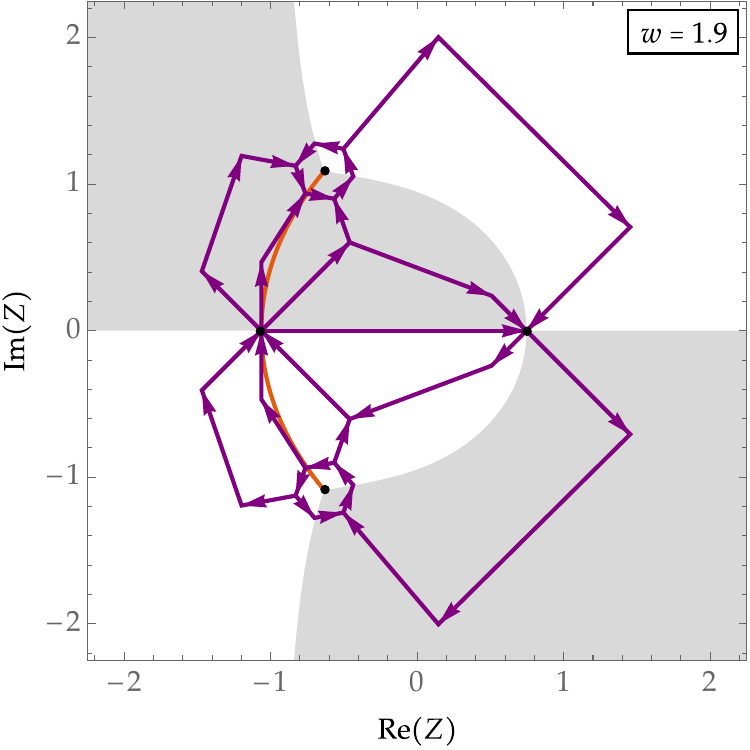}
\includegraphics[width=0.31\textwidth]{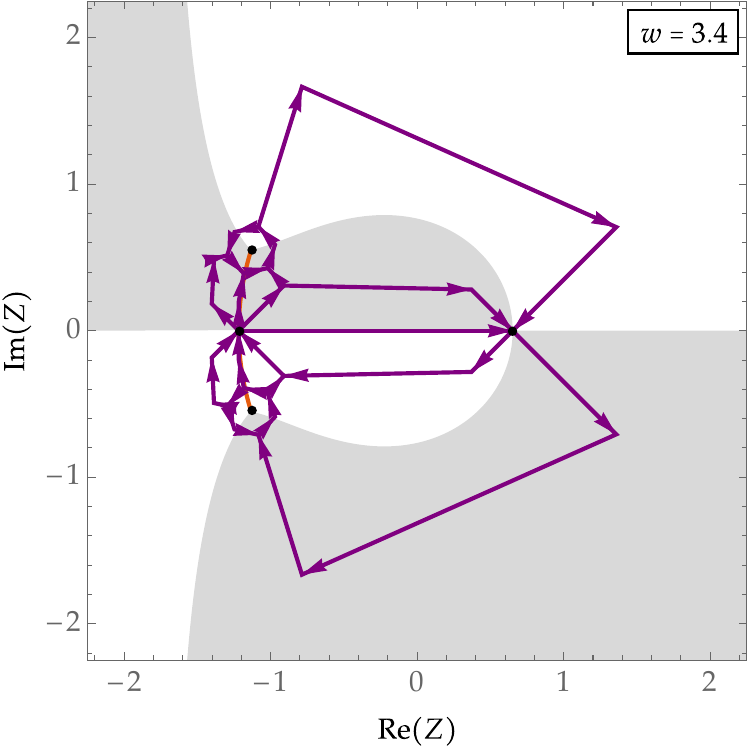}
\caption{The numerical contours used in the region \texttt{largeT} for increasing values of $w\in[0,w_{\mathrm{c}})$. The orange arc is $\Sigma$ (see Figure~\ref{fig:largeT-contour}) and it is included just for reference in the plots. $\Sigma$ is also modeled by line segments.}
\label{fig:num-contours-T}
\end{figure}
Except for those on the polygons modeling the disks centered at $\tz=\tz_0$ and $\tz=\tz_0^*$, the jump matrices for $\mathbf{N}(\tz;T,w)$ on the subarcs of the numerical contour shown in Figure~\ref{fig:num-contours-T} coincide with the jump matrices for $\mathbf{T}(\tz;T,w)$ as described in \eqref{num-jump-T-Gamma-L}--\eqref{num-jump-T-Sigma-m}. These jump matrices (except for the constant jump matrices supported on $I$ and $\Sigma^+\cup\Sigma^-$) are close to the identity away from the points $\tz=\tz_1$ and $\tz=\tz_2$ when Algorithm~\ref{alg:regions} assigns $(X,T)$ to the region \texttt{LargeT}.
The jump matrices for $\mathbf{N}(\tz;T,w)$ on the polygons centered at $\tz=\tz_0$ and $\tz=\tz_0^*$ as shown in Figure~\ref{f:num-T-z0-post} have elements that grow as $T$ increases. To combat this growth when $T$ is large, \texttt{rwio\_largeT} chooses the radii $\delta_0(T)$ to be smaller when $T$ is larger.
Indeed, noting from
\begin{equation}
h(\tz ; w)-h(\xi ; w)=O((\tz-\xi)^{3 / 2}), \quad \tz \rightarrow \xi,\quad\text{for}~ \xi=\tz_0(w), \tz_0(w)^*,
\end{equation}
that 
\begin{equation}
\ee^{\ii T^{1 / 3} h(\tz ; w) \sigma_3}=O(1), \quad T \rightarrow+\infty,
\end{equation}
if $|\tz-\xi| T^{\frac{2}{9}}=O(1)$, for $\xi=\tz_0(w), \tz_0(w)^*$ as $T \rightarrow+\infty$. Therefore, \texttt{rwio\_largeT} scales the common radius of the circles (polygons) centered at $z=\tz_0(w),\tz_0(w)^*$ as $|T|^{-\frac{2}{9}}$.
With these choices and the numerical implementation of $h(\tz;w)$ discussed earlier to compute the jump matrices, \texttt{rwio\_largeT} solves for $\mathbf{N}(\tz;T,w)$ using the routines in \texttt{OperatorApproximation.jl}. %This is the Riemann-Hilbert problem we solve numerically.

The low-level routine \texttt{rwio\_largeT} takes as arguments $T$, $w$, $a$, $b$, and an integer specifying the number of collocation points per polygonal segment of the jump contour.  The use of $T$ and $w$ appears because these are natural coordinates for the description of the jump conditions, however $w=XT^{-\frac{2}{3}}$ is an explicit function of $(X,T)$ and the routine \texttt{wfromXT(X,T)} provided with \texttt{RogueWaveInfiniteNLS.jl} could be used to find $w$ from given $(X,T)$ if needed.  For instance, 
%
%If one would like to compute $\Psi(X,T;\mathbf{G},\bg=1)$ by specifically solving this Riemann-Hilbert problem, one could call, for example:
%\begin{lstlisting}
%[julia> rwio_largeT(40, 1.9, 1, 2im, 300)
%\end{lstlisting}
\begin{lstlisting}
[julia> rwio_largeT(40, 1.9, 1, 2im, 150)
\end{lstlisting}
computes $\Psi(X,T;\mathbf{G},\bg=1)$ at $T=40$ , $w=XT^{-\frac{2}{3}}=1.9$, with $\mathbf{G}=\mathbf{G}(a=1,b=2\ii)$, with 150 collocation points on each segment of the polygonal numerical jump contour.  The corresponding wrapper \texttt{psi\_largeT} has the same arguments as $\texttt{psi}$ except for an additional argument it passes directly to \texttt{rwio\_largeT} to determine the number of collocation points.  Thus
\begin{lstlisting}
[julia> psi_largeT(40,-1.9,1,2im,1.2,150)
\end{lstlisting}
computes $\Psi(40,-1.9,\mathbf{G}(1,2\ii),1.2)$ by scaling the variables by $\bg=1.2$, computing $w$ from the scaled variables using \texttt{wfromXT}, and then calling \texttt{rwio\_largeT} with 150 collocation points, after which the returned value is scaled by $\bg=1.2$.

%\textcolor{green}{[DB: New material.]}
%\begin{remark}\label{rem:num-largeT}
%Considerations in Remark~\ref{rem:num-largeX} also apply to the routines employed in the \texttt{largeT} region.  Here the situation is more complicated due to the existence of the branch points at $\tz=\tz_0(w)$ and $\tz=\tz_0^*(w)$ off the real axis. After contour truncation and the implementation of the outer parametrix, the resulting jump conditions on the circles centered at $\tz=\tz_0(w)$ and $\tz=\tz_0^*(w)$ would have to be treated simultaneously in order to have a well-posed Riemann-Hilbert problem. This is challenging in the implementation of a ``truncate-shift-and-rescale'' strategy described in Remark~\ref{rem:num-largeX}. One way to combat this issue could be a reliable implementation of local parametrices built out of the Airy functions as constructed in Section~\ref{sec:Airy-parametrices}. 
%\end{remark}
%
%\textcolor{green}{[DB: End of new material]}


\subsection{The region \texttt{Painleve}} 
\label{s:num-transition}
Finally, if Algorithm~\ref{alg:regions} determines that $(X,T)$ lies in the region \texttt{Painleve}, then \texttt{psi} calls the wrapper \texttt{psi\_Painleve} that then calls the low-level routine \texttt{rwio\_Painleve}.  The latter implements the numerical solution of the modification described in Section~\ref{s:transitional} of the Riemann-Hilbert jump conditions for $\mathbf{T}(z;X,v)$ given in Section~\ref{s:large-X} with the aim of improving accuracy as
$v=TX^{-\frac{3}{2}}\uparrow v_{\mathrm{c}}=54^{-\frac{1}{2}}$.  The method for computing $\Psi(X,T;\mathbf{G},\bg)$ implemented in \texttt{rwio\_Painleve} is also a new contribution of the package \texttt{RogueWaveInfiniteNLS.jl} and such a computation was not attempted in our earlier work \cite{BilmanLM2020}.

% for the same reason the analysis in Section~\ref{s:large-X} fails; and the routines employed in the computational region \texttt{largeT} fail as $w(X,T)\uparrow w_{\mathrm{c}}=54^{\frac{1}{3}}$ for the same reason the analysis in Section~\ref{s:large-T} fails. 
%This computational region follows the analytical approach in Section~\ref{s:transitional} to capture these scenarios and it was not considered in our earlier work \cite{BilmanLM2020}.

Recall that for $0<v<v_{\mathrm{c}}$, the controlling exponent function $\vartheta(z;v)$ in the Riemann-Hilbert problem analyzed in Section~\ref{s:large-X} has three real simple critical points $z_{-\infty}(v)<z_1(v)<0<z_2(v)$. 
As $v<v_\mathrm{c}$ gets close to $v_\mathrm{c}$, the third critical point $z_{-\infty}(v)$ (not playing a role in the large-$X$ analysis) gets close to $z_1(v)$, and at $v=v_\mathrm{c}$ these two critical points collide at $z=z_\mathrm{c}:=-\sqrt{6}=z_1(v_{\mathrm{c}})$, forming a double critical point.

On the other side of the critical curve $v=TX^{-\frac{3}{2}}=v_{\mathrm{c}}$ in the $(X,T)$-plane (equivalent to the curve $w=XT^{-\frac{2}{3}}=w_{\mathrm{c}}$), for $0<w<w_{\mathrm{c}}$, the controlling exponent function $h(Z;w)$ has two simple critical points $Z_1(w)<0<Z_2(w)$ and two non-real branch points $Z_0(w)$ and $Z_0(w)^*$. As $w$ approaches $w_\mathrm{c}$, the branch points $Z_0(w)$ and $Z_0(w)^*$ approach to the real critical point $Z_1(w)$, and at $w=w_\mathrm{c}$ three points collide at a point $\tz=\tz_\mathrm{c}$ related to $z=z_\mathrm{c}$ by the scaling relation $\tz=w^{-\frac{1}{2}}z=v^{\frac{1}{3}}z$.

Algorithm~\ref{alg:regions} assigns $(X,T)$ to the region \texttt{Painleve} when $|v-v_{\mathrm{c}}|$ is small so that one of the two collision scenarios described above is about to occur. The pair of nearby critical points 
%that are about to collide as $v\uparrow v_{\mathrm{c}}$ as described in the second paragraph and the points that are about to collide as $w\uparrow w_{\mathrm{c}}$ (i.e. as $v\downarrow v_{\mathrm{c}}$) as described in the third paragraph above are all 
is modeled by the double critical point $z=z_\mathrm{c}=-\sqrt{6}$ of the exponent function $\vartheta(z;v_\mathrm{c})$. Consequently, the Riemann-Hilbert problem solved numerically by \texttt{rwio\_Painleve} coincides mostly with that solved by \texttt{rwio\_largeX} but with critical points $z=z_\mathrm{c}$ (fixed, double) and $z=z_2(v)$, varying slightly with $v\approx v_{\mathrm{c}}$. The main new feature accounted for in \texttt{rwio\_Painleve} is an adjustment of the angles with which the jump contours exit the point $z=z_\mathrm{c}$. See Figure~\ref{fig:num-contours-Painleve} for the numerical jump contours used by \texttt{rwio\_Painleve} (and compare with the right-hand panel of Figure~\ref{fig:Painleve-contour}) to formulate and solve a numerically-tractable Riemann-Hilbert problem for the relevant values of $(X,T)$.
\begin{figure}
\includegraphics[width=0.3\textwidth]{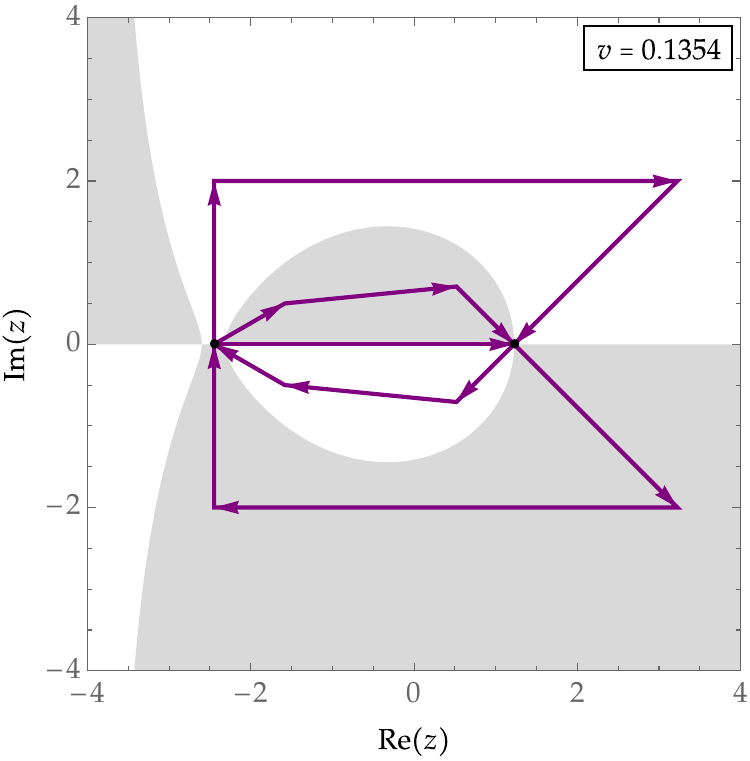}
\includegraphics[width=0.3\textwidth]{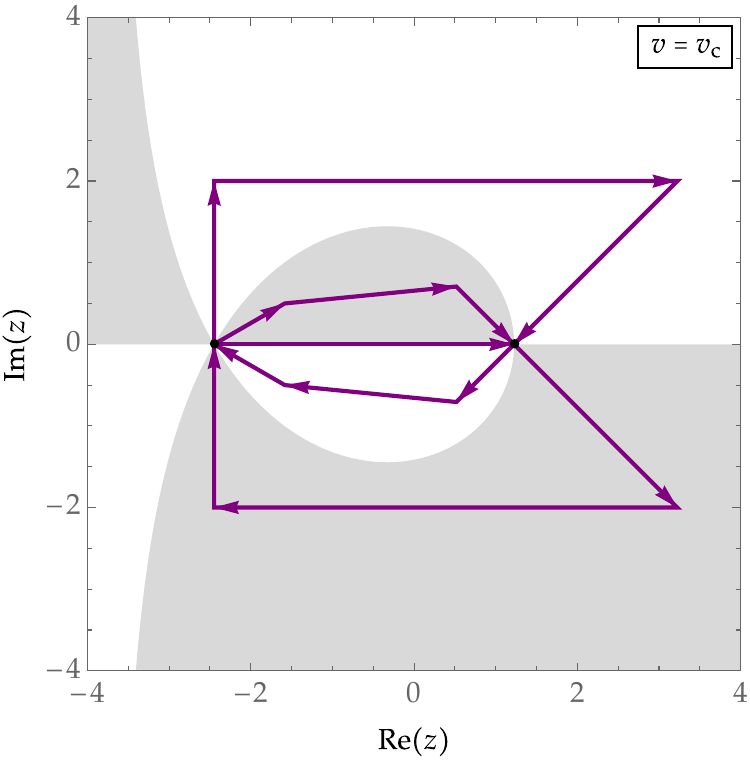}
\includegraphics[width=0.3\textwidth]{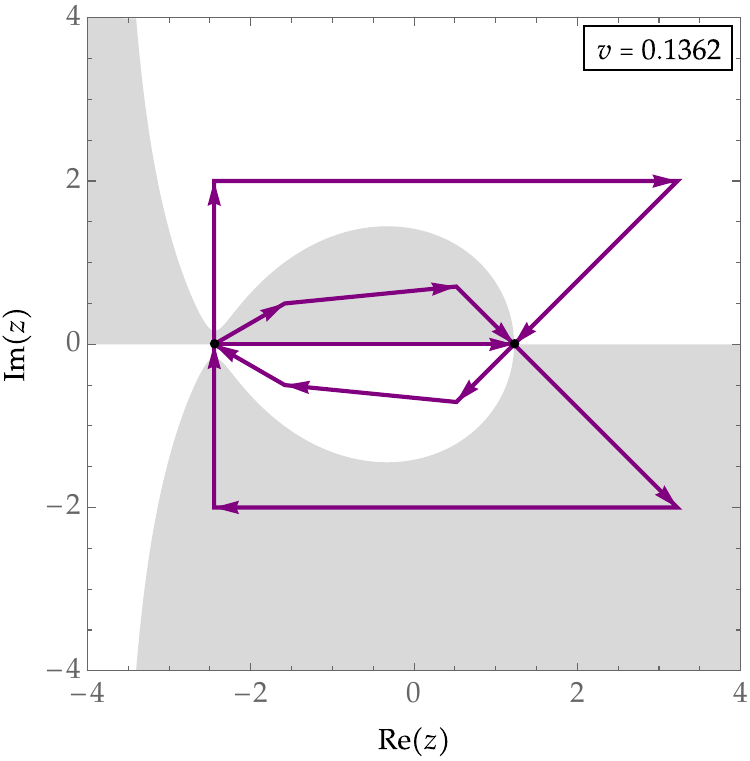}
\caption{The numerical contours used by \texttt{rwio\_Painleve} as $v$ increases when $|v-v_{\mathrm{c}}|$ remains small. $v_{\mathrm{c}}\approx 0.136083$.}
\label{fig:num-contours-Painleve}
\end{figure}

Since \texttt{rwio\_Painleve} is quite similar to \texttt{rwio\_largeX}, the two routines take the same arguments.  
%To compute $\Psi(X,T;\mathbf{G},\bg=1)$ using the Riemann-Hilbert problem treated numerically considered in this region, one can use the following command
%\begin{lstlisting}
%[julia> rwio_Painleve(80, 0.13, 1, 2im, 300)
%\end{lstlisting}
So, for instance,
\begin{lstlisting}
[julia> rwio_Painleve(80, 0.13, 1, 2im, 150)
\end{lstlisting}
returns the value of $\Psi$ at $X=80$ and $v=TX^{-\frac{3}{2}}=0.13$, with $a=1$ and $b=2\ii$ using 150 collocation points at each segment of the numerical jump contour.  The corresponding wrapper has the same arguments as \texttt{psi}, except for the number of collocation points to use that gets passed directly to \texttt{rwio\_Painleve}.  Thus, for instance
\begin{lstlisting}
[julia> psi_Painleve(80,-93,1,2im,1,150)
\end{lstlisting}
returns $\Psi(80,-93,\mathbf{G}(1,2\ii),1)$ using 150 collocation points per contour segment.

%Note that just as in the region \texttt{largeX}, the routine starting with \texttt{rwio\_} requires the value of $v$ as input, not the value of $T$.
%We repeat that the end user doesn't need to solve this Riemann-Hilbert problem specifically. The choice of appropriate Riemann-Hilbert problems to compute $\Psi(X,T;\mathbf{G},\bg)$ is done automatically when the main routine \texttt{psi(X, T, a, b, B)} is called.

\subsection{Consistency of the low-level programs near region boundaries}
In this section we cross-validate the numerically computed solution by comparing the output of the different low-level routines, which we remind the reader are actually designed to compute exactly the same quantity:  $\Psi(X,T;\mathbf{G}(a,b),\bg=1)$.  We first compare the solution computed by each of the routines \texttt{rwio\_largeX}, \texttt{rwio\_largeT}, and $\texttt{rwio\_Painleve}$ with the solution computed by the simplest routine \texttt{rwio\_undeformed}. This is done by taking $(X,T)$ near the origin. We then take $X$ and $T$ large and near the critical curve $v=TX^{-\frac{3}{2}}=v_{\mathrm{c}}$, and compare the solution computed by \texttt{rwio\_largeX} and \texttt{rwio\_largeT} with the solution computed by  \texttt{rwio\_Painleve}. We demonstrate that the computed solutions match to at least 13 digits of accuracy in all the cases mentioned above. This indicates that Algorithm~\ref{alg:regions} operates in a ``seamless'' fashion.
See the notebook \texttt{Paper-Code.ipynb} in the repository \cite{PaperCode} for the sample codes that produced the examples below along with the codes for performing the computations presented in Section~\ref{s:introduction}.

\subsubsection{Comparing \emph{\texttt{rwio\_largeX}} and \emph{\texttt{rwio\_undeformed}} near the origin} We set $X=1$ and $v=0.1 v_{\mathrm{c}}$. Then the value of $T$ is determined via the code:
\begin{lstlisting}
[julia> Xval = 1.
[julia> vval = 0.1*VCRIT
[julia> Tval = TfromXv(Xval,vval)
	0.013608276348795434
\end{lstlisting}
The code below shows that the solution computed using \texttt{rwio\_undeformed} and that computed using \texttt{rwio\_largeX} match with high accuracy:
\begin{lstlisting}
[julia> abs(rwio_undeformed(Xval,Tval,1,2im,400)-rwio_largeX(Xval,vval,1,2im,140))
	3.5749182362651225e-13
\end{lstlisting}
Similarly, we set $X=1$ and $v=0.8 v_{\mathrm{c}}$, and find:
\begin{lstlisting}
[julia> Xval = 1.
[julia> vval = 0.8*VCRIT
[julia> Tval = TfromXv(Xval,vval)
[julia> abs(rwio_undeformed(Xval,Tval,1,2im,400)-rwio_largeX(Xval,vval,1,2im,140))
	3.784797839067842e-13
\end{lstlisting}

\subsubsection{Comparing \emph{\texttt{rwio\_largeT}} and \emph{\texttt{rwio\_undeformed}} near the origin} We set $T=1$ and $w=0.1w_{\mathrm{c}}$. Then the value of $X$ is determined via the code:
\begin{lstlisting}
[julia> Tval = 1.
[julia> wval = 0.1*WCRIT
[julia> Xval = XfromTw(Tval,wval)
	0.37797631496846196
\end{lstlisting}
The code below shows that the solution computed using \texttt{rwio\_undeformed} and that computed using \texttt{rwio\_largeT} match with high accuracy:
\begin{lstlisting}
[julia> abs(rwio_undeformed(Xval,Tval,1,2im,400)-rwio_largeT(Tval,wval,1,2im,140))
	1.5685785522111634e-13
\end{lstlisting}
Similarly, we set $T=1$ and $w=0.8w_{\mathrm{c}}$, and find:
\begin{lstlisting}
[julia> Tval = 1.
[julia> wval = 0.8*WCRIT
[julia> Xval = XfromTw(Tval,wval)
[julia> abs(rwio_undeformed(Xval,Tval,1,2im,400)-rwio_largeT(Tval,wval,1,2im,140)
	6.467885900866133e-14
\end{lstlisting}

\subsubsection{Comparing \emph{\texttt{rwio\_Painleve}} and \emph{\texttt{rwio\_undeformed}} near the origin} We set $X=1$ and $v=v_{\mathrm{c}}$. Again, these choices determine the value of $T$ via the code:
\begin{lstlisting}
[julia> Xval = 1.
[julia> vval = VCRIT
[julia> Tval = TfromXv(Xval,vval)
	0.13608276348795434
\end{lstlisting}
The code below shows that the solution computed using  \texttt{rwio\_undeformed} and that computed using \texttt{rwio\_Painleve} match with high accuracy:
\begin{lstlisting}
[julia> abs(rwio_undeformed(Xval,Tval,1,2im,140)-rwio_Painleve(Xval,vval,1,2im,140))
	2.4340993547656853e-13
\end{lstlisting}

\subsubsection{Comparing \emph{\texttt{rwio\_Painleve}} with \emph{\texttt{rwio\_largeX}} or \emph{\texttt{rwio\_largeT}} for $(X,T)$ large near the critical curve}
To compare {\texttt{rwio\_Painleve}} with {\texttt{rwio\_largeX}} we set $X=2000$ and consider $v<v_\mathrm{c}$ but also $v\approx v_\mathrm{c}$ by setting $v=0.98v_\mathrm{c}$.
The code below shows that the solution computed using  \texttt{rwio\_largeX}  and that computed using \texttt{rwio\_Painleve} match near the critical curve $v=v_{\mathrm{c}}$ with high accuracy:
\begin{lstlisting}
[julia> Xval = 2000
[julia> vval = 0.98*VCRIT
[julia> abs(rwio_largeX(Xval,vval,1,2im,140)-rwio_Painleve(Xval,vval,1,2im,140))
	1.5436906930782975e-15
\end{lstlisting}

To compare {\texttt{rwio\_Painleve}} with {\texttt{rwio\_largeT}} we again set $X=2000$ and consider $v>v_\mathrm{c}$ (i.e. $w<w_\mathrm{c}$) but also $v\approx v_\mathrm{c}$ by setting $v=1.05v_\mathrm{c}$. This choice determines the value of $T$ which we find numerically and then obtain the value of $w$ from these determined values of $X$ and $T$. 
The code below performs these initial computations and then shows that the solution computed using  \texttt{rwio\_largeT}  and that computed using \texttt{rwio\_Painleve} match near the critical curve $v=v_{\mathrm{c}}$ with high accuracy:
\begin{lstlisting}
[julia> Xval = 2000
[julia> vval = 1.05*VCRIT
[julia> Tval = TfromXv(Xval,vval)
[julia> wval = wfromXT(Xval,Tval)
[julia> abs(rwio_largeT(Tval,wval,1,2im,140)-rwio_Painleve(Xval,vval,1,2im,140))
	1.0412793661344947e-14
\end{lstlisting}
%\textcolor{green}{The subsection below is also new material}

\subsection{Effect of increasing the number of collocation points}
We now demonstrate how the accuracy of the various routines improves as the number of collocation points is increased. 
%We let $\texttt{Psi}(X,T,a,b, m)$ denote the value of the computed solution at $(X,T)$ with 
We fix parameters $a=b=1$ and $\bg=1$.
% and compare  using $m$ collocation points on each segment of the relevant numerical jump contour.
First, to study \texttt{rwio\_largeX}, we fix a relatively large reference number of collocation points $N_\mathrm{X}=80$ and fix $v=0.5v_\mathrm{c}$.  Then varying $X$ and the number $m$ of collocation points per contour segment, we define
%For the method in \texttt{largeX}, we set $N_{\mathrm{X}}=80$, fix $v=0.5 v_{\mathrm{c}}$, and define the pointwise error
%\begin{equation}
%\mathcal{E}^{\mathrm{X}}_m(X):= | \texttt{Psi}(X, 0.5 v_{\mathrm{c}}X^{\frac{3}{2}} , a=1, b=1, m) -  \texttt{Psi}(X, 0.5 v_{\mathrm{c}}X^{\frac{3}{2}} , a=1, b=1, N_{\mathrm{X}}) |.
%\end{equation}
\begin{equation}
\mathcal{E}^{\mathrm{X}}_m(X):= \left|\mathtt{rwio\_largeX}(X,0.5v_\mathrm{c},1,1,m)-\mathtt{rwio\_largeX}(X,0.5v_\mathrm{c},1,1,N_{\mathrm{X}})\right|.
\end{equation}
Next, to study \texttt{rwio\_largeT}, we fix a relatively large reference number of collocation points $N_\mathrm{T}=80$ and fix $w=0.5w_\mathrm{c}$.  Then varying $T$ and the number $m$ of collocation points per contour segment, we define
%For the method in \texttt{largeT}, we set $N_{\mathrm{T}}=80$, fix $w=0.5 w_{\mathrm{c}}$, and define the pointwise error
%\begin{equation}
%\mathcal{E}^{\mathrm{T}}_m(T):= | \texttt{Psi}(0.5 w_{\mathrm{c}}T^{2/3}, T , a=1, b=1, m) -  \texttt{Psi}(0.5 w_{\mathrm{c}}T^{2/3}, T , a=1, b=1, N_{\mathrm{T}}) |.
%\end{equation}
\begin{equation}
\mathcal{E}^{\mathrm{T}}_m(T):=\left|\mathtt{rwio\_largeT}(T,0.5w_\mathrm{c},1,1,m)-\mathtt{rwio\_largeT}(T,0.5w_\mathrm{c},1,1,N_\mathrm{T})\right|.
\end{equation}
Finally, to study \texttt{rwio\_Painleve}, we fix a relatively large reference number of collocation points $N_\mathrm{P}=160$ and fix $v=v_\mathrm{c}$ to be on the critical curve.  Then varying $X$ and the number $m$ of collocation points per contour segment, we define
%For the method in \texttt{Painleve}, we set $N_{\mathrm{P}}=160$, fix $v=v_{\mathrm{c}}$, and define the pointwise error
%\begin{equation}
%\mathcal{E}^{\mathrm{P}}_m(X):= | \texttt{Psi}(X, v_{\mathrm{c}}X^{\frac{3}{2}} , a=1, b=1, m) -  \texttt{Psi}(X, v_{\mathrm{c}}X^{\frac{3}{2}} , a=1, b=1, N_{\mathrm{P}}) |.
%\end{equation}
\begin{equation}
\mathcal{E}_m^\mathrm{P}(X):=\left|\mathtt{rwio\_Painleve}(X,v_\mathrm{c},1,1,m)-\mathtt{rwio\_Painleve}(X,v_\mathrm{c},1,1,N_\mathrm{P})\right|.
\end{equation}

In Figure~\ref{fig:convergence} we plot these pointwise errors over different intervals (over the $X$-axis for $\mathcal{E}_m^\mathrm{X}(X)$ and $\mathcal{E}_m^\mathrm{P}(X)$, and over the $T$-axis for $\mathcal{E}_m^\mathrm{T}(T)$)  for three values of $m$ increasing towards $N_{\mathrm{X}}$, $N_{\mathrm{P}}$ and $N_{\mathrm{T}}$. 
\begin{figure}[h]
\includegraphics[width=0.3\textwidth]{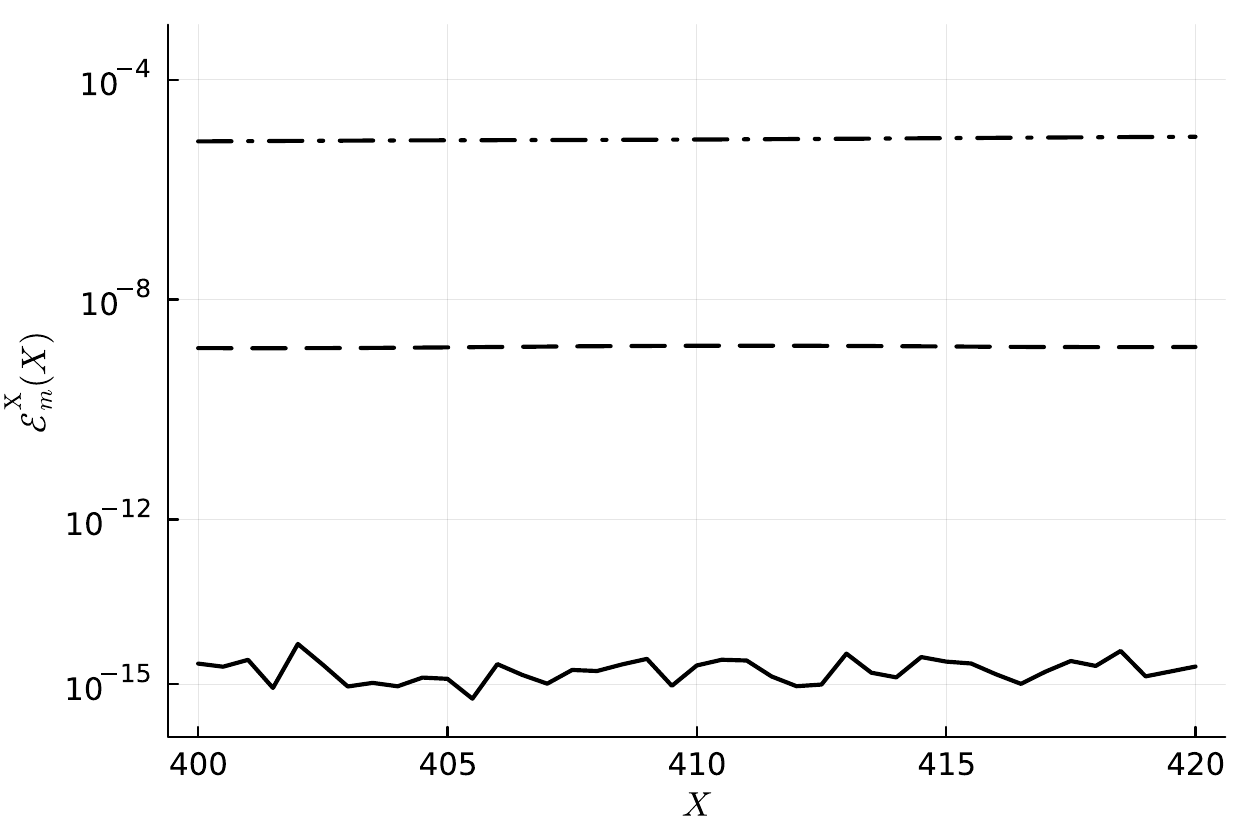}\,
\includegraphics[width=0.3\textwidth]{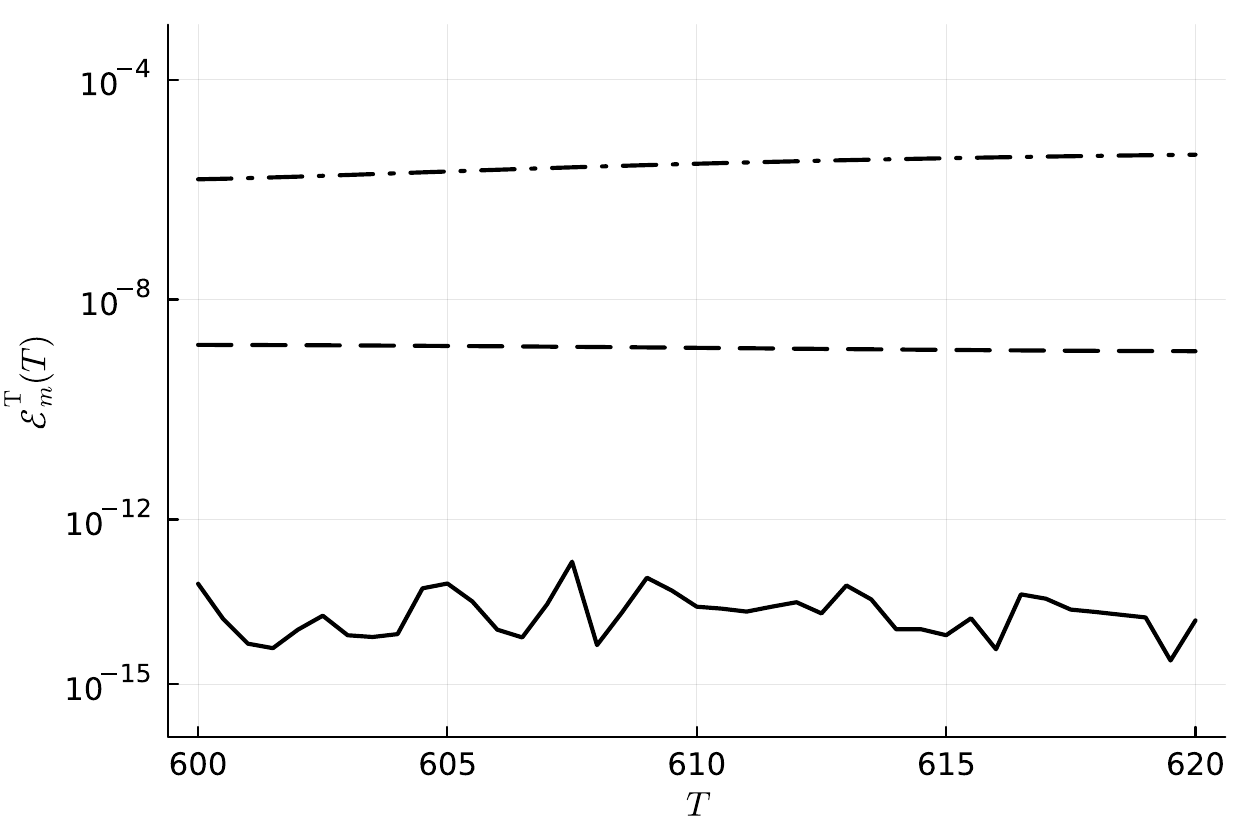}\,
\includegraphics[width=0.3\textwidth]{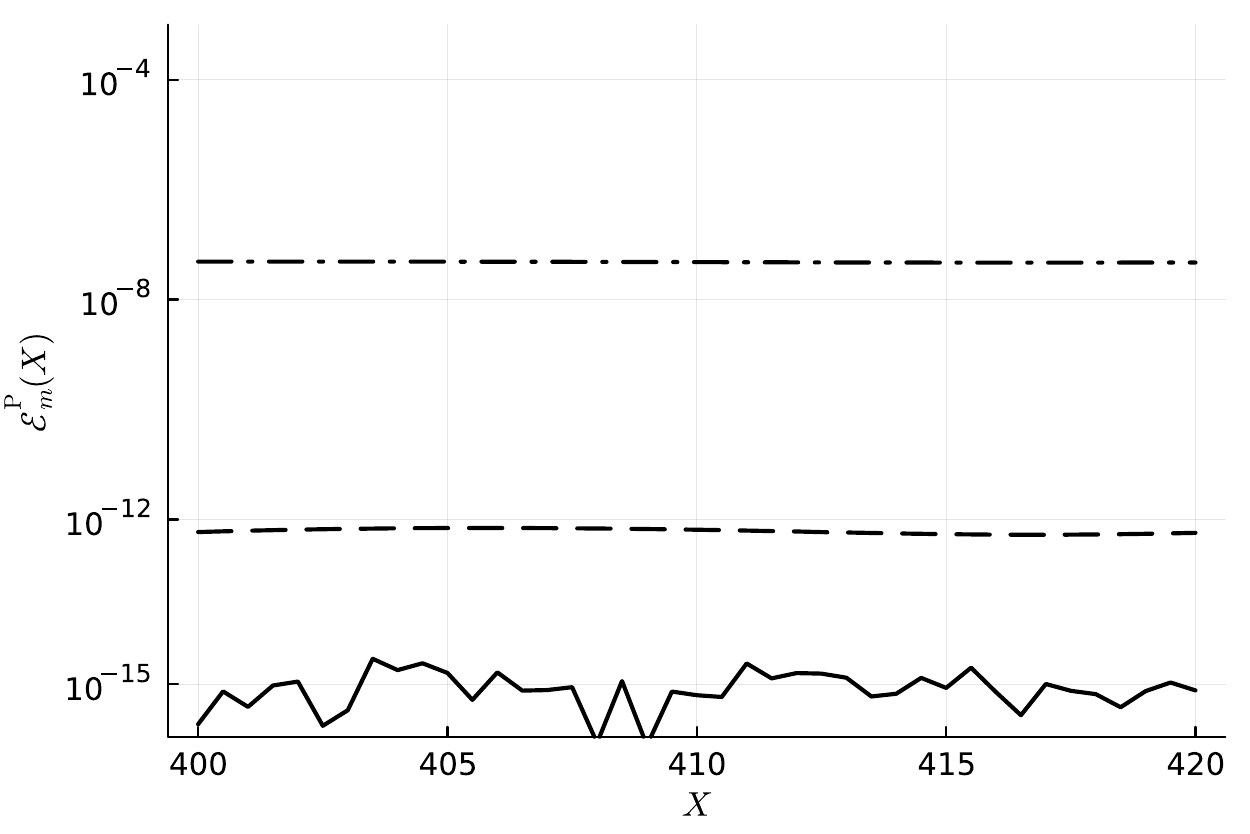}
\caption{Left: $\mathcal{E}^{\mathrm{X}}_m(X)$ for $m=10$ (dashed-dotted), $m=20$ (dashed), and $m=40$ (solid) over $400\leq X \leq 420$. 
Center: $\mathcal{E}^{\mathrm{T}}_m(T)$ for $m=10$ (dashed-dotted), $m=20$ (dashed), and $m=40$ (solid) over $600\leq T \leq 620$.
Right: $\mathcal{E}^{\mathrm{P}}_m(X)$ for $m=20$ (dashed-dotted), $m=40$ (dashed), and $m=80$ (solid) over $400\leq X \leq 420$.}
\label{fig:convergence} 
\end{figure}
These plots show that once $m$ has increased to half of the reference value in each case, the difference has decreased to machine precision.  This demonstrates how quickly the accuracy improves as the number of collocation points per contour segment is increased.

\appendix

\section{Elementary properties of $\Psi(X,T;\mathbf{G},\bg)$}
\label{a:elementary}
This appendix is devoted to the proofs of several basic symmetries of the function $\Psi(X,T;\mathbf{G},\bg)$.  First, we prove the scaling invariance of the solution $\Psi(X,T;\mathbf{G},\bg)$ with respect to $\bg>0$.
\begin{proof}[Proof of Proposition~\ref{p:scaling}]
Suppose that $(X,T)\in\mathbb{R}^2$, $\bg>0$, and $\mathbf{G}$ satisfying $\det(\mathbf{G})=1$ and $\mathbf{G}=\sigma_2\mathbf{G}^*\sigma_2$ are given.  
Let ${\mathbf{P}}(\Lambda;{X},{T},\mathbf{G},\bg)$ be the solution of \rhref{rhp:near-field} with these given parameters. On the other hand, let $\widetilde{\mathbf{P}}(\Lambda;\widetilde{X},\widetilde{T},\mathbf{G},1)$ be the solution of \rhref{rhp:near-field} for given $(\widetilde{X},\widetilde{T})\in\mathbb{R}^2$ with $\bg=1$. 
Since the jump matrix in \rhref{rhp:near-field} is invariant under $\Lambda\mapsto \bg^{-1}\Lambda$, $X\mapsto \bg X$, and $T\mapsto \bg^2 T$, by uniqueness we find that
\begin{equation}
\mathbf{P}(\Lambda;X,T,\mathbf{G},\bg) \equiv \widetilde{\mathbf{P}}(\bg^{-1} \Lambda; \bg X,\bg^2  T,\mathbf{G},1)
\label{P-identity}
\end{equation}
since the radius of the circular jump contour in \rhref{rhp:near-field} can be taken arbitrary.
We deduce from \eqref{P-identity} that
\begin{equation}
\begin{aligned}
\Psi(X,T; \mathbf{G}, \bg) &= 2\ii \lim_{\Lambda\to \infty} \Lambda P_{12}(\Lambda;X,T,\mathbf{G},\bg)\\
&= 2\ii \lim_{\Lambda\to \infty} \Lambda \widetilde{P}_{12}(\bg^{-1}\Lambda; \bg X, \bg^2 T,\mathbf{G}, 1)\\
&= \bg \left(2\ii \lim_{\Lambda\to \infty} \bg^{-1}\Lambda \widetilde{P}_{12}(
\bg^{-1}\Lambda; \bg X, \bg^2 T,\mathbf{G}, 1)\right)\\
&= \bg \Psi(\bg X, \bg^2 T; \mathbf{G}, 1),
\end{aligned}
\end{equation}
which is the claimed scaling symmetry.
\end{proof}
As in the rest of the paper, from this point onwards in this Appendix we take $\bg=1$ and omit $\bg$ from all argument lists.

We now work towards proving the symmetries of the solution $\Psi(X,T;\mathbf{G})$ with respect to $X\mapsto -X$ and $T\mapsto -T$. Given the solution $\mathbf{P}(\Lambda;X,T,\mathbf{G}(a,b))$ of \rhref{rhp:near-field}, define
\begin{equation}
\mathbf{X}(\Lambda;X,T,\mathbf{G}(a,b)) :=
\begin{cases}
\sigma_3 \mathbf{P}(\Lambda;X,T, \mathbf{G}(a,b)) \ee^{-4\ii  \Lambda^{-1}\sigma_3}\sigma_3, &\quad |\Lambda|>1,\\
\sigma_3 \mathbf{P}(\Lambda;X,T, \mathbf{G}(a,b)) \ee^{-2\ii (\Lambda X + \Lambda^2 T)\sigma_3}(\ii \sigma_2)\sigma_3, &\quad |\Lambda|<1.
\end{cases}
\end{equation}

\begin{proposition} $\mathbf{X}(- \Lambda; -X, T,\mathbf{G}(b,a)) = \mathbf{P}(\Lambda;X,T,\mathbf{G}(a,b))$.
\label{P-symmetry-X}
\end{proposition}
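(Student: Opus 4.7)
The plan is to show that $\mathbf{Y}(\Lambda;X,T):=\mathbf{X}(-\Lambda;-X,T,\mathbf{G}(b,a))$ satisfies exactly the same Riemann-Hilbert problem as $\mathbf{P}(\Lambda;X,T,\mathbf{G}(a,b))$ and then invoke the uniqueness half of Proposition~\ref{prop:RHP-EU} to conclude. The three checks to make are analyticity, jump, and normalization. Analyticity of $\mathbf{Y}$ in $|\Lambda|\neq 1$ is immediate: the map $\Lambda\mapsto -\Lambda$ preserves the dichotomy $|\Lambda|\gtrless 1$, both conjugating exponential factors in the definition of $\mathbf{X}$ are analytic in the relevant regions (note in particular that the $|\Lambda|<1$ piece uses $\exp(-2\ii(\Lambda X+\Lambda^2 T)\sigma_3)$, which is entire and harmless at $\Lambda=0$), and $\mathbf{P}(-\Lambda;-X,T,\mathbf{G}(b,a))$ is analytic for $|\Lambda|\neq 1$ by Proposition~\ref{prop:RHP-EU}. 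Normalization follows from the $|\Lambda|>1$ piece: as $\Lambda\to\infty$, $\mathbf{P}\to\mathbb{I}$ and $\exp(4\ii\Lambda^{-1}\sigma_3)\to\mathbb{I}$, so $\mathbf{Y}\to\sigma_3\mathbb{I}\sigma_3=\mathbb{I}$.

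The main content of the proof is the jump condition on $|\Lambda|=1$. Since $\Lambda\mapsto -\Lambda$ preserves the clockwise orientation of the unit circle and swaps neither the interior nor the exterior with the other, the boundary value $\mathbf{Y}_+(\Lambda)$ (resp.\ $\mathbf{Y}_-(\Lambda)$) is computed using the $|\Lambda|>1$ (resp.\ $|\Lambda|<1$) piece of the definition of $\mathbf{X}$, with the argument $-\Lambda$ evaluated from the corresponding side. Writing $\phi:=\Lambda X+\Lambda^2 T+2\Lambda^{-1}$, the jump condition \eqref{P-jump} applied to $\mathbf{P}(-\Lambda;-X,T,\mathbf{G}(b,a))$ gives
\begin{equation}
\mathbf{P}_+(-\Lambda;-X,T,\mathbf{G}(b,a))=\mathbf{P}_-(-\Lambda;-X,T,\mathbf{G}(b,a))\ee^{-\ii(\phi-4\Lambda^{-1})\sigma_3}\mathbf{G}(b,a)\ee^{\ii(\phi-4\Lambda^{-1})\sigma_3},
\end{equation}
where $-\Lambda\cdot(-X)+(-\Lambda)^2 T+2(-\Lambda)^{-1}=\phi-4\Lambda^{-1}$. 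Computing $\mathbf{Y}_-^{-1}\mathbf{Y}_+$ using this relation, the $\mathbf{P}_-$ factors cancel via $\sigma_3\sigma_3=\mathbb{I}$, the two half-integer exponentials with the $-4\Lambda^{-1}$ shift combine with the $\exp(4\ii\Lambda^{-1}\sigma_3)$ from the exterior piece of $\mathbf{X}$, and the remaining $(\ii\sigma_2)\sigma_3$ on the interior side contributes a factor $\sigma_3(-\ii\sigma_2)=-\sigma_1$ on the left. Using $\sigma_1\ee^{c\sigma_3}=\ee^{-c\sigma_3}\sigma_1$, the result simplifies to
\begin{equation}
\mathbf{Y}_-(\Lambda)^{-1}\mathbf{Y}_+(\Lambda)=\ee^{-\ii\phi\sigma_3}\bigl(-\sigma_1\mathbf{G}(b,a)\sigma_3\bigr)\ee^{\ii\phi\sigma_3}.
\end{equation}

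Matching this with the jump \eqref{P-jump} for $\mathbf{P}(\Lambda;X,T,\mathbf{G}(a,b))$ reduces to the single algebraic identity
\begin{equation}
-\sigma_1\mathbf{G}(b,a)\sigma_3=\mathbf{G}(a,b),
\end{equation}
which one verifies directly from the representation \eqref{G-form}: left-multiplication by $\sigma_1$ swaps the rows of $\mathbf{G}(b,a)$, right-multiplication by $\sigma_3$ negates the second column, and the overall sign then produces exactly $\mathbf{G}(a,b)$. With all three Riemann-Hilbert conditions verified, uniqueness from Proposition~\ref{prop:RHP-EU} yields $\mathbf{Y}(\Lambda;X,T)=\mathbf{P}(\Lambda;X,T,\mathbf{G}(a,b))$, which is the claim.

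The only step I expect to require any care is the orientation and boundary-value bookkeeping on the unit circle when passing from $\mathbf{X}$ to $\mathbf{Y}$, since the piecewise definition of $\mathbf{X}$ involves two different conjugating factors on the two sides of the contour; the algebraic simplification to the clean identity $-\sigma_1\mathbf{G}(b,a)\sigma_3=\mathbf{G}(a,b)$ is then routine.
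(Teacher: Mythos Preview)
Your proof is correct and takes essentially the same approach as the paper's: verify that the candidate satisfies the analyticity, normalization, and jump conditions of Riemann--Hilbert Problem~\ref{rhp:near-field} and invoke uniqueness. Your key identity $-\sigma_1\mathbf{G}(b,a)\sigma_3=\mathbf{G}(a,b)$ is equivalent to the paper's $\sigma_3(-\ii\sigma_2)\mathbf{G}(a,b)\sigma_3=\mathbf{G}(b,a)$ (since $\sigma_3(-\ii\sigma_2)=-\sigma_1$), and the only cosmetic difference is that you substitute $(\Lambda,X,a,b)\mapsto(-\Lambda,-X,b,a)$ at the outset while the paper first computes the jump of $\mathbf{X}$ and substitutes afterward.
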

\begin{proof}
Observe that for $|\Lambda| = 1$ we have
\begin{equation}
\begin{aligned}
\mathbf{X}_+(\Lambda;X,T, \mathbf{G}(a,b)) =& \sigma_3 \mathbf{P}_+(\Lambda;X,T, \mathbf{G}(a,b)) \ee^{-4\ii \Lambda^{-1}\sigma_3}\sigma_3\\
=& \sigma_3 \mathbf{P}_-(\Lambda;X,T, \mathbf{G}(a,b)) \ee^{-\ii (\Lambda X+\Lambda^2 T + 2 \Lambda^{-1})\sigma_{3}}\mathbf{G}(a,b)\sigma_3 \ee^{\ii (\Lambda X+\Lambda^2 T - 2 \Lambda^{-1})\sigma_{3}}\\
%&\quad\times \ee^{-4\ii \beta \Lambda^{-1}\sigma_3}\sigma_3
=& \mathbf{X}_-(\Lambda;X,T, \mathbf{G}(a,b))\sigma_3 (-\ii\sigma_2) \ee^{\ii(\Lambda X + \Lambda^2 T - 2 \Lambda^{-1})\sigma_3} \mathbf{G}(a,b)\sigma_3  \\
&{}\cdot \ee^{\ii (\Lambda X+\Lambda^2 T - 2 \Lambda^{-1})\sigma_{3}}\\
=& \mathbf{X}_-(\Lambda;X,T, \mathbf{G}(a,b))\ee^{-\ii(\Lambda X + \Lambda^2 T - 2 \Lambda^{-1})\sigma_3}
[\sigma_3 (-\ii\sigma_2) \mathbf{G}(a,b)\sigma_3]\\
&{}\cdot  \ee^{\ii (\Lambda X+\Lambda^2 T - 2 \Lambda^{-1})\sigma_{3}}.\\
=& \mathbf{X}_-(\Lambda;X,T, \mathbf{G}(a,b))\ee^{-\ii(\Lambda X + \Lambda^2 T - 2 \Lambda^{-1})\sigma_3}
\mathbf{G}(b,a) \ee^{\ii (\Lambda X+\Lambda^2 T - 2 \Lambda^{-1})\sigma_{3}},
\end{aligned}
\end{equation}
since  $\sigma_3 (-\ii\sigma_2) \mathbf{G}(a,b)\sigma_3 = \mathbf{G}(b,a)$. Thus, $\mathbf{X}(-\Lambda; -X, T, \mathbf{G}(b,a))$ satisfies exactly the same jump condition as $\mathbf{P}(\Lambda;X,T, \mathbf{G}(a,b))$. Since they satisfy the same normalization and analyticity properties, by uniqueness of the solutions of \rhref{rhp:near-field} the result follows.
\end{proof}
The proof of Proposition~\ref{prop:X-symmetry} is now a simple consequence.
%\begin{proposition} $\Psi(X,T; \mathbf{G}(a,b),\beta) = \Psi(-X,T; \mathbf{G}(b,a),\beta)$.
%\label{prop:X-symmetry}
%\end{proposition}
\begin{proof}[Proof of Proposition~\ref{prop:X-symmetry}] 
We make use of Proposition~\ref{P-symmetry-X} and compute
\begin{equation}
\begin{aligned}
\Psi(X,T;\mathbf{G}(a,b)) &= 2\ii \lim_{\Lambda\to \infty} \Lambda P_{12}(\Lambda; X,T, \mathbf{G}(a,b))\\
&= 2\ii \lim_{\Lambda\to \infty} \Lambda \left[ \sigma_3 \mathbf{P}(- \Lambda; -X, T, \mathbf{G}(b,a))\ee^{-4\ii \Lambda^{-1}\sigma_3} \sigma_3 \right]_{12}\\
&= 2\ii \lim_{\Lambda\to \infty} \Lambda P_{12}(\Lambda; X,T, \mathbf{G}(a,b))\\
&= 2\ii \lim_{\Lambda\to \infty} \Lambda \left[  - P_{12}(- \Lambda; -X, T, \mathbf{G}(b,a)) \right]\\
&= 2\ii \lim_{\Lambda\to \infty}  \left[ ( -\Lambda) P_{12}(- \Lambda; -X, T, \mathbf{G}(b,a)) \right]\\
&=\Psi(-X,T;\mathbf{G}(b,a)),
\end{aligned}
\end{equation}
which is the claimed symmetry.
\end{proof}
An easier observation is the following.
\begin{proposition} $\mathbf{P}(\Lambda;X,-T,\mathbf{G}(a,b)) =\mathbf{P}(-\Lambda^*;X,T,\mathbf{G}(a,b)^*)^*$
\label{prop:P-symmetry-T}
\end{proposition}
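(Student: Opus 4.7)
The plan is to show that $\mathbf{Y}(\Lambda):=\mathbf{P}(-\Lambda^*;X,T,\mathbf{G}(a,b)^*)^*$ solves the same Riemann-Hilbert problem as $\mathbf{P}(\Lambda;X,-T,\mathbf{G}(a,b))$, and then invoke the uniqueness assertion of Proposition~\ref{prop:RHP-EU}. Before beginning, I would quickly verify that $\mathbf{G}(a,b)^*$ is an admissible parameter matrix for \rhref{rhp:near-field}, i.e., that it has unit determinant and satisfies $\sigma_2(\mathbf{G}(a,b)^*)^*\sigma_2=\mathbf{G}(a,b)^*$, both of which follow immediately from the corresponding properties of $\mathbf{G}(a,b)$ since the two symmetry conditions involve real operations (determinant, complex conjugation, and conjugation by $\sigma_2$ whose entries are in $\{0,\pm\ii\}$).

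I would next check the three defining conditions in turn. For analyticity, since $\Lambda\mapsto f(-\Lambda^*)^*$ is analytic on the image of the domain of $f$ under $\Lambda\mapsto-\Lambda^*$ whenever $f$ is analytic, and since the map $\Lambda\mapsto-\Lambda^*$ preserves $\{|\Lambda|\neq 1\}$, analyticity is inherited from $\mathbf{P}$. For the normalization, as $\Lambda\to\infty$ we also have $-\Lambda^*\to\infty$, so $\mathbf{Y}(\Lambda)\to\mathbb{I}^*=\mathbb{I}$.

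The main step is to check the jump condition. The map $\Lambda\mapsto -\Lambda^*$ preserves the unit circle and sends interior to interior (since $|\Lambda|<1$ iff $|-\Lambda^*|<1$). Hence the boundary values satisfy $\mathbf{Y}_\pm(\Lambda)=\mathbf{P}_\pm(-\Lambda^*;X,T,\mathbf{G}(a,b)^*)^*$. Applying complex conjugation to the jump relation for $\mathbf{P}(\diamond;X,T,\mathbf{G}(a,b)^*)$ at the point $-\Lambda^*$ with $|\Lambda|=1$, using that $(X,T)\in\mathbb{R}^2$ and that conjugation commutes with the exponential (and $\sigma_3$ is real), one computes
\begin{equation}
\mathbf{Y}_+(\Lambda)=\mathbf{Y}_-(\Lambda)\ee^{-\ii(\Lambda X-\Lambda^2 T+2\Lambda^{-1})\sigma_3}\mathbf{G}(a,b)\ee^{\ii(\Lambda X-\Lambda^2 T+2\Lambda^{-1})\sigma_3},\quad|\Lambda|=1,
\end{equation}
where the sign flip on $T$ comes from $(-\Lambda^*)^2=\Lambda^{*2}$, whose conjugate is $\Lambda^2$, combined with the conjugation of the prefactor $-\ii$ in the exponent that accounts for the signs of the $X$ and $\Lambda^{-1}$ terms. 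This is exactly the jump condition of \rhref{rhp:near-field} with parameters $(X,-T,\mathbf{G}(a,b))$.

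The main point requiring care is the orientation bookkeeping on the unit circle: although $\Lambda\mapsto -\Lambda^*$ reverses the orientation of $S^1$, the $\pm$ labels are determined by interior vs.\ exterior (not by direction of traversal), so the correct identification $\mathbf{Y}_\pm=\mathbf{P}_\pm(-\Lambda^*;\,\cdot\,)^*$ holds without any swap of $+$ and $-$. Once the jump, analyticity, and normalization are verified, Proposition~\ref{prop:RHP-EU} (applied with parameters $(X,-T,\mathbf{G}(a,b))$) gives $\mathbf{Y}(\Lambda)=\mathbf{P}(\Lambda;X,-T,\mathbf{G}(a,b))$, which is the claimed identity.
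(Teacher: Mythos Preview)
Your proof is correct and follows exactly the approach sketched in the paper, which simply states that the two matrices satisfy the same analyticity, normalization, and jump conditions and invokes uniqueness. You have carefully filled in the details the paper omits, including the verification that $\mathbf{G}(a,b)^*$ is admissible and the orientation bookkeeping showing that $\mathbf{Y}_\pm(\Lambda)=\mathbf{P}_\pm(-\Lambda^*;\cdot)^*$ without a swap of sides.
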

\begin{proof}
It is straightforward to verify that the two matrix functions satisfy the same analyticity and normalization properties, and satisfy the same jump condition. The result follows from uniqueness.
\end{proof}
We now prove Proposition~\ref{prop:T-symmetry} as a consequence of this result.
%\begin{proposition} $\Psi(X,-T; \mathbf{G}(a,b),\beta) = \Psi(X,T; \mathbf{G}(a,b)^*,\beta)^*$.
%\label{prop:T-symmetry}
%\end{proposition}
\begin{proof}[Proof of~Proposition~\ref{prop:T-symmetry}]
We make use of Proposition~\ref{prop:P-symmetry-T} and compute:
\begin{equation}
\begin{aligned}
\Psi(X,-T;\mathbf{G}(a,b)) &= 2\ii \lim_{\Lambda\to \infty} \Lambda P_{12}(\Lambda; -X,T, \mathbf{G}(a,b))\\
&= 2\ii \lim_{\Lambda\to \infty} \Lambda \left[P_{12}(- \Lambda^*; X, T, \mathbf{G}(a,b)^*)^* \right]\\
&= 2\ii \lim_{\Lambda\to \infty}  \left[\Lambda^* P_{12}(- \Lambda^*; X, T, \mathbf{G}(a,b)^*) \right]^*\\
%&=  \lim_{\Lambda\to \infty}  \left[-2\ii \Lambda^* P_{12}(- \Lambda^*; X, T, \mathbf{G}(a,b)^*,\beta) \right]^*\\
&=  \lim_{\Lambda\to \infty}  \left[2\ii (-\Lambda^*) P_{12}(- \Lambda^*; X, T, \mathbf{G}(a,b)^*) \right]^*\\
&=  \lim_{\Lambda\to \infty}  \left[2\ii \Lambda P_{12}(\Lambda ; X, T, \mathbf{G}(a,b)^*) \right]^*\\
&=\Psi(X,T;\mathbf{G}(a,b)^*)^*,
\end{aligned}
\end{equation}
which is the claimed symmetry.
\end{proof}

Next, we turn to the proof of Proposition~\ref{prop:a-b-scaling}, which concerned a normalization of the parameters $a,b$ in $\mathbf{G}(a,b)$.
\begin{proof}[Proof of Proposition~\ref{prop:a-b-scaling}]
It is clear from the structure of the matrix $\mathbf{G}(a,b)$ in \eqref{G-form} that $\mathbf{G}(a,b)=\mathbf{G}(c a, c b)$ for any positive
scalar $c >0$. Since the dependence on $a$ and $b$ of \rhref{rhp:near-field} enters only via the matrix $\mathbf{G}$, we have
\begin{equation}
\Psi(X,T,\mathbf{G}(a,b)) = \Psi(X,T,\mathbf{G}(c a, c b)),\qquad\text{for any $c>0$}.
\end{equation}
%Therefore, from now on in the paper we assume without any loss of generality that
%\begin{equation}
%|a|^2+|b|^2 = 1.
%\end{equation}
%\textcolor{red}{[Placing the above choice here somewhat tricky. I still want to present the results for general $a,b$. I would like to place it at the end of the next section: ``Preliminaries and elementary properties'']}
%\textcolor{teal}{I think we need $c>0$ to have $\mathbf{G}(ca,cb)=\mathbf{G}(a,b)$.  

An additional identity following from \eqref{G-form} is $\mathbf{G}(\ee^{\ii\theta}a,\ee^{-\ii\theta}b)=\ee^{\ii\theta\sigma_3}\mathbf{G}(a,b)$ for all $\theta\in\mathbb{R}$.  Since $\ee^{\ii\theta\sigma_3}$ commutes with $\ee^{-\ii(\Lambda X+\Lambda^2T+2 \bg \Lambda^{-1})\sigma_3}$ it can be absorbed into $\mathbf{P}_-(\Lambda;X,T,\mathbf{G})$, which has no bearing on $\Psi(X,T;\mathbf{G})$. Taken together, we can say that for any $c\in\mathbb{C}\setminus\{0\}$, the matrices $\mathbf{G}(ca,c^*b)$ and $\mathbf{G}(a,b)$ yield exactly the same solution $\Psi(X,T;\mathbf{G})$, or put another way, 
\begin{equation}
\Psi(X,T;\mathbf{G}(ca,c^*b))=\Psi(X,T;\mathbf{G}(a,b)).
\label{eq:identity-one}
\end{equation}  
%This is really the $\mathbb{CP}^1$ parameter manifold again; this complex manifold can be covered by two charts:  if $a\neq 0$ then we can pick $c=a^{-1}$ and it suffices to study the solution with $\mathbf{G}(1,b)$ for general $b\in\mathbb{C}$, and if $b\neq 0$ then we can pick $c^*=b^{-1}$ and it suffices to study the solution with $\mathbf{G}(a,1)$ for general $a\in\mathbb{C}$.  

Finally, one can remove phase factors on $b$ by diagonal conjugation of $\mathbf{P}$, leading to the formula 
\begin{equation}
\Psi(X,T;\mathbf{G}(a,b\ee^{\ii\theta}))=\ee^{-\ii\theta}\Psi(X,T;\mathbf{G}(a,b)).  
\label{eq:identity-two}
\end{equation}
Composing these identities by first taking $c=\ee^{-\ii\arg(a)}/\sqrt{|a|^2+|b|^2}$ in \eqref{eq:identity-one} so that
\begin{equation}
\Psi(X,T;\mathbf{G}(a,b))=\Psi\left(X,T;\mathbf{G}\left(\frac{|a|}{\sqrt{|a|^2+|b|^2}},\frac{|b|}{\sqrt{|a|^2+|b|^2}}\ee^{\ii\arg(ab)}\right)\right)
\end{equation}
and then taking $\theta=\arg(ab)$ in \eqref{eq:identity-two}, we arrive at \eqref{ab-symmetry}.
%The last formula is the one we are using to study the ``escape'' asymptotics, in which we are writing 
%\[
%\frac{|a|}{\sqrt{|a|^2+|b|^2}}=\ee^{-2M}\quad\text{and}\quad \frac{|b|}{\sqrt{|a|^2+|b|^2}}=\sqrt{1-\ee^{-4M}}.
%\]
%}
\end{proof}

Finally, recall the parameterization $\mathbf{G}=\mathbf{G}(a,b)$ by complex numbers $a,b$ not both zero as given in \eqref{G-form}.  Proposition~\ref{p:degeneration} concerned the special case that either $a=0$ or $b=0$, and we give its proof now.
\begin{proof}[Proof of Proposition~\ref{p:degeneration}]
First, if $b=0$, but $a\neq 0$, then $\mathbf{G}=\mathbf{G}(a,b)$ given in \eqref{G-form} is a diagonal matrix and it is easy to verify that the matrix function
\begin{equation}
\mathbf{P}(\Lambda;X,T,\mathbf{G}) = 
\begin{cases} 
\begin{bmatrix} \dfrac{a^*}{|a|} & 0 \\0 & \dfrac{a}{|a|} \end{bmatrix},&\quad |\Lambda|<1,\\
\mathbb{I},&\quad |\Lambda|>1,
\end{cases}
\end{equation}
is the solution of \rhref{rhp:near-field}, which produces $\Psi(X,T;\mathbf{G})\equiv 0$ by \eqref{Psi-def}. 
Similarly, if $a=0$, but $b\neq 0$, then $\mathbf{G}$ is an off-diagonal matrix, and the jump matrix \eqref{P-jump} may be expressed as
\begin{equation}
\ee^{-\ii(\Lambda X+\Lambda^2T+2 \Lambda^{-1})\sigma_3}\mathbf{G}
\ee^{\ii(\Lambda X+\Lambda^2T+2 \Lambda^{-1})\sigma_3} 
= \frac{1}{|b|}\begin{bmatrix}
0 & b^* \ee^{-2\ii (\Lambda X +\Lambda^2 T)}\\
-b \ee^{2\ii (\Lambda X +\Lambda^2 T)} & 0
\end{bmatrix}\ee^{4\ii  \Lambda^{-1}\sigma_3}.
\end{equation}
In this case, one can verify that
\begin{equation}
\mathbf{P}(\Lambda;X,T,\mathbf{G}) = 
\begin{cases} 
\displaystyle\frac{1}{|b|}\begin{bmatrix}
0 &-b^* \ee^{-2\ii (\Lambda X +\Lambda^2 T)}\\
b \ee^{2\ii (\Lambda X +\Lambda^2 T)} & 0
\end{bmatrix},&\quad |\Lambda|<1,\\
\ee^{4\ii  \Lambda^{-1}\sigma_3},&\quad |\Lambda|>1,
\end{cases}
\label{eq:answer-a-zero}
\end{equation}
is the solution of \rhref{rhp:near-field}, which again produces $\Psi(X,T;\mathbf{G})\equiv 0$ by \eqref{Psi-def}. 
\end{proof}

\section{Computing $\mathcal{V}(y;\tau)$ related to the increasing tritronqu\'ee solution of Painlev\'e-II}
\label{a:P2-numerics}
In this appendix we provide the details concerning the computation of $\mathcal{V}(y;\tau)$ for real bounded values of $y$, which is used in the numerical validation of Theorem~\ref{t:transition} as shown in Figure~\ref{fig:TransitionalLogLogError}.
We recall that  $\mathcal{V}(y;\tau)$ is characterized by the conditions \eqref{V-P2-def} in terms of the (unique) increasing tritronqu\'ee solution $u(x)$ of \eqref{eq:PII} with $\alpha=\frac{1}{2}+\ii p$, where $p=\frac{1}{2\pi}\ln(1+\tau^2)$ and $\tau=|b/a|$.
As explained in Section~\ref{s:transitional}, $\mathcal{V}(y;\tau)$ is obtained via \eqref{eq:Transitional-XT-V} from the unique solution $\mathbf{U}^{\mathrm{TT}}(\zeta;y,\tau)$ of the Riemann-Hilbert problem arising from a Lax pair for the Painlev\'e-II equation due to Jimbo and Miwa \cite{JimboM81}.

As discussed in Section~\ref{s:Numerics}, the numerical framework developed in \cite{TrogdonO2015} and implemented in \texttt{OperatorApproximation.jl} \cite{OperatorApproximation} concerns Riemann-Hilbert problems posed on a suitable oriented contour $\Gamma$ and normalized such that the solution is of the form $\mathbf{C}+\mathcal{C}^{\Gamma}[\mathbf{F}](\zeta)$, where $\mathbf{C}$ is a constant $2\times 2$ matrix and $\mathcal{C}^{\Sigma}[\mathbf{F}](\zeta)$ is the Cauchy transform
\begin{equation}
\mathcal{C}^{\Gamma}[\mathbf{F}](\zeta) = \frac{1}{2\pi \ii} \int_{\Gamma} \frac{\mathbf{F}(s)}{s-\zeta}\dd s.
\end{equation}
Therefore, we consider the following Riemann-Hilbert problem satisfied by the renormalized function
\begin{equation}
\mathbf{W}^{\mathrm{TT}}(\zeta;y,\tau):=
\begin{cases} 
\mathbf{U}^{\mathrm{TT}}(\zeta;y,\tau),&\quad |\zeta|<1,\\
\mathbf{U}^{\mathrm{TT}}(\zeta;y,\tau)\zeta^{\ii p \sigma_3},&\quad |\zeta|>1.
\end{cases}
\end{equation}
\begin{figure}
\includegraphics[width=0.75\textwidth]{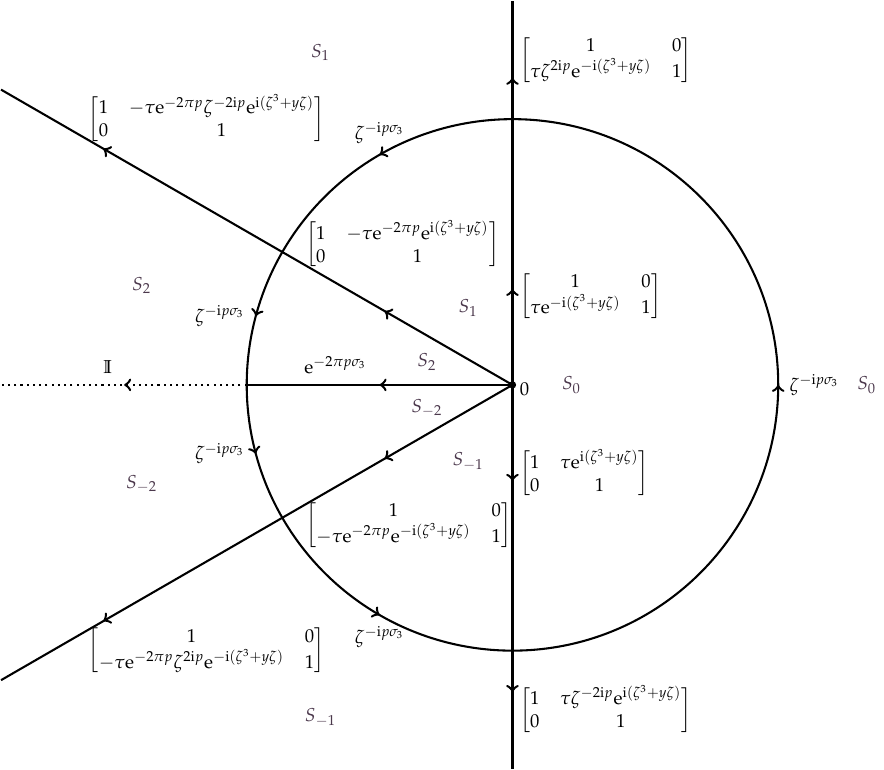}
\caption{Jump contours and conditions associated with \rhref{rhp:normalized-P2} in the $\zeta$-plane satisfied by $\mathbf{W}^{\mathrm{TT}}(\zeta;y,\tau)$.}
\label{fig:P2-jumps}
\end{figure}

\begin{rhp}[Renormalized Jimbo-Miwa Painlev\'e-II problem]
Let $y, p, \tau \in \mathbb{C}$ be related by $\tau^2=\ee^{2 \pi p}-1$. Seek a $2 \times 2$ matrix-valued function $\mathbf{W}^{\mathrm{TT}}(\zeta;y,\tau)$ with the following properties.
\begin{itemize}
\item \emph{Analyticity:}
$\mathbf{W}^{\mathrm{TT}}(\zeta;y,\tau)$ is analytic for $\zeta$ in the complement of the unit circle in the five sectors $S_0:|\arg (\zeta)|<\frac{1}{2} \pi, S_1: \frac{1}{2} \pi<$ $\arg (\zeta)<\frac{5}{6} \pi, S_{-1}:-\frac{5}{6} \pi<\arg (\zeta)<-\frac{1}{2} \pi, S_2: \frac{5}{6} \pi<\arg (\zeta)<\pi$, and $S_{-2}:-\pi<\arg (\zeta)<$ $-\frac{5}{6} \pi$. It takes continuous boundary values on the excluded rays and at the origin from each sector.
\item \emph{Jump conditions:} $\mathbf{W}_+^{\mathrm{TT}}(\zeta;y,\tau)=\mathbf{W}^{\mathrm{TT}}_-(\zeta;y,\tau) \mathbf{V}^{\mathrm{PII}}(\zeta ; y,\tau)$, where $\mathbf{V}^{\mathrm{PII}}(\zeta ; y,\tau)$ is the matrix defined on the jump contour shown in Figure~\ref{fig:P2-jumps}.
\item \emph{Normalization:} $\mathbf{W}^{\mathrm{TT}}(\zeta;y,\tau)  \rightarrow \mathbb{I}$ as $\zeta \rightarrow \infty$ uniformly in all directions.
\end{itemize}
\label{rhp:normalized-P2}
\end{rhp}
The function $\mathcal{V}(y;\tau)$ is then given by
\begin{equation}
\mathcal{V}(y;\tau) = \lim_{\zeta\to\infty} \zeta {W}^{\mathrm{TT}}_{12}(\zeta; y,\tau).
\label{V-P2-recover}
\end{equation}

For the purposes of verifying Theorem~\ref{t:transition}, one only needs to obtain $\mathcal{V}(y;\tau)$ for fairly small values of $|y|$.
For $y\in\mathbb{R}$ close to $y=0$, the jump matrix $\mathbf{V}^{\mathrm{PII}}(\zeta ; y,\tau)$ is bounded and tends to the identity matrix as $\zeta\to\infty$ on any arc of the jump contour described in Figure~\ref{fig:P2-jumps}. For such $y\in\mathbb{R}$, we numerically solve \rhref{rhp:normalized-P2} as is (without employing any steepest descent deformations) using \cite{OperatorApproximation}.
The routines we developed to compute $\mathcal{V}(y;\tau)$ can be found in the repository associated with this paper \cite{PaperCode}. See the Jupyter notebook \texttt{Painleve2TT.ipynb} in the repository \cite{PaperCode}.

In order to verify Theorem~\ref{t:transition}, we consider a dense grid $\mathbb{Y}$ on the closed interval $-1\leq y \leq 1$ with a mesh size of $0.005$. We numerically solve \rhref{rhp:normalized-P2} for each $y\in\mathbb{Y}$ and obtain the data for $\mathcal{V}(y;\tau)$ via \eqref{V-P2-recover} over $\mathbb{Y}$.
We then interpolate over $\mathbb{Y}$ to obtain a continuous function $\mathcal{V}(y;\tau)$ of $y$. We use this interpolant for each value of $X\in\mathbb{X}$, where
\begin{multline}
\mathbb{X} := \{200,400,600,800,1000,1200,1400,1600,1800,2000,2200,2400,2600,2800,3000,\\
3200,3400,3600,3800,4000,5000,6000,7000,8000,9000,10000\},
\end{multline}
to compute $E(y)$ as defined in \eqref{E-transition} and then take the supremum over $y\in\mathbb{R}$ as described immediately thereafter.

\section{User's guide for the package \texttt{RogueWaveInfiniteNLS.jl}}
\label{a:julia-summary}
In this Appendix we list all of the commands defined in the package \texttt{RogueWaveInfiniteNLS.jl} written in the \texttt{Julia} programming language.  

\subsection{Main command}
Most users will only need the command \texttt{psi}.
\begin{lstlisting}
[julia> psi(X,T,a,b,B)
\end{lstlisting}
This command returns a numerical approximation of $\Psi(X,T;\mathbf{G}(a,b),\bg)$ computed in a black-box fashion.  It determines which of the routines \texttt{psi\_undeformed}, \texttt{psi\_largeX}, \texttt{psi\_largeT}, and \texttt{psi\_Painleve} to call based on the coordinates $(X,T)$.

\subsection{Commands for computing $\Psi$ based on specific deformed Riemann-Hilbert problems}
\subsubsection{Using the undeformed Riemann-Hilbert problem}
The following commands implement a direct solution of Riemann-Hilbert Problem~\ref{rhp:near-field}.
\begin{lstlisting}
[julia> rwio_undeformed(X,T,a,b,n)
\end{lstlisting}
This solves a version of Riemann-Hilbert Problem~\ref{rhp:near-field} assuming $X>0$ and $T>0$, and returns $\Psi(X,T;\mathbf{G}(a,b),1)$ with $n$ collocation points per contour segment.
\begin{lstlisting}
[julia> psi_undeformed(X,T,a,b,B,n)
\end{lstlisting}
This wrapper for \texttt{rwio\_nodeformation\_rescaled} allows for variables $(X,T)$ of any signs, and also takes an additional argument representing the value of $\bg$.  It calls \texttt{rwio\_undeformed} after rescaling the variables by $\bg$ and determining the appropriate parameters from the matrix $\widetilde{\mathbf{G}}(a,b)$, and then rescales the returned value again by $\bg$.

\subsubsection{Using the Riemann-Hilbert problem deformed for large-$X$ asymptotics}
The following commands implement a numerical solution of the Riemann-Hilbert Problem satisfied by $\mathbf{T}(z;X,v)$ described in Section~\ref{s:large-X}.
\begin{lstlisting}
[julia> rwio_largeX(X,v,a,b,n)
\end{lstlisting}
This uses the native variables $(X,v)$ for $\mathbf{T}(z;X,v)$. The underlying assumptions are $X>0$ and $0\leq v < v_\mathrm{c}$. There are useful routines \texttt{TfromXv} and \texttt{vfromXT} for switching back and forth between the coordinates $(X,v)$ and $(X,T)$ that are described below in Section~\ref{s:change-coordinates}.
\begin{lstlisting}
[julia> psi_largeX(X,T,a,b,B,n)
\end{lstlisting}
This wrapper allows for variables $(X,T)$ of any signs, and also takes an additional argument representing the value of $\bg$.
For $X\geq 0$ and $T\geq 0 $, it calls \texttt{rwio\_largeX} with rescaled $(X,T)$ coordinates so that $B=1$ and with $v$ obtained from the rescaled $(X,T)$ via the routine \texttt{vfromXT}. If $X<0$ or $T<0$, a symmetry is used to map (X,T) to a point in the first quadrant, and then \texttt{rwio\_largeX} is called using that point.

\subsubsection{Using the Riemann-Hilbert problem deformed for large-$T$ asymptotics}
The following commands implement a numerical solution of the Riemann-Hilbert Problem satisfied by $\mathbf{T}(\tz;T,w)$ described in Section~\ref{s:large-T}.
\begin{lstlisting}
[julia> rwio_largeT(T,w,a,b,n)
\end{lstlisting}
This uses the native variables $(T,w)$ for $\mathbf{T}(\tz;T,w)$.   The underlying assumptions are $T>0$ and $0\leq w < w_\mathrm{c}$. There are  useful routines \texttt{XfromTw} and \texttt{wfromXT} for switching back and forth between the coordinates $(T,w)$ and $(X,T)$ that are described below in Section~\ref{s:change-coordinates}.
\begin{lstlisting}
[julia> psi_largeT(X,T,a,b,B,n)
\end{lstlisting}
This wrapper allows for variables $(X,T)$ of any signs, and also takes an additional argument representing the value of $\bg$. For $X\geq 0$ and $T\geq 0 $, it calls \texttt{rwio\_largeT} with rescaled $(X,T)$ coordinates so that $\bg=1$ and with $w$ obtained from the rescaled $(X,T)$ via the routine \texttt{wfromXT}. If $X<0$ or $T<0$, a symmetry is used to map (X,T) to a point in the first quadrant, and then \texttt{rwio\_largeT} is called using that point.

\subsubsection{Using the Riemann-Hilbert problem deformed for large $X$ and $T$ near the critical curve}
The following commands implement a modification of the Riemann-Hilbert problem satisfied by $\mathbf{T}(z;X,v)$ as described in Section~\ref{s:transitional}, accounting for the effect of $v\approx v_\mathrm{c}$.  
\begin{lstlisting}
[julia> rwio_Painleve(X,v,a,b,n)
\end{lstlisting}
This again uses the native variables $(X,v)$ for $\mathbf{T}(z;X,v)$.   The underlying assumptions are $X>0$ and $v\approx v_\mathrm{c}$.
\begin{lstlisting}
[julia> psi_Painleve(X,T,a,b,B,n)
\end{lstlisting}
This wrapper allows for variables $(X,T)$ of any signs, and also takes an additional argument representing the value of $\bg$.
For $X\geq 0$ and $T\geq 0 $, it calls \texttt{rwio\_Painleve} with rescaled $(X,T)$ coordinates so that $B=1$ and with $v$ obtained from the rescaled $(X,T)$ via the routine \texttt{vfromXT}. If $X<0$ or $T<0$, a symmetry is used to map (X,T) to a point in the first quadrant, and then \texttt{rwio\_Painleve} is called using that point.

\subsection{Routines for changing coordinates}
\label{s:change-coordinates}
The package \texttt{RogueWaveInfiniteNLS.jl} defines for the user the two important constants \texttt{VCRIT} representing $v_\mathrm{c}:=54^{-\frac{1}{2}}$ and \texttt{WCRIT} representing $w_\mathrm{c}:=54^\frac{1}{3}$.  The following commands allow the user to easily move between the coordinates $(X,T)$, $(X,v)$, and $(T,w)$.
\begin{lstlisting}
[julia> TfromXv(X,v)
\end{lstlisting}
This returns the value $T = X^{\frac{3}{2}}v$ determined by a given $(X,v)$, in case one would like to extract the $T$-coordinate of a point on the curve in the $(X,T)$ plane determined by fixing the value of $v$.
\begin{lstlisting}
[julia> vfromXT(X,T)
\end{lstlisting}
In a similar fashion, this returns $v = T X^{-\frac{3}{2}}$, in case one would like to determine the value of $v$ to use in the routine \texttt{rwio\_largeX} from given $(X,T)$.
\begin{lstlisting}
[julia> XfromTw(T,w)
\end{lstlisting}
This returns the value $X = T^{\frac{2}{3}} w$ determined by a given $(T,w)$, in case one would like to extract the $X$-coordinate of a point on the curve in the $(X,T)$ plane determined by fixing the value of $w$.
\begin{lstlisting}
[julia> wfromXT(X,T)
\end{lstlisting}
In a similar fashion, this returns $w = X T^{-\frac{2}{3}}$, in case one would like to determine the value of $w$ to use in the routine \texttt{rwio\_largeT} from given $(X,T)$.


\begin{thebibliography}{99}
%

\bibitem{PaperCode}
{AsymptoticsOfRWIO-Paper-Code}, GitHub repository containing sample codes, \url{https://github.com/bilman/AsymptoticsOfRWIO-Paper-Code}, 2024.

\bibitem{BarhoumiLMP23}
A. Barhoumi, O. Lisovyy, P. D. Miller, and A. Prokhorov, ``Painlev\'e-III monodromy maps under the $\mathrm{D}_6\to\mathrm{D}_8$ confluence and applications to the large-parameter asymptotics of rational solutions,'' \textit{SIGMA} \textbf{20}, 019 (77 pages), 2024.  \texttt{arXiv:2307.11217}.



\bibitem{BenneyN67}
D. J. Benney and A. C. Newell, ``The propagation of nonlinear wave envelopes,'' \textit{J. Math.\@ and Phys.\@} \textbf{46}, 133--139, 1967.

\bibitem{Bertola17}
M. Bertola, 
``The Malgrange form and Fredholm determinants,''
\textit{SIGMA Symmetry Integrability Geom.\@ Methods Appl.\@}
\textbf{13},
Paper No. 046, 12,
2017.


\bibitem{BilmanBMY}
D. Bilman, E. Blackstone, P. D. Miller, and G. F. Young, ``Infinite-order solutions of the modified Korteweg-de Vries equation,'' in preparation, 2024.

\bibitem{BilmanB2019}
D. Bilman and R. Buckingham,
``Large-order asymptotics for multiple-pole solitons of the focusing nonlinear Schr\"odinger equation,''
\textit{J. Nonlinear Sci.\@} 
\textbf{29}, 2185--2229, 2019.

\bibitem{BilmanM19}
D. Bilman and P. D. Miller, ``A robust inverse scattering transform for the focusing nonlinear Schr\"odinger equation,'' \textit{Comm.\@ Pure Appl.\@ Math.\@} \textbf{72}, 1722--1805, 2019.

%
\bibitem{BilmanLM2020}
D. Bilman, L. Ling, and P. D. Miller,
``Extreme superposition: Rogue waves of infinite order and the Painlev\'e-III hierarchy,''
\textit{Duke Math.\@ J.} \textbf{169}, 671--760, 2020.

\bibitem{BilmanM2021}
D. Bilman and P. D. Miller, ``Extreme superposition: high-order fundamental rogue waves in the far-field regime,''  \texttt{arXiv:2103.00337}, 2021.  To appear in \textit{Memoirs Amer.\@ Math.\@ Soc.\@}

\bibitem{BilmanM2022}
D. Bilman and P. D. Miller, ``Broader universality of rogue waves of infinite order,'' \textit{Phys.\@ D} \textbf{435}, 133289, 12 pp, 2022.

\bibitem{RogueWaveInfiniteNLS}
D. Bilman and P. D. Miller, ``bilman/RogueWaveInfiniteNLS.jl: v0.1.4 (v0.1.4)'' Zenodo. \url{https://doi.org/10.5281/zenodo.12743088}, 2024.

\bibitem{BilmanM2024b}
D. Bilman and P. D. Miller, ``General rogue waves of infinite order:  double scaling asymptotics,'' in preparation, 2024.

\bibitem{BilmanT2017}
D. Bilman and T. Trogdon, ``Numerical inverse scattering for the Toda lattice,''
\textit{Comm.\@ Math.\@ Phys.\@} \textbf{352}, 805--879, 2017.


\bibitem{BorgheseJM2018}
M. Borghese, R. 
Jenkins, and K. D. T.-R.
McLaughlin, 
``Long time asymptotic behavior of the focusing nonlinear Schr\"odinger
   equation,''
\textit{J Ann.\@ Inst.\@ H. Poincar\'e C  Anal.\@ Non Lin\'eaire}
\textbf{35}, 887--920,
2018.

\bibitem{BothnerM19}
T. Bothner and P. D. Miller, ``Rational solutions of the Painlev\'e-III equation: large parameter asymptotics,'' \textit{Constr.\@ Approx.\@} \textbf{51}, 123--224, 2020. 

\bibitem{BuckinghamJM}
R. J. Buckingham, R. M. Jenkins, and P. D. Miller, ``Talanov self-focusing and its non-generic character,'' in preparation, 2024.


\bibitem{Deift1999}
P. Deift, 
``Integrable operators,'' in 
\textit{Differential operators and spectral theory},
    \textit{Amer.\@ Math.\@ Soc.\@ Transl.\@ Ser.\@ 2},
    \textbf{189},
     69--84,
 Amer.\@ Math.\@ Soc., Providence, RI,
      1999.


\bibitem{ItsIKS1990}
A. R. Its, A. G. Izergin, V. E. Korepin, and 
              N. A. Slavnov,
``Differential equations for quantum correlation functions,''
in \textit{Proceedings of the {C}onference on {Y}ang-{B}axter
              {E}quations, {C}onformal {I}nvariance and {I}ntegrability in
              {S}tatistical {M}echanics and {F}ield {T}heory},
   \textit{Internat.\@ J. Modern Phys.\@ B}
\textbf{4},
1003--1037, 1990.


\bibitem{JimboM81}
M. Jimbo and T. Miwa, ``Monodromy preserving deformation of linear ordinary differential equations with rational coefficients. II,'' \textit{Phys.\@ D} \textbf{2}, 407--448, 1981.

\bibitem{KitaevV04}
A. V. Kitaev and A. H. Vartanian, ``Connection formul\ae\ for asymptotics of solutions of the degenerate third Painlev\'e equation: I,'' \textit{Inverse Problems} \textbf{20}, 1165--1206, 2004.

\bibitem{KitaevV23}
A. V. Kitaev and A. H. Vartanian, 
``One-parameter meromorphic solution of the degenerate third Painlev\'e
   equation  with formal monodromy parameter $a = \pm {\rm i}/2$
   vanishing at the origin,''
\textit{Zap.\@ Nauchn.\@ Sem.\@ S.-Peterburg.\@ Otdel.\@ Mat.\@ Inst.\@ Steklov (POMI)}
\textbf{520} Voprozy Kvantovo\u{i} Teorii Polya i Statistichesko\u{i} Fiziki. 28, 189--226, 2023.  \texttt{arxiv.org/pdf/2305.17278}.
 


\bibitem{LiM24}
S. Li and P. D. Miller, ``On the Maxwell-Bloch system in the sharp-line limit without solitons,'' \textit{Comm.\@ Pure Appl.\@ Math.\@} \textbf{77}, 457--542, 2024.

\bibitem{Miller2018}
P. D. Miller, ``On the increasing tritronqu\'ee solutions of the Painlev\'e-II equation,'' 
\textit{SIGMA} \textbf{14}, 125, 38 pp., 2018.

\bibitem{NewellM92}
A. C. Newell and J. V. Moloney, 
\textit{Nonlinear Optics}, 
Advanced Topics in the Interdisciplinary Mathematical Sciences
Addison-Wesley, Redwood City, CA, 1992.

\bibitem{OhioSupercomputerCenter1987}
Ohio Supercomputer Center, Columbus OH, 1987. \url{http://osc.edu/ark:/19495/f5s1ph73}

\bibitem{DLMF}
F. W. J. Olver, A. B. Olde Daalhuis, D. W. Lozier, B. I. Schneider, R. F. Boisvert, C. W. Clark, B. R. Miller, B. V. Saunders, H. S. Cohl, and M. A. McClain, eds, \textit{NIST Digital Library of Mathematical Functions}, \texttt{http://dlmf.nist.gov/}, Release 1.1.7 of 2022-10-15. 


\bibitem{Olver2011}
S. Olver, ``Numerical solution of Riemann-Hilbert problems,'' \textit{Found.\@ Comp.\@ Math.\@} \textbf{11}, 153--179, 2011.

\bibitem{Olver2012}
S. Olver, ``A general framework for solving Riemann-Hilbert problems numerically,'' \textit{Numer.\@ Math.\@} \textbf{122}, 305--340, 2012.



\bibitem{OlverT2014}
S. Olver and T. Trogdon, ``Nonlinear steepest descent and numerical solution of Riemann-Hilbert problems,'' \textit{Comm.\@ Pure Appl.\@ Math.\@} \textbf{67} no. 8, 1353--1389, 2014.

\bibitem{Peregrine83}
D. H. Peregrine, ``Water waves, nonlinear Schr\"odinger equations and their solutions,'' \textit{J. Austral.\@ Math.\@ Soc.\@ Ser.\@ B} \textbf{25}, 16--43, 1983.  

\bibitem{PutS09}
M. van der Put and M.-H. Saito, ``Moduli spaces for linear differential equations and the Painlev\'e equations,''  \textit{Ann.\@ Inst.\@ Fourier
(Grenoble)} \textbf{59}, 2611--2667, 2009.

\bibitem{Sakka2009}
A. H. Sakka, ``Linear problems and hierarchies of Painlev\'e equations,'' \textit{J. Phys.\@ A} \textbf{42}, 025210, 19 pp., 2009.


\bibitem{Suleimanov17}
B. I. Suleimanov, ``Effect of a small dispersion on self-focusing in a spatially one-dimensional case,'' \textit{JETP Lett.\@} \textbf{106}, 400--405, 2017.

\bibitem{Sunde95}
A. Sunde, ``Kjempb\o lger i Nordsjoen (Extreme waves in the North Sea),'' \textit{V\ae r \& Klima} \textbf{18}, 17--23, 1995 (in Norwegian).

\bibitem{Talanov65}
V. I. Talanov, ``Self focusing of wave beams in nonlinear media,'' \textit{JETP Lett.\@} \textbf{2}, 138--141, 1965.

\bibitem{TotzW12}
N. Totz and S. Wu, ``A rigorous justification of the modulation approximation to the 2D full water wave problem,'' \textit{Comm.\@ Math.\@ Phys.\@} \textbf{310}, 817--883, 2012.

\bibitem{ISTPackage}
T. Trogdon, ISTPackage, \url{https://bitbucket.org/trogdon/istpackage}, 2014.

\bibitem{OperatorApproximation}
T.\ Trogdon, OperatorApproximation, ``tomtrogdon/OperatorApproximation.jl: v0.1.3'', Zenodo, \url{https://doi.org/10.5281/zenodo.10957640}, 2024.



\bibitem{TrogdonO2015}
T. Trogdon and S. Olver, \textit{Riemann-Hilbert Problems, Their Numerical Solution, and the Computation of Nonlinear Special Functions}, SIAM, Philadelphia, PA, 2015.



\bibitem{DeconinckOT2012}
T. Trogdon, S. Olver, and B. Deconinck, ``Numerical inverse scattering for the Korteweg-de Vries and the modified Korteweg-de Vries equations,''
\textit{Phys.\@ D} \textbf{241}, 1003--1025, 2012.

\bibitem{TrogdonO2013}
T. Trogdon and S. Olver, ``Numerical inverse scattering for the focusing and defocusing nonlinear Schr\"odinger equations,'' 
\textit{Proc.\@ Roy.\@ Soc.\@ Lon.\@ A} \textbf{469}, 20120330, 22 pp., 2013.

\bibitem{Tsutsumi87}
Y.  Tsutsumi, 
``$L^2$-solutions for nonlinear Schr\"odinger equations and nonlinear groups,''
\textit{J. Funkcial.\@ Ekvac.\@}
\textbf{30}, 115--125, 1987.

\bibitem{Ueda2010}
M. Ueda,
\textit{Fundamentals and New Frontiers of Bose-Einstein
              Condensation},
World Scientific, Hackensack, NJ,
2010.

\bibitem{YangY21}
B. Yang and J. Yang, ``Rogue wave patterns in the nonlinear Schr\"odinger equation,'' \textit{Phys.\@ D} \textbf{419}, 132850, 2021.

\bibitem{Zakharov69}
V. E. Zakharov, ``Stability of periodic waves of finite amplitude on the surface of a deep fluid,'' \textit{Sov.\@ Phys.\@ J. Appl.\@ Mech.\@ Tech.\@ Phys.\@} \textbf{4}, 190--194, 1969.

\bibitem{ZakharovS72}
V. E. Zakharov and A. B.
Shabat, 
``Exact theory of two-dimensional self-focusing and one-dimensional
   self-modulation of waves in nonlinear media,''
\textit{J. Sov.\@ Phys.\@ JETP}
\textbf{34},  62--69, 
1972.

\bibitem{Zhou1989}
X. Zhou, ``The Riemann-Hilbert problem and inverse scattering,'' \textit{SIAM J. Math.\@ Anal.\@} \textbf{20}, 966--986, 1989.


\end{thebibliography}
\end{document}